\newtheorem{thm}{Theorem}[section]
\newtheorem{prop}{Proposition}[section]
\newtheorem{cor}{Corollary}[section]
\newtheorem{eg}{Example}[section]
\newtheorem{rem}{Remark}[section]
\title{Gauss diagram formulae for Vassiliev invariants from Kauffman polynomial}
\author{Butian Zhang}
\address{Institut de Mathématiques de Toulouse, Université Paul Sabatier}
\email{butian.zhang@math.univ-toulouse.fr}
\begin{document}

\maketitle
\begin{abstract}
A state model for Kauffman polynomial of Dubrovnik-version is given. Based on the state model, the Gauss diagram formulae for Vassiliev invariants are given from the coefficients of Kauffman polynomial following the method of Chmutov and Polyak. Some arrow diagram identities are given to simplify the Gauss diagram formulae of order 3, which give Polyak-Viro and Chmutov-Polyak formulae for the Vassiliev invariant of order 3. The models of Kauffman polynomial and HOMFLY-PT polynomial give different Gauss diagram expressions when specializing to Jones poynomial. 
\end{abstract}
\tableofcontents
\section{Introduction}
Polyak and Viro introduced the tool of arrow diagrams in \cite{Polyak_Viro:1994} to represent Vassiliev invariants (\cite{Vassiliev:1990}, \cite{Birman:1993aa}, \cite{BARNATAN:1995}). An arrow diagram is a diagram with several circles and signed arrows on the circles. The arrows represent the crossings. The foot of the arrow represents the bottom strand at the crossing and the head represents the top strand. An arrow diagram which can be realised by a knot in $\mathbb{R}^3$ is called a Gauss diagram. 
\begin{figure}[H]
\centering
    \begin{subfigure}{0.4\textwidth}
        \centering
        \includegraphics[width=0.3\textwidth]{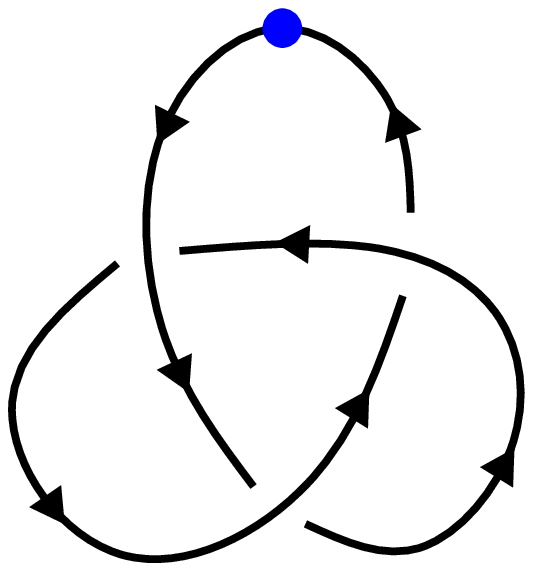}
        \caption{A trefoil}
    \end{subfigure}
    \hfill
    \begin{subfigure}{0.4\textwidth}
        \centering
        \includegraphics[width=0.3\textwidth]{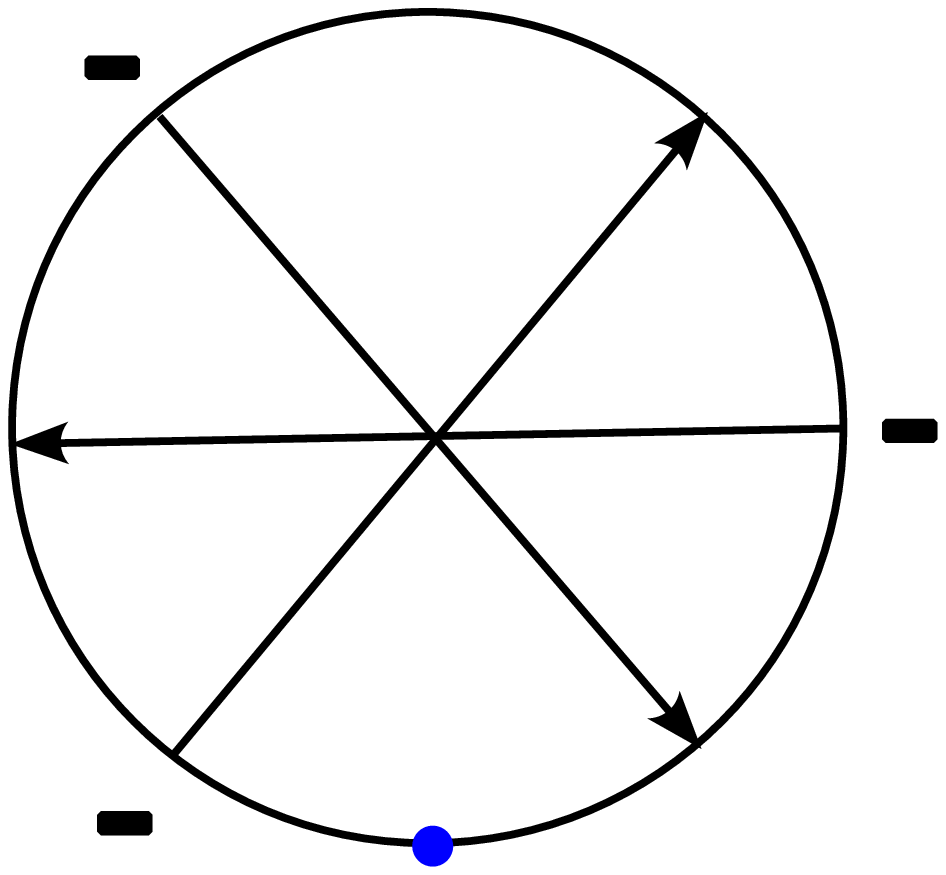}
        \caption{Gauss diagram of the trefoil}
    \end{subfigure}
    \caption{Knot diagram and its Gauss diagram}
    \label{fig.eg_Gauss_diagram}
\end{figure}

It is proved in \cite{Goussarov:1998} that all Vassiliev invariants can be expressed by Gauss diagram formulae (see Section \ref{sec.GDF} for the definition). And after a change of variable all the coefficients of HOMFLY-PT polynomial and Kauffman polynomial are Vassiliev invariants \cite{Birman:1993aa}. Chmutov and Polyak derived the Gauss diagram formulae for the coefficients of HOMFLY-PT polynomial in \cite{Chmutov_Polyak:2009} based on a state model of HOMFLY-PT polynomial \cite{Jaege:1990}. Using the method in \cite{Chmutov_Polyak:2009}, one can derive the Gauss diagram formulae for the coefficients $H_{4,0}, H_{3,1}, H_{2,2}, H_{1,3}$ and $H_{4,0}$ of the terms of degree 4 of HOMFLY-PT polynomial. By checking each term, we have 
\begin{prop}
    $H_{3,1} = H_{1,3} = 0$ and $48H_{0,4} + 12H_{2,2} + 3H_{4,0} + 4H_{0,2} = 0$
    where $H_{k,l}$ is the coefficient of the term $h^kz^l$ in HOMFLY-PT polynomial $H_{K}(e^h,z) = \sum\limits_{k,l}H_{k,l}(K)h^kz^l$. 
\end{prop}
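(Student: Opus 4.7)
The plan is to derive the Gauss diagram formulae for the five coefficients $H_{4,0}, H_{3,1}, H_{2,2}, H_{1,3}, H_{0,4}$ by running the Chmutov–Polyak procedure of \cite{Chmutov_Polyak:2009} on Jaeger's state model of HOMFLY-PT, and then verify the two claimed relations by comparing the resulting formulae term-by-term. Each $H_{k,l}$ will be expressed as a linear combination $\sum_A c_A \langle A,\,\cdot\,\rangle$ over arrow diagrams $A$ with at most $k+l$ arrows, where $\langle A, G\rangle$ counts signed sub-diagrams of the Gauss diagram $G$. Since the pairing $\langle\cdot,\cdot\rangle$ is universal (independent of the knot), any identity among the coefficient combinations $c_A$ for the various $H_{k,l}$ proves the corresponding identity between the invariants.

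For the vanishing statements $H_{3,1}=0$ and $H_{1,3}=0$, I would list every arrow diagram $A$ that appears with nonzero coefficient in the Chmutov–Polyak expansion of the corresponding $h^k z^l$ term and check that the coefficient collapses to zero after collecting contributions from all states that produce $A$. This is expected because the substitution $v=e^h$ places $H_{3,1}$ and $H_{1,3}$ at Vassiliev order $4$, yet the parity of their monomials forces the odd-arrow contributions to cancel in a very structured way inherited from the skein-triple symmetry of HOMFLY-PT.

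For the relation $48H_{0,4}+12H_{2,2}+3H_{4,0}+4H_{0,2}=0$, I would first write each summand as its arrow-diagram series, then sum with the indicated coefficients, and finally verify that the coefficient of every arrow diagram (of order $\leq 4$) in the total is zero. The appearance of $4H_{0,2}$ is the only subtle point: it provides precisely the correction needed to kill the order-$2$ residue produced by the quadratic terms of $v=e^h$, and matching this residue with the order-$2$ formula for $H_{0,2}$ is what pins down the coefficient $4$. The relation itself should follow from the framing identity $H_K(v,v-v^{-1})=1$ specialized to HOMFLY-PT, or equivalently from the well-known fact that the $z$-only polynomial is determined in low orders by framing and self-linking data.

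The main obstacle is purely combinatorial book-keeping: at order $4$ there are on the order of a few hundred relevant arrow diagrams on one circle, and each contributes through several Jaeger states and several Reidemeister-type normalizations. Managing sign conventions, orientation data, and the choice of an ascending/descending base-point across $H_{4,0}, H_{3,1}, H_{2,2}, H_{1,3}, H_{0,4}$ so that the cancellations are manifest is the delicate part; no new conceptual ingredient beyond the method of \cite{Chmutov_Polyak:2009} is needed.
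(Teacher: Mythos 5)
Your proposal follows essentially the same route as the paper: derive the Gauss diagram formulae for $H_{4,0}, H_{3,1}, H_{2,2}, H_{1,3}, H_{0,4}$ via the Chmutov--Polyak procedure applied to Jaeger's state model, and then verify the two relations by a term-by-term comparison of the resulting arrow-diagram expressions. The paper gives no more detail than this (it states the result ``by checking each term''), so your plan, including the combinatorial book-keeping caveat, matches the intended argument.
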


This tells us that the 3 nontrivial Vassiliev invariants of order $4$ are not independent. However, according to Bar-Natan \cite{BARNATAN:1995}, the space of Vassiliev invariants of order 4 has dimension 3. This inspires us to find the other Vassiliev invariants of order 4. We turn to Kauffman polynomial. By the calculation of the Kauffman polynomial (Dubrovnik version) on several knots, we can see that there are 4 nontrivial Vassiliev invariants of order 4 in Kauffman polynomial and any 3 of them are independent. 

The aim of this paper is to derive the Gauss diagram formulae from Kauffman polynomial which is needed for the work of the construction of combinatorial 1-cocycles in a moduli space of knots (see \cite{Fiedler:2023}, \cite{Gros:2022}). For this purpose, we give a state model in Section \ref{sec.state_model}, use the method of Chmutov and Polyak to derive the Gauss diagram formulae in Section \ref{sec.GDF} and simplify the results in order $3$ by using some identities in Section \ref{sec.identities}. And we compare the Gauss diagram formulae from Jones polynomial based on the 2 different state models in Section \ref{sec.GDF_Jones} as suggested by Chmutov. 

The author is grateful to Thomas Fiedler for his encouragement, to Hongmei Li for her companionship, and to Louis Kauffman and Sergei Chmutov for their suggestions. The work is supported by a grant of CSC. 

\section{State model of Kauffman polynomial}\label{sec.state_model}

The 2-variable Kauffman polynomial of Dubrovnik version $D_K(a,z)$ of an unoriented link $K$ (see \cite{Kauffman:2001}) is defined by the following rules:
\begin{enumerate}
    \item\label{regular_isotopy_relation} if $K$ and $K'$ are regular isotopic, $D_K = D_{K'}$
    \item \label{skein_relation} \begin{align*}
        D_{\includegraphics[width = 0.7cm]{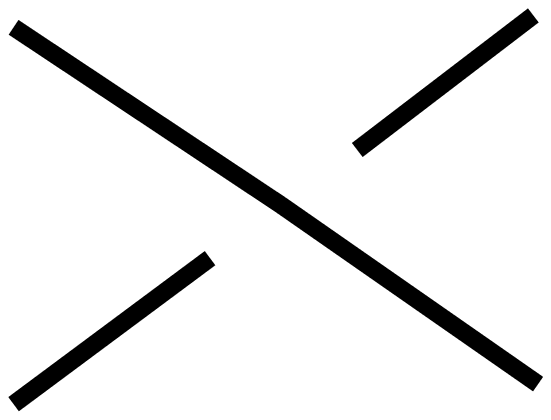}} - D_{\includegraphics[width = 0.7cm]{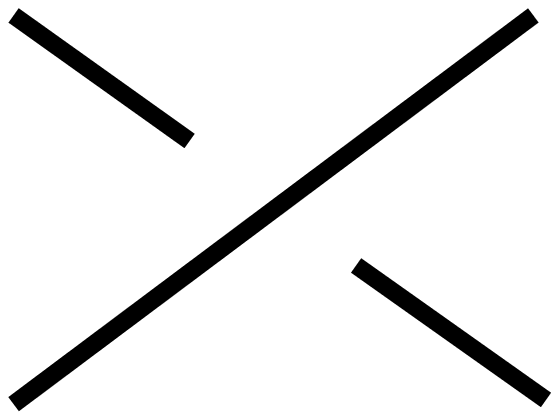}} = z(D_{\includegraphics[width = 0.7cm]{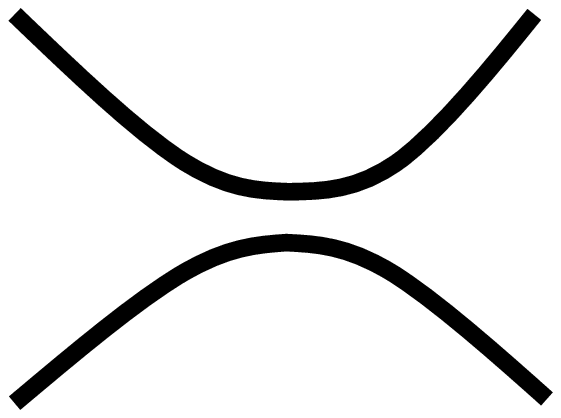}} - D_{\includegraphics[width = 0.7cm]{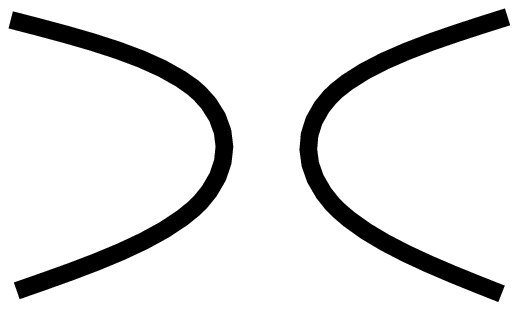}})
        \end{align*}
    \item \label{curl_relation}\begin{align*}
        D_{\includegraphics[width = 0.3cm]{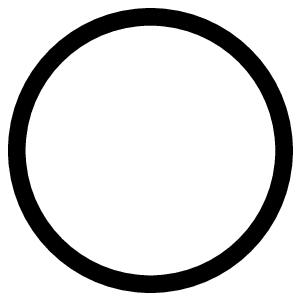}} &= Dd = D\cdot (\dfrac{a-a^{-1}}{z}+1)\\
        D_{\includegraphics[width = 0.5cm]{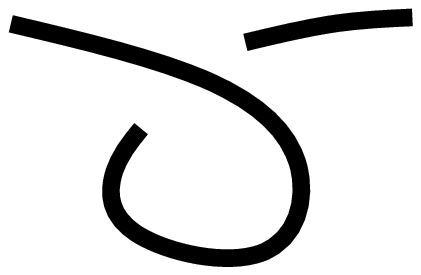}} &= aD\\
        D_{\includegraphics[width = 0.5cm]{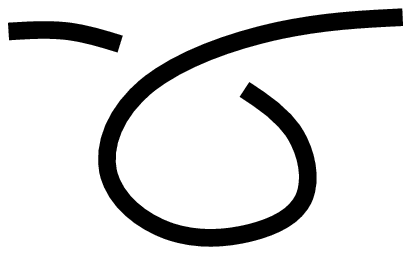}} &= a^{-1}D
    \end{align*}
\end{enumerate}
For an unoriented link $K$, we can first endow an auxiliary orientation on each component in any manner. Then the 
skein relation \ref{skein_relation} can be interpreted as 
$$D_{+} - D_{-} = z(D_0 - D_{\infty})$$
where $+,-$ denote the links with only one single distinct crossing with different signs, $0$ denotes the link doing "smooth" at this crossing and $\infty$ denotes the links doing "singularize". These notations are consistent with the notation of Birman and Lin \cite{Birman:1993aa}. Notice that in the case of $D_{\infty}$, the orientation adopted at first has to be changed on some arcs of the links which will be stated in detail soon. This change of orientation will not impact the result of $D_K$ because it is an unoriented knot invariant and the orientation we choose is just an auxiliary tool. 

Given an oriented link $K$ we define $$DK_K(a,z) = a^{-w(K)}D_K$$ 
where $w(K)$ denotes the writhe of $K$. Then $DK$ is an invariant for oriented links. In the rest of this paper, we shall focus on knots, i.e. the links with only one component and we will not distinguish the projection of a knot into a plane with the knot itself. And for a knot $K$, the relation between the usual Kauffman polynomial $F_K(a,z)$ (see \cite{Kauffman:2001}) and the Dubrovnik version is $$DK_{K}(a,z) = F_{K}(-ia,iz)$$

Based on the definition of Kauffman polynomial, there is a direct algorithm to calculate it. Given an oriented knot $K$, we choose a base point $B$ on it. Going along the orientation of the knot, when we come across a crossing in the first time, if we are on the top strand of this crossing, we go over this crossing and continue. If we are on the bottom strand of this crossing, we use the skein relation \ref{skein_relation} to change the sign of this crossing such that we are on the top strand of this crossing. Each time we use the  skein relation, we will get two more knots to calculate. One is $K_0$ and another is $K_\infty$. The $D_0$ has a consistent orientation while $K_\infty$ not. Hence we shall adopt a new orientation on $D_\infty$ satisfying that the orientation on the path that we have already passed will never change. There are two cases:
\begin{enumerate}
    \item \label{case.one_component} {the crossing is a self-crossing of a single component}
    \item \label{case.two_components} {the crossing is of two components}
\end{enumerate}
In the case \ref{case.one_component}, if we remove a small neighbourhood of this crossing, the component will be cut into two piece. One piece is that we have already passed. Another piece is the one that we have not passed. We remain the orientation on the piece that we have already passed and reverse the orientation on the piece that we have not passed yet. 
In the case \ref{case.two_components}, we remain the orientation on the component that we have passed and reverse the orientation on the component that we have not passed. 
It should be noticed that for a singularized crossing $P$ each time the orientation on an arc is changed, the sign of the crossings (excluding $P$) on this arc are also changed: the sign changes once if it is the crossing of this arc with other parts of the knots; the sign changes twice (i.e. remains) if it is a crossing of the arc with itself. 

When we smoothen a crossing, we may split a single component into two or connect 2 components into one. In the first case, we shall add a new base point on the new component at some place (the arc with the smallest order which is chosen at first). For the second case, we shall remove the base point of the component that we have not passed. 
When we singularize a crossing, we will always get a one-component result: from 1 component to 1 component or from 2 components to a single component. In the second case, we shall also remove the base point on the component that we have not passed. 

Using this algorithm we will finally get several links such that from the base point on each component, we will meet the crossing always on the top strand first. Hence they are isotopic (may not regular) to the unknot. 
Using curl relation and adding all the results according to the skein relation, we will get the $D_K$ and hence $DK_K$.

\begin{figure}[h]
  \centering
  \begin{tikzpicture}[level distance=3cm,
    level 1/.style={sibling distance=5cm},
    level 2/.style={sibling distance=3cm},
    level 3/.style={sibling distance=0.75cm}]
    \node {\includegraphics[width=2cm]{image/trefoil_left.eps}}
    child {node[label=below:{ $a^{-1}$}] {\includegraphics[width=2cm]{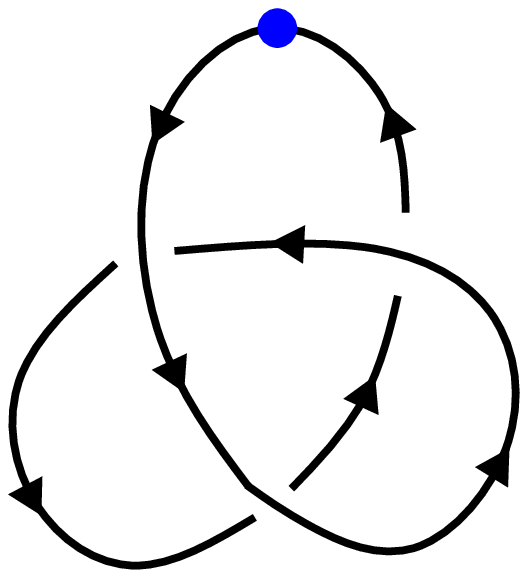}}
    }
    child {node[label=above:{ $-z$}] {\includegraphics[width=2cm]{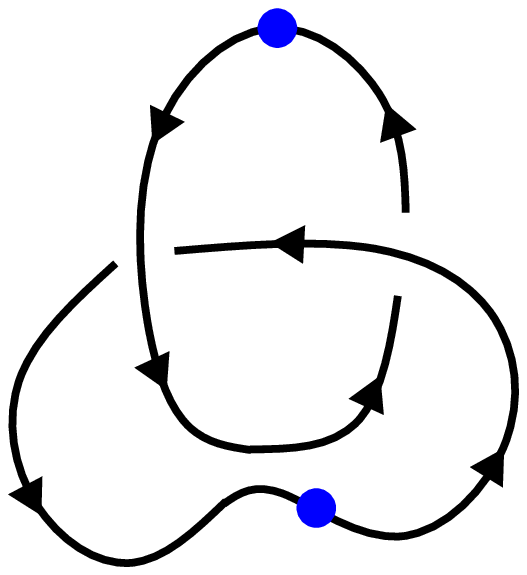}}
      child {node[label=below:{ $\dfrac{a-a^{-1}}{z}+1$}] {\includegraphics[width=2cm]{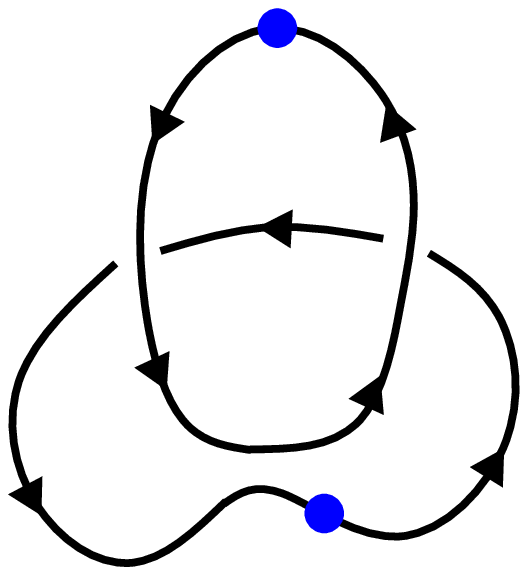}}}
      child {node[label=above:{ $-z$}, label=below:{$a^{-1}$}] {\includegraphics[width=2cm]{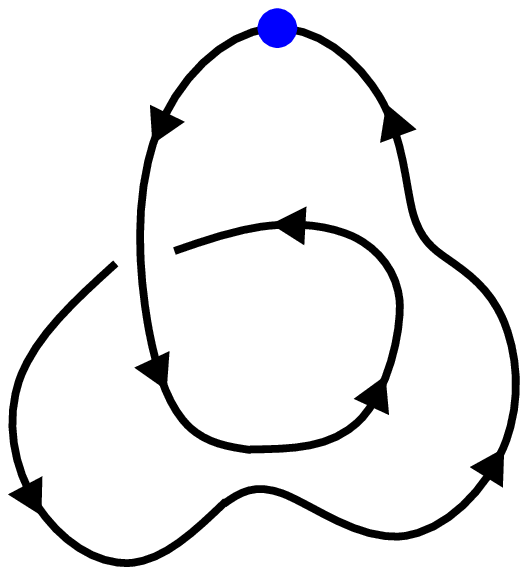}}}
      child {node[label=above:{ $z$},label=below:{$a$}] {\includegraphics[width=2cm]{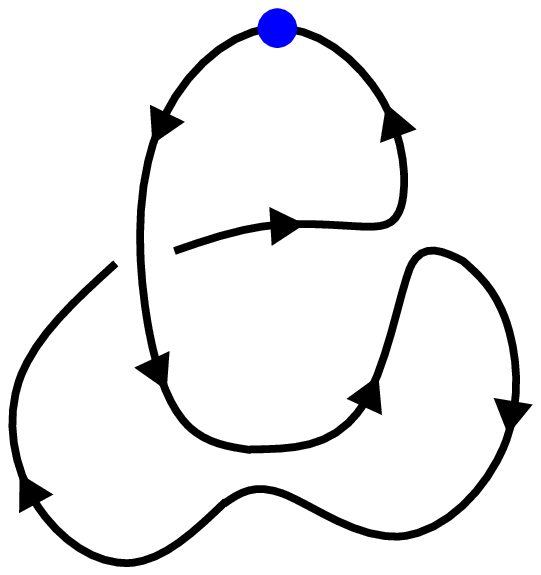}}}
    }
    child {node[label=above:{ $z$}, label=below:{ $a^{2}$}] {\includegraphics[width=2cm]{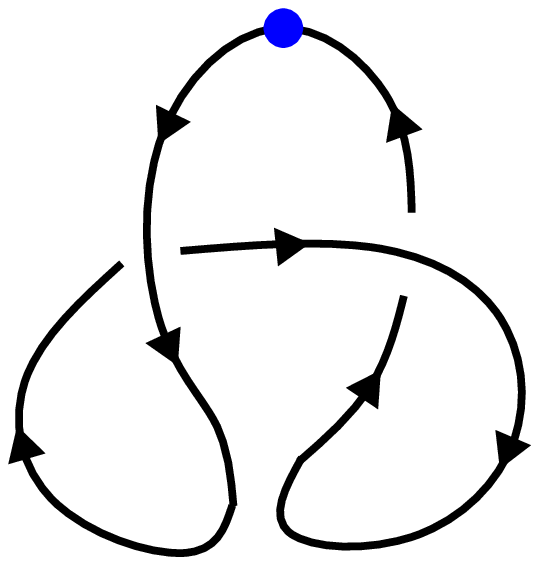}}
    };
  \end{tikzpicture}
  \caption{Calculation on a left trefoil}
  \label{fig.calculation_trefoil_left}
\end{figure}

We represent this algorithm on a left-handed trefoil in Figure \ref{fig.calculation_trefoil_left}. From the figure, we can see that
\begin{align*}
D_{K} &= a^{-1}-z(\frac{a-a^{-1}}{z}+1)+z^2a^{-1}-z^2a+za^2 \\
&= 2a^{-1}-a+a^2z-z+a^{-1}z^2-az^2
\end{align*}
Since $writhe(K) = -3$, we have 
\begin{align*}
DK_{K} &= a^3D_K\\
&= 2a^2-a^4+a^5z-a^3z+a^{2}z^2-a^4z^2
\end{align*}

Based on the algorithm above, we can establish a state model. 
Given a Gauss diagram $G$ of a knot $K$, a \textit{state} is the Gauss diagram with arrows labelled $\{ 0, \infty \}$ or non-labeled, equivalently a function $\sigma: A(G) \rightarrow \{\phi, 0, \infty \}$, where $A(G)$ is the set of the arrows of $G$. Given a state $(G,\sigma)$, we give it a process based on the Table \ref{table.process} and we give a weight $w(\sigma)$ according to the process. 

The process is as the following. We go along the circle from the base point. When we meet an arrow at the first time (we may call it the first passage at this arrow), we go through it as the red route indicated in \ref{table.process} and change the orientation on the corresponding arcs of the circle if necessary. If we meet an arrow at the second time, we go along the path indicated by the previous steps. If we have come back to the base point but there are other components unpassed, we set another base point at the arc of unpassed component which has the smallest order number and we go from it again. Repeat these procedure until we have passed all the arc of the circle. Then the process is finished. 

During the whole process, we attach a number $n(\alpha)$ (\textit{change number}) to each arrow $\alpha \in A(G)$. When we meet an arrow $\beta$ labelled $\infty$ at the first time, if only the head or only the foot of $\alpha$ is on the arc that change orientation due to $\beta$, we add $1$ to $n(\alpha)$. If both head and foot of $\alpha$ are on the arc that change orientation due to $\beta$, we add $2$ to $n(\alpha)$. Else we do nothing to $n(\alpha)$. And we ask that if $\alpha$ is an arrow labelled with $0$ or $\infty$, the $n(\alpha)$ stops growth when we meet $\alpha$ at the first time. (If $\alpha$ is labelled $\infty$, the change of orientation due to itself does not contribute to $n(\alpha)$ as well). 

We define the weight of the state $(G,\sigma)$ to be $$w(G,\sigma) = \prod \limits_{\alpha \in A(G)} w(G,\sigma,\alpha)$$ where $w(\sigma,\alpha)$ is given in Table \ref{table.weight} when we meet $\alpha$ at the first time during the process (i.e., at the \textit{first passage}). Notice that the change number $n(\alpha)$ of an unlabelled arrow are not settled until the end of the process. 
\begin{thm}[State model for Kauffman polynomial]
Given a Gauss diagram $G$ of an oriented knot $K$, we have $$DK_{K}(a,z) = \sum \limits_{\sigma \in state(G)} w(G,\sigma)(\dfrac{a-a^{-1}}{z}+1)^{c(\sigma)-1}$$
where $c(\sigma)$ is the number of components of the state $\sigma$ with its process finishing. 
\end{thm}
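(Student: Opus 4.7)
The plan is to show that the state sum precisely encodes the recursive algorithm described just before the theorem statement. I will induct on the number of \emph{bad arrows} of $(G,\sigma)$, meaning those arrows at which the process first encounters the bottom strand while traversing the knot diagram from the base point(s).

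For the base case in which no arrow is bad, every crossing is met on its top strand first, so the diagram is isotopic (through a sequence of over-passes away from the base point) to a disjoint union of unknots carrying only curls. Its Kauffman polynomial is then computed by iterating the curl rules together with the disjoint-loop multiplier $d = (a-a^{-1})/z + 1$. The only admissible state here is the all-$\phi$ state, whose weight given by the ``top-first'' entries of Table \ref{table.weight} should exactly produce the product of $a^{\pm 1}$ factors recorded by the curls, while the $d^{c(\sigma)-1}$ term accounts for the remaining $c(\sigma)-1$ disjoint unknotted components.

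For the inductive step, let $\alpha_0$ be the first bad arrow encountered in the process. Applying the skein relation $D_+ - D_- = z(D_0 - D_\infty)$ at $\alpha_0$ expresses the contribution of the current sub-diagram as a signed combination of three simpler diagrams, corresponding exactly to the three admissible values of $\sigma(\alpha_0)$: $\phi$ (sign-flipped, at which the algorithm then proceeds as if meeting the top strand first), $0$ (smoothing), and $\infty$ (singularization). By the inductive hypothesis each branch equals the sub-state sum on the smaller diagram, and the weight of $\alpha_0$ drawn from Table \ref{table.weight} must supply exactly the skein coefficient $\pm 1$, $+z$, or $-z$, possibly dressed by the $a^{\pm 1}$ that arises from the writhe correction $DK = a^{-w}D$.

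The key technical ingredient is that the change number $n(\alpha)$ correctly tracks orientation-induced sign flips. When the algorithm singularizes at some arrow $\beta$, one arc of the diagram is reoriented: every crossing $\alpha$ with exactly one endpoint on the reoriented arc has its sign flipped (recorded by $n(\alpha)$ increasing by $1$), while crossings with both endpoints on that arc are untouched by two cancelling flips (recorded by $n(\alpha)$ increasing by $2$). Thus the parity of $n(\alpha)$ at first passage measures exactly the discrepancy between the local sign of $\alpha$ in the current sub-diagram and its original sign in $G$, so the entries of Table \ref{table.weight} can be expressed coherently in terms of the original sign, $n(\alpha) \bmod 2$, and whether the top or bottom strand of $\alpha$ is met first. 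The main obstacle will be the remaining bookkeeping: tracking base point creation and removal through smoothings and singularizations, verifying that $c(\sigma)$ is indeed the final number of components produced by the process, and confirming that the cumulative power of $d$ matches the number of extra unknotted loops generated along the way. Once this combinatorial bookkeeping is settled, the theorem follows by combining the inductive hypothesis on each of the three sub-states with the skein relation.
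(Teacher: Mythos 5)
Your proposal is correct and follows essentially the same route as the paper, which in fact offers no separate written proof: the theorem is presented there as a direct translation of the recursive skein algorithm into a state sum, with each leaf of the computation tree corresponding to a state and the change number $n(\alpha)$ tracking the reorientations caused by singularizations. Your induction on the first bad arrow, with the three skein branches matching the labels $\phi$, $0$, $\infty$ and the weights of Table \ref{table.weight} absorbing the skein coefficients together with the writhe normalization $a^{-w(K)}$, is precisely the formalization the paper leaves implicit.
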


\begin{table}[htbp]
\centering
\begin{tabular}{| c | c | c | c |}
\hline
first passage& result & first passage & result \\
\hline
\begin{minipage}[b]{0.25\columnwidth}
    \centering
	\raisebox{-.5\height}{\includegraphics[width=\linewidth]{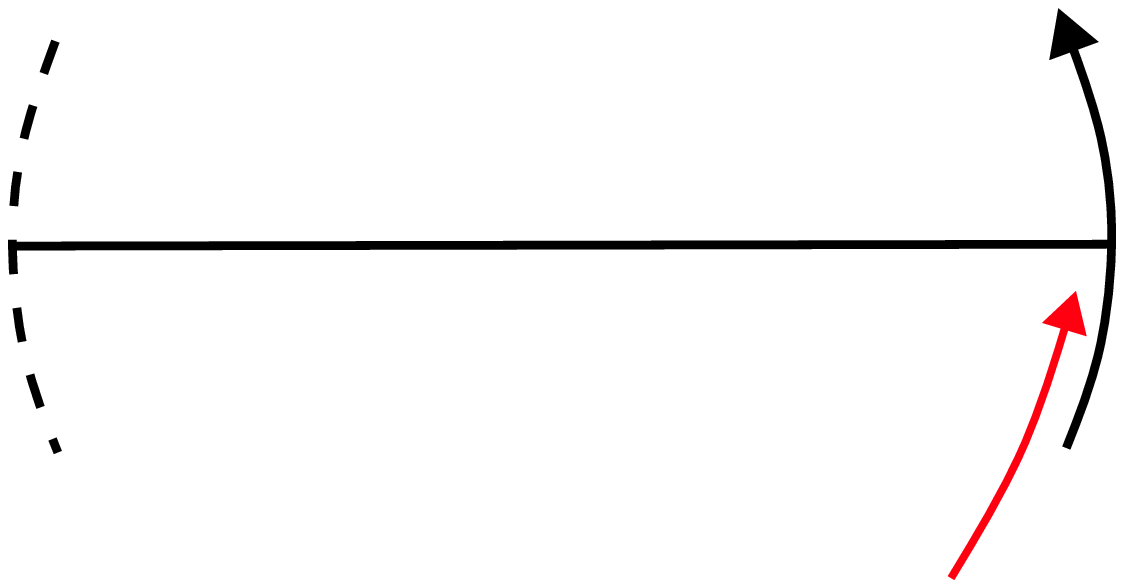}}
\end{minipage}
& \begin{minipage}[b]{0.25\columnwidth}
        \centering
		\raisebox{-.5\height}{\includegraphics[width=\linewidth]{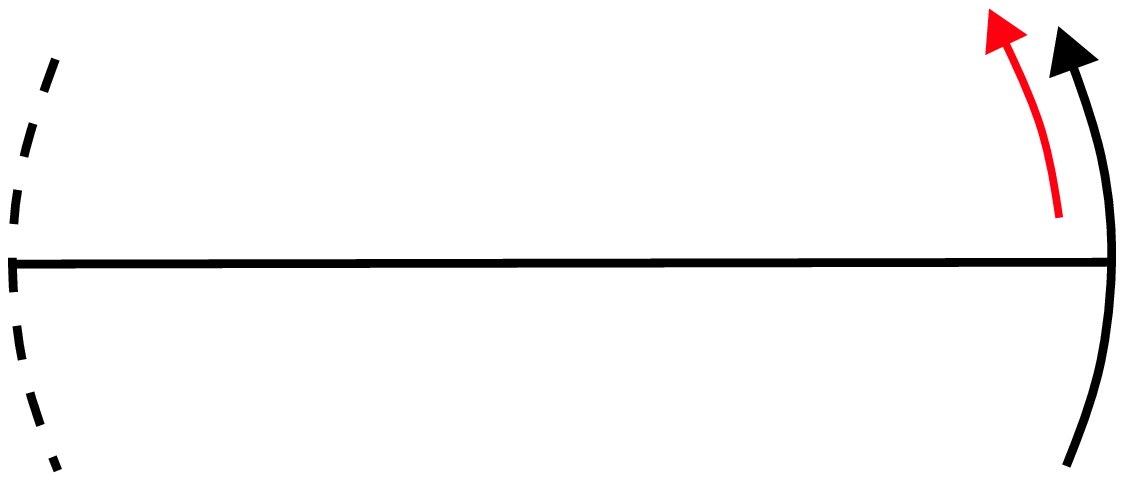}}
	\end{minipage}
&\begin{minipage}[b]{0.25\columnwidth}
    \centering
    \raisebox{-.5\height}{\includegraphics[width=\linewidth]{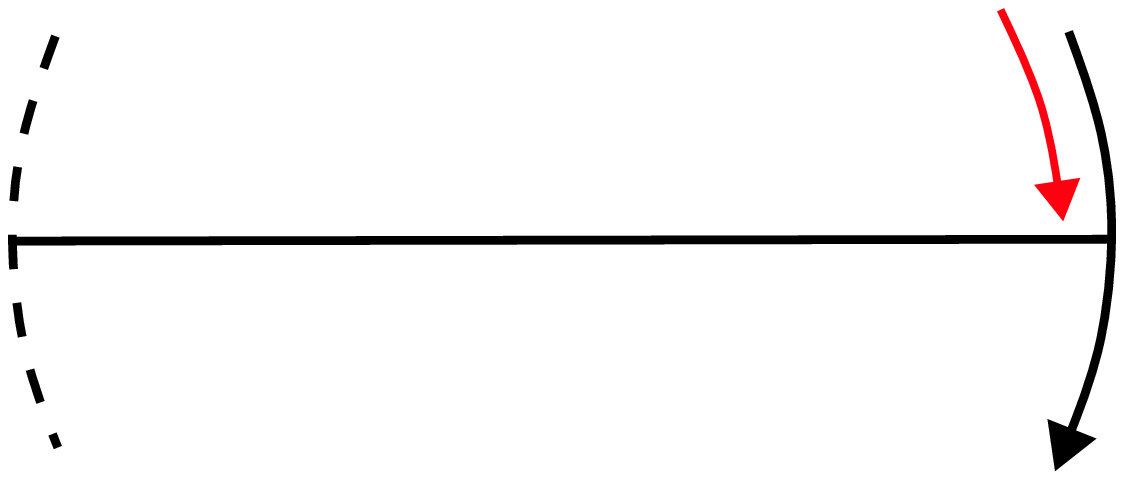}}
    \end{minipage}
& \begin{minipage}[b]{0.25\columnwidth}
		\centering
		\raisebox{-.5\height}{\includegraphics[width=\linewidth]{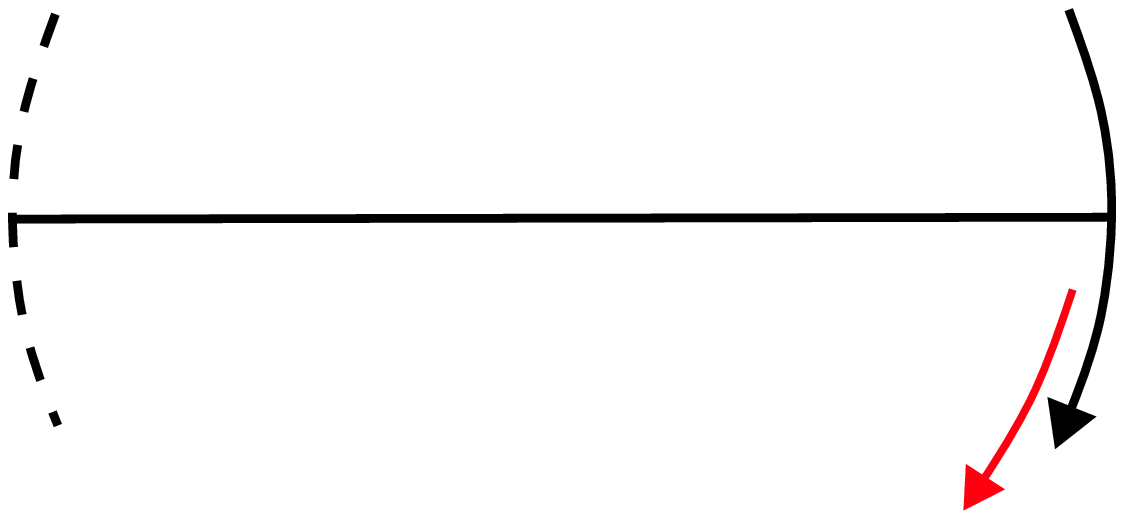}}
	\end{minipage} \\
\hline

\begin{minipage}[b]{0.25\columnwidth}
    \centering
	\raisebox{-.5\height}{\includegraphics[width=\linewidth]{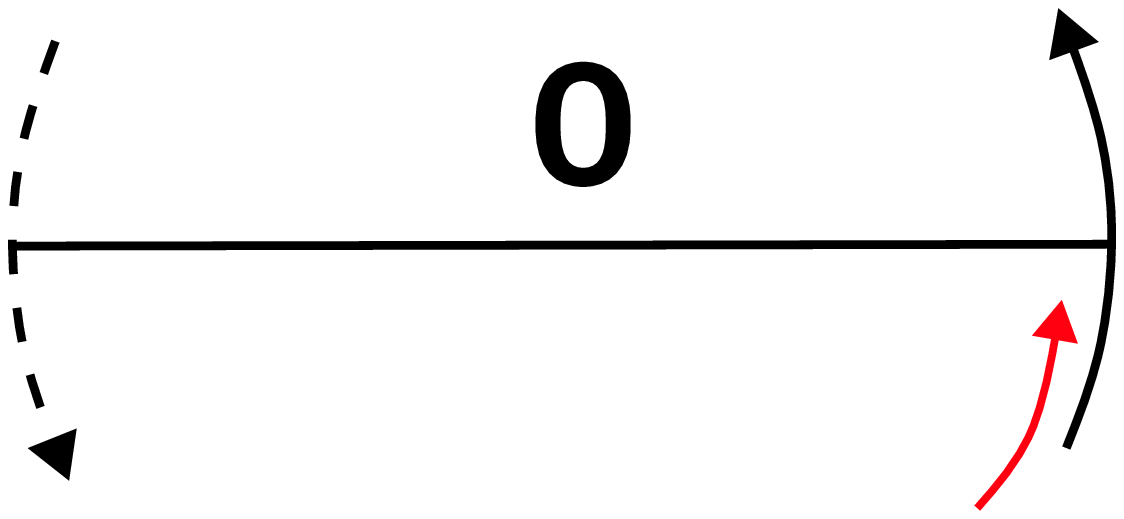}}
\end{minipage}
& \begin{minipage}[b]{0.25\columnwidth}
        \centering
		\raisebox{-.5\height}{\includegraphics[width=\linewidth]{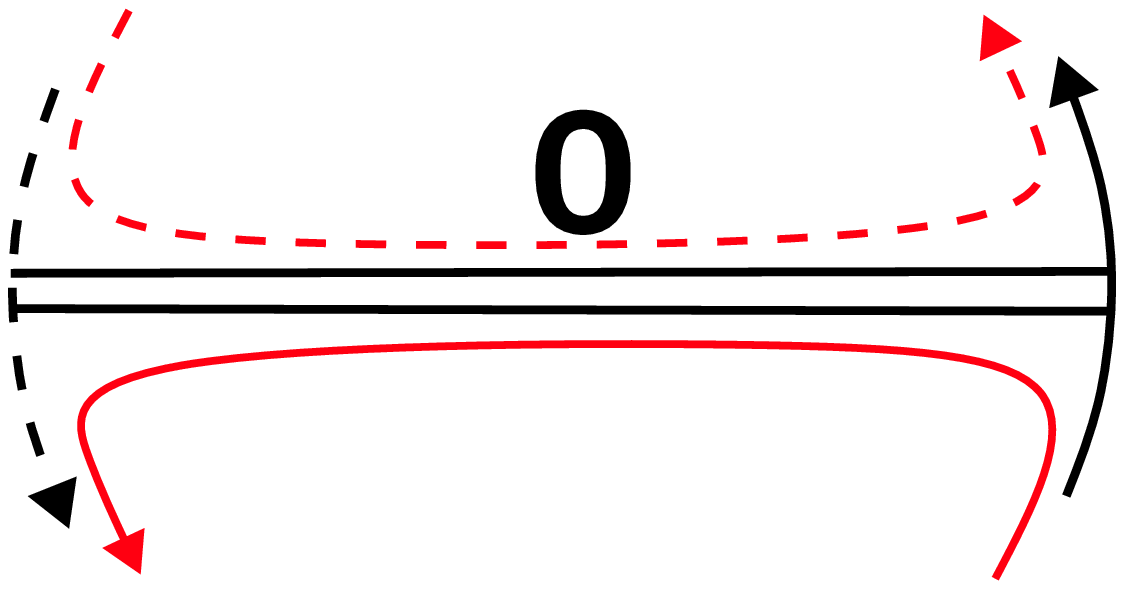}}
	\end{minipage}
&\begin{minipage}[b]{0.25\columnwidth}
    \centering
    \raisebox{-.5\height}{\includegraphics[width=\linewidth]{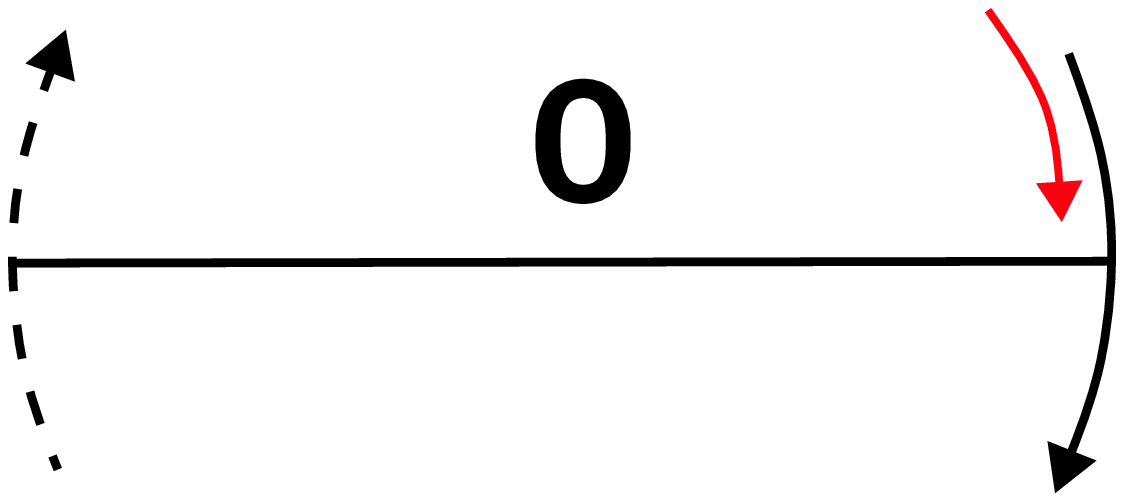}}
    \end{minipage}
& \begin{minipage}[b]{0.25\columnwidth}
		\centering
		\raisebox{-.5\height}{\includegraphics[width=\linewidth]{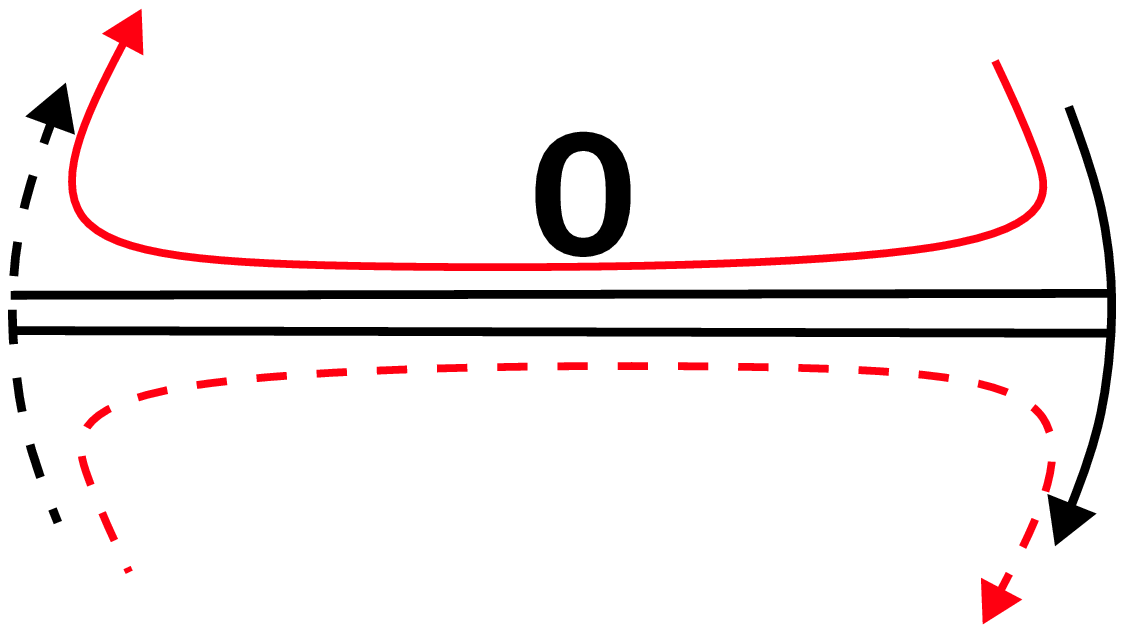}}
	\end{minipage} \\
\hline

\begin{minipage}[b]{0.25\columnwidth}
    \centering
	\raisebox{-.5\height}{\includegraphics[width=\linewidth]{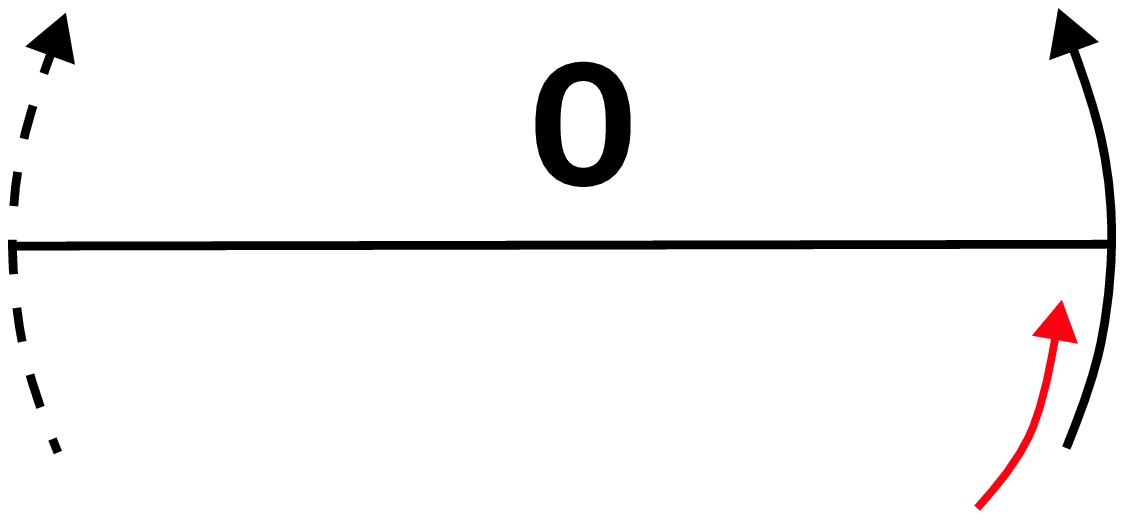}}
\end{minipage}
& \begin{minipage}[b]{0.25\columnwidth}
        \centering
		\raisebox{-.5\height}{\includegraphics[width=\linewidth]{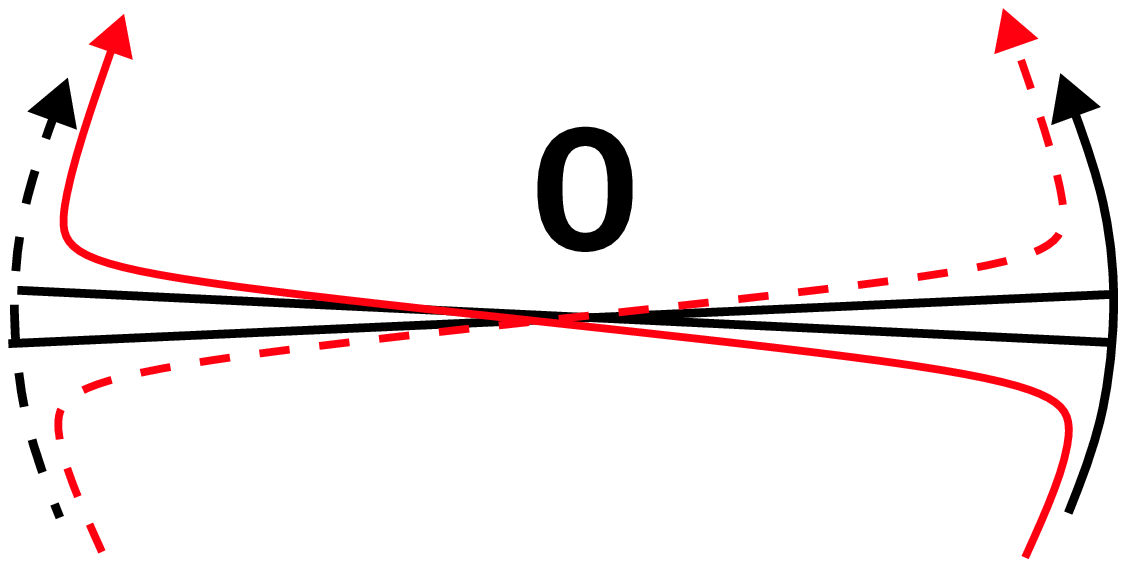}}
	\end{minipage}
&\begin{minipage}[b]{0.25\columnwidth}
    \centering
    \raisebox{-.5\height}{\includegraphics[width=\linewidth]{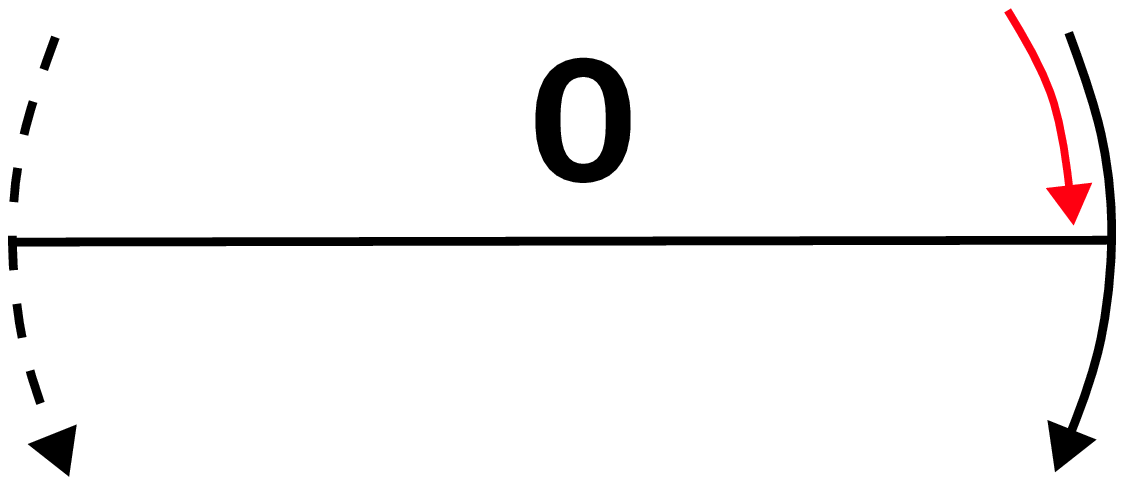}}
    \end{minipage}
& \begin{minipage}[b]{0.25\columnwidth}
		\centering
		\raisebox{-.5\height}{\includegraphics[width=\linewidth]{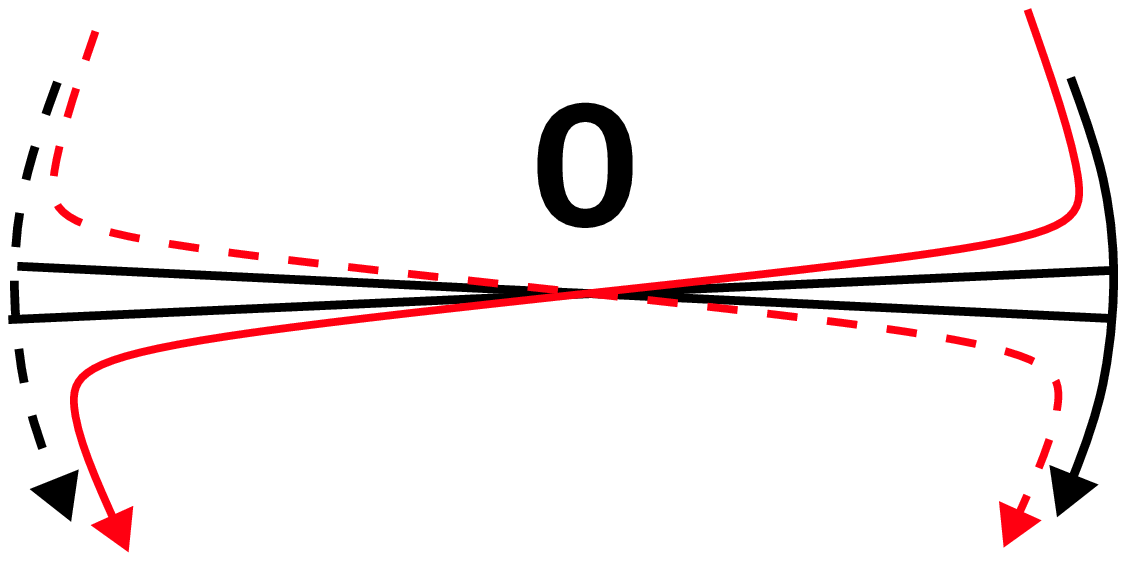}}
	\end{minipage} \\
\hline

\begin{minipage}[b]{0.25\columnwidth}
    \centering
	\raisebox{-.5\height}{\includegraphics[width=\linewidth]{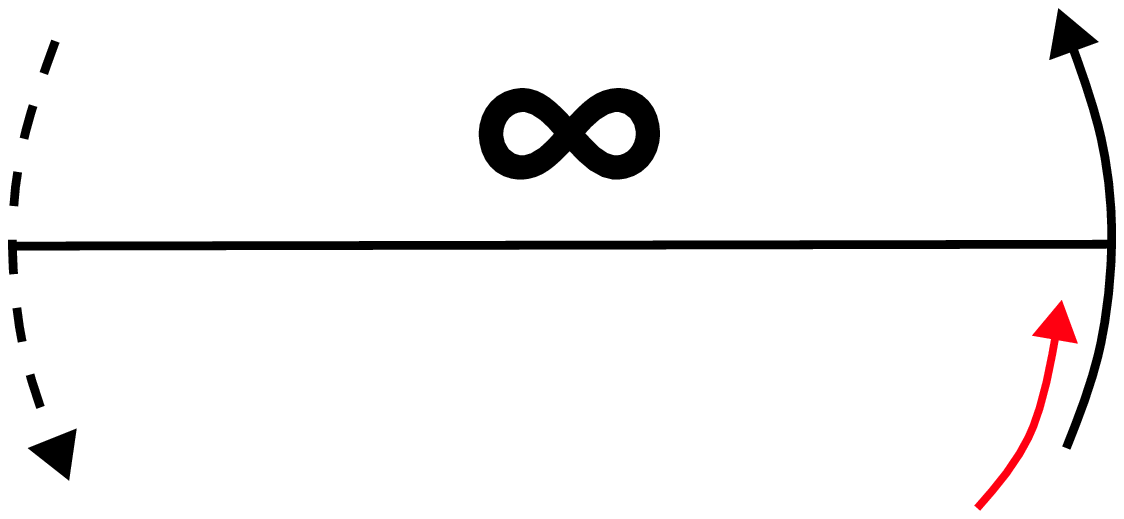}}
\end{minipage}
& \begin{minipage}[b]{0.25\columnwidth}
        \centering
		\raisebox{-.5\height}{\includegraphics[width=\linewidth]{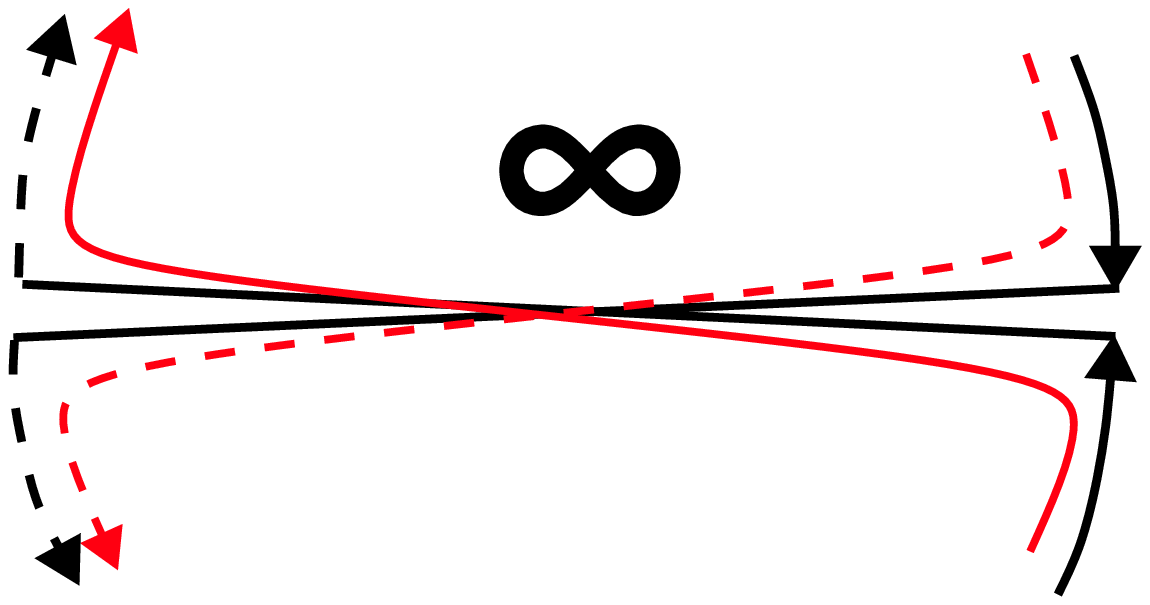}}
	\end{minipage}
&\begin{minipage}[b]{0.25\columnwidth}
    \centering
    \raisebox{-.5\height}{\includegraphics[width=\linewidth]{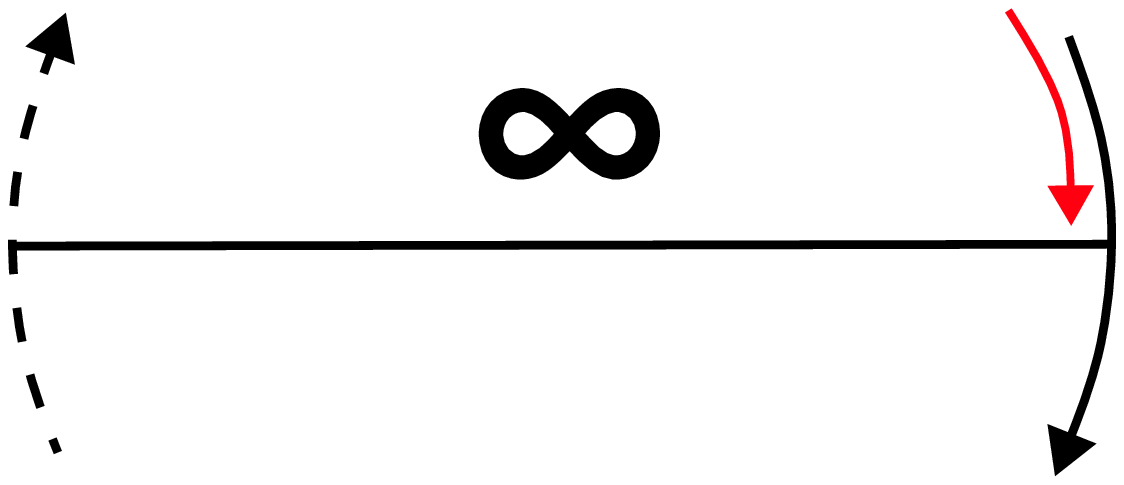}}
    \end{minipage}
& \begin{minipage}[b]{0.25\columnwidth}
		\centering
		\raisebox{-.5\height}{\includegraphics[width=\linewidth]{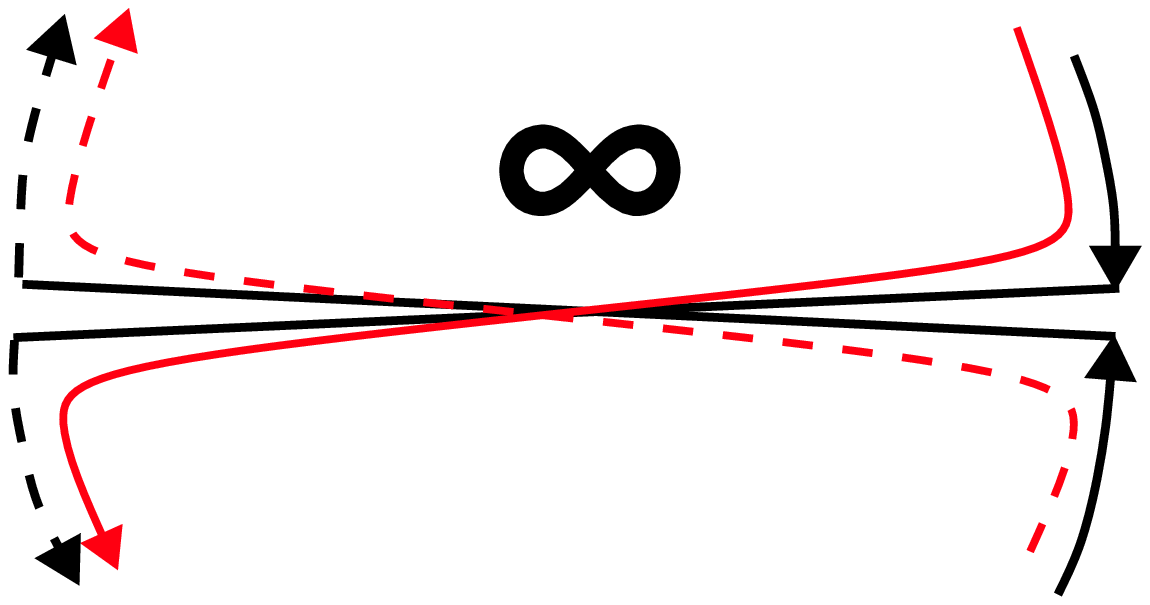}}
	\end{minipage} \\
\hline

\begin{minipage}[b]{0.25\columnwidth}
    \centering
	\raisebox{-.5\height}{\includegraphics[width=\linewidth]{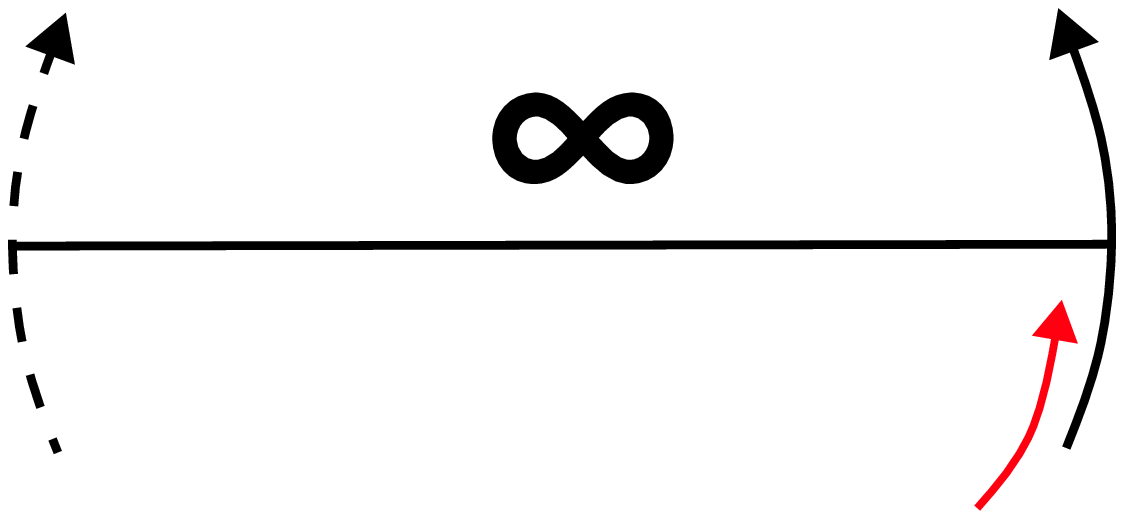}}
\end{minipage}
& \begin{minipage}[b]{0.25\columnwidth}
        \centering
		\raisebox{-.5\height}{\includegraphics[width=\linewidth]{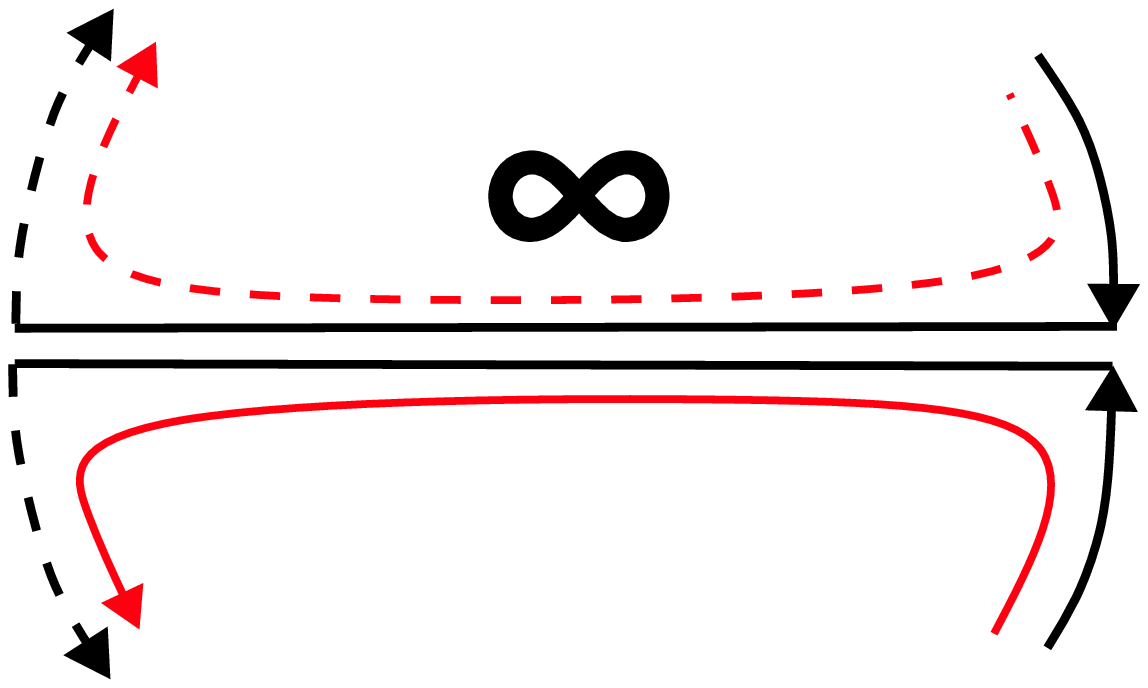}}
	\end{minipage}
&\begin{minipage}[b]{0.25\columnwidth}
    \centering
    \raisebox{-.5\height}{\includegraphics[width=\linewidth]{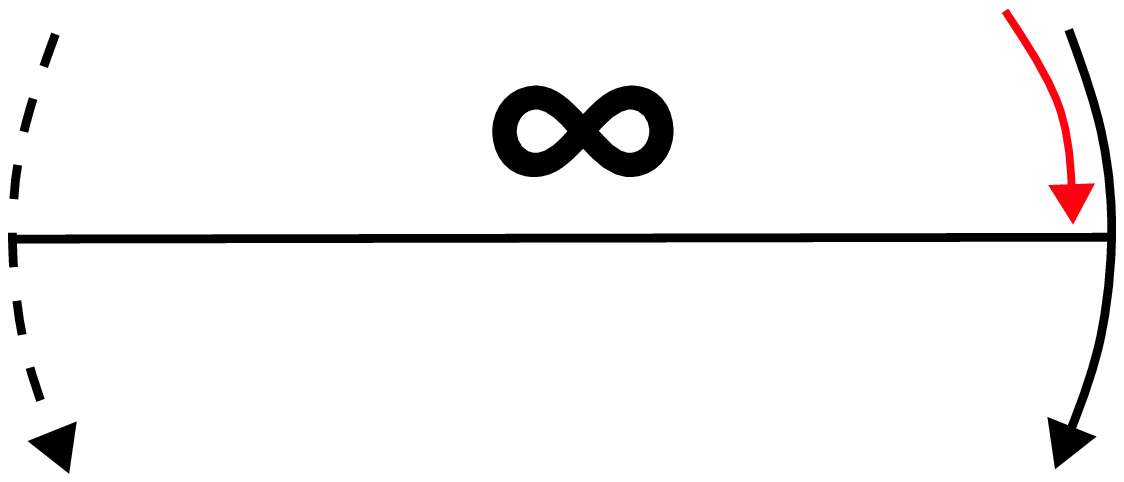}}
    \end{minipage}
& \begin{minipage}[b]{0.25\columnwidth}
		\centering
		\raisebox{-.5\height}{\includegraphics[width=\linewidth]{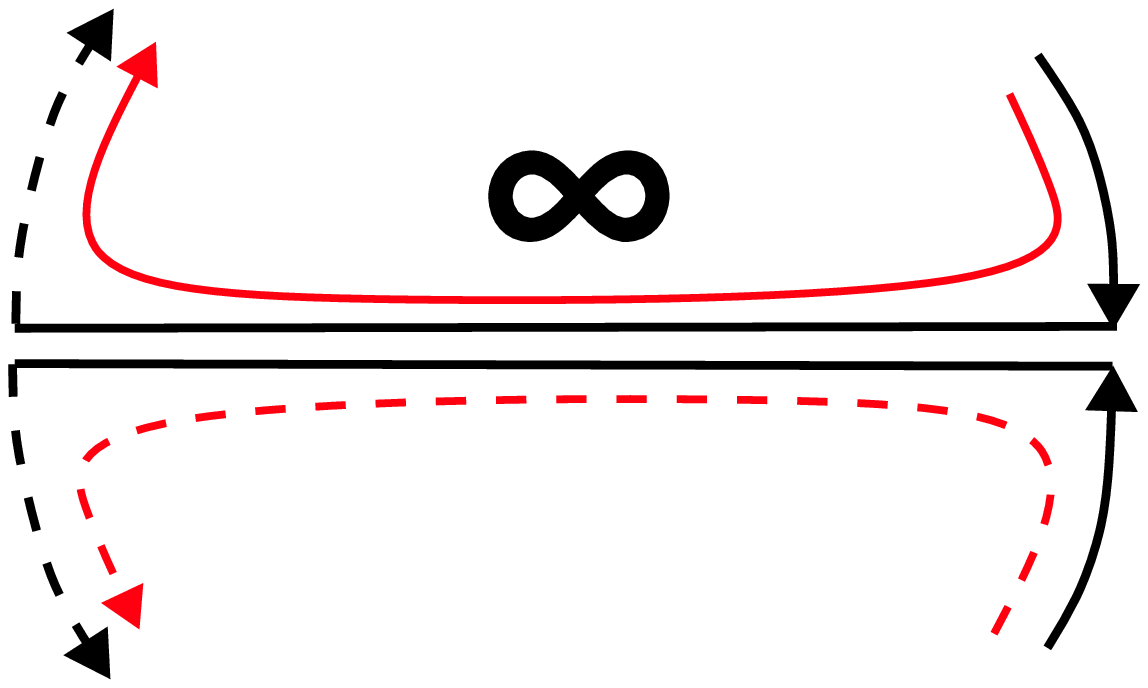}}
	\end{minipage} \\
\hline

\begin{minipage}[b]{0.25\columnwidth}
    \centering
	\raisebox{-.5\height}{\includegraphics[width=\linewidth]{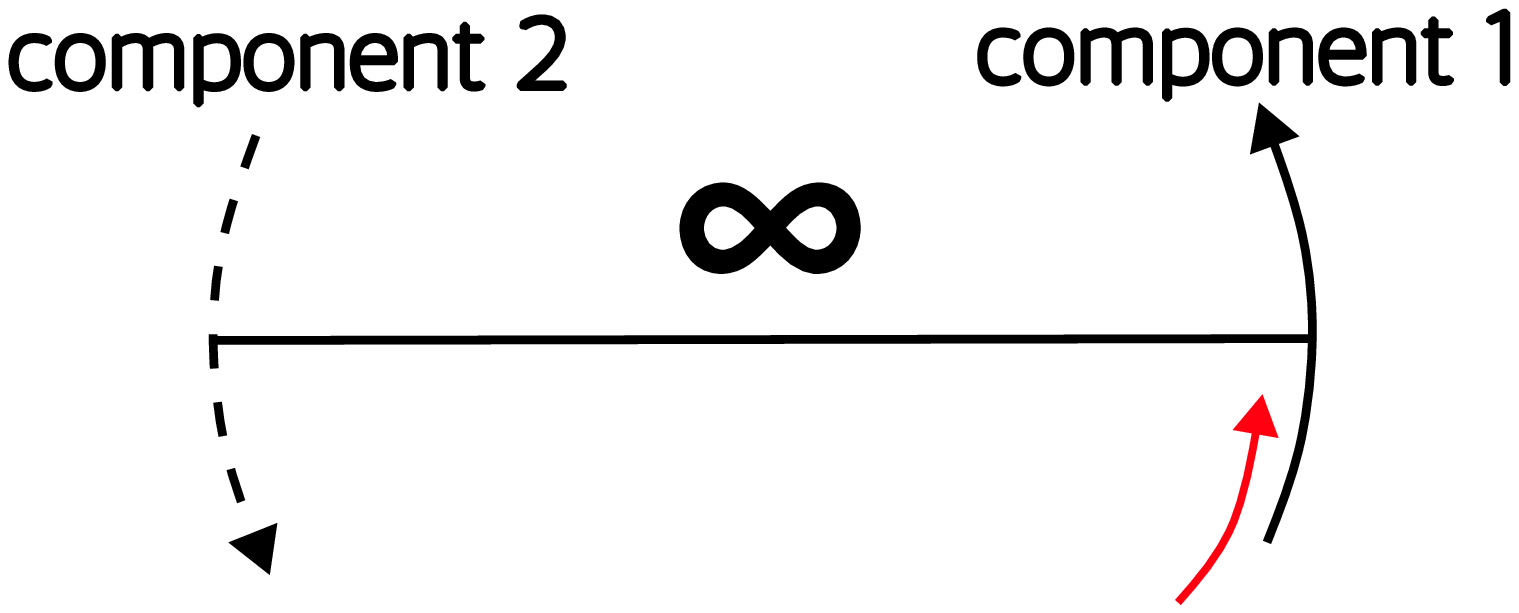}}
\end{minipage}
& \begin{minipage}[b]{0.25\columnwidth}
        \centering
		\raisebox{-.5\height}{\includegraphics[width=\linewidth]{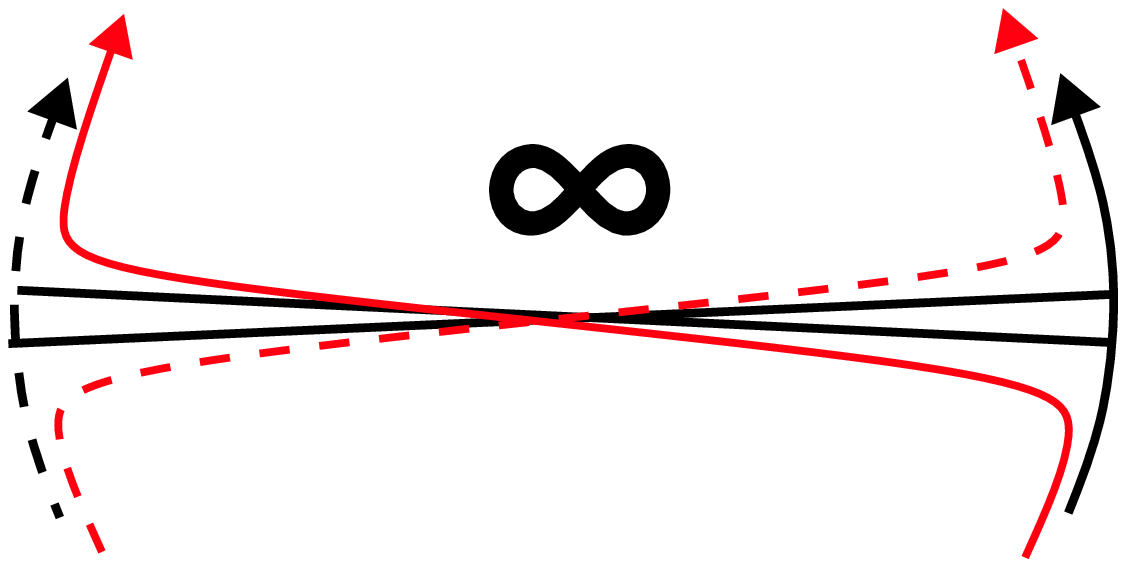}}
	\end{minipage}
&\begin{minipage}[b]{0.25\columnwidth}
    \centering
    \raisebox{-.5\height}{\includegraphics[width=\linewidth]{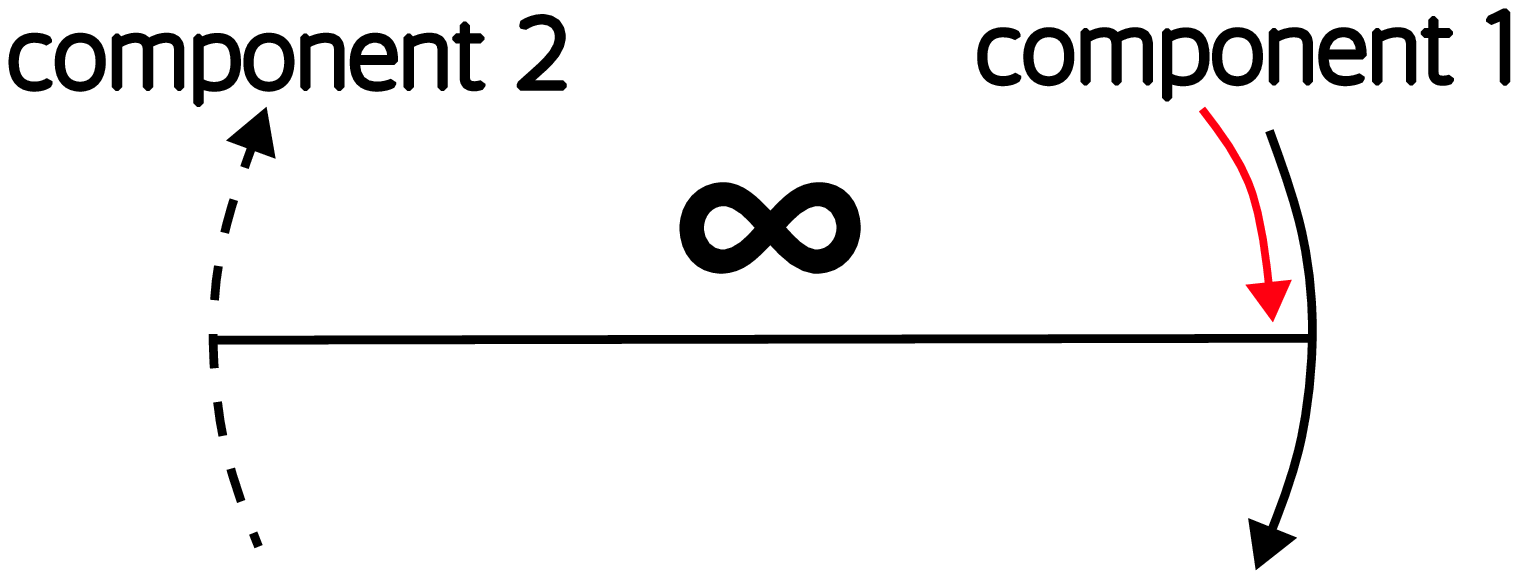}}
    \end{minipage}
& \begin{minipage}[b]{0.25\columnwidth}
		\centering
		\raisebox{-.5\height}{\includegraphics[width=\linewidth]{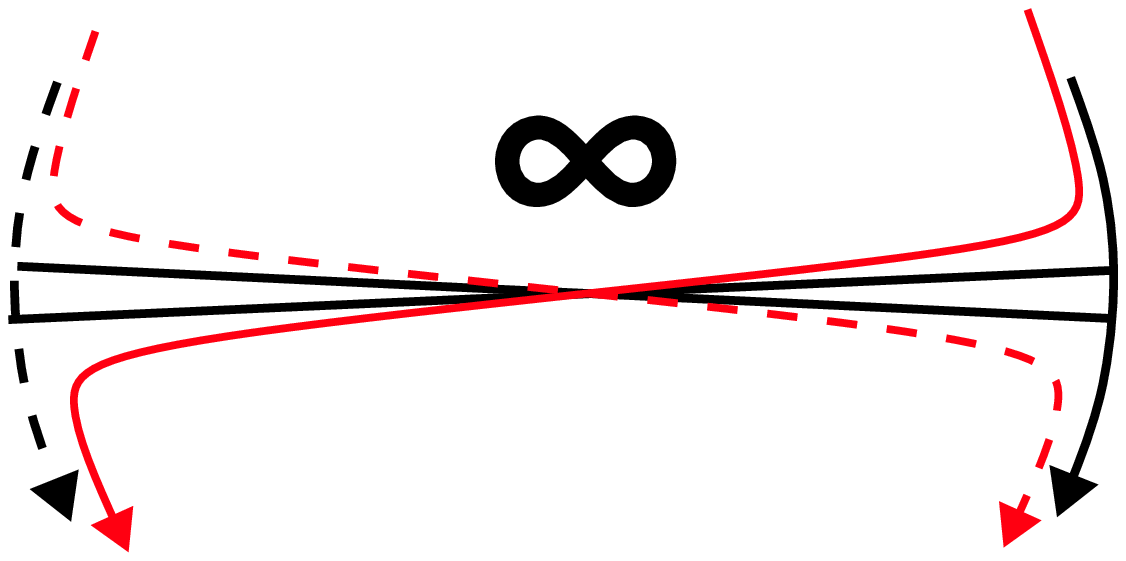}}
	\end{minipage} \\
\hline

\begin{minipage}[b]{0.25\columnwidth}
    \centering
	\raisebox{-.5\height}{\includegraphics[width=\linewidth]{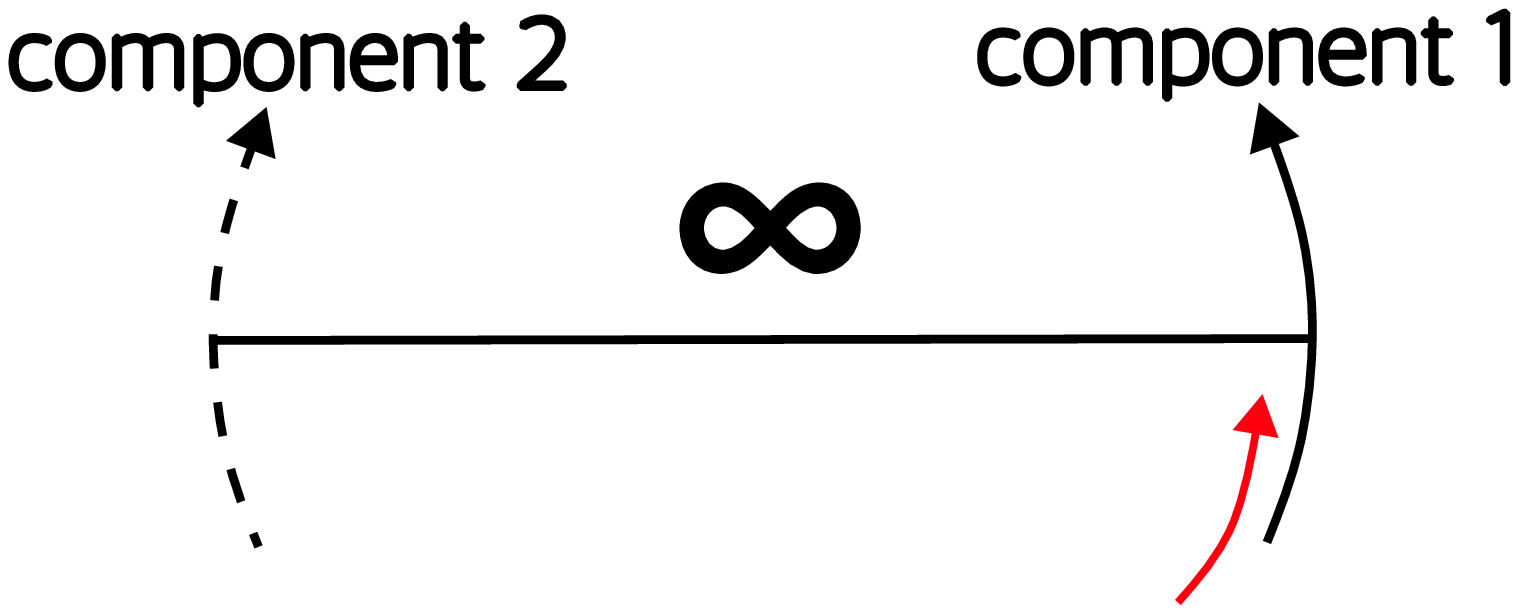}}
\end{minipage}
& \begin{minipage}[b]{0.25\columnwidth}
        \centering
		\raisebox{-.5\height}{\includegraphics[width=\linewidth]{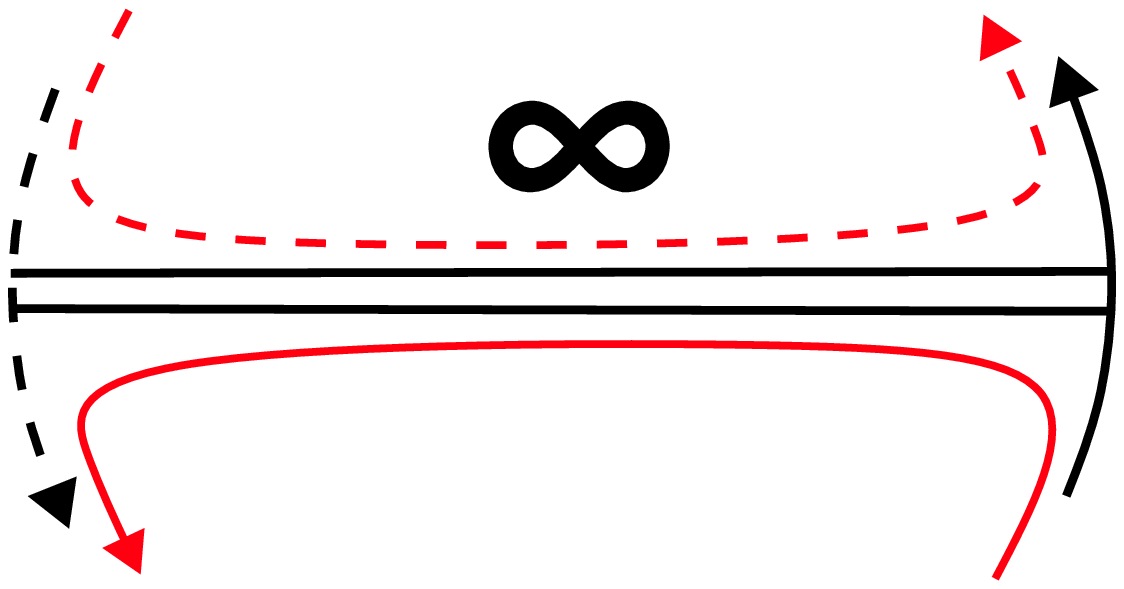}}
	\end{minipage}
&\begin{minipage}[b]{0.25\columnwidth}
    \centering
    \raisebox{-.5\height}{\includegraphics[width=\linewidth]{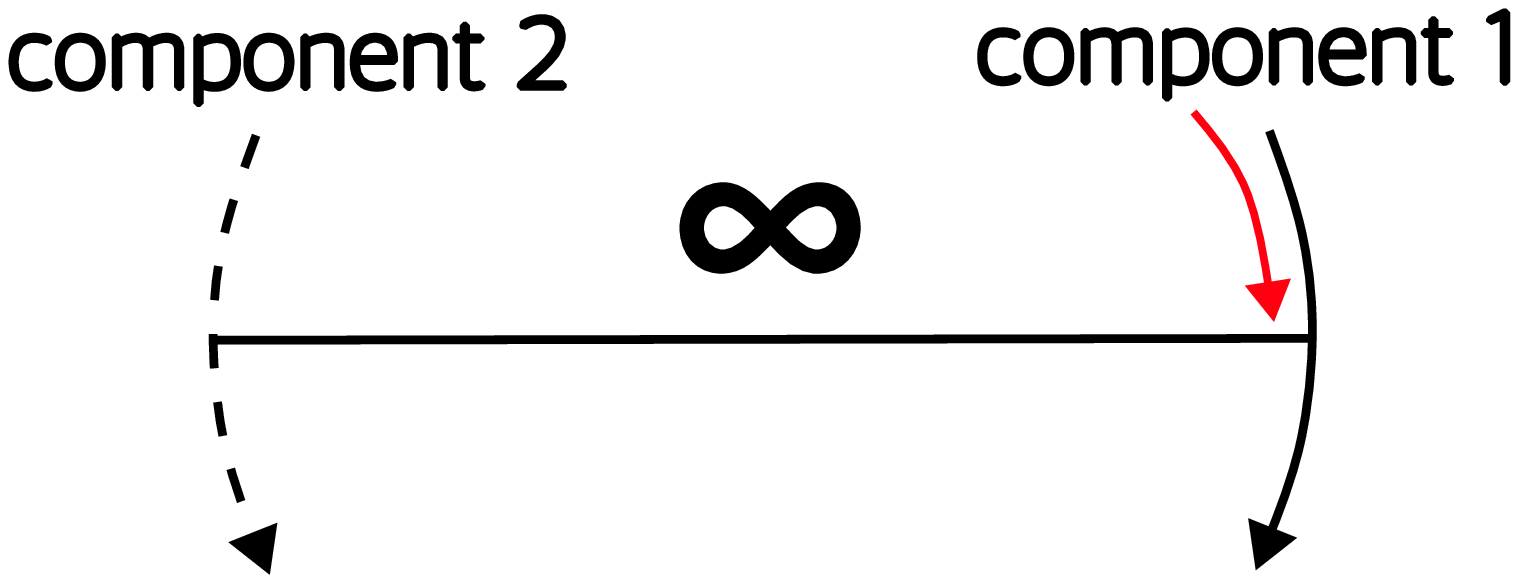}}
    \end{minipage}
& \begin{minipage}[b]{0.25\columnwidth}
		\centering
		\raisebox{-.5\height}{\includegraphics[width=\linewidth]{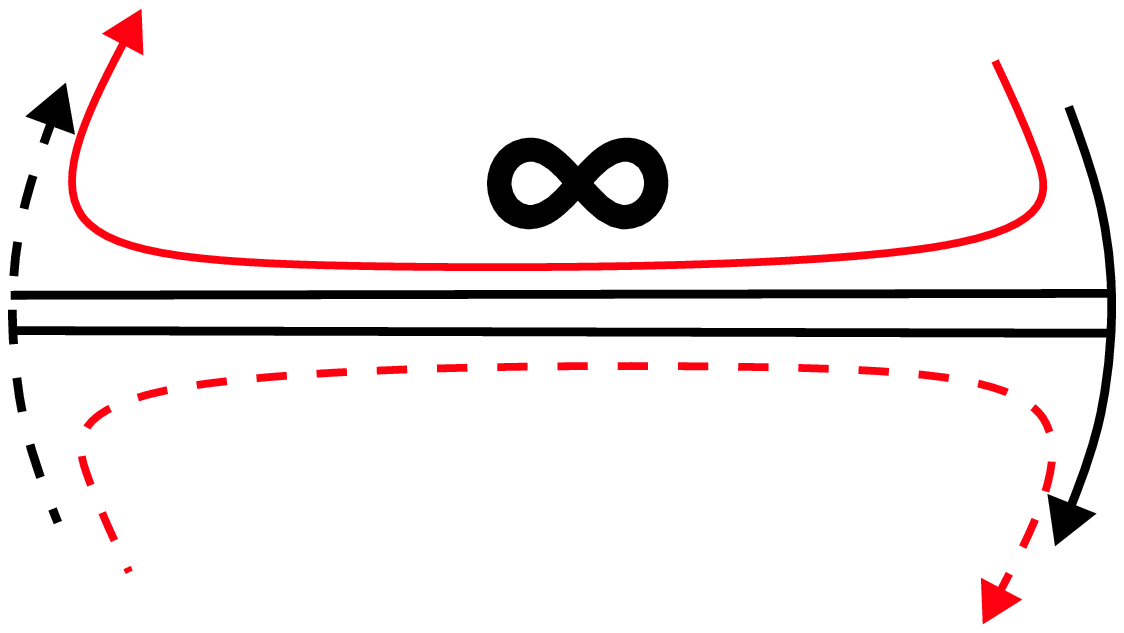}}
	\end{minipage} \\
\hline

\end{tabular}
\caption{Process}
\label{table.process}
\end{table}

\clearpage

\begin{table}[htbp]
\centering
\begin{tabular}{| c | c | c |}
\hline
& \begin{minipage}[b]{0.25\columnwidth}
		\centering
		\raisebox{-.5\height}{\includegraphics[width=\linewidth]{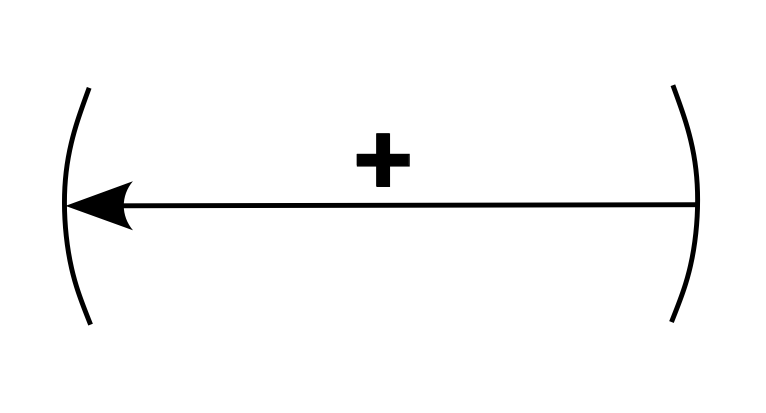}}
	\end{minipage}
& \begin{minipage}[b]{0.25\columnwidth}
		\centering
		\raisebox{-.5\height}{\includegraphics[width=\linewidth]{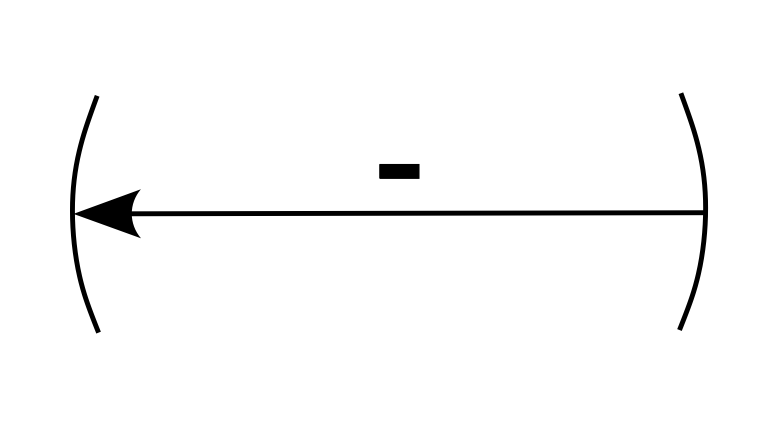}}
	\end{minipage} \\
\hline
\begin{minipage}[b]{0.25\columnwidth}
    \centering
	\raisebox{-.4\height}{\includegraphics[width=\linewidth]{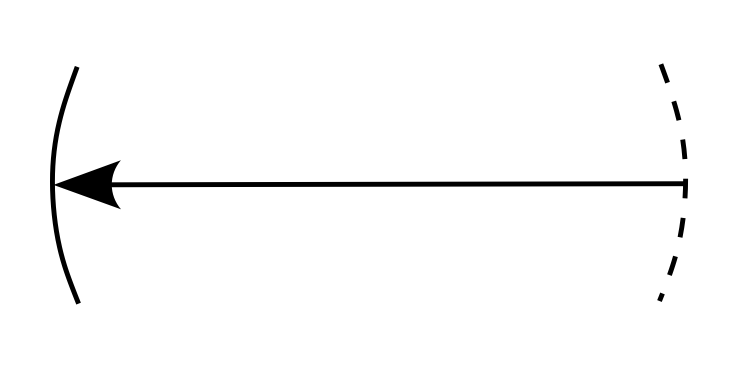}}
	\end{minipage}
 & $a^{(-1)^n-1}$ & $a^{(-1)^{n+1}+1}$ \\
\hline
\begin{minipage}[b]{0.25\columnwidth}
    \centering
	\raisebox{-.4\height}{\includegraphics[width=\linewidth]{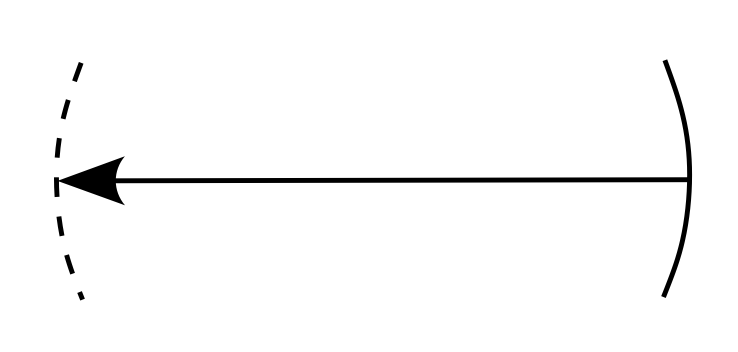}}
	\end{minipage}
 & $a^{(-1)^{n+1}-1}$ & $a^{(-1)^{n}+1}$ \\
 \hline
\begin{minipage}[b]{0.25\columnwidth}
    \centering
	\raisebox{-.4\height}{\includegraphics[width=\linewidth]{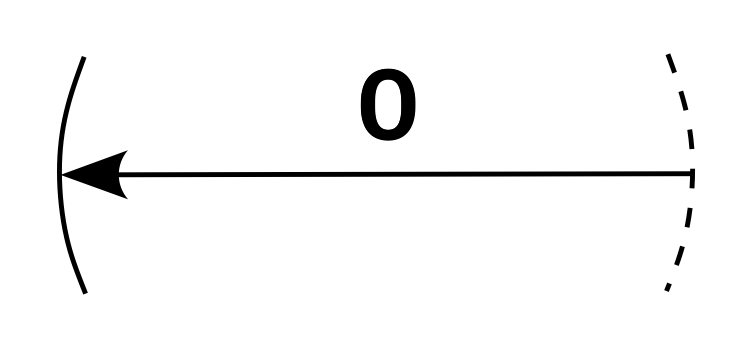}}
	\end{minipage}
 & $0$ & $0$ \\
\hline
\begin{minipage}[b]{0.25\columnwidth}
    \centering
	\raisebox{-.4\height}{\includegraphics[width=\linewidth]{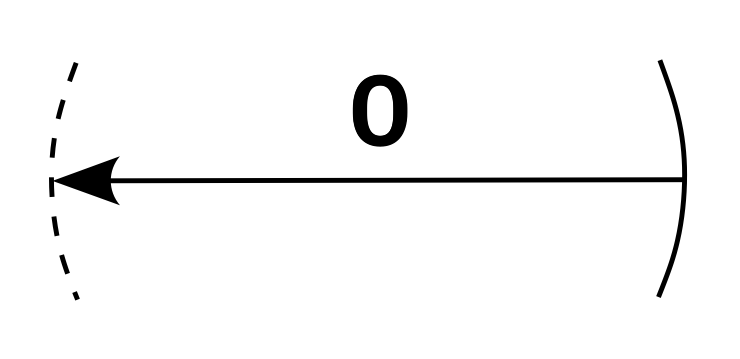}}
	\end{minipage}
 & $(-1)^{n}za^{-1}$ & $(-1)^{n+1}za$ \\
 \hline
\begin{minipage}[b]{0.25\columnwidth}
    \centering
	\raisebox{-.4\height}{\includegraphics[width=\linewidth]{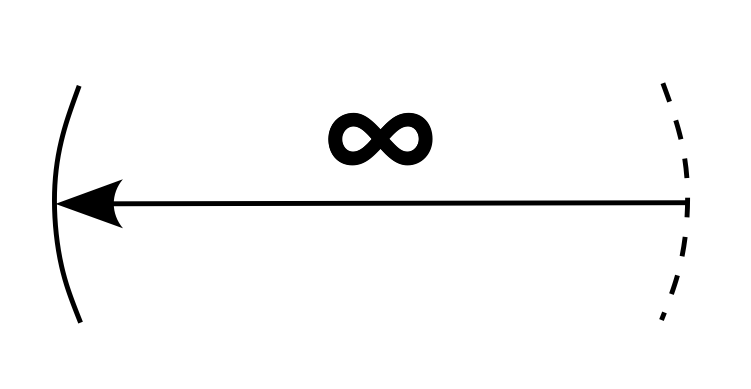}}
	\end{minipage}
 & 0 & 0 \\
\hline
\begin{minipage}[b]{0.25\columnwidth}
    \centering
	\raisebox{-.4\height}{\includegraphics[width=\linewidth]{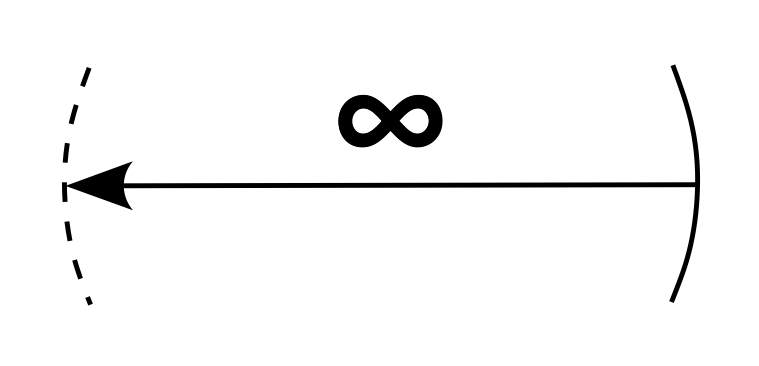}}
	\end{minipage}
 & $(-1)^{n+1}za^{-1}$ & $(-1)^{n}za$ \\
\hline
\end{tabular}
\caption{Weight}
\label{table.weight}
\end{table}

\begin{eg}[Calculation of a state]
Let us study one of the states $\sigma$ of a left handed trefoil indicated in Figure \ref{fig.ex_state}.
\begin{figure}[H]
    \centering
    \includegraphics[width = 3cm]{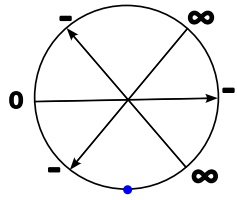}
    \caption{A state $\sigma$ of a trefoil}
    \label{fig.ex_state}
\end{figure}
Readers follow the red arrow to complete the process. The change number of each arrow is indicated in red. We adopt the convention that the arcs are ordered from the base point anti-clockwisely at the beginning of a process.

\begin{figure}[ht]
  \centering
  \begin{tikzpicture}
    \matrix (m) [matrix of nodes, nodes in empty cells,
      column sep=1.2cm, row sep=1.2cm,
      nodes={anchor=center, inner sep=0pt}]
    {
      \includegraphics[width=0.25\textwidth]{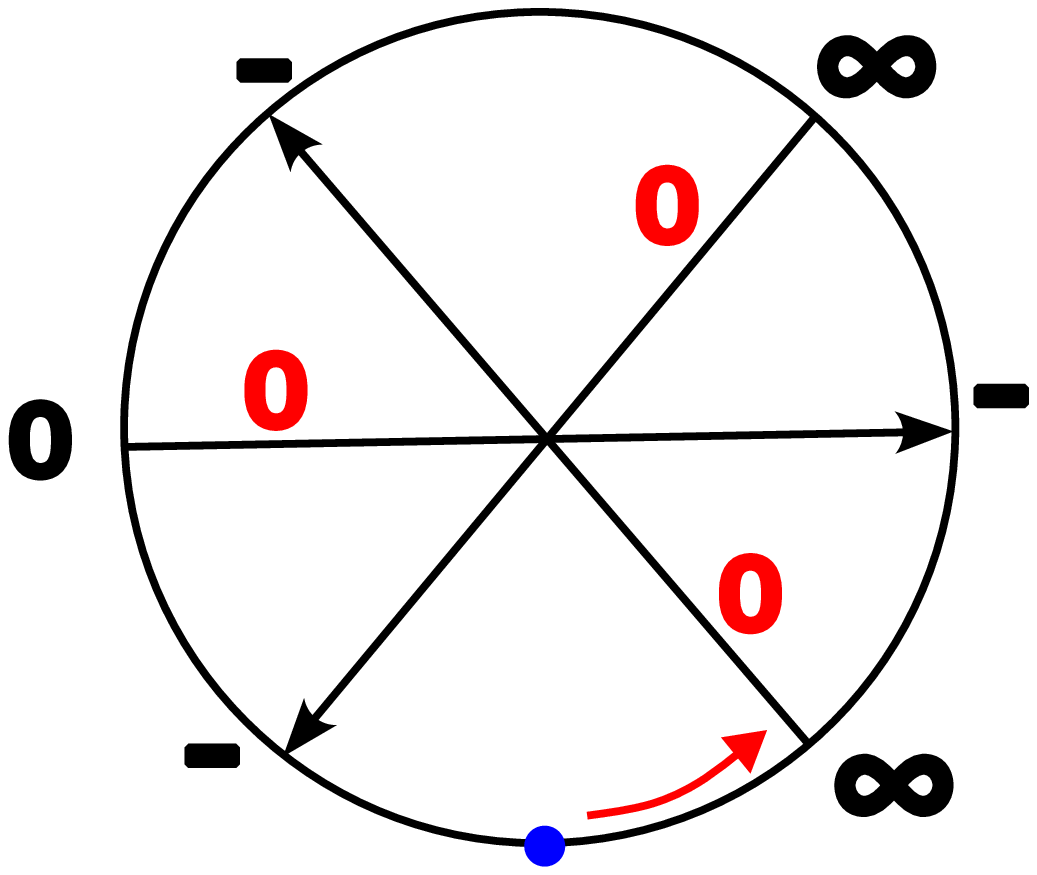} &
      \includegraphics[width=0.25\textwidth]{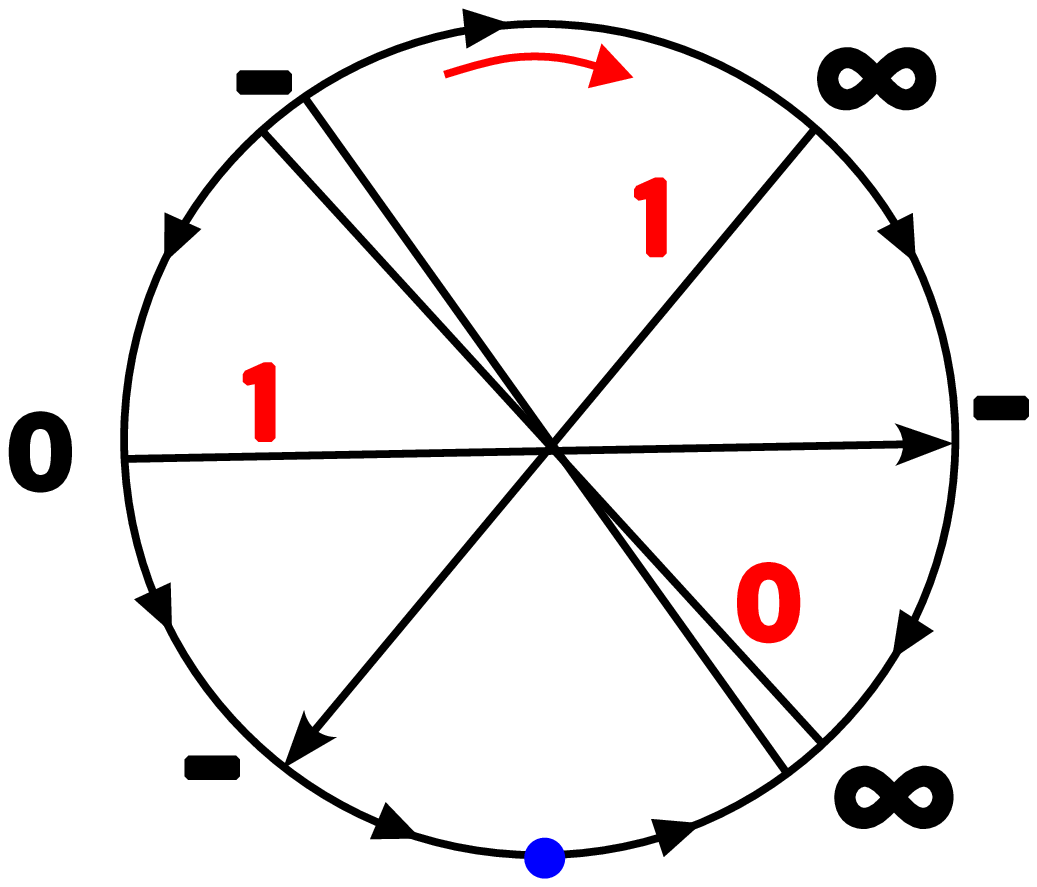} &
      \includegraphics[width=0.25\textwidth]{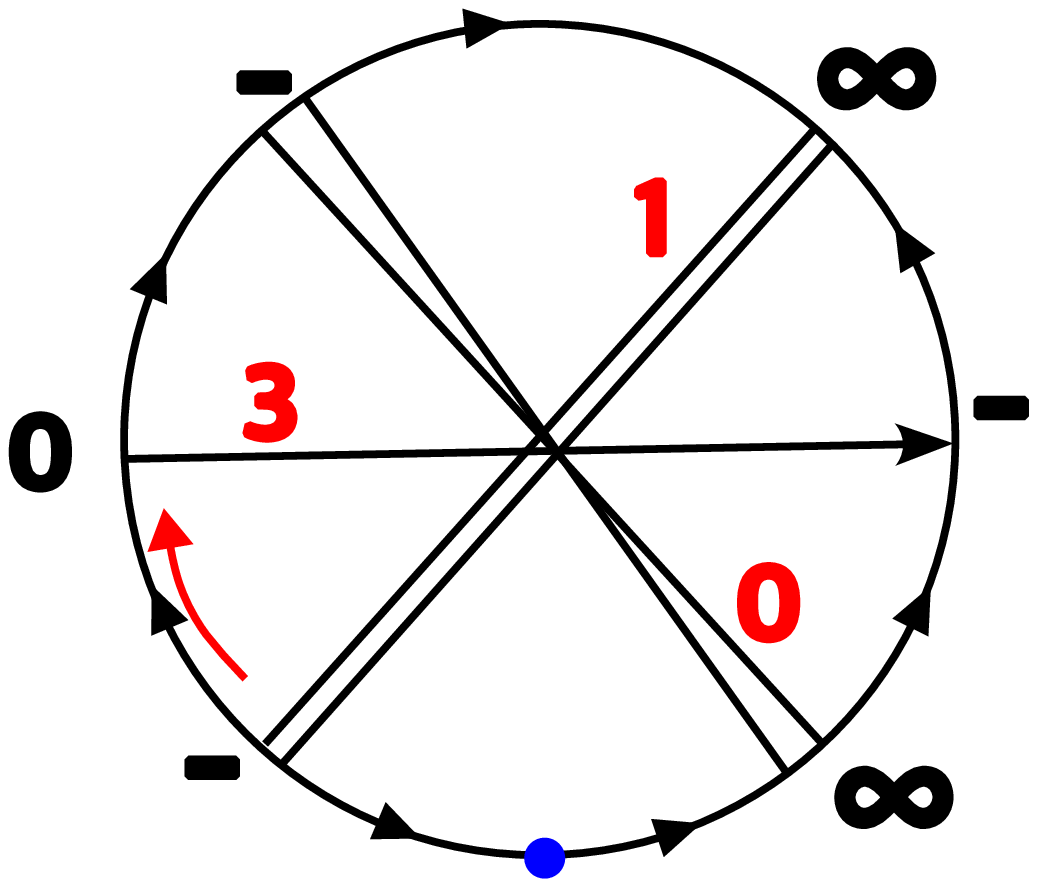} \\
      \includegraphics[width=0.25\textwidth]{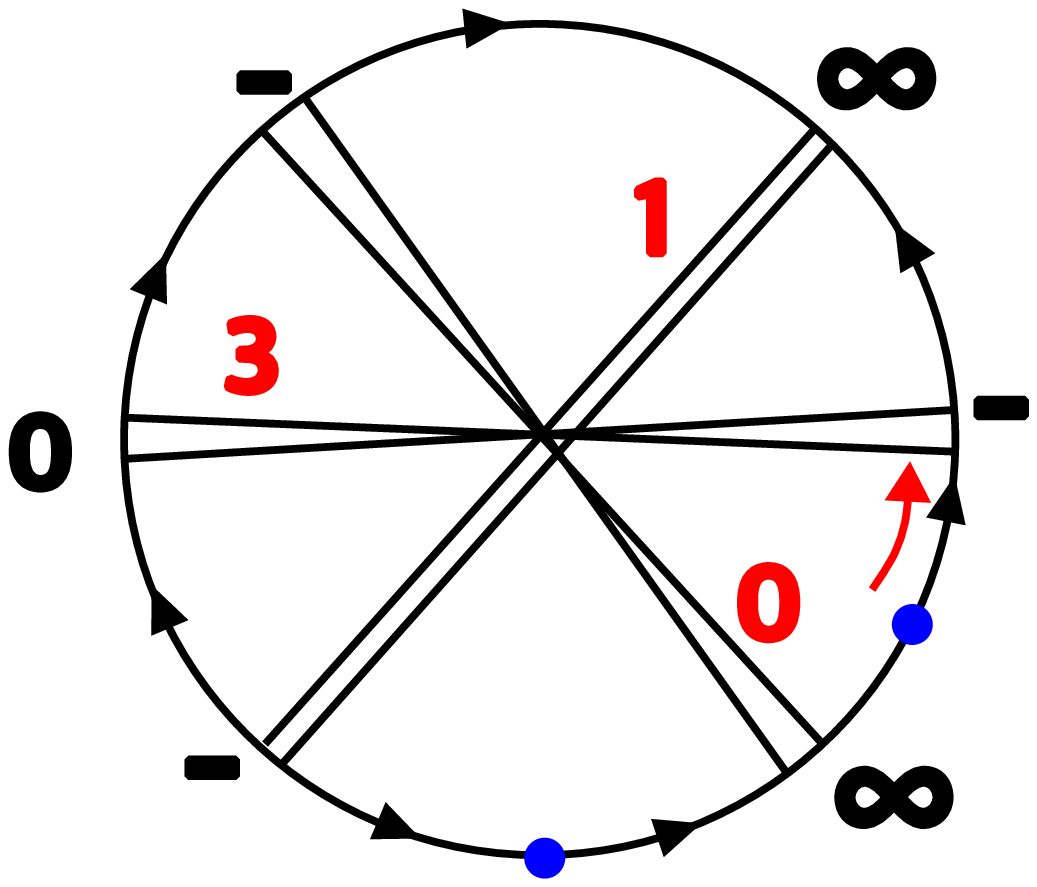} &
      \includegraphics[width=0.25\textwidth]{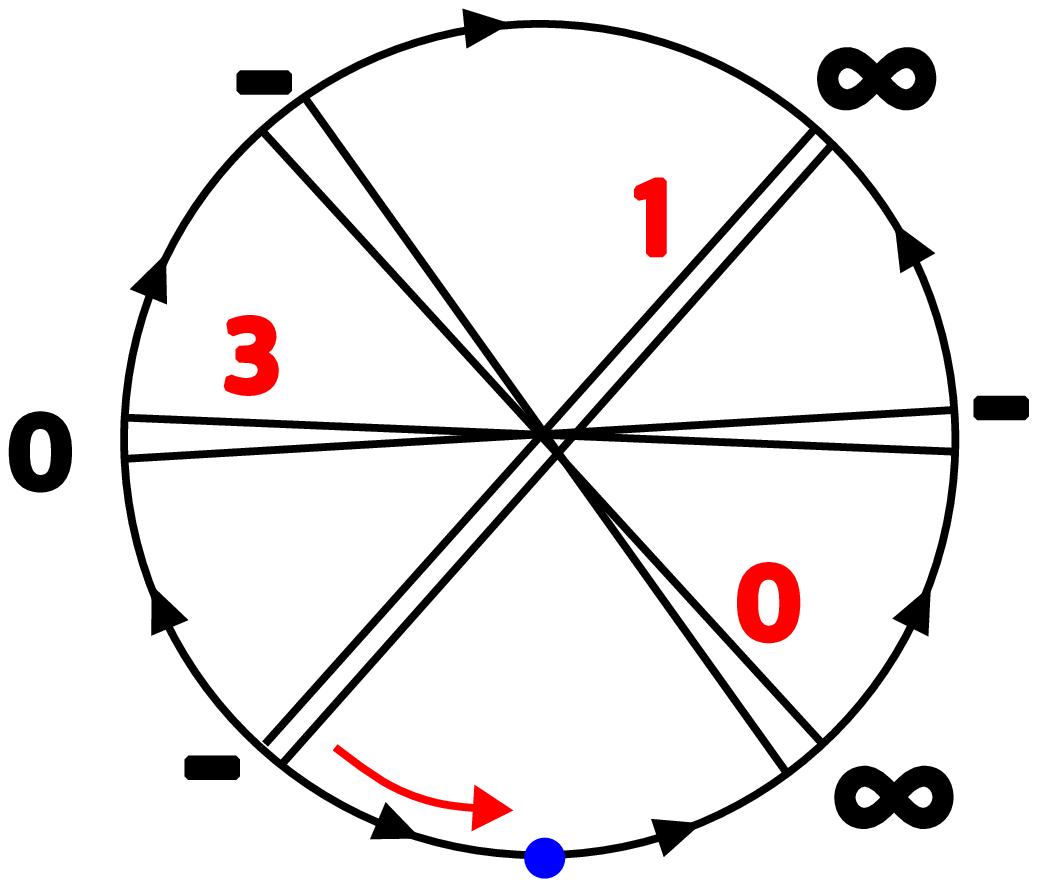} &
      \includegraphics[width=0.25\textwidth]{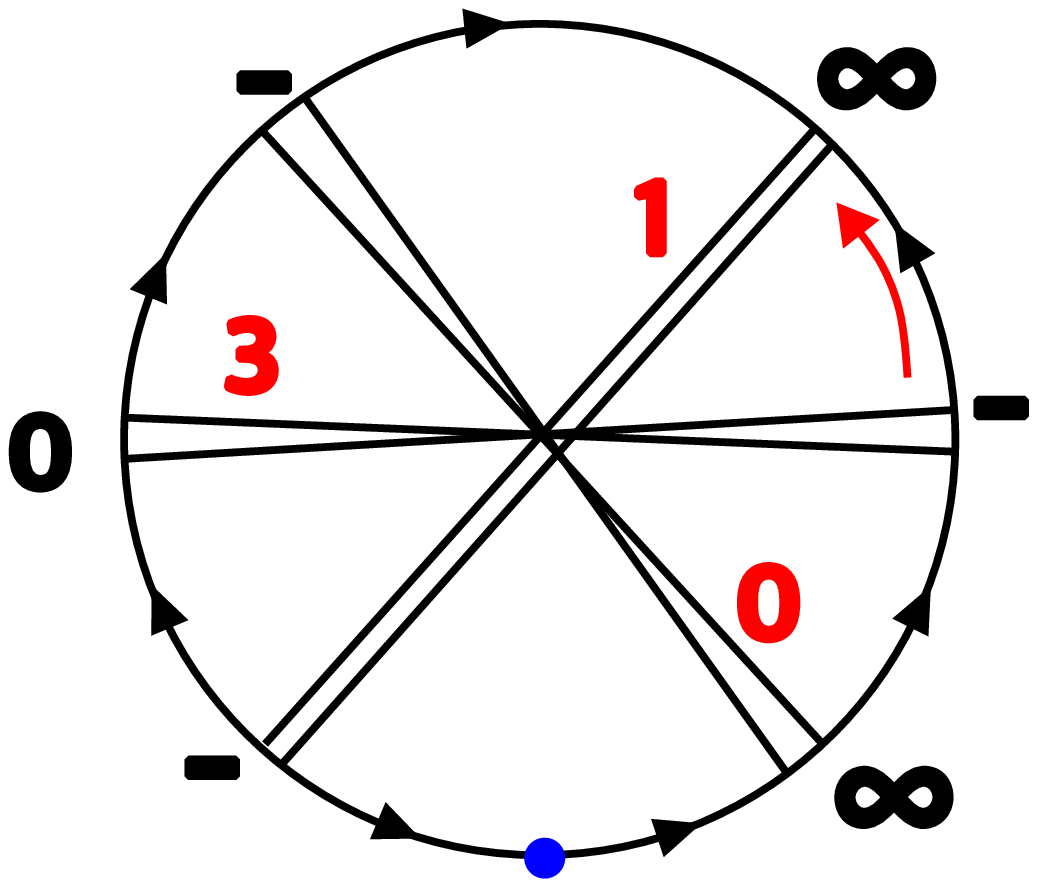} \\
      \includegraphics[width=0.25\textwidth]{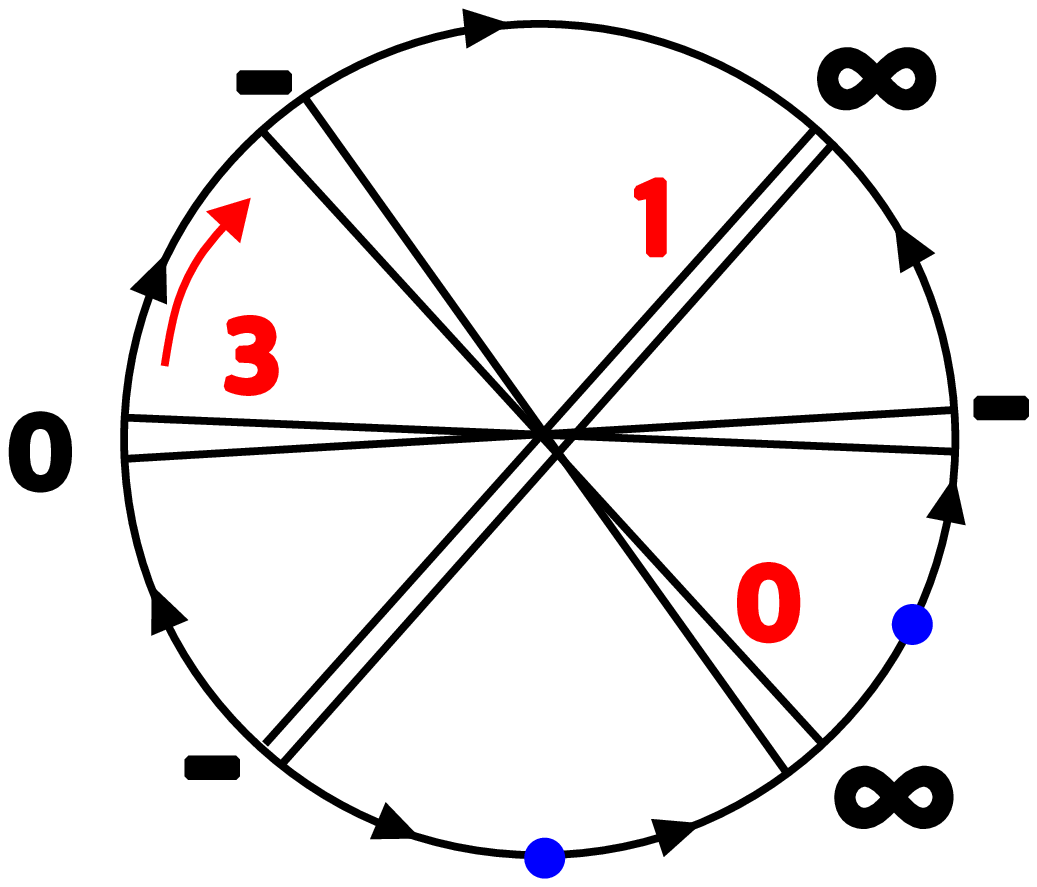} &
      \includegraphics[width=0.25\textwidth]{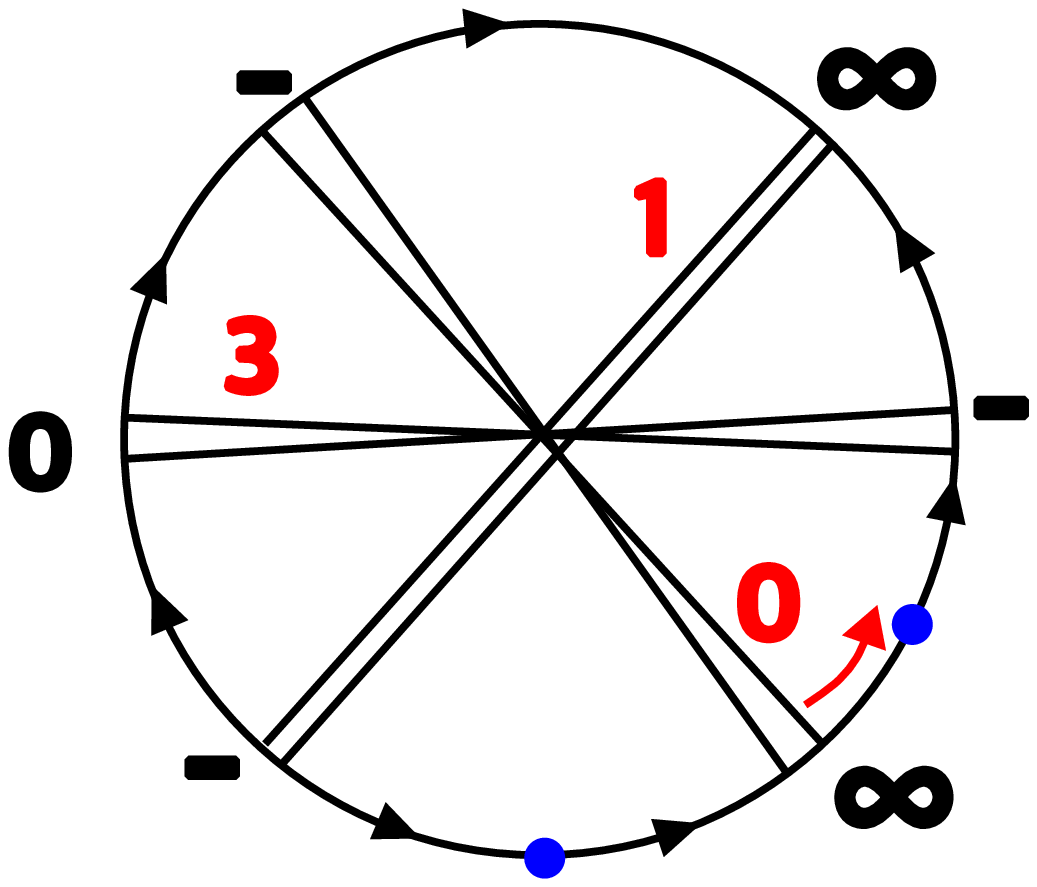} &
      \phantom{} \\
    };

    \draw[->, shorten >=1mm, shorten <=1mm] (m-1-1) -- (m-1-2);
    \draw[->, shorten >=1mm, shorten <=1mm] (m-1-2) -- (m-1-3);
    \draw[->, shorten >=1mm, shorten <=1mm] (m-1-3) -- (m-2-3);
    \draw[->, shorten >=1mm, shorten <=1mm] (m-2-3) -- (m-2-2);
    \draw[->, shorten >=1mm, shorten <=1mm] (m-2-2) -- (m-2-1);
    \draw[->, shorten >=1mm, shorten <=1mm] (m-2-1) -- (m-3-1);
    \draw[->, shorten >=1mm, shorten <=1mm] (m-3-1) -- (m-3-2);
  \end{tikzpicture}
  \caption{The process of the state}
  \label{fig.state_process}
\end{figure}

Noting the first passage of each arrow during the process we have
\begin{align*}
w(\sigma) &= (za)(-za)(za) \\
&= -a^3z^3
\end{align*}
\end{eg}

\begin{eg}[Calculation on a trefoil]
We calculate the Kauffman polynomial of a left handed trefoil (Figure \ref{fig.GD_trefoil}) based on the state model. 
\begin{figure}[H]
    \centering
    \includegraphics[width = 3cm]{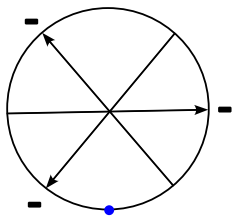}
    \caption{A left-handed trefoil}
    \label{fig.GD_trefoil}
\end{figure}

\begin{table}[htbp]
\centering
\begin{tabular}{| c | c | c | c |}
\hline
contributing state & $w(\sigma)d^{c(\sigma)-1}$ & contributing state & $w(\sigma)d^{c(\sigma)-1}$ \\

\hline
\begin{minipage}[b]{0.2\columnwidth}
    \centering
	\raisebox{-.5\height}{\includegraphics[width=\linewidth]{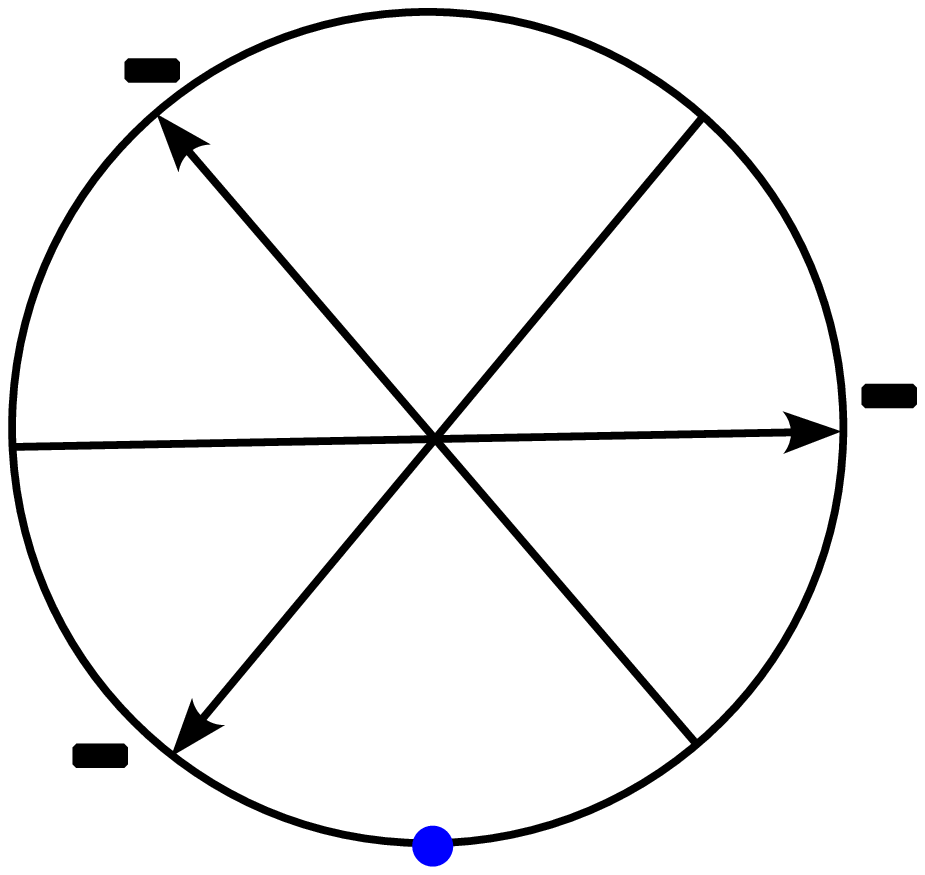}}
\end{minipage}
& $a^4$
&\begin{minipage}[b]{0.2\columnwidth}
    \centering
    \raisebox{-.5\height}{\includegraphics[width=\linewidth]{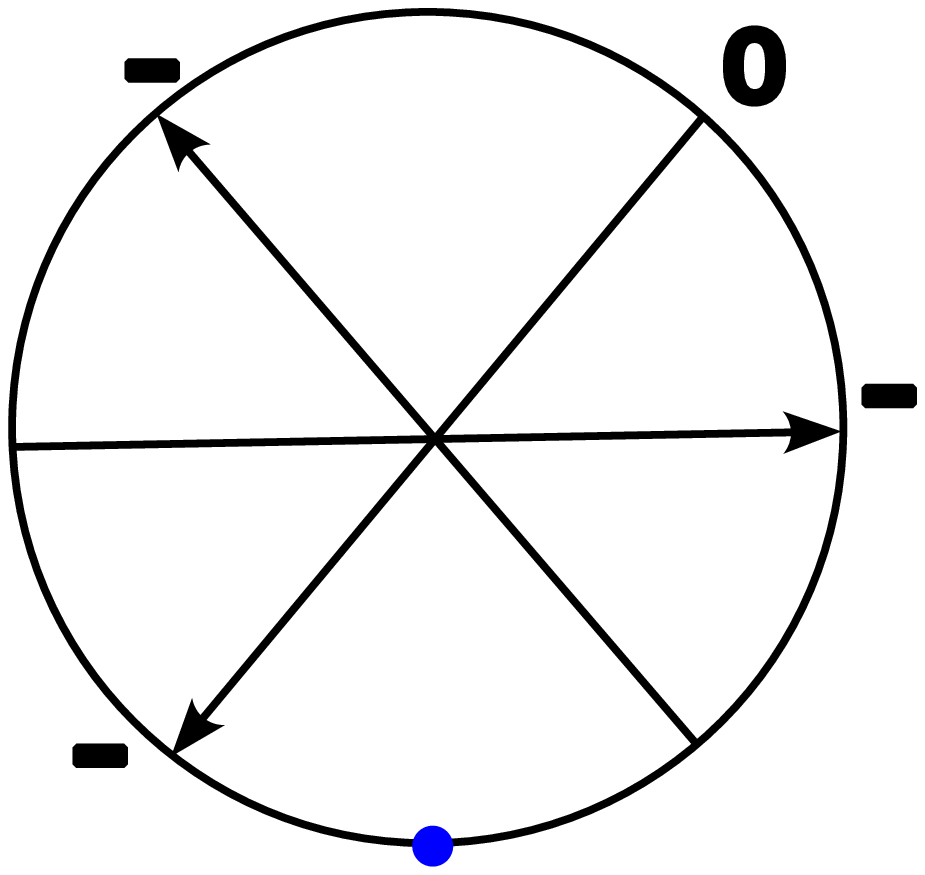}}
    \end{minipage}
& $-a^3zd$ \\
\hline

\begin{minipage}[b]{0.2\columnwidth}
    \centering
	\raisebox{-.5\height}{\includegraphics[width=\linewidth]{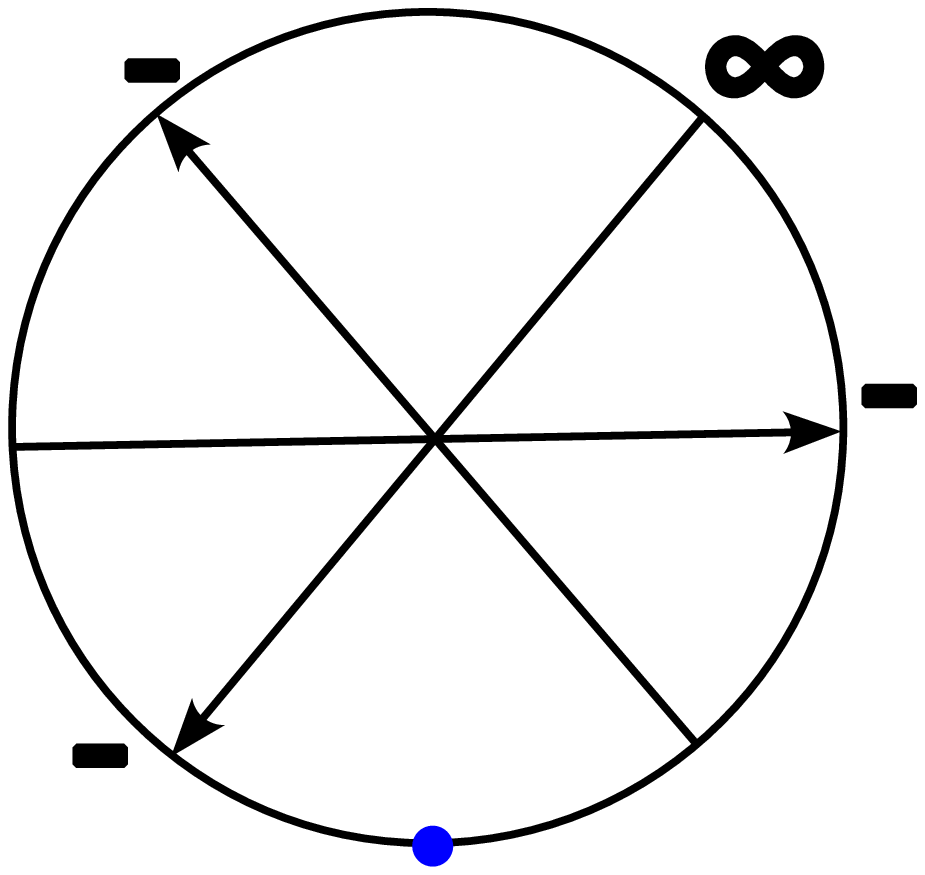}}
\end{minipage}
& $a^3z$
&\begin{minipage}[b]{0.2\columnwidth}
    \centering
    \raisebox{-.5\height}{\includegraphics[width=\linewidth]{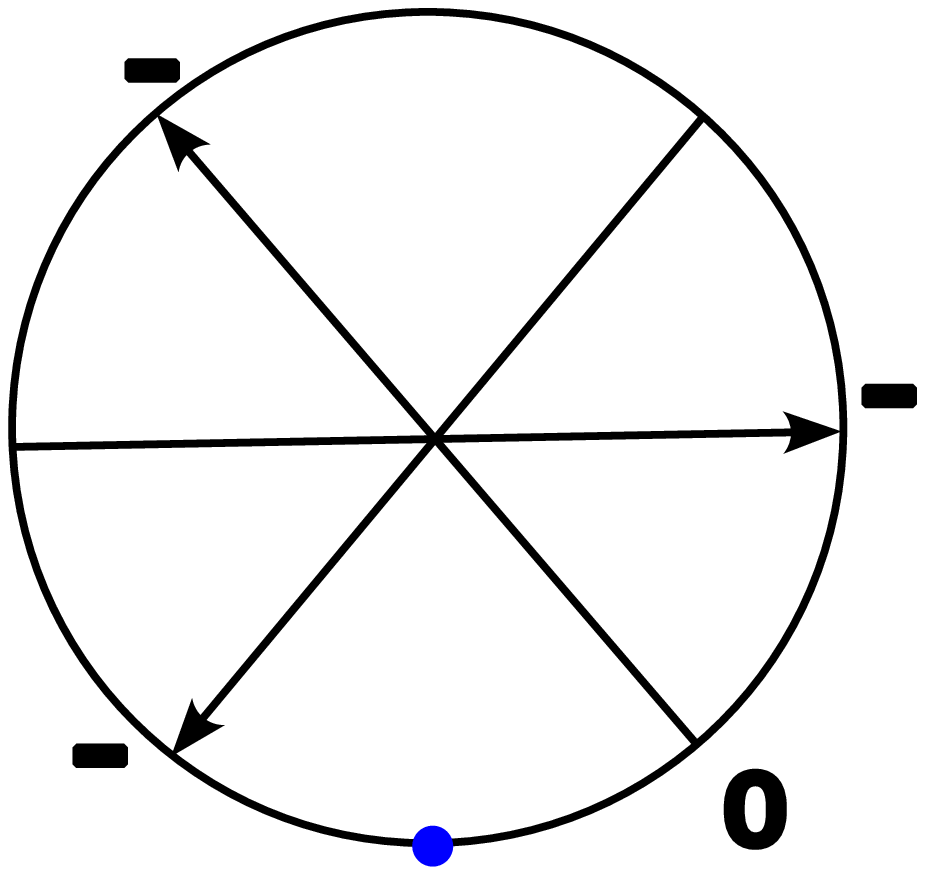}}
    \end{minipage}
& $-a^3zd$ \\
\hline

\begin{minipage}[b]{0.2\columnwidth}
    \centering
	\raisebox{-.5\height}{\includegraphics[width=\linewidth]{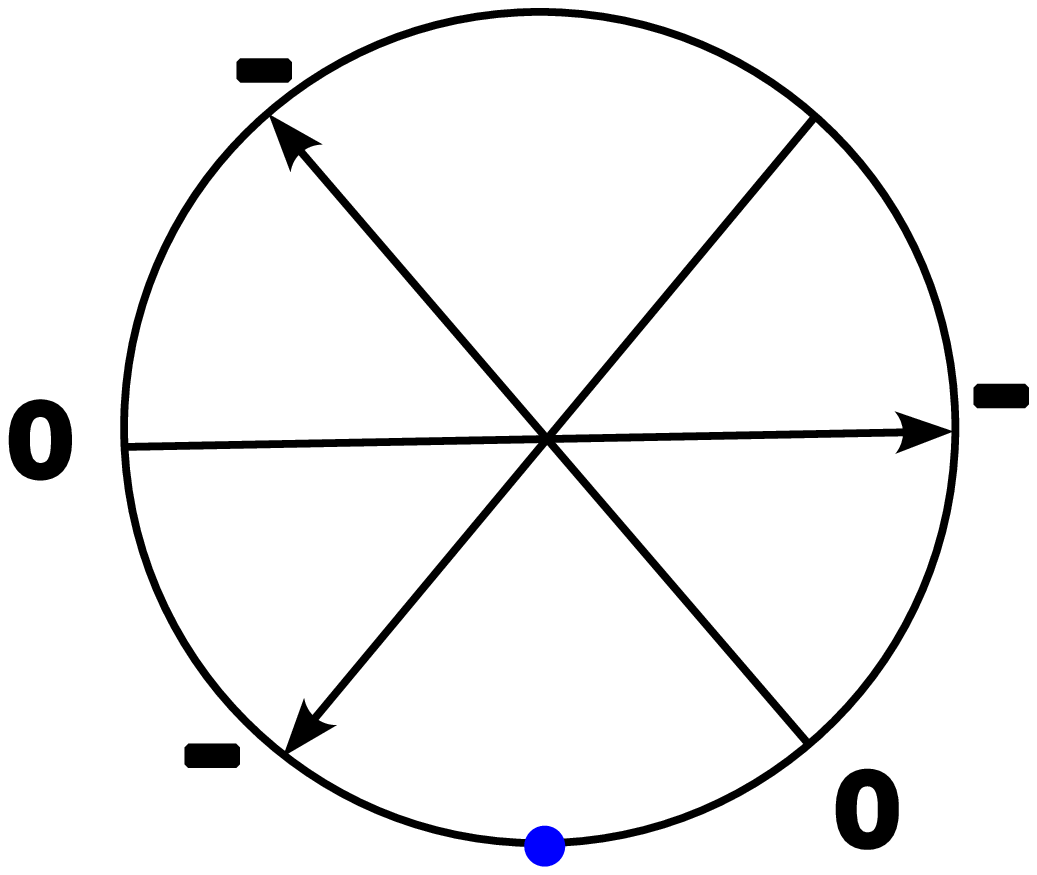}}
\end{minipage}
& $a^4z^2$
&\begin{minipage}[b]{0.2\columnwidth}
    \centering
    \raisebox{-.5\height}{\includegraphics[width=\linewidth]{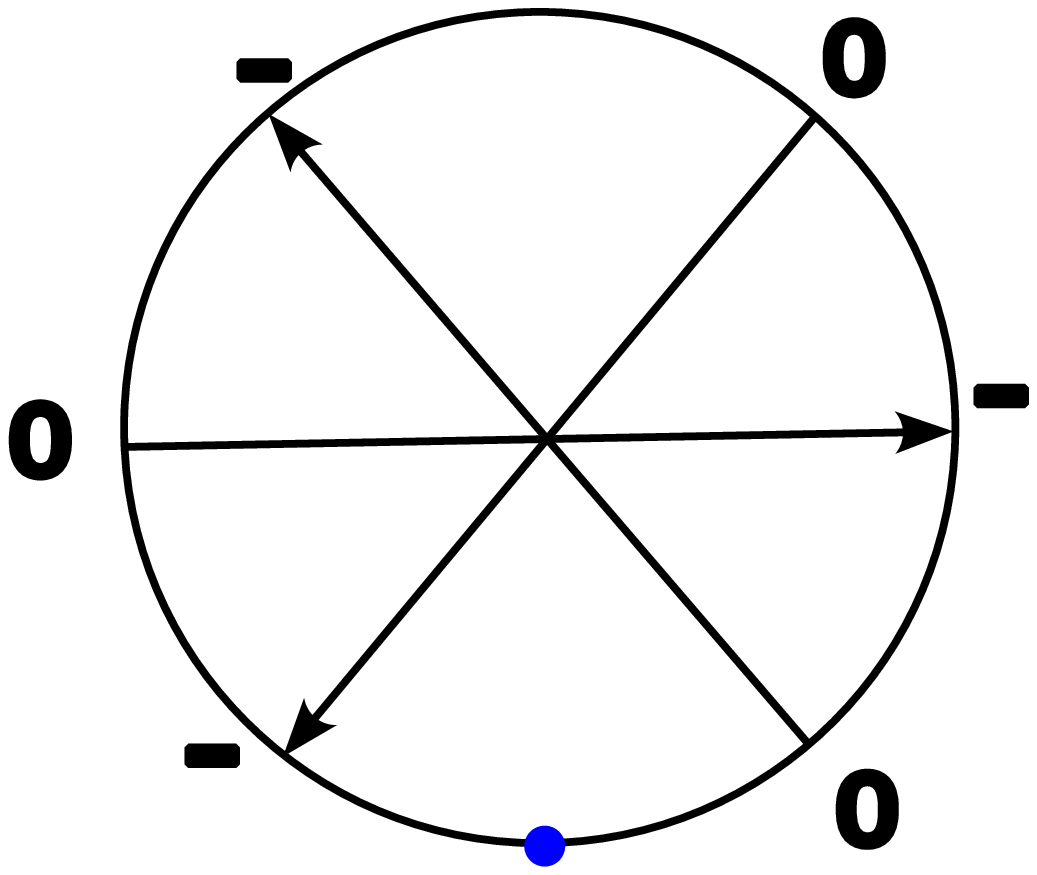}}
    \end{minipage}
& $-a^3z^3d$ \\
\hline

\begin{minipage}[b]{0.2\columnwidth}
    \centering
	\raisebox{-.5\height}{\includegraphics[width=\linewidth]{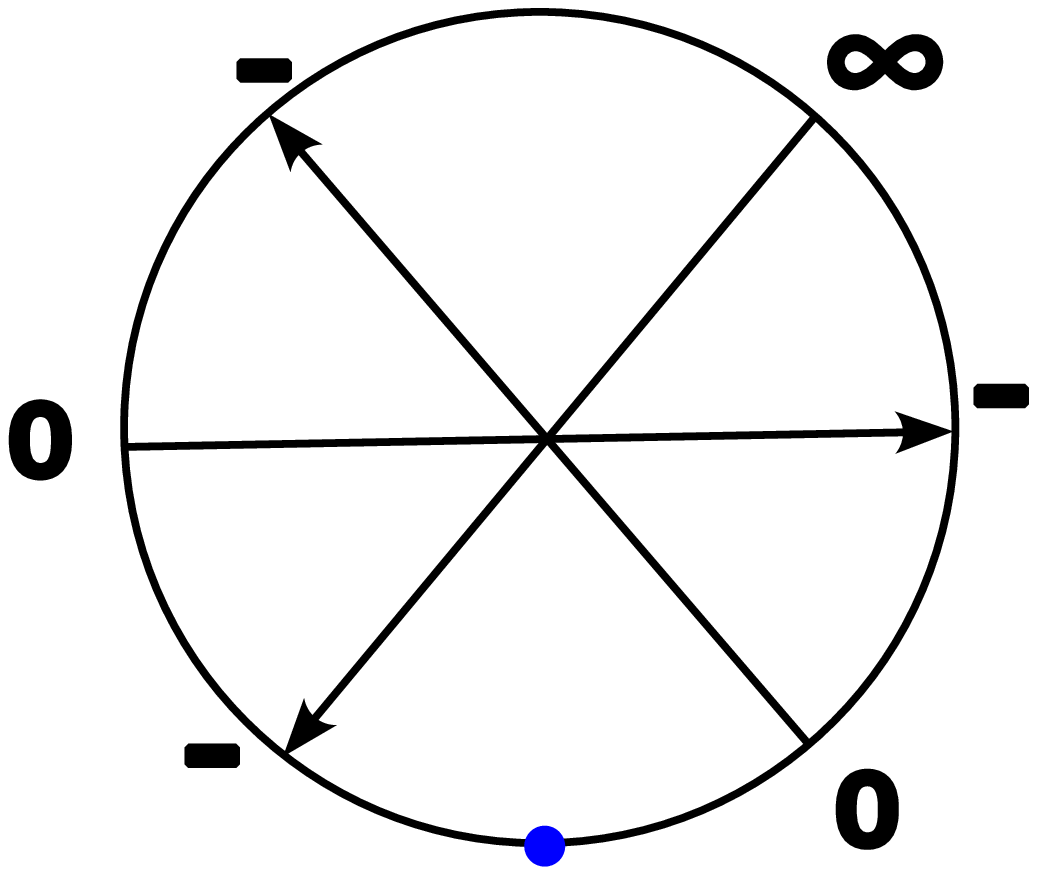}}
\end{minipage}
& $a^3z^3$
&\begin{minipage}[b]{0.2\columnwidth}
    \centering
    \raisebox{-.5\height}{\includegraphics[width=\linewidth]{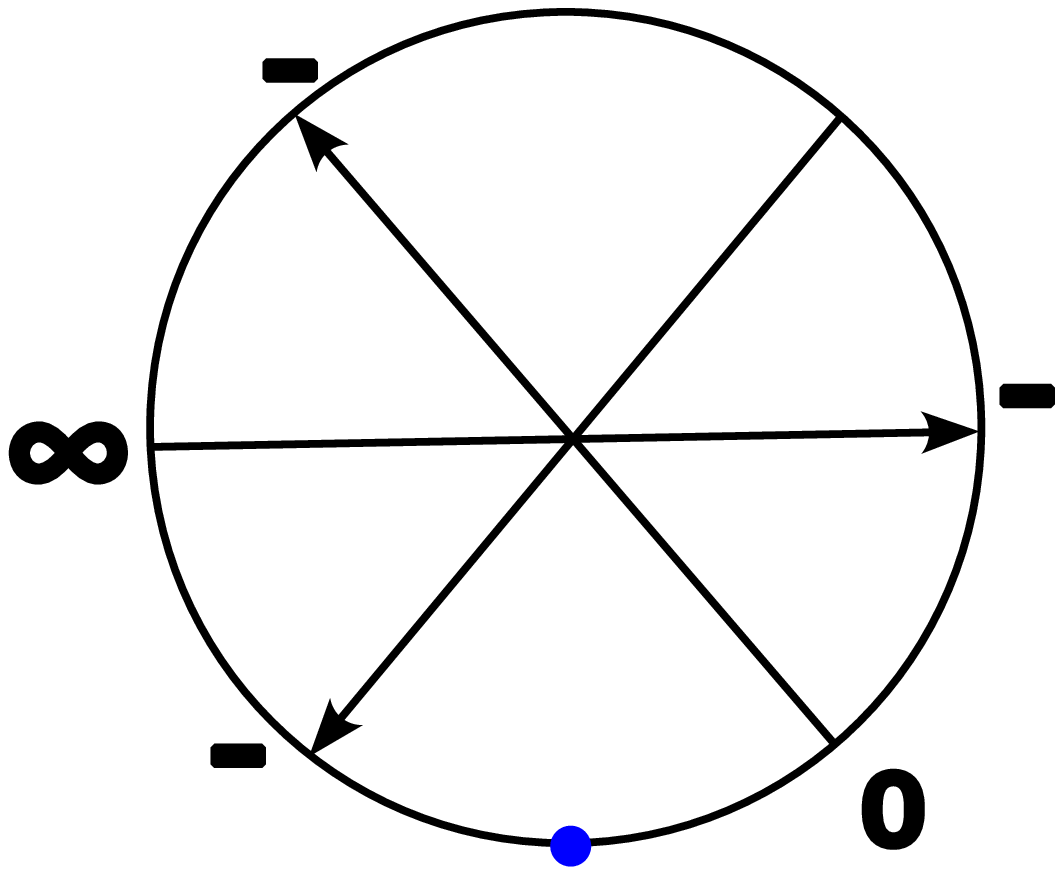}}
    \end{minipage}
& $-a^2z^2$ \\
\hline

\begin{minipage}[b]{0.2\columnwidth}
    \centering
	\raisebox{-.5\height}{\includegraphics[width=\linewidth]{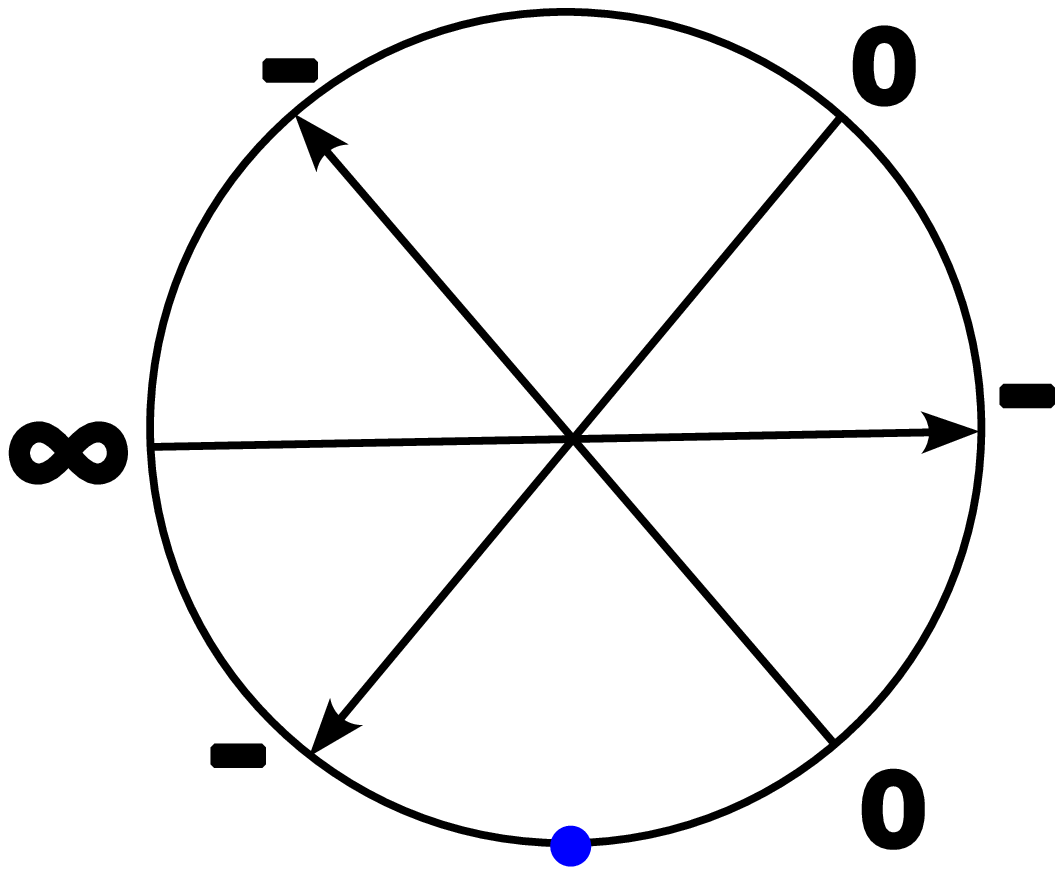}}
\end{minipage}
& $-a^3z^3d$
&\begin{minipage}[b]{0.2\columnwidth}
    \centering
    \raisebox{-.5\height}{\includegraphics[width=\linewidth]{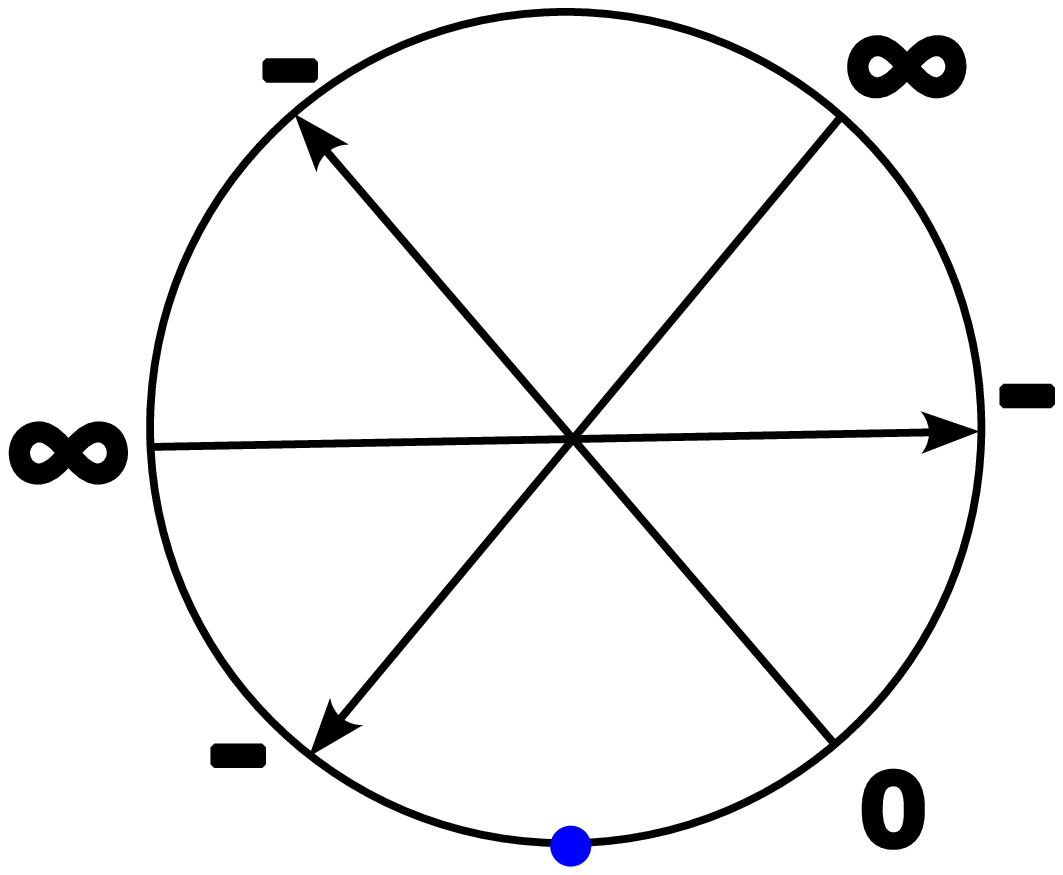}}
    \end{minipage}
& $a^3z^3$ \\
\hline

\begin{minipage}[b]{0.2\columnwidth}
    \centering
	\raisebox{-.5\height}{\includegraphics[width=\linewidth]{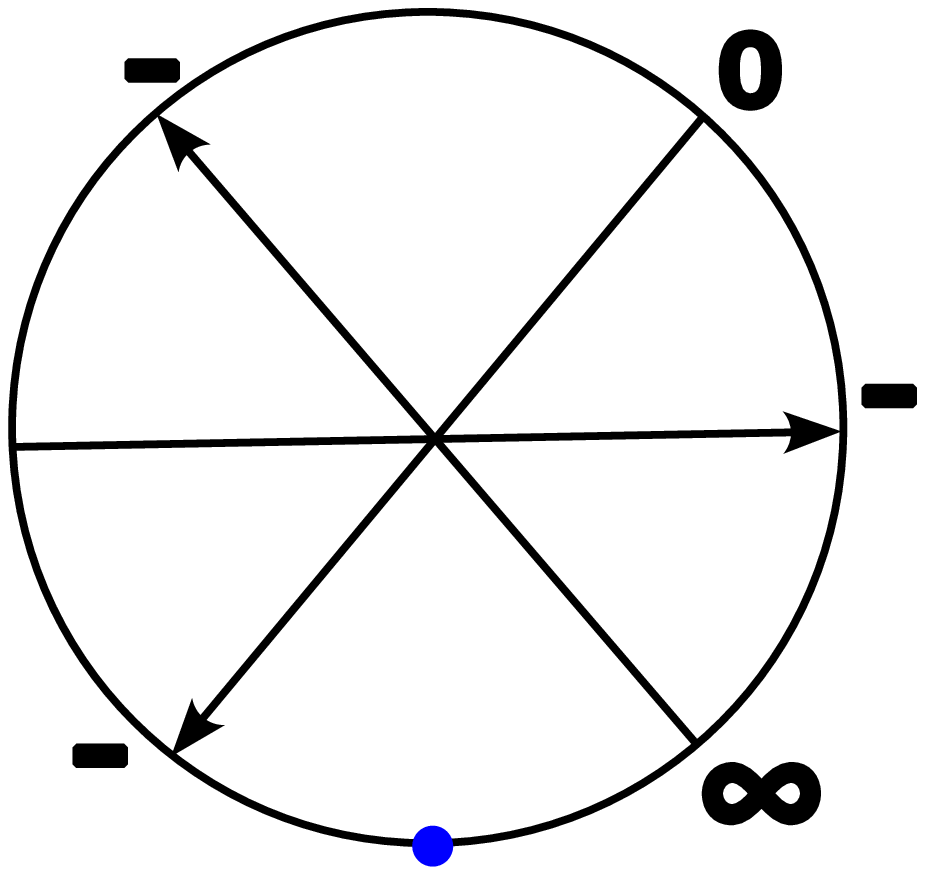}}
\end{minipage}
& $a^2z^2d$
&\begin{minipage}[b]{0.2\columnwidth}
    \centering
    \raisebox{-.5\height}{\includegraphics[width=\linewidth]{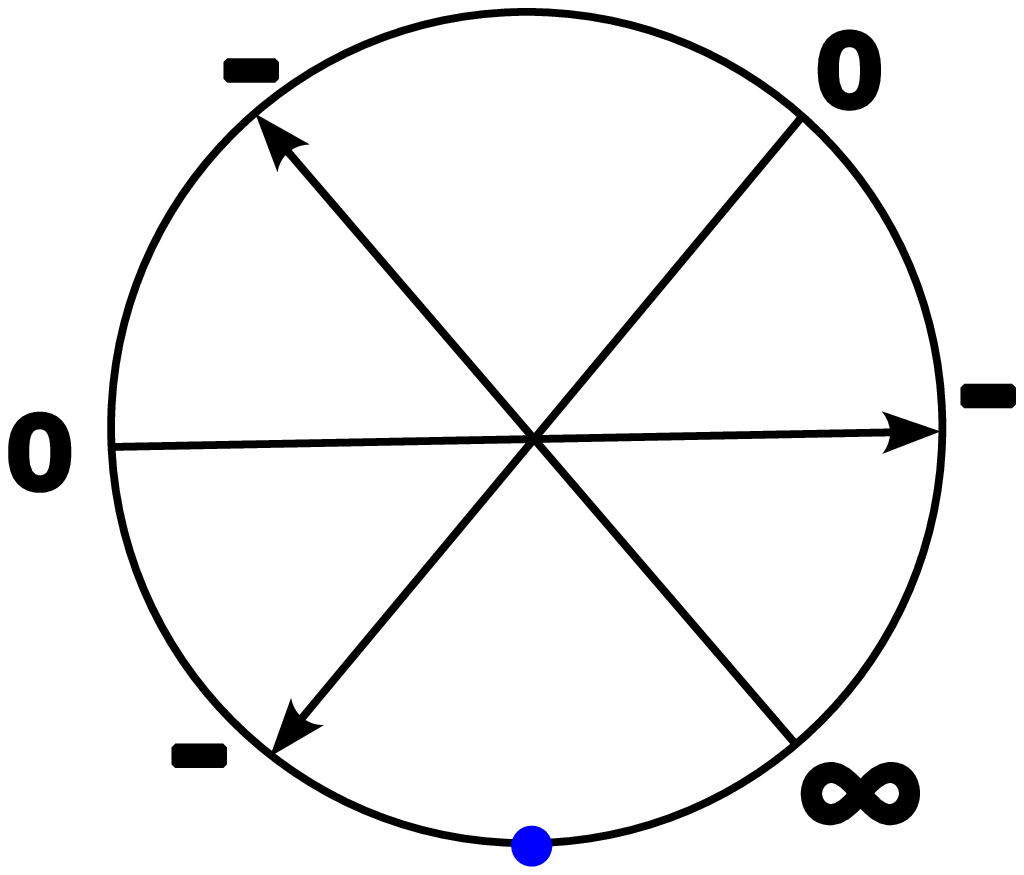}}
    \end{minipage}
& $a^3z^3d^2$ \\
\hline

\begin{minipage}[b]{0.2\columnwidth}
    \centering
	\raisebox{-.5\height}{\includegraphics[width=\linewidth]{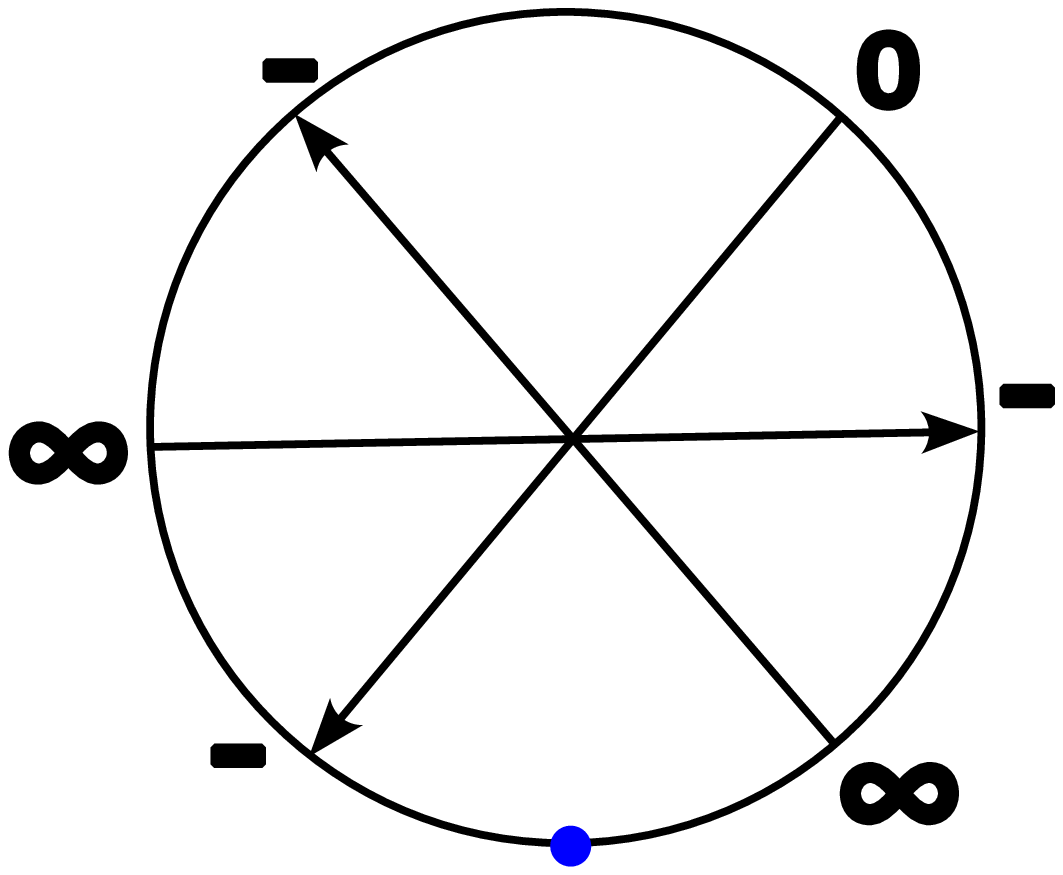}}
\end{minipage}
& $-a^3z^3d$
&\begin{minipage}[b]{0.2\columnwidth}
    \centering
    \raisebox{-.5\height}{\includegraphics[width=\linewidth]{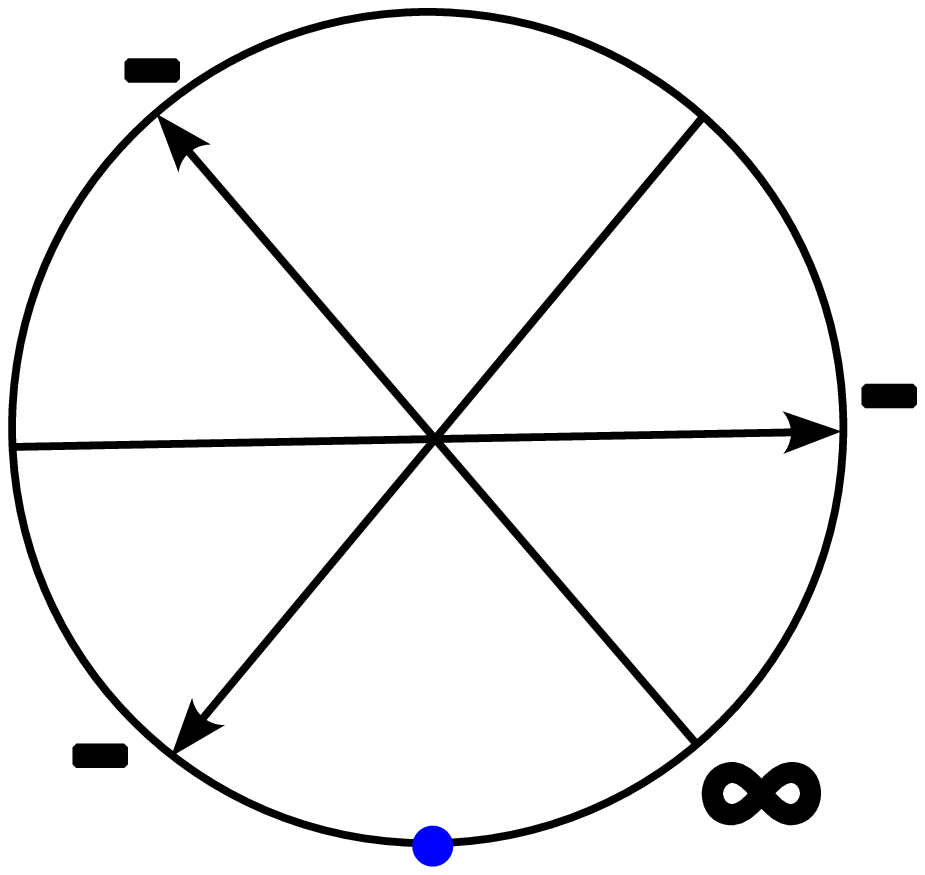}}
    \end{minipage}
& $a^3z$ \\
\hline

\begin{minipage}[b]{0.2\columnwidth}
    \centering
	\raisebox{-.5\height}{\includegraphics[width=\linewidth]{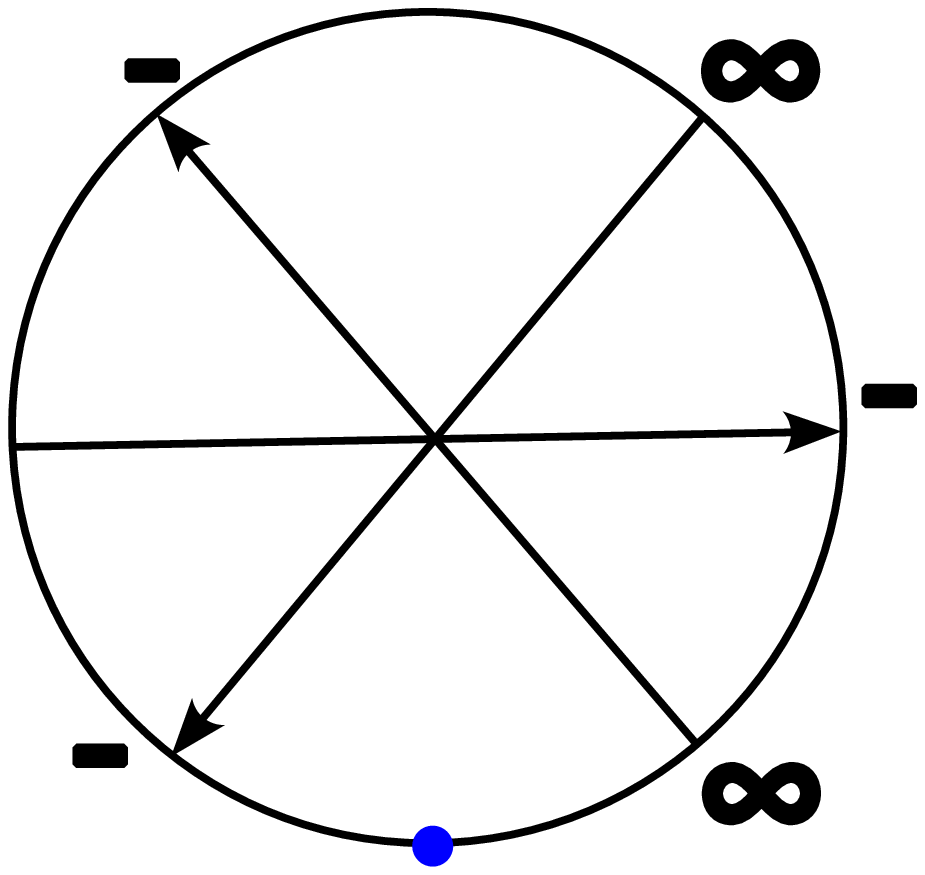}}
\end{minipage}
& $-a^2z^2$
&\begin{minipage}[b]{0.2\columnwidth}
    \centering
    \raisebox{-.5\height}{\includegraphics[width=\linewidth]{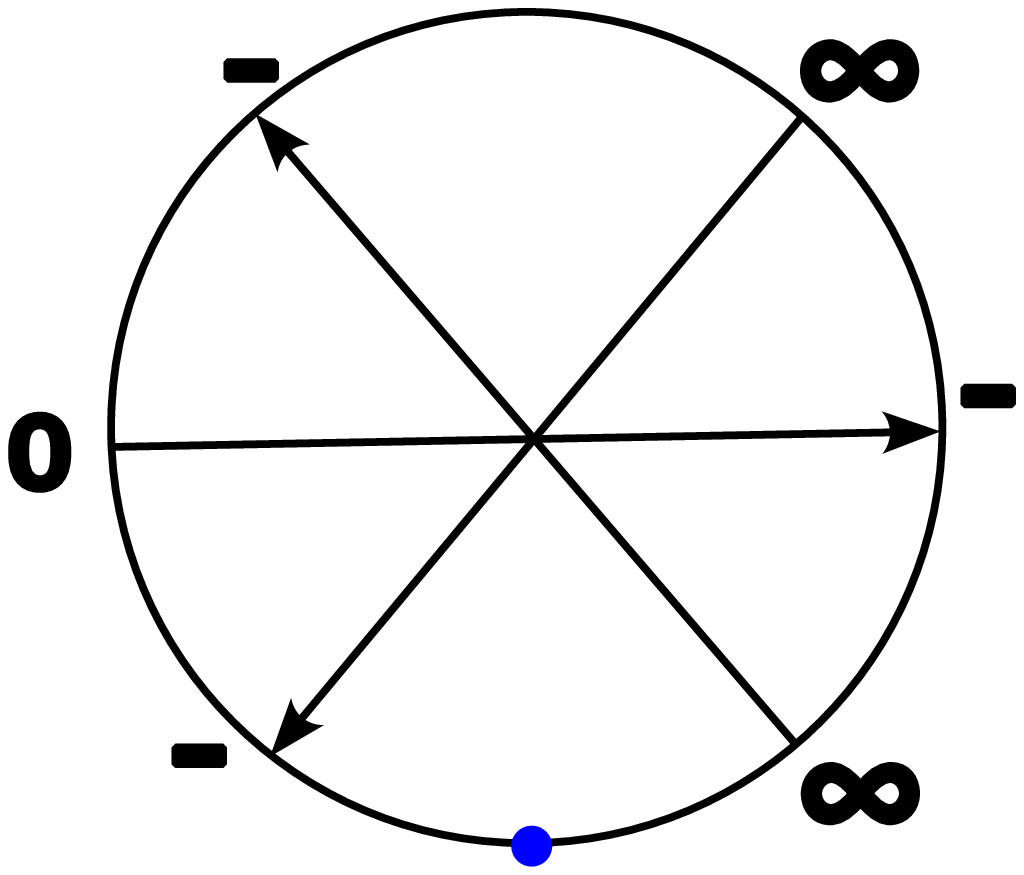}}
    \end{minipage}
& $-a^3z^3d$ \\
\hline

\begin{minipage}[b]{0.2\columnwidth}
    \centering
	\raisebox{-.5\height}{\includegraphics[width=\linewidth]{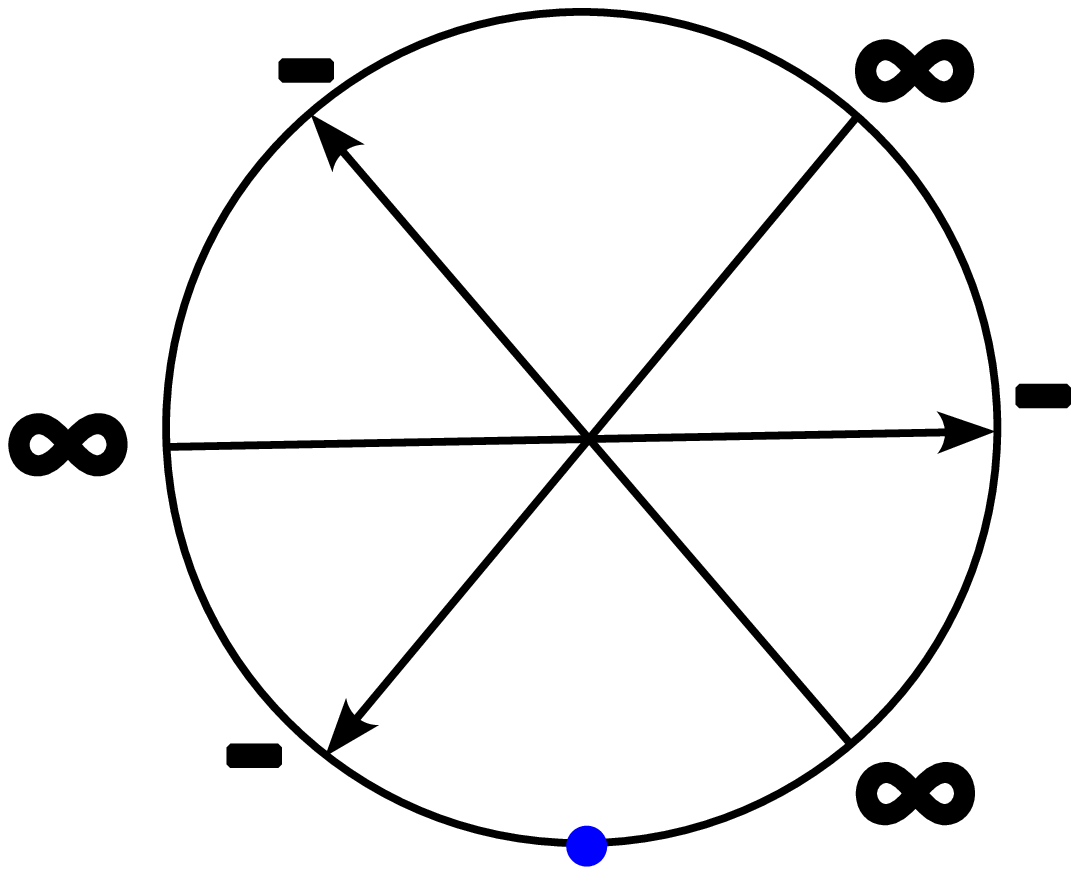}}
\end{minipage}
& $a^3z^3$
&
&  \\
\hline

\end{tabular}
\caption{Contributing states}
\label{table.Contributing states}
\end{table}
According to Table \ref{table.Contributing states}, we have $$DK= 2a^2-a^4+a^5z-a^3z+a^2z^2-a^4z^2$$
\end{eg}

\begin{prop}\label{prop.evaluation}
    For any oriented knot $K$ we have $$DK_K(1,z)=1$$
\end{prop}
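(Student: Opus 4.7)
The plan is to reduce the statement to proving $D_K(1,z) = 1$, since the normalization factor $a^{-w(K)}$ equals $1$ at $a = 1$, so $DK_K(1,z) = D_K(1,z)$. The key observation is that at $a = 1$ the loop factor $d = (a - a^{-1})/z + 1$ and the curl-relation factors $a^{\pm 1}$ all equal $1$, so $D(\cdot, 1, z)$ is invariant not only under regular isotopy (R2, R3) but also under Reidemeister I, making it an ambient isotopy invariant of unoriented links.

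I would prove the stronger claim that $D_L(1, z) = 1$ for every link diagram $L$ by induction on the crossing number $n$. The base case $n = 0$ is immediate: $L$ is a disjoint union of $k$ trivial circles, so $D_L = d^{k-1}$, which evaluates to $1$ at $a = 1$. For the inductive step on a diagram with $n \geq 1$ crossings, pick any crossing $c$; both $L_0$ and $L_\infty$ at $c$ have $n - 1$ crossings and so, by the inductive hypothesis, $D_{L_0}(1, z) = D_{L_\infty}(1, z) = 1$. Substituting into the skein relation
\begin{equation*}
D_+(1,z) - D_-(1,z) = z\bigl(D_0(1,z) - D_\infty(1,z)\bigr)
\end{equation*}
yields $D_+(1, z) = D_-(1, z)$, so switching any single crossing preserves $D(\cdot, 1, z)$.

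To conclude, I iterate crossing switches to bring $L$ into a descending diagram $L'$, following the algorithm at the start of Section \ref{sec.state_model} (at each crossing encountered along the orientation, the first visit is on the overstrand). A descending diagram of a $k$-component link is ambient isotopic to the trivial $k$-component unlink, so by the ambient isotopy invariance established above we get $D_L(1, z) = D_{L'}(1, z) = 1$, completing the induction and hence the proof.

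The main technical point to watch is tracking the auxiliary orientations and induced crossing-sign changes on the $\infty$-smoothing $L_\infty$, as spelled out in Section \ref{sec.state_model} before the state model; however, this bookkeeping becomes immaterial because $D$ is an unoriented-link invariant, so the value $D_{L_\infty}(1,z)$ depends only on the underlying unoriented diagram and the inductive hypothesis still applies.
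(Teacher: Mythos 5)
Your proof is correct, but it takes a genuinely different route from the paper. The paper proves the proposition directly from the state model just established: at $a=1$ every unlabelled arrow has weight $1$ and $d=1$, so the all-unlabelled state contributes $1$, and all remaining contributing states cancel in pairs under the sign-reversing involution that toggles the label of the \emph{last} labelled arrow between $0$ and $\infty$ (toggling the last one ensures the rest of the process only involves unlabelled arrows, whose weights are $1$ regardless, so only the sign of that one factor changes). Your argument instead works purely from the defining axioms of $D$: a skein induction on the crossing number of the diagram, using that $L_0$ and $L_\infty$ each lose a crossing so the inductive hypothesis forces $D_+(1,z)=D_-(1,z)$, combined with the observation that at $a=1$ the curl factors and $d$ are trivial so $D(\cdot,1,z)$ is an ambient isotopy invariant, and then unknotting via a descending diagram. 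Your route is self-contained (it does not rely on the state model theorem), it proves the stronger statement for all link diagrams rather than just knots, and your remark that the orientation bookkeeping on $L_\infty$ is immaterial because $D$ is an unoriented invariant is exactly right. What the paper's approach buys is that the same cancellation argument transfers verbatim to the arrow-diagram weights $w'$, which is how the later proposition $A_{0,n}=0$ is obtained; your skein induction does not directly yield that statement.
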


\begin{proof}
    Let $G$ be the Gauss diagram of any knot $K$. We study the state contribution to $DK_K(1,z)$ based on the state model. Notice that when $a=1$, the weight of the unlabelled arrow in all the cases is always $1$ and that the component factor $d=1$. First of all, we have the state with all arrows unlabelled. This state gives the contribution $1$ to $DK$. For any other contributing state $\sigma$, we focus on the last labelled arrow $\alpha$ in the process. If $\alpha$ is labelled with $0$ (resp. $\infty$), then change this label to $\infty$ (resp. $0$) will get another contributing state $\sigma'$ and $w(G,\sigma') = -w(G,\sigma)$. Hence from the state model we can see that $DK_K(1,z) = 1$. 
\end{proof}

\section{Gauss diagram formulae}\label{sec.GDF}

After the change of variable $a = e^h$, the coefficients of the power series of Kauffman polynomial are Vassiliev invariants (Theorem 4.8 in \cite{Birman:1993aa}).
\begin{prop}
Let $$P_K(h,z)=DK_K(e^h,z)=\sum\limits_{k,l}p_{k,l}(K)h^kz^l$$
be the power series in two variables $h,z$. The coefficients $p_{k,l}$ is a Vassiliev invariant with order no more than $k+l$.
\end{prop}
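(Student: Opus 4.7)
The plan is to run the standard Birman--Lin filtration argument: substitute $a=e^h$, rewrite the Kauffman skein as a relation for $DK$, and show that resolving one double point via the Vassiliev skein $v(K_\times)=v(K_+)-v(K_-)$ introduces at least one extra factor of $h$ or $z$.

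First I would convert $D_+ - D_- = z(D_0 - D_\infty)$ into a relation for $DK=a^{-w(\cdot)}D$. With $w:=w(K_+)$ one has $w(K_-)=w-2$, $w(K_0)=w-1$, and $w(K_\infty)=w-1+\varepsilon$ for an integer $\varepsilon$ determined by the local strand orientations and the auxiliary orientation convention of Section~\ref{sec.state_model}. Dividing the skein by $a^w$ gives
\begin{equation*}
DK_{K_+} - a^{-2}\, DK_{K_-} \;=\; z\, a^{-1} DK_{K_0} \;-\; z\, a^{\varepsilon-1} DK_{K_\infty}.
\end{equation*}
Substituting $a=e^h$ and isolating $DK_{K_+}-DK_{K_-}$ produces the filtered identity
\begin{equation*}
DK_{K_+}-DK_{K_-} \;=\; (e^{-2h}-1)\, DK_{K_-} \;+\; z\bigl(e^{-h} DK_{K_0} - e^{(\varepsilon-1)h} DK_{K_\infty}\bigr),
\end{equation*}
whose right-hand side lies in the ideal $(h,z)\subset\mathbb{Q}[[h,z]]$ because $e^{-2h}-1 = -2h + O(h^2)$.

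Next I would extend $DK$ to singular knots by the Vassiliev skein $DK_{K_\times}:=DK_{K_+}-DK_{K_-}$ and prove by induction on the number $n$ of double points that $DK_{K^{(n)}}(e^h,z)\in (h,z)^n$. The base case $n=0$ is trivial. For the inductive step, apply the Vassiliev skein at one double point of $K^{(n)}$ and plug in the identity above: the $DK_{K_-}$ term is multiplied by the $(h)$-factor $e^{-2h}-1$ while $K_-$ itself carries $n-1$ double points and so contributes $(h,z)^{n-1}$ by induction, giving total order $(h,z)^n$; likewise each of $DK_{K_0}$ and $DK_{K_\infty}$ comes with an explicit prefactor $z$ and their diagrams still have $n-1$ double points, so each term again lies in $(h,z)^n$.

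Extracting the $h^kz^l$ coefficient then yields $p_{k,l}(K^{(n)})=0$ for every $n>k+l$, which is exactly the definition of Vassiliev order at most $k+l$. The main technical subtlety is the $K_\infty$ contribution: since the Dubrovnik skein is unoriented but $DK$ requires an orientation, one must reverse the orientation on certain arcs to obtain a coherent oriented $K_\infty$, and the writhe shift $\varepsilon$ depends on which arcs are reversed. However $\varepsilon$ only appears in the exponent of the unit power series $e^{(\varepsilon-1)h}\in 1+h\,\mathbb{Q}[[h]]$, so the crucial factor of $z$ in front of $DK_{K_\infty}$ is unaffected and the induction closes without ever needing to compute $\varepsilon$ explicitly.
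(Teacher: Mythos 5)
Your overall strategy is the same as the paper's: rewrite the Dubrovnik skein for $DK$ after the substitution $a=e^h$ so that each of the three resolutions $-,0,\infty$ carries a prefactor of positive order, and conclude that resolving $m>k+l$ double points kills the coefficient of $h^kz^l$. The paper does this in one shot, expanding all $m$ double points into $\sum c(d_1)\cdots c(d_m)P_{d_1\cdots d_m}$ with $c(-)=-2h+o(1)$, $c(0)=z+o(1)$, $c(\infty)=-z+o(1)$; your induction on the number of double points is the same computation packaged recursively. As written, however, the induction does not close, for a concrete reason: when you resolve a double point of a (singular) \emph{knot}, the oriented smoothing $K_0$ is a \emph{two-component link}, and $DK_{K_0}(e^h,z)$ is not an element of $\mathbb{Q}[[h,z]]$ at all --- already for the two-component unlink one gets $d=(e^h-e^{-h})/z+1=1+2hz^{-1}+\cdots$, which has negative powers of $z$. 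So the inductive hypothesis ``$DK_{K^{(n-1)}}\in(h,z)^{n-1}$'', stated for knots, neither applies to $K_0^{(n-1)}$ nor is true verbatim for links. The repair is to grade by \emph{total} degree (counting $z^{-1}$ as degree $-1$) and to prove the statement for singular links: for a $c$-component link the negative powers $z^{-j}$ arise only from the loop factor $d^{c-1}$ and are accompanied by $(a-a^{-1})^{j}=(2h+O(h^3))^{j}$, so every monomial of $DK_L(e^h,z)$ has total degree $\ge 0$, and each resolution then raises the total degree by at least one. This is exactly the content of the paper's terse assertion that ``the lowest degree of $P_{d_1\cdots d_m}$ is $0$''.

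A second, smaller wrinkle concerns your exponent $\varepsilon$ in the $K_\infty$ term: it equals $-2\lambda$, where $\lambda$ is the writhe of the reversed arc against the rest of the diagram, and this depends on the signs of the crossings on that arc --- hence on how the \emph{other} $n-1$ double points are resolved. Consequently the three-term identity does not hold for the Vassiliev extension of $DK$ to singular knots with a single constant $\varepsilon$ pulled out in front. Your observation that $e^{(\varepsilon-1)h}$ is a unit congruent to $1$ modulo $h$ is the right instinct, but to exploit it you must either expand all double points simultaneously (so that each term in the final sum has its own, now well-defined, $\lambda$), as the paper does, or keep the alternating sum over resolutions explicit and bound each summand's total degree separately. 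Neither fix is difficult, but both are needed before the argument is actually complete.
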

\begin{proof}
Let $K_{+},K_{-},K_{0},K_{\infty}$ be four oriented links with differences at one single crossing (if one ignore the orientation). The first three have a consistent orientation and the fourth have a piece with an opposite orientation of the first three. We denote this piece $\beta$ endowed with the orientation in $K_{+}$. The writhe relation of them are:
$w(K_{+}) = w(K_{-})+2 = w(K_{0})+1 = w(K_{\infty}) + 2\lambda + 1$ where $\lambda = w(\beta,K_{+}-\beta)$ is the writhe of $\beta$ with other part of the link $K_{+}$. 
Then we can see that $$aDK_{+}-a^{-1}DK_{-} = z(DK_{0} - DK_{\infty}a^{-2\lambda})$$.
$$DK_{+}-DK_{-} = (a^{-2}-1)DK_{-} + a^{-1}zDK_{0} - a^{-(2\lambda+1)}zDK_{\infty}$$

Given any singular knot $K$ with $m$ singular crossings, 
we denote $K_{\epsilon_1\epsilon_2...\epsilon_{m} }$ the ordinary knot based on $K$ such that the singular crossings is replaced by the ordinary crossings of sign $\epsilon$. By the skein relation above, we have 

\begin{align}
&\sum\limits_{\epsilon_1,\epsilon_2,...,\epsilon_{m}} \epsilon_1\epsilon_2\cdots\epsilon_{m}P_{K_{\epsilon_1\epsilon_2...\epsilon_{m}}} \label{eq.Vassiliev_test}\\ 
&= \sum\limits_{\epsilon_1,\epsilon_2,...,\epsilon_{m-1}} \epsilon_1\epsilon_2\cdots \epsilon_{m-1} [(a^{-2}-1)P_{\epsilon_1\epsilon_2...\epsilon_{m-1}-} + a^{-1}zP_{\epsilon_1\epsilon_2...\epsilon_{m-1}0} - a^{-(2\lambda+1)}zP_{\epsilon_1\epsilon_2...\epsilon_{m-1}\infty}] \\
&= \sum\limits_{d_1,d_2,...,d_{m}}c(d_1)c(d_2)\cdots c(d_m)P_{d_1 d_2...d_{m}}
\end{align}
where $d_i \in \{-,0,\infty \}$ and 
$$c(-)=a^{-2}-1=-2h+o(1)$$
$$c(0)=a^{-1}z = z+o(1)$$
$$c(\infty) = -a^{-(2\lambda+1)}z = -z+o(1)$$
Notice that the lowest degree of $P_{d_1 d_2...d_{m}}$ is $0$ and that $$c(d_1)c(d_2)\cdots c(d_m) = (-2h+o(1))^{m_1}(z+o(1))^{m_2}(-z+o(1))^{m_3}$$ where $m_1+m_2+m_3=m$.
If $m>k+l$, we have either $m_1>k$ or $m_2+m_3>l$. Hence the coefficient of the term $h^kz^l$ in \ref{eq.Vassiliev_test}  would be 0, which proves the proposition. 
\end{proof}

Now we use the method of Chmutov and Polyak in \cite{Chmutov_Polyak:2009} to derive the Gauss diagram formula for $p_{k,l}$. We repeat the essential of their proof of Theorem \ref{thm.representation} emphasizing why the theorem still works for the model of Kauffman polynomial. 

Let $\mathscr{A}$ be the free $\mathbb{Q}$-vector space spanned by the basis of all arrow diagrams. We define an inner scalar product by \begin{equation}
 (A,B) = \left\{
\begin{array}{rcl}
1 && {\text{$A=B$}}\\
0 && {\text{otherwise}}
\end{array}
\right.
\end{equation}
And we extend this scalar product to the whole $\mathscr{A}$ by linearity. We define $$\langle A,G\rangle = (A,\sum\limits_{ C \subset G}C)$$ for any arrow diagram $G$, where the $C\subset G$ means the arrow diagram with arrows being a subset of arrows of G. In words, this product counts the number of times $A$ appears in $G$ as a subdiagram. Let $F= \sum\limits_{i}c_iA_i$ be an element of $\mathscr{A}$. If $\langle F,\cdot \rangle$ is a knot invariant, we say that $F$ is a \textit{Gauss diagram formula} (GDF). 

Given an arrow diagram $A$, we apply the previous state model procedure to it but with a different weight $w'(\cdot)$ described in Table \ref{table.weight_arrow} and we get a new polynomial $$W_A(a,z)=\sum \limits_{\sigma \in state(A)} w'(A,\sigma)(\dfrac{a-a^{-1}}{z}+1)^{c(\sigma)-1}$$

Let $W_A(e^h,z) = \sum\limits_{k.l}w_{k,l}(A)h^kz^l$ and we define $$A_{k,l} \coloneqq  \sum_{A}w_{k,l}(A)\cdot A$$. 

\begin{rem}
From Table \ref{table.weight_arrow} we can see that $A_{k,l}$ is well-defined. Indeed, if $A$ is an arrow diagram with $m$ arrows ($m > k+l$), the lowest degree of the contributing weight of any state of the arrow diagram is at least $m$. So $w_{k,l}(A) = 0$. 
\end{rem}

\begin{prop}
    For an arrow diagram $A$ with an isolated arrow $\alpha$ (i.e., the foot and the head of $\alpha$ bound piece of arc on which there is no other arrows or the base point), we have $W_A = 0$.
\end{prop}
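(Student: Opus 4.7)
The plan is to partition the state space $state(A)$ according to the labels assigned to every arrow \emph{except} $\alpha$, and then to show that for each such partial labeling $\tau$ the three extensions $\sigma_\phi, \sigma_0, \sigma_\infty$ (obtained by labeling $\alpha$ with $\phi$, $0$, $\infty$ respectively) have weighted contributions that sum to zero. Summing over $\tau$ then yields $W_A = 0$.

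First I would use the isolation hypothesis to decouple the behaviour of $\alpha$ from that of the rest of the diagram. Because the small arc bounded by the foot and head of $\alpha$ carries neither the base point nor any endpoint of another arrow, the following hold: labeling $\alpha$ with $\infty$ reverses the orientation only on that small arc (it is the "piece not yet passed" at the first meeting of $\alpha$), and since no other head or foot sits on it, the change number $n(\beta)$ is unchanged for every $\beta \neq \alpha$; labeling $\alpha$ with $0$ simply detaches the small arc as a new, empty component carrying no arrows and no base point, leaving the process on the remaining component identical to the $\phi$-case. Consequently, for every $\beta \neq \alpha$ the weight $w'(\sigma,\beta)$ is the same across the three extensions, and the change number $n(\alpha)$ is determined by $\tau$ alone.

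Next I would track the component count: smoothing at an isolated self-crossing creates a trivial extra circle, so $c(\sigma_0) = c(\sigma_\phi) + 1$, while singularizing preserves connectivity, so $c(\sigma_\infty) = c(\sigma_\phi)$. After factoring out the common quantity $\bigl(\prod_{\beta \neq \alpha} w'(\sigma,\beta)\bigr)\, d^{\,c(\sigma_\phi)-1}$, the vanishing of the triple reduces to the local identity
\[
w'_\phi(\alpha) + d\, w'_0(\alpha) + w'_\infty(\alpha) = 0,
\]
whose three terms are read from Table \ref{table.weight} at the fixed values of $n = n(\alpha)$, the sign of $\alpha$, and the direction of the first passage.

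Finally I would verify this identity by direct inspection of the relevant entries of Table \ref{table.weight} in each of the sub-cases (sign of $\alpha$ times which endpoint is encountered first). The identity is exactly the algebraic shadow of the skein relation $D_+ - D_- = z(D_0 - D_\infty)$ combined with the curl relation $D_{\text{curl}_\pm} = a^{\pm 1}D$, since $z(d-1) = a - a^{-1}$; this is precisely how the entries of Table \ref{table.weight} were designed. The step I expect to be the main obstacle is the orientation bookkeeping that justifies the claim that the reversal triggered by labeling $\alpha$ with $\infty$ is supported on the small arc, which uses essentially both the absence of the base point on that arc and the convention that the already-traversed portion keeps its orientation.
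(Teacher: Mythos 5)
Your overall strategy coincides with the paper's: group the states into triples that differ only in the label of $\alpha$, check that this label affects neither the weights of the other arrows nor the course of the process outside the small arc, and reduce everything to the local identity $w'_\phi(\alpha) + d\,w'_0(\alpha) + w'_\infty(\alpha) = 0$. Your decoupling argument and the component count $c(\sigma_0)=c(\sigma_\phi)+1$, $c(\sigma_\infty)=c(\sigma_\phi)$ are correct, and in fact spelled out more carefully than in the paper.

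There is, however, a genuine gap at the step you defer to ``direct inspection'': the local identity is \emph{not} true for an arbitrary value of $n(\alpha)$. The relevant entries (of Table \ref{table.weight_arrow}, by the way, not Table \ref{table.weight} --- for $W_A$ the unlabelled weights must be the ones with the $-1$ subtracted, otherwise the $\phi$-term cannot participate in any cancellation) carry the factors $(-1)^{n}$, $(-1)^{n+1}$ and the exponents $(-1)^{n}\pm 1$, and the three terms cancel, as in $a^{-2}-1+za^{-1}d-za^{-1}=0$, only when $n(\alpha)$ is even; for odd $n(\alpha)$ the signs of the $0$- and $\infty$-weights flip, the sum becomes a nonzero multiple of $a^{\pm2}-1$, and in the head-first case even the unlabelled weight $a^{(-1)^{n}\pm1}-1$ fails to vanish. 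So you must first prove that $n(\alpha)$ is even --- this is the opening observation of the paper's proof, and it uses the isolation hypothesis in the direction you did not exploit: every orientation-reversal arc caused by an $\infty$-labelled arrow $\beta\neq\alpha$ is bounded by endpoints of $\beta$, none of which lie strictly between the head and the foot of $\alpha$, so that arc contains either both endpoints of $\alpha$ or neither, and $n(\alpha)$ only ever increases by $0$ or $2$. With this parity statement inserted before the case check, your verification goes through and the proof is complete.
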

\begin{proof}
    Notice that the change number $n(\alpha)$ must be even. So in a contributing state, $\alpha$ must be labelled. In this case, if we change the label of $\alpha$ (between $0$ and $\infty$), we will get another contributing state.If the first passage at $\alpha$ is at head, then weight is surely $0$. If the first passage is at foot and if it has positive sign, $a^{-2}-1 + za^{-1}d -za^{-1} = 0$ implies that the three contributing states cancelled out together. Similar for the negative sign case. 
\end{proof}

\begin{table}[htbp]
\centering
\begin{tabular}{| c | c | c |}
\hline
& \begin{minipage}[b]{0.25\columnwidth}
		\centering
		\raisebox{-.5\height}{\includegraphics[width=\linewidth]{image/table1_7.png}}
	\end{minipage}
& \begin{minipage}[b]{0.25\columnwidth}
		\centering
		\raisebox{-.5\height}{\includegraphics[width=\linewidth]{image/table1_8.png}}
	\end{minipage} \\
\hline
\begin{minipage}[b]{0.25\columnwidth}
    \centering
	\raisebox{-.4\height}{\includegraphics[width=\linewidth]{image/table1_1.png}}
	\end{minipage}
 & $a^{(-1)^n-1}-1$ & $a^{(-1)^{n+1}+1}-1$ \\
\hline
\begin{minipage}[b]{0.25\columnwidth}
    \centering
	\raisebox{-.4\height}{\includegraphics[width=\linewidth]{image/table1_2.png}}
	\end{minipage}
 & $a^{(-1)^{n+1}-1}-1$ & $a^{(-1)^{n}+1}-1$ \\
 \hline
\begin{minipage}[b]{0.25\columnwidth}
    \centering
	\raisebox{-.4\height}{\includegraphics[width=\linewidth]{image/table1_3.png}}
	\end{minipage}
 & $0$ & $0$ \\
\hline
\begin{minipage}[b]{0.25\columnwidth}
    \centering
	\raisebox{-.4\height}{\includegraphics[width=\linewidth]{image/table1_4.png}}
	\end{minipage}
 & $(-1)^{n}za^{-1}$ & $(-1)^{n+1}za$ \\
 \hline
\begin{minipage}[b]{0.25\columnwidth}
    \centering
	\raisebox{-.4\height}{\includegraphics[width=\linewidth]{image/table1_5.png}}
	\end{minipage}
 & 0 & 0 \\
\hline
\begin{minipage}[b]{0.25\columnwidth}
    \centering
	\raisebox{-.4\height}{\includegraphics[width=\linewidth]{image/table1_6.png}}
	\end{minipage}
 & $(-1)^{n+1}za^{-1}$ & $(-1)^{n}za$ \\
\hline
\end{tabular}
\caption{Weight for arrow diagrams}
\label{table.weight_arrow}
\end{table}
 
\begin{thm}\label{thm.representation}
    $A_{k,l}$ is a Gauss diagram formula. In fact, given a Gauss diagram $G$ of an oriented knot $K$, we have $$p_{k,l}(K) = \langle A_{k,l} ,G\rangle$$.
\end{thm}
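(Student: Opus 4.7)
The plan is to adapt the Chmutov--Polyak argument from \cite{Chmutov_Polyak:2009} to the Kauffman state model: the goal is to rewrite the state-sum expression for $DK_K$ as a sum over arrow subdiagrams of $G$, and then to extract the coefficient of $h^k z^l$.

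First I would start from the identity $DK_K(a,z) = \sum_{\sigma} w(G,\sigma) \bigl(\tfrac{a-a^{-1}}{z}+1\bigr)^{c(\sigma)-1}$ provided by the state model theorem, and write $d = \tfrac{a-a^{-1}}{z}+1$ for brevity. The crucial observation, obtained by comparing Table \ref{table.weight} and Table \ref{table.weight_arrow} row by row, is that the two weights agree on labelled arrows and differ by exactly $1$ on unlabelled arrows: $w(G,\sigma,\alpha) = w'(G,\sigma,\alpha)$ if $\sigma(\alpha) \in \{0,\infty\}$, while $w(G,\sigma,\alpha) = w'(G,\sigma,\alpha) + 1$ if $\sigma(\alpha) = \phi$. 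Substituting this into $w(G,\sigma) = \prod_\alpha w(G,\sigma,\alpha)$ and expanding the product over unlabelled arrows gives
\begin{align*}
w(G,\sigma) \;=\; \sum_{S \,\supseteq\, \sigma^{-1}(\{0,\infty\})} \; \prod_{\alpha \in S} w'(G,\sigma,\alpha),
\end{align*}
where $S$ ranges over subsets of $A(G)$ containing the labelled arrows of $\sigma$. Exchanging the order of summation with $\sigma$ produces
\begin{align*}
DK_K(a,z) \;=\; \sum_{S \subseteq A(G)} \; \sum_{\sigma:\, \sigma^{-1}(\{0,\infty\}) \subseteq S} \; \prod_{\alpha \in S} w'(G,\sigma,\alpha) \cdot d^{c(\sigma)-1}.
\end{align*}

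The heart of the argument is to identify the inner sum with $W_S(a,z)$, where $S$ is regarded as an arrow subdiagram with its inherited signs and arc structure. For this I would verify that, whenever every labelled arrow of $\sigma$ lies in $S$, the process on $(G,\sigma)$ and the process on $(S, \sigma|_S)$ yield the same routing, the same change numbers $n(\alpha)$ for each $\alpha \in S$, and the same component count. The reason is that an unlabelled arrow acts as a trivial pass-through in the process: it is neither smoothed nor singularized, so it neither splits nor merges components and reverses no orientations; moreover only $\infty$-labelled arrows contribute to change numbers, and all of these already lie in $S$. Consequently $w'(G,\sigma,\alpha) = w'(S, \sigma|_S, \alpha)$ for every $\alpha \in S$, and $c(\sigma) = c(\sigma|_S)$. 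The restriction $\sigma \mapsto \sigma|_S$ is then a bijection between the states of $G$ appearing in the inner sum and all states on $S$, and the inner sum equals $W_S(a,z)$, yielding the master identity
\begin{align*}
DK_K(a,z) \;=\; \sum_{S \subseteq A(G)} W_S(a,z).
\end{align*}

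Grouping the subdiagrams by their isomorphism class $A$ rewrites this as $DK_K = \sum_A \langle A,G\rangle \, W_A(a,z)$. Substituting $a = e^h$ and reading off the coefficient of $h^k z^l$ finally gives
\begin{align*}
p_{k,l}(K) \;=\; \sum_A \langle A,G\rangle \, w_{k,l}(A) \;=\; \langle A_{k,l}, G \rangle,
\end{align*}
as required. The main obstacle I anticipate is precisely the structural check in the middle step: one must go carefully through Table \ref{table.process} to confirm that unlabelled arrows really are identity steps of the process, so that erasing them from the diagram preserves the routing, the change numbers, and the component count. This is the point where the argument is genuinely sensitive to the specific form of the Kauffman state model rather than the HOMFLY-PT one of Chmutov and Polyak, and where the isolated-arrow vanishing proposition is in fact a first consistency check on the same mechanism.
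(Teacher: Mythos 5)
Your proposal is correct and follows essentially the same route as the paper's proof: the same expansion of the product over unlabelled arrows via $w = 1 + w'$, the same exchange of summation to obtain a sum over subdiagrams, and the same bijection between states of $G$ with labelled arrows contained in $S$ and states of $S$, justified by the observation that unlabelled arrows are transparent to the routing, the change numbers, and the component count. Your version is slightly more explicit than the paper's about why erasing unlabelled arrows preserves the process data, but the argument is the same.
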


\begin{proof}
Given a state $\sigma: A(G) \rightarrow \{\phi, 0, \infty \}$, we denote $A_{\phi}(G)$ and $A_{L}(G)$ the sets of arrows unlabelled and labelled respectively. 

\begin{align*}
    w(G,\sigma) &= \prod\limits_{\alpha \in A(G)}w(G,\sigma,\alpha)\\
    &= \prod\limits_{\alpha \in A_{L}(G)}w(G,\sigma,\alpha) \cdot \prod\limits_{\alpha \in A_{\phi}(G)}w(G,\sigma,\alpha)
\end{align*}
Using the algebraic identity $t_1t_2\cdots t_n = 1+[(t_1-1)+(t_2-1)+\cdots (t_n-1)] + [(t_1-1)(t_2-1)+\cdots (t_{n-1}-1)(t_n-1)]+\cdots +[(t_1-1)(t_2-1)\cdots (t_n-1)]$ on $\prod\limits_{\alpha \in A_{\phi}(G)}w(G,\sigma,\alpha)$, we have
\begin{align*}
    DK_K(a,z) &= \sum\limits_{\sigma} d^{c(G,\sigma)-1}w(G,\sigma) \\
    &= \sum\limits_{\sigma} \prod\limits_{\alpha \in A_{L}(G)}w(G,\sigma,\alpha)(\sum\limits_{A_L(G) \subset A(B) \subset A(G)}\prod\limits_{\alpha \in A(B)-A_L(G)}(w(G,\sigma,\alpha)-1))d^{c(G,\sigma)-1} \\
    &=\sum\limits_{B\subset G}\sum\limits_{\sigma : A_{L}(G)\subset A(B)} \prod\limits_{\alpha \in A_{L}(G)}w(G,\sigma,\alpha) \prod\limits_{\alpha \in A(B)-A_{L}(G)}(w(G,\sigma,\alpha)-1))d^{c(G,\sigma)-1}
\end{align*}
Meanwhile we have
\begin{align*}
    \langle A_{k,l}, G \rangle &= \sum\limits_{B \subset G}(B,\sum\limits_{D \in \mathscr{A}}w'_{k,l}(D)D)\\
    &=\sum\limits_{B \subset G}w'_{k,l}(B)\\
    &=\sum\limits_{B \subset G} [ \sum\limits_{\sigma_B} \prod\limits_{\alpha \in A_L(B)}w'(B,\sigma_B,\alpha) \prod\limits_{\alpha \in A_{\phi}(B)}w'(B,\sigma_B,\alpha)d^{c(B,\sigma_B)-1} ]_{k,l}\\
\end{align*}
where $[\cdots]_{k,l}$ means taking the coefficient of term $h^kz^l$. 
For a subdiagram $B \subset G$, there is a bijection between the set $\{\sigma : A(G) \rightarrow \{\phi, 0, \infty \} | A_L(G) \subset A(B)\}$ and the set $\{\sigma_B : A(B) \rightarrow \{\phi, 0, \infty \} \}$. For the corresponding $\sigma$ and $\sigma_B$, we have $A_{L}(G) = A_{L}(B)$ and $A(B)-A_{L}(G) = A_{\phi}(B)$. According to the state model of Kauffman polynomial, we see that $c(G,\sigma) = c(B,\sigma_B)$ because the components is determined by the labelled arrows. If $\alpha \in A_{\phi}(B)$, we have $w'(A,\sigma_B,\alpha) = w(A,\sigma_B,\alpha)-1 = w(G,\sigma, \alpha)-1$ because the arrows in $G-B$ will not influence the weight of $\alpha$. If $\alpha \in A_{L}(B)$, we have $w'(A,\sigma_B,\alpha) = w(A,\sigma_B,\alpha) = w(G,\sigma, \alpha)$ because of the same reason. Then we can easily see that $p_{k,l} = \langle A_{k,l} , \cdot \rangle$. 
\end{proof}

For an arrow diagram $A_{\epsilon_1\epsilon_{2}\cdots \epsilon_{m}}$ with $m$ arrows, where $\epsilon_{i}$ denotes the sign of the $i$-th arrow, consider $w_{k,l}(A_{\epsilon_1\epsilon_{2}\cdots \epsilon_{m}})$ with $k+l = m$. Notice from Table \ref{table.weight_arrow} that the lowest degree of the weight of each arrow is $1$ and that the lowest degree of $d$ is 0. We can see that the multiplication contribution to $w_{k,l}$ must come from the lowest degree. Notice that this contribution from the lowest degree will change to the opposite sign if we change the sign of any one of the $m$ arrows. So we introduce the notion of unsigned arrow diagrams as the algebraic sum of signed arrow diagrams with the same underlying diagram $$A \coloneqq \sum\limits_{\epsilon_1,\epsilon_2,...,\epsilon_{m}}\epsilon_1\epsilon_{2}\cdots \epsilon_{m}A_{\epsilon_1\epsilon_{2}\cdots \epsilon_{m}}$$

\begin{eg}[GDFs from Kauffman polynomial of order 2]
$$A_{2,0} = -4\ \raisebox{-.4\height}{\includegraphics[width=1.75cm]{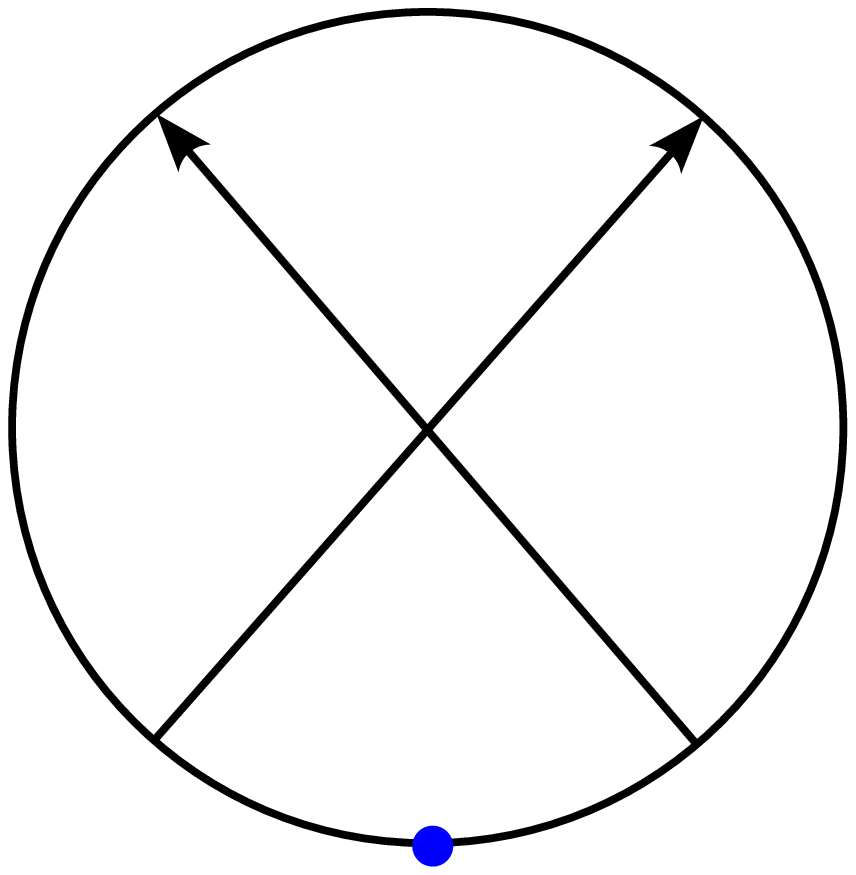}} = -4\ \raisebox{-.4\height}{\includegraphics[width=1.75cm]{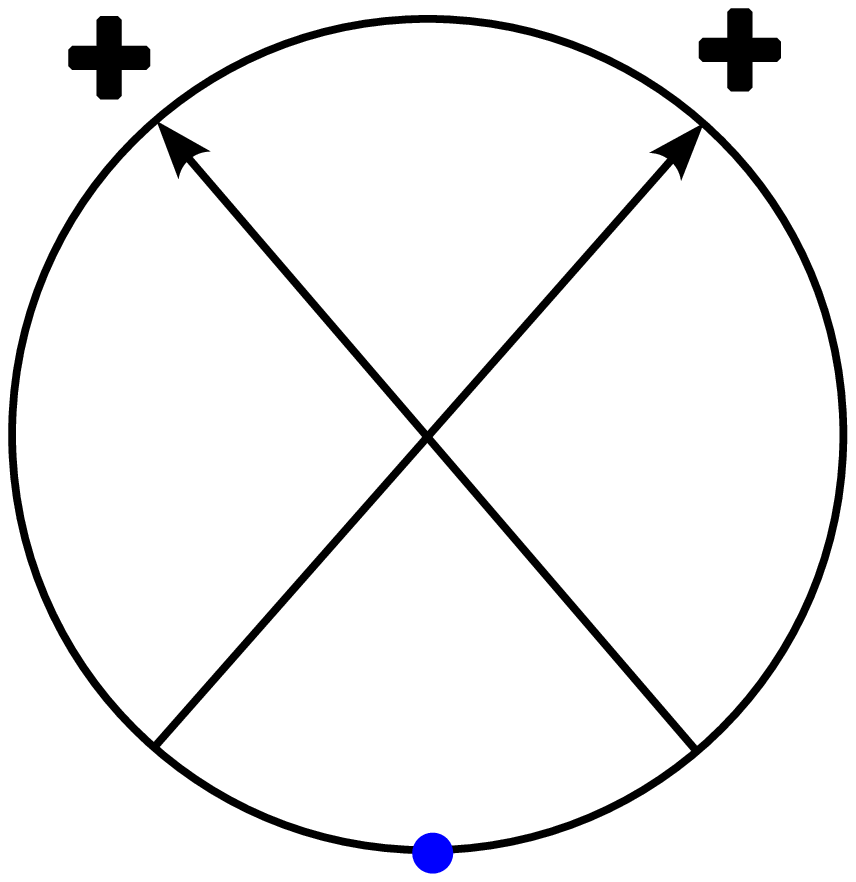}} +4\  \raisebox{-.4\height}{\includegraphics[width=1.75cm]{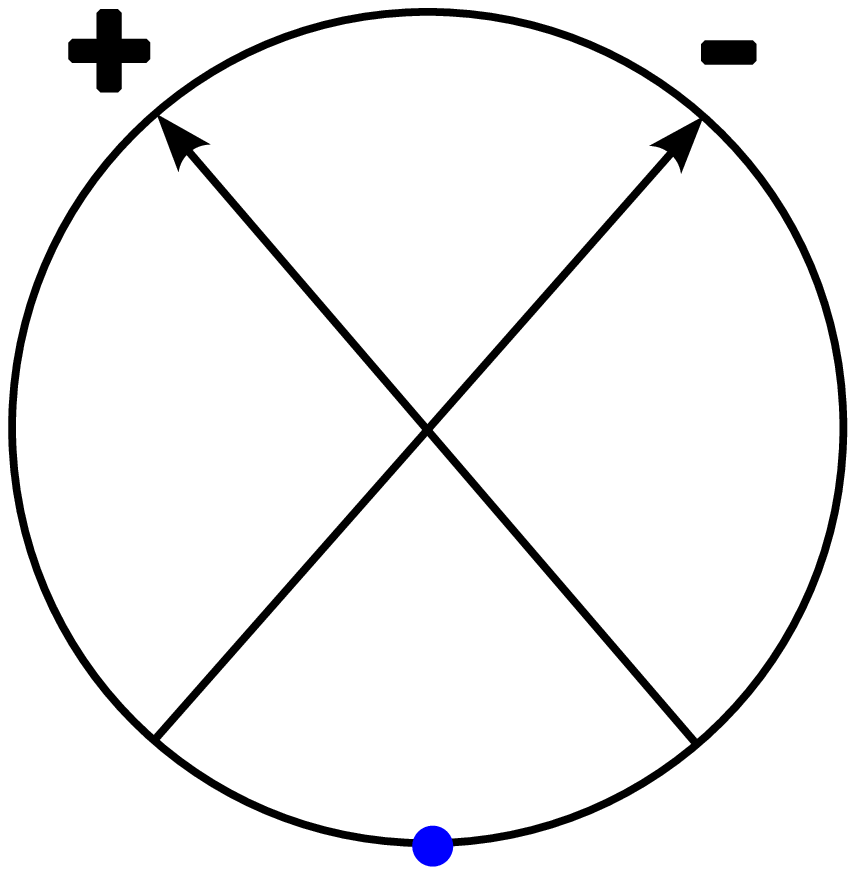}} +4\  \raisebox{-.4\height}{\includegraphics[width=1.75cm]{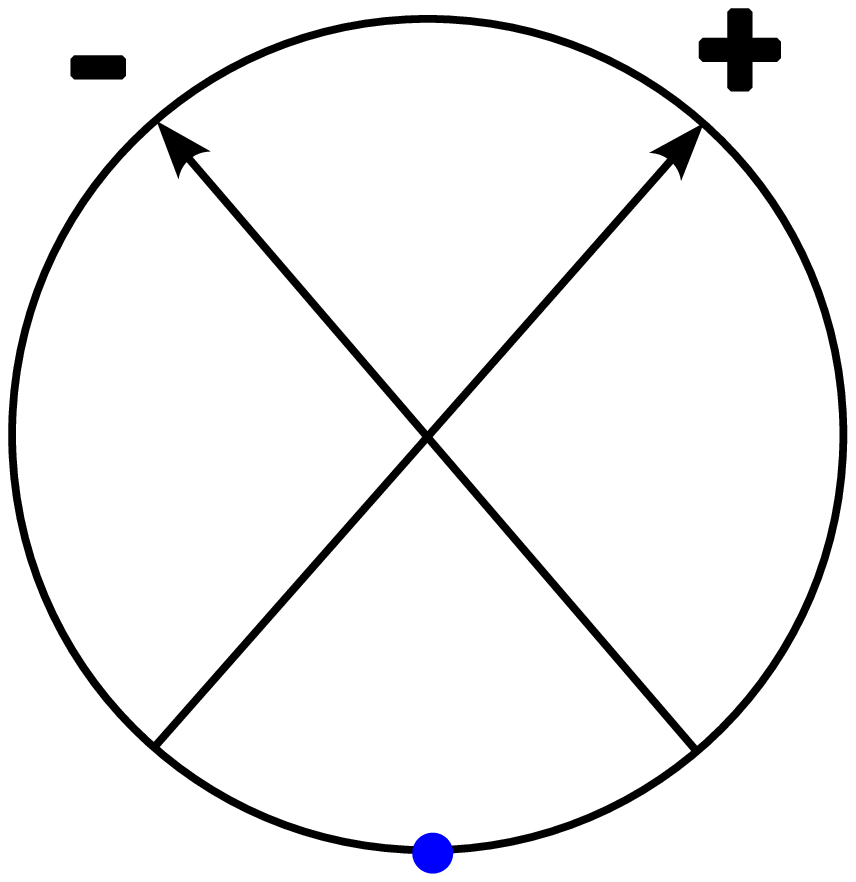}} -4\  \raisebox{-.4\height}{\includegraphics[width=1.75cm]{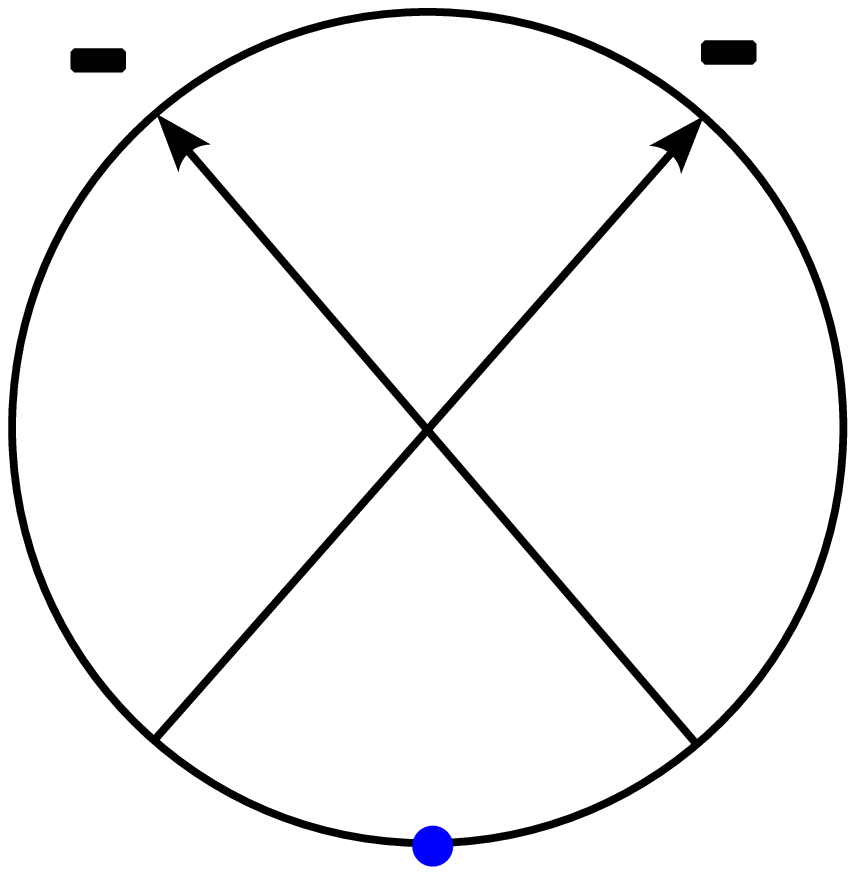}}$$
$$A_{1,1} = 2\ \raisebox{-.4\height}{\includegraphics[width=1.75cm]{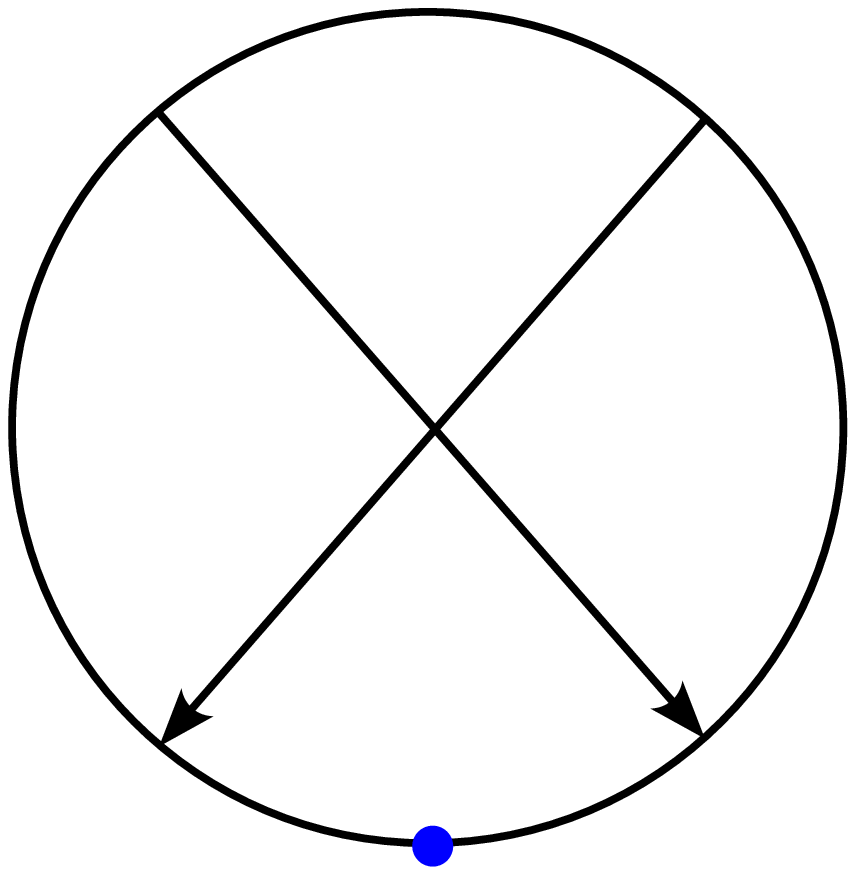}}$$
\end{eg}

\begin{eg}[GDFs from Kauffman polynomial of order 3]
\begin{align*}
    \dfrac{1}{2}A_{1,2} &= \raisebox{-.4\height}{\includegraphics[width=1.75cm]{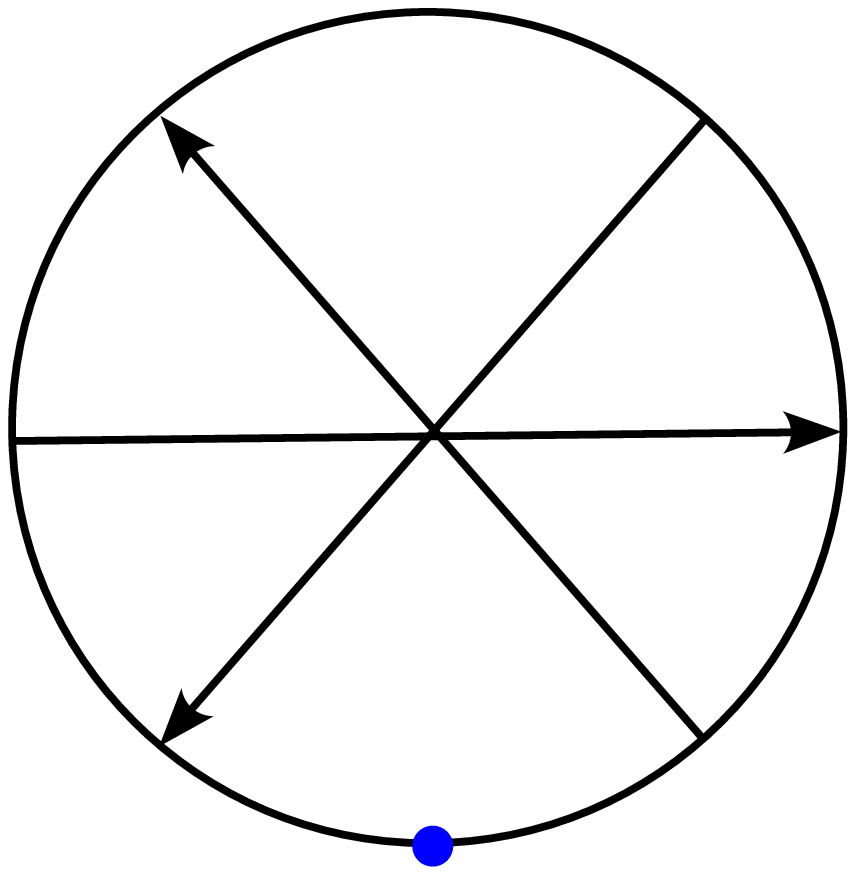}} + \raisebox{-.4\height}{\includegraphics[width=1.75cm]{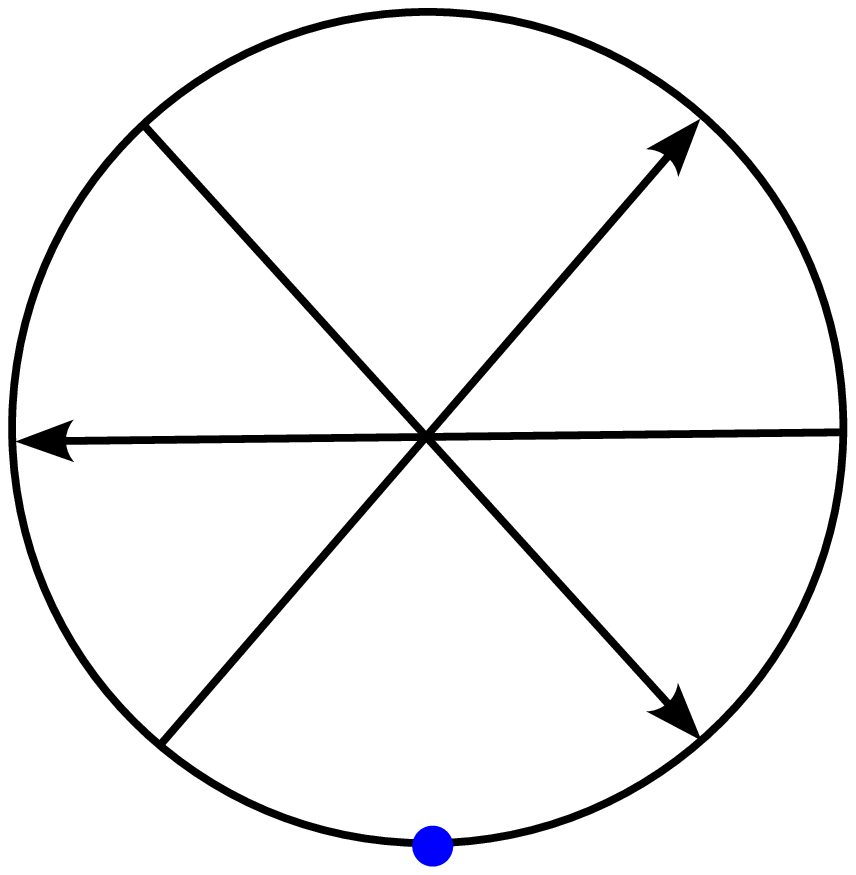}} - \raisebox{-.4\height}{\includegraphics[width=1.75cm]{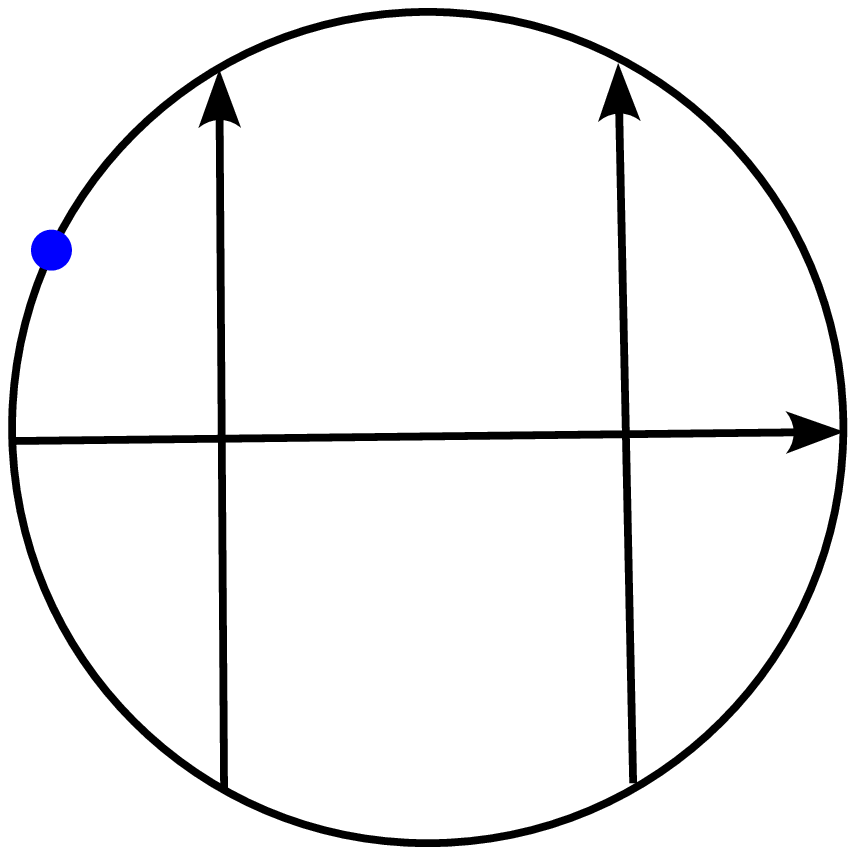}} + \raisebox{-.4\height}{\includegraphics[width=1.75cm]{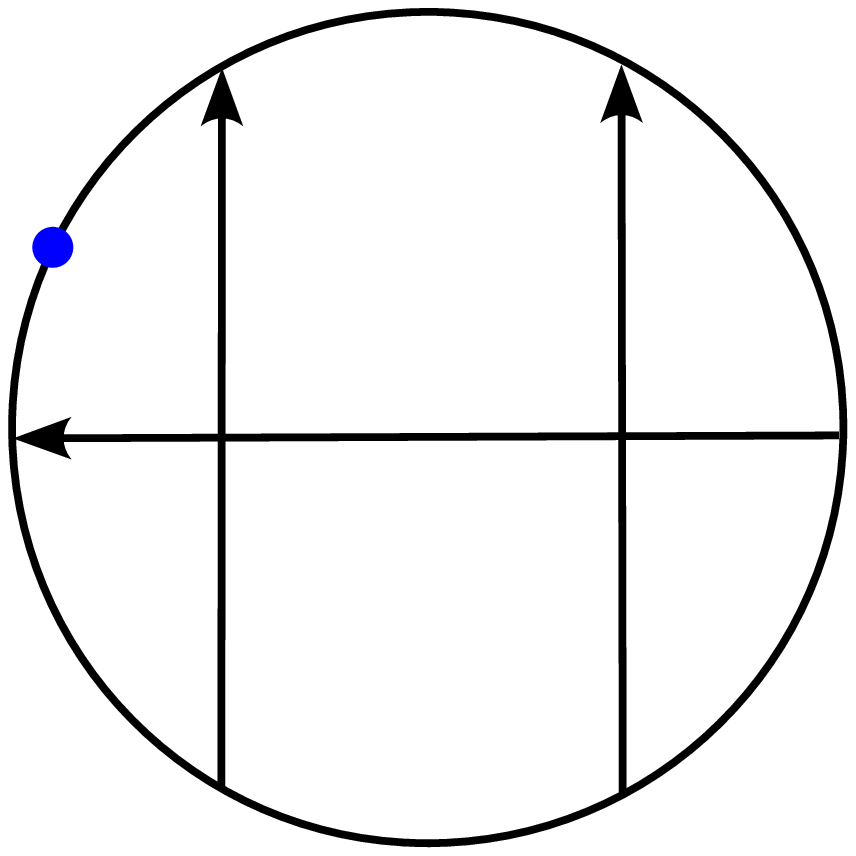}} + \raisebox{-.4\height}{\includegraphics[width=1.75cm]{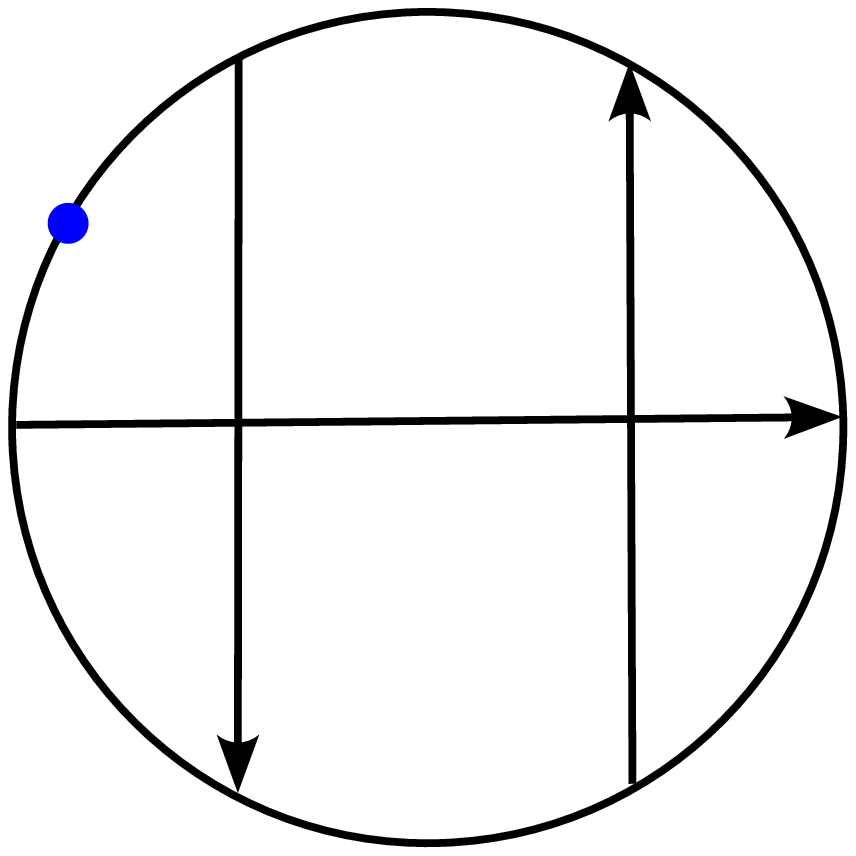}}\\
    &- \raisebox{-.4\height}{\includegraphics[width=1.75cm]{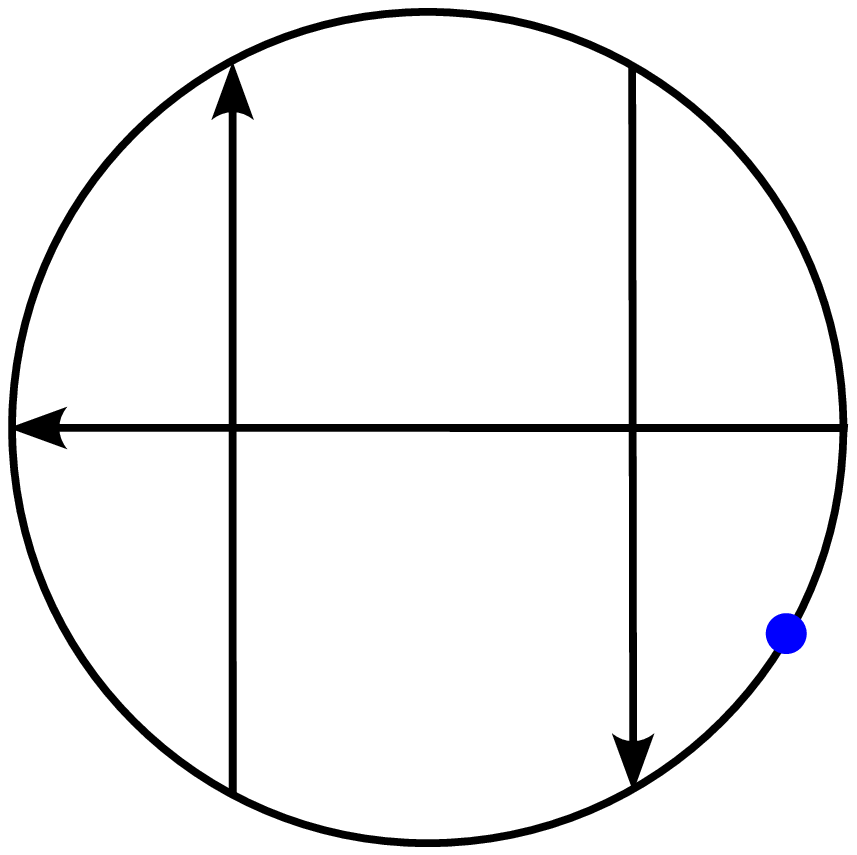}} + \raisebox{-.4\height}{\includegraphics[width=1.75cm]{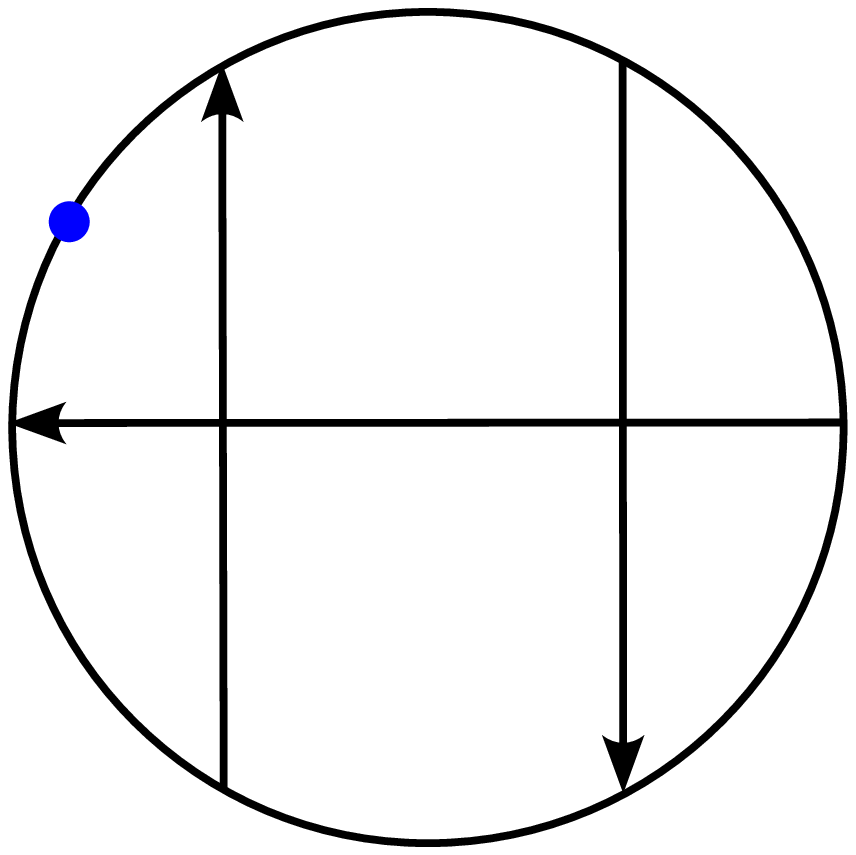}} + \raisebox{-.4\height}{\includegraphics[width=1.75cm]{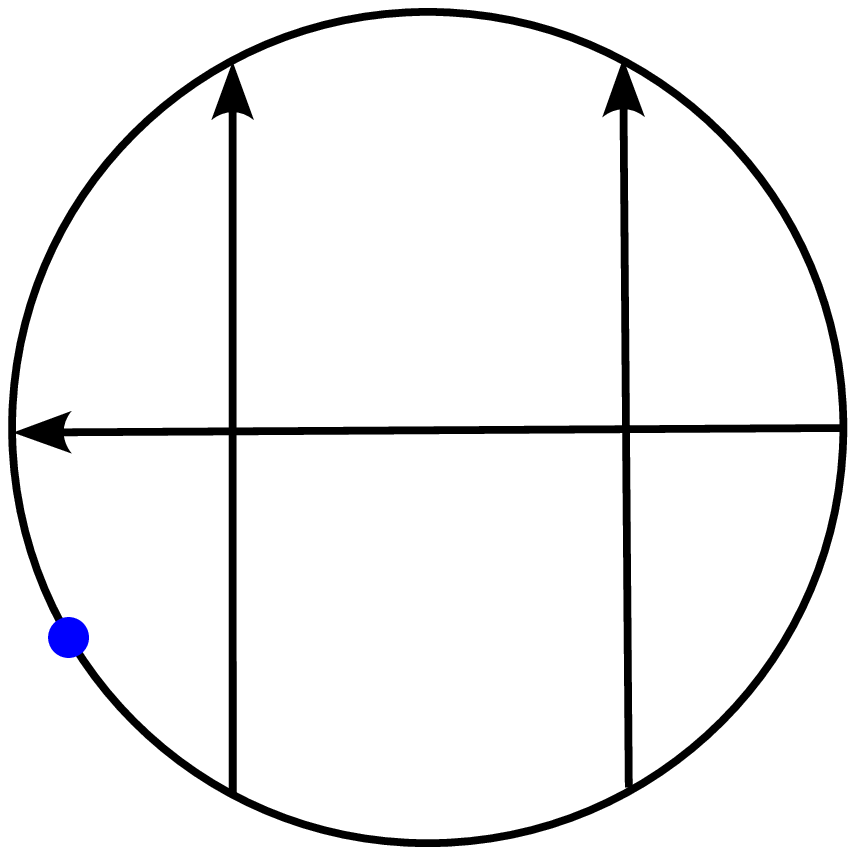}} + \raisebox{-.4\height}{\includegraphics[width=1.75cm]{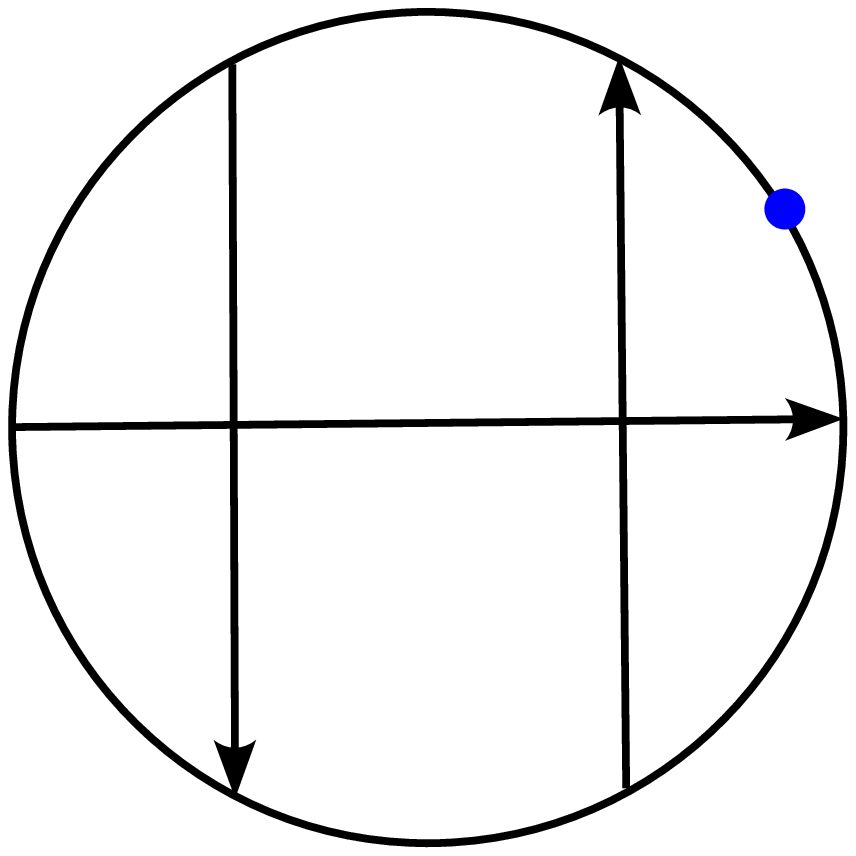}} - \raisebox{-.4\height}{\includegraphics[width=1.75cm]{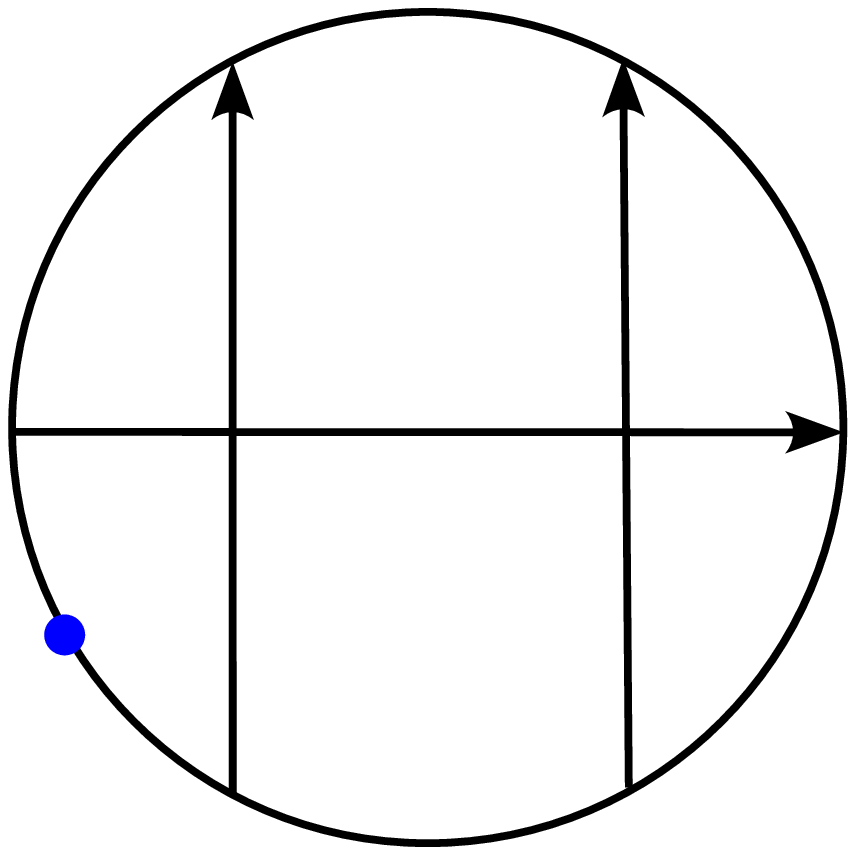}} \\
    &+ \raisebox{-.4\height}{\includegraphics[width=1.75cm]{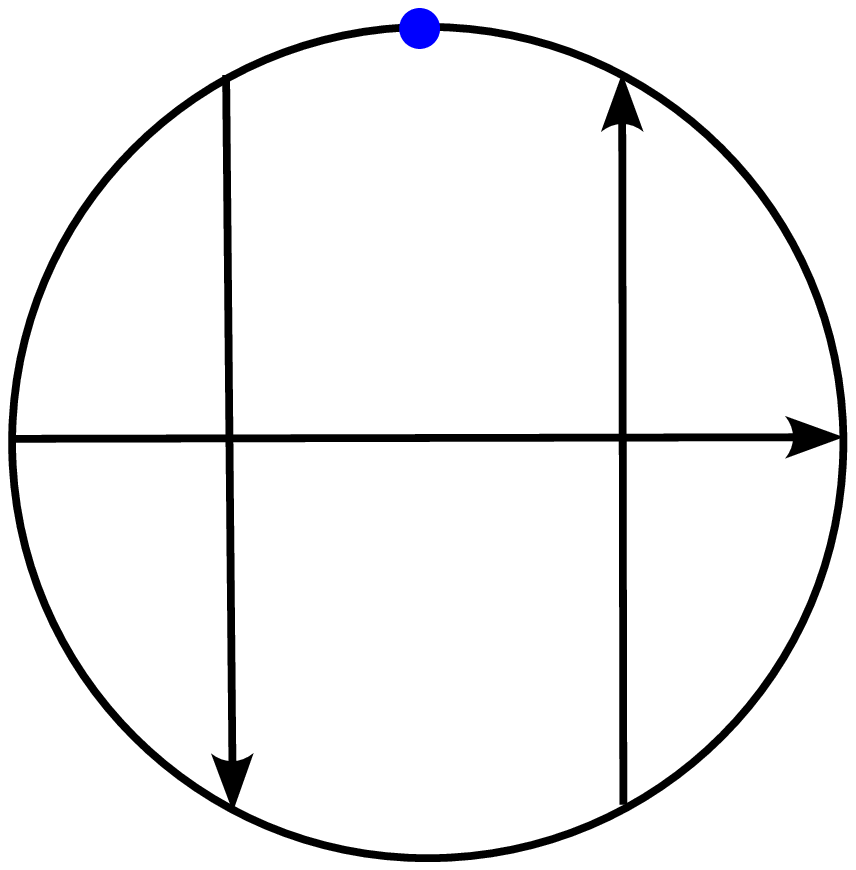}} - \raisebox{-.4\height}{\includegraphics[width=1.75cm]{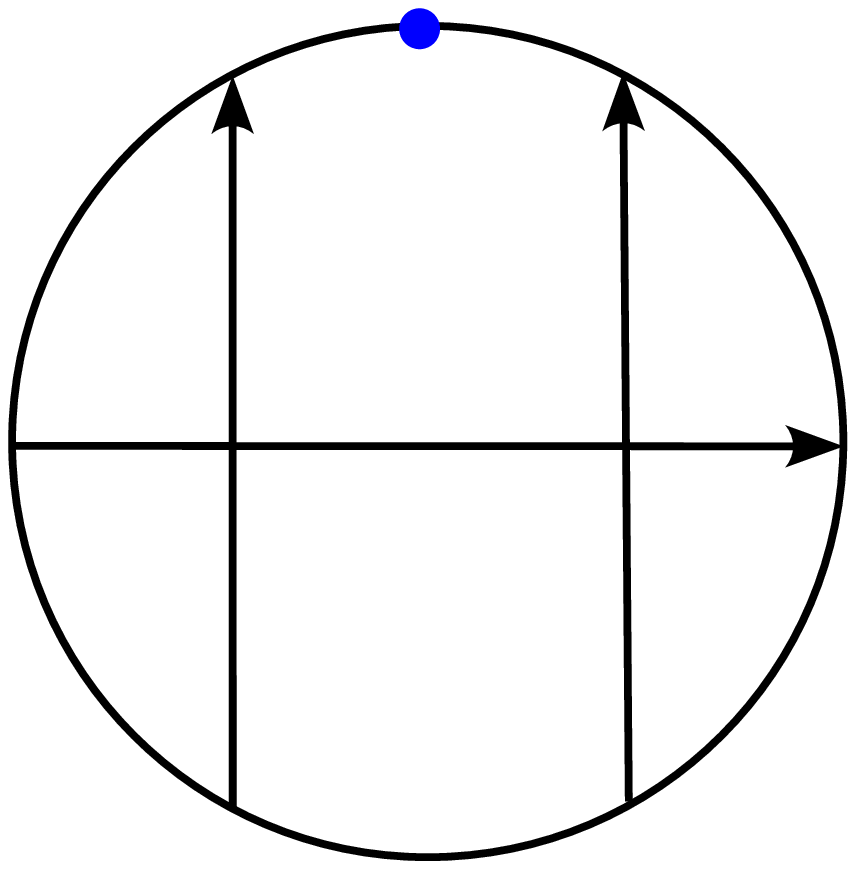}} + \raisebox{-.4\height}{\includegraphics[width=1.75cm]{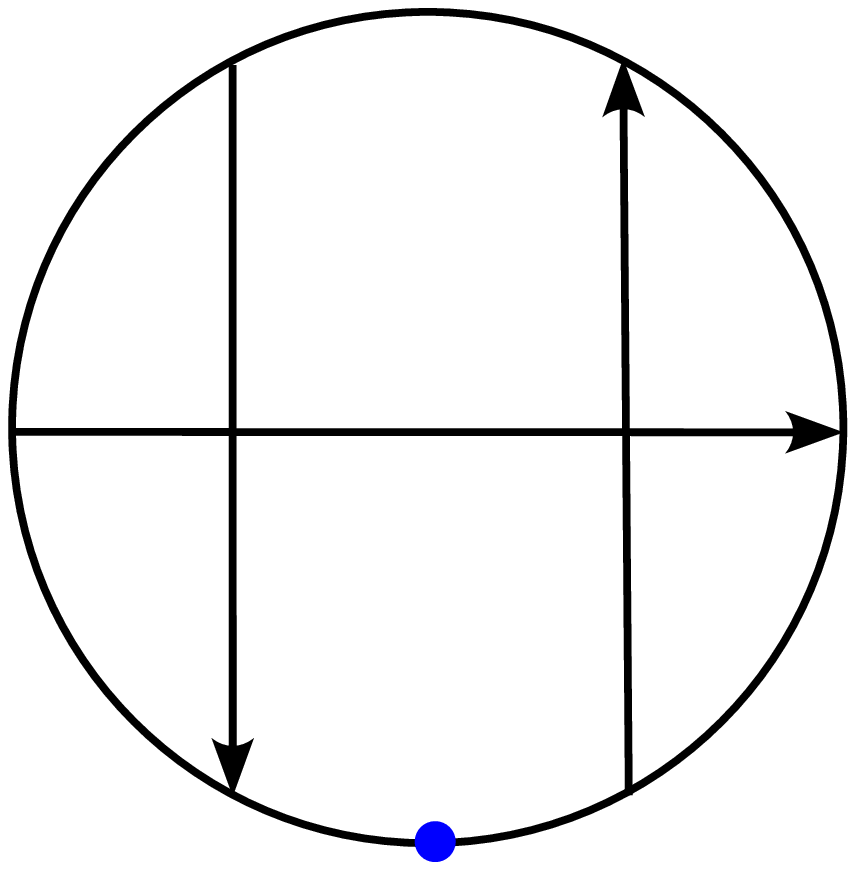}} - \raisebox{-.4\height}{\includegraphics[width=1.75cm]{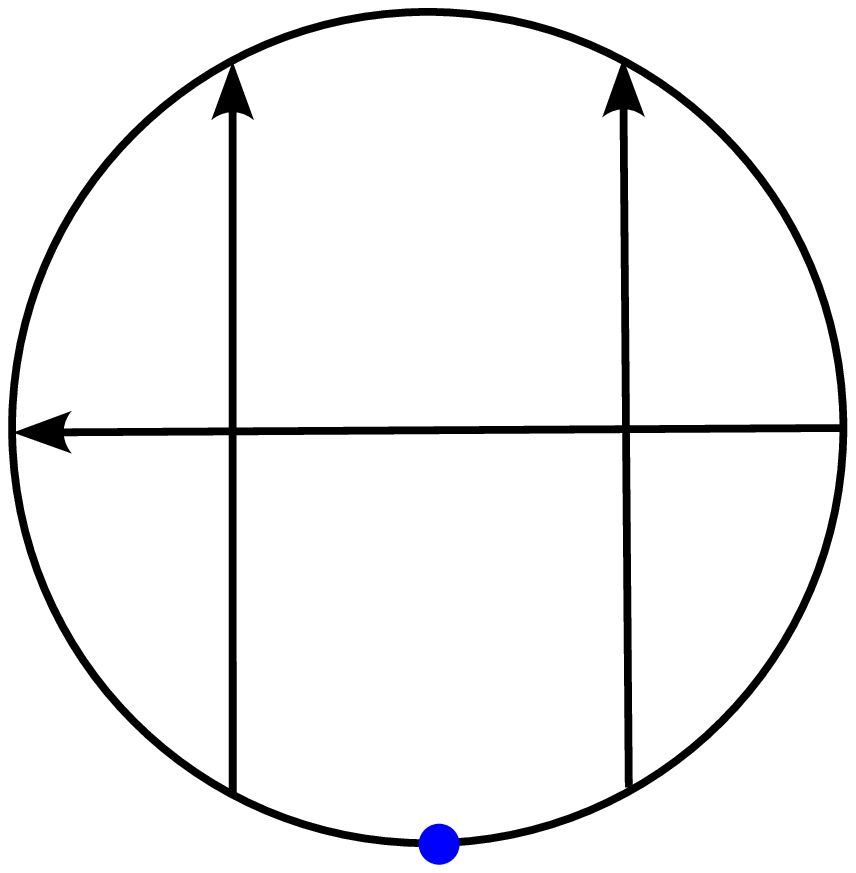}} + \raisebox{-.4\height}{\includegraphics[width=1.75cm]{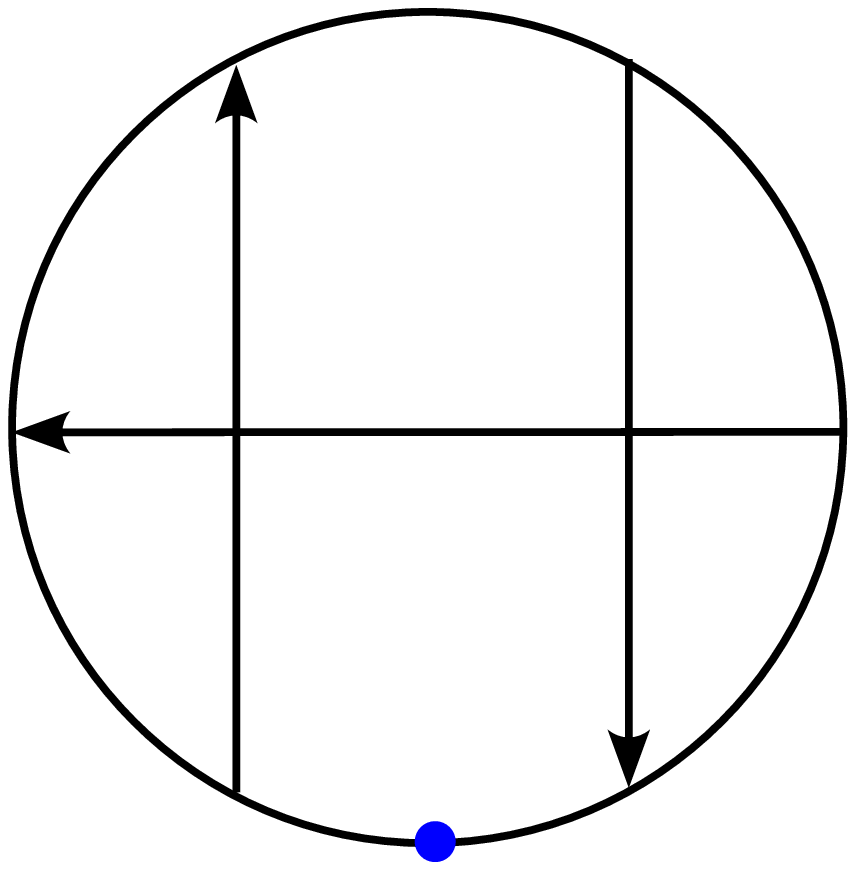}}
\end{align*}
\begin{align*}
    \dfrac{1}{8}A_{3,0} &= - \raisebox{-.4\height}{\includegraphics[width=1.75cm]{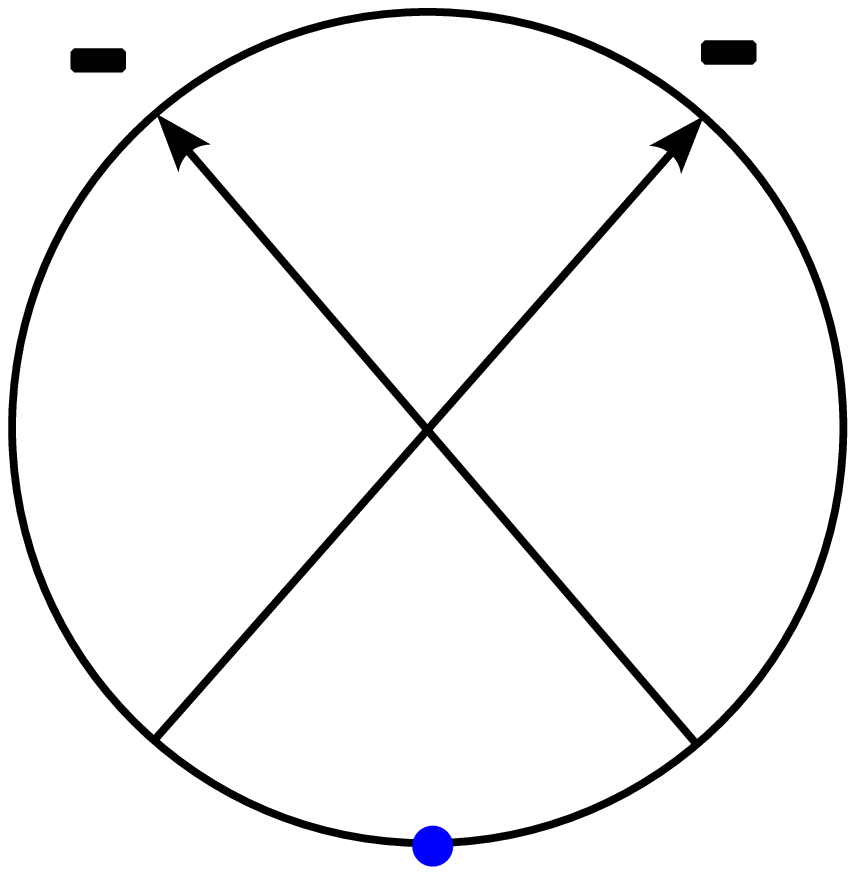}} + \raisebox{-.4\height}{\includegraphics[width=1.75cm]{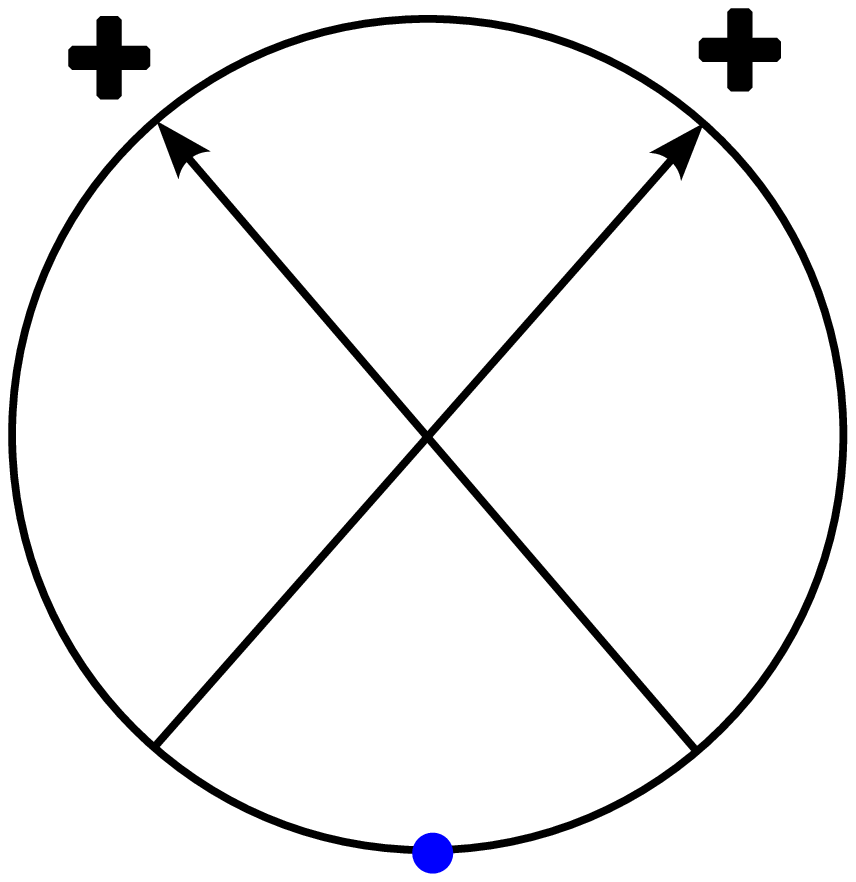}} + \raisebox{-.4\height}{\includegraphics[width=1.75cm]{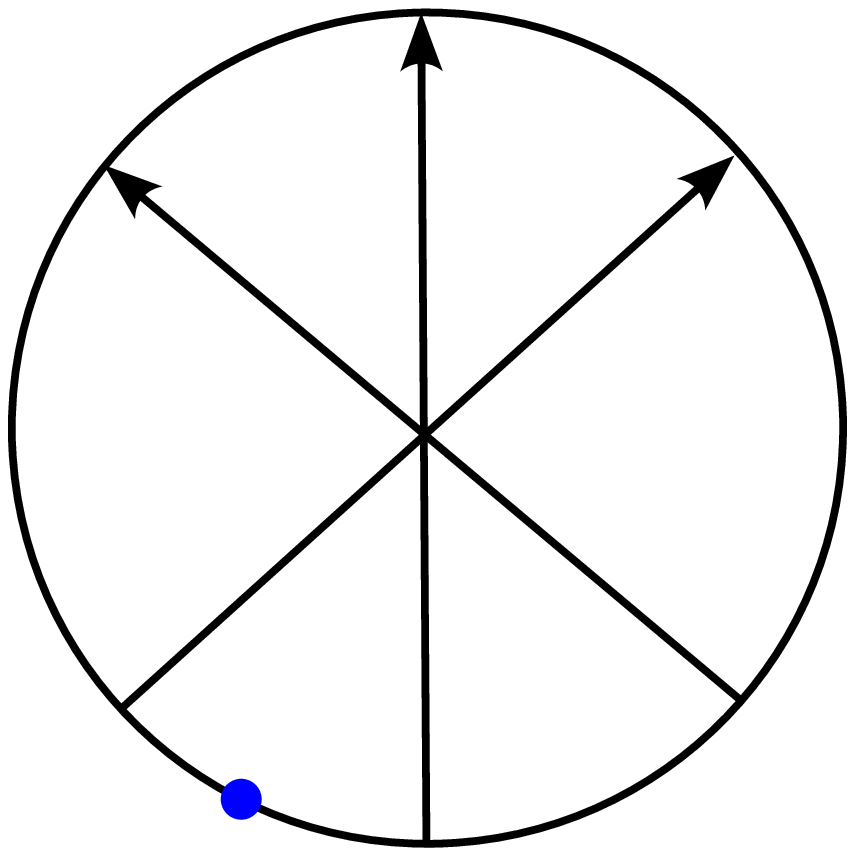}} + \raisebox{-.4\height}{\includegraphics[width=1.75cm]{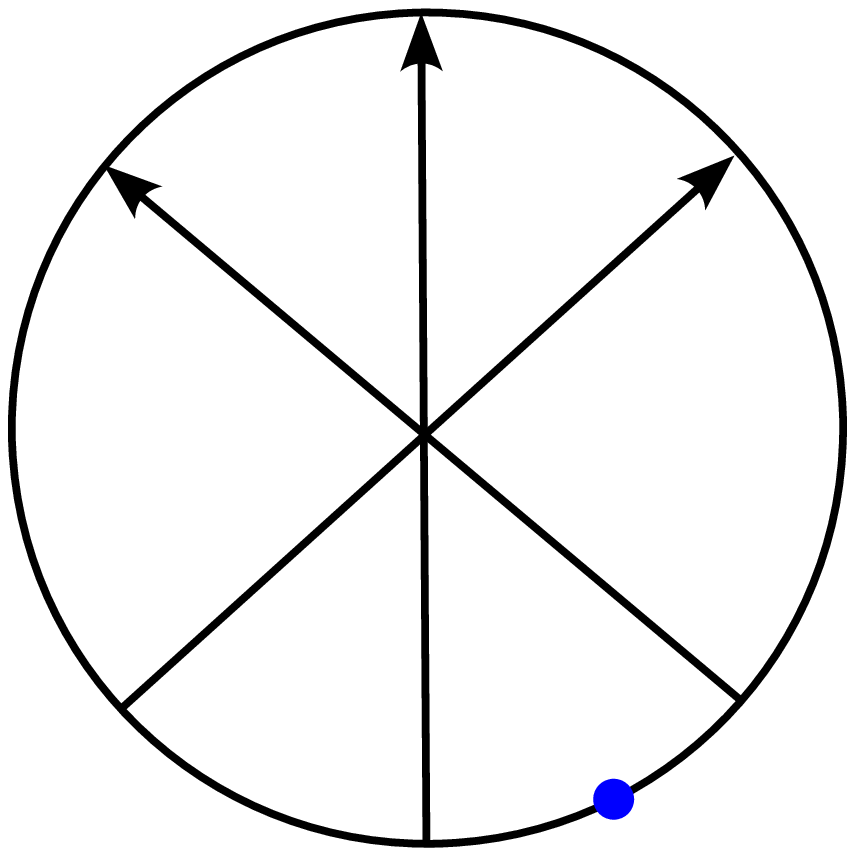}} + \raisebox{-.4\height}{\includegraphics[width=1.75cm]{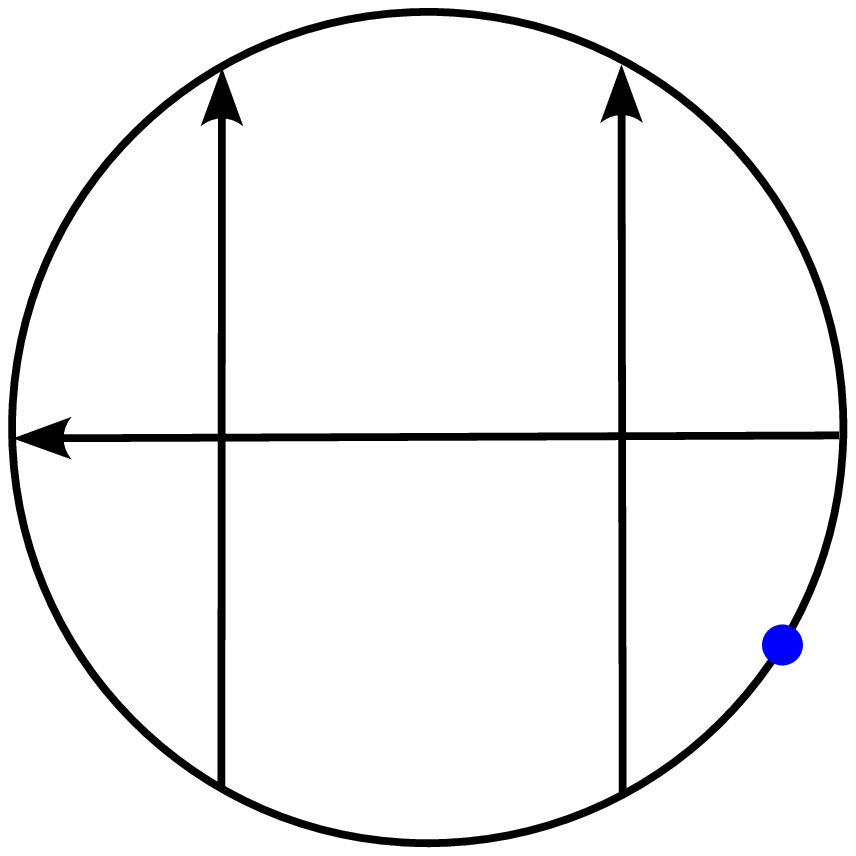}} \\
    &+ \raisebox{-.4\height}{\includegraphics[width=1.75cm]{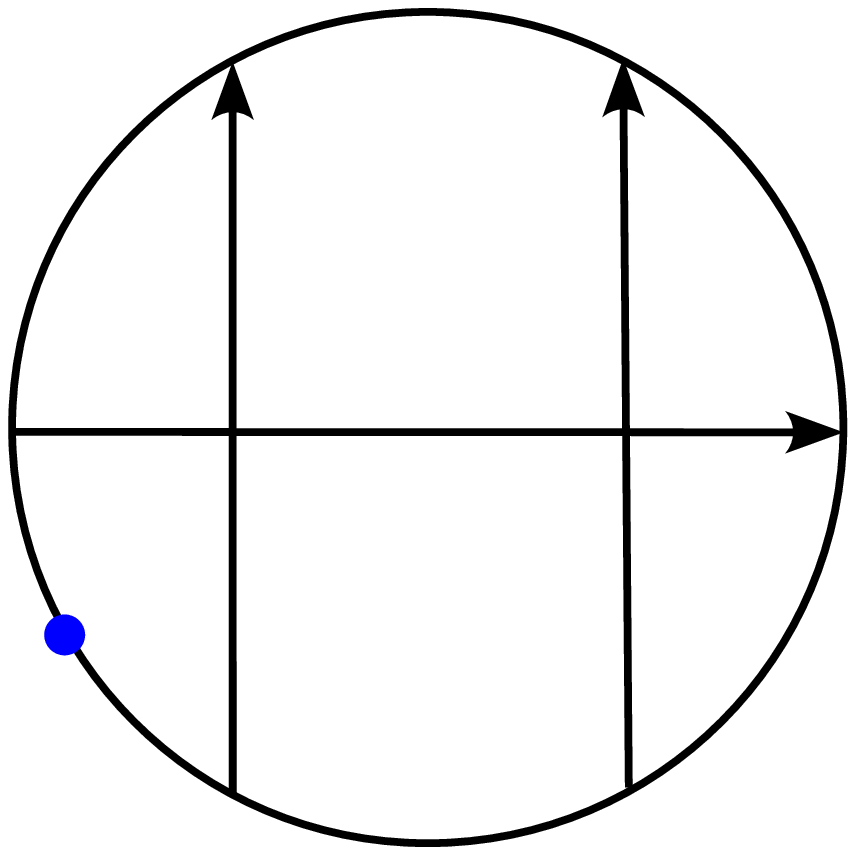}} - \raisebox{-.4\height}{\includegraphics[width=1.75cm]{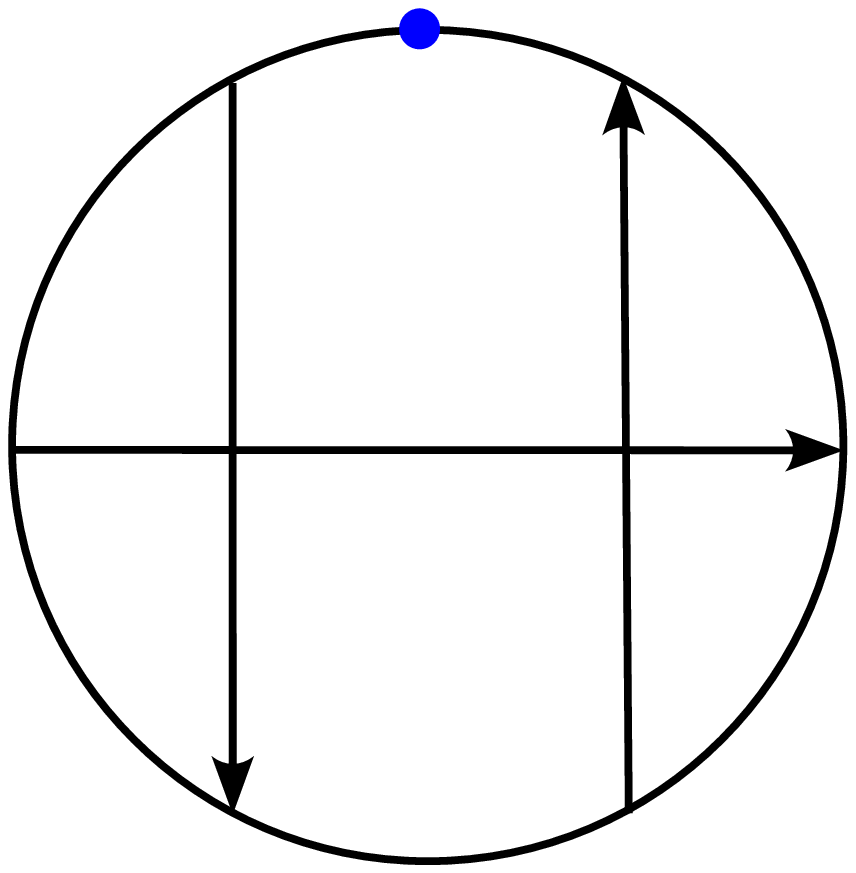}} + \raisebox{-.4\height}{\includegraphics[width=1.75cm]{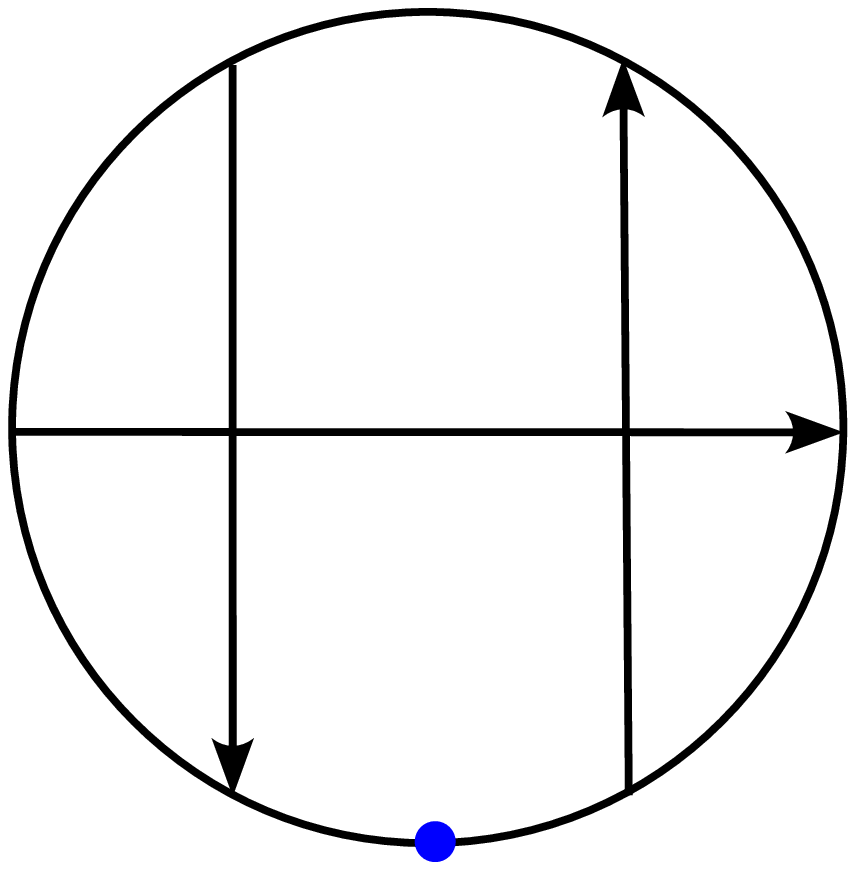}} + \raisebox{-.4\height}{\includegraphics[width=1.75cm]{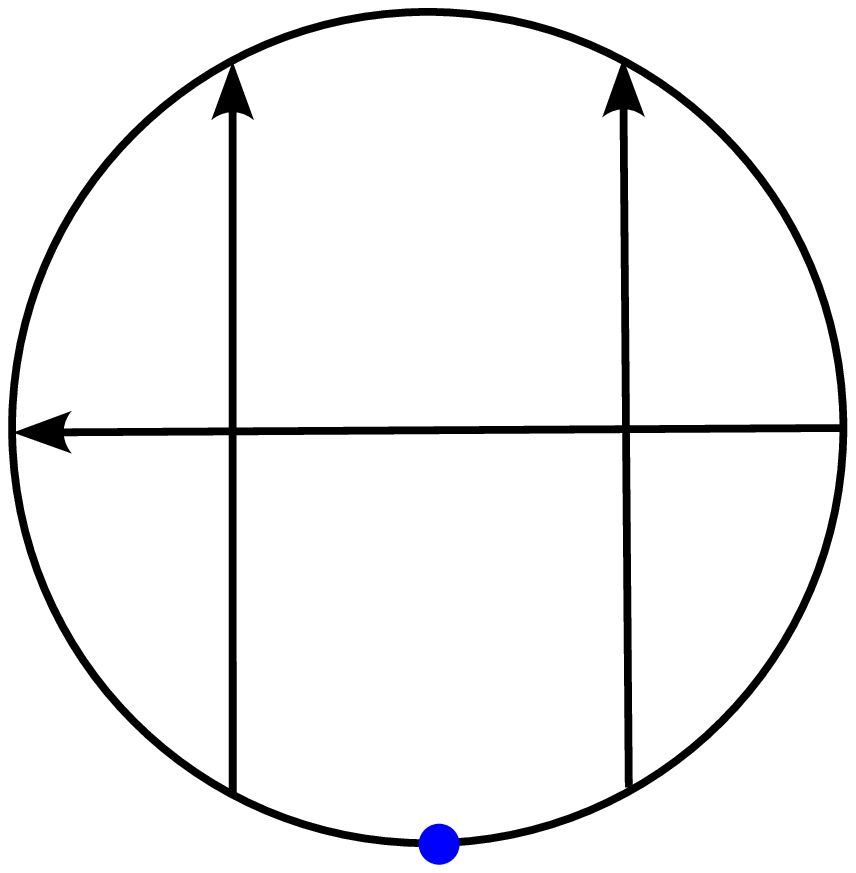}}
\end{align*}

\begin{align*}
    -\dfrac{1}{4}A_{2,1} &= -\raisebox{-.4\height}{\includegraphics[width=1.75cm]{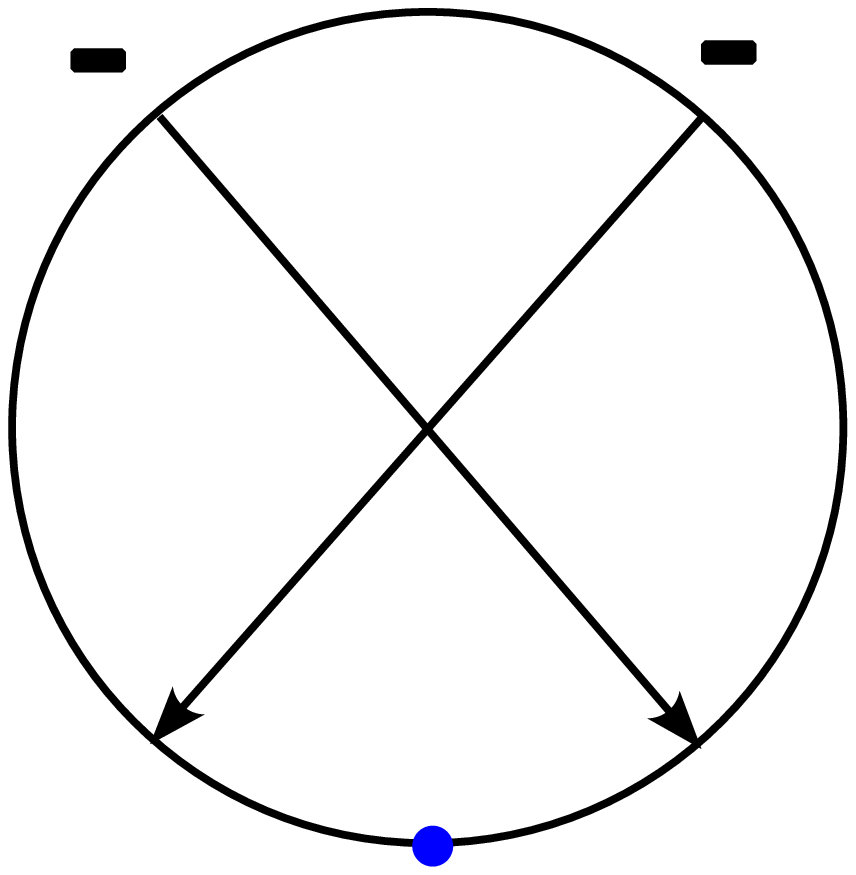}} + \raisebox{-.4\height}{\includegraphics[width=1.75cm]{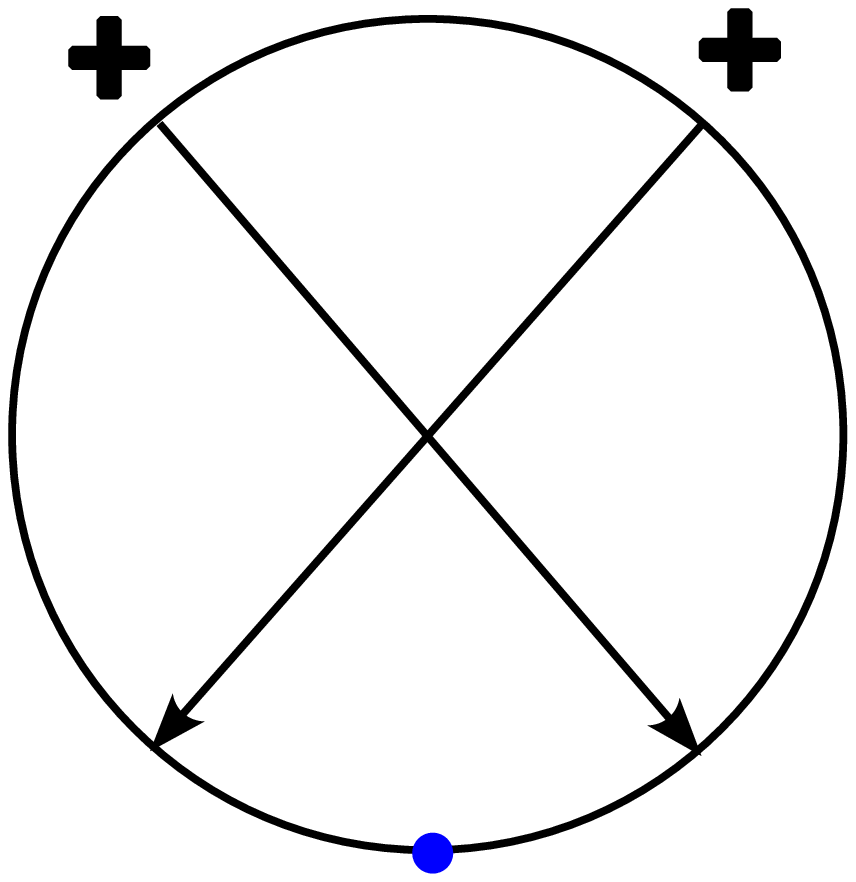}} + \raisebox{-.4\height}{\includegraphics[width=1.75cm]{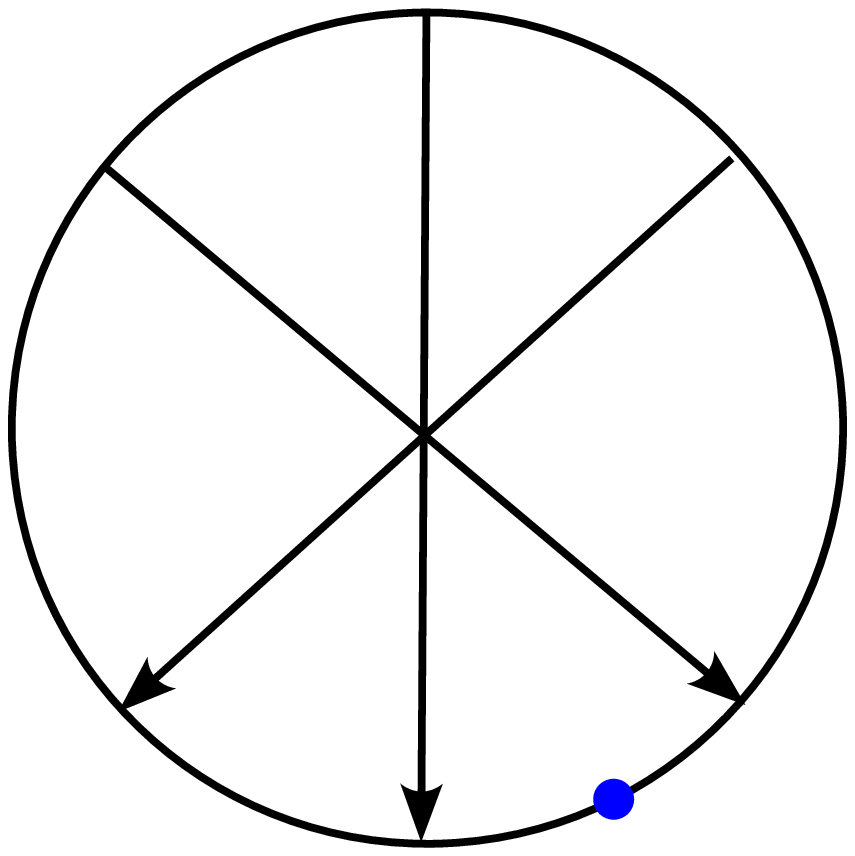}} + \raisebox{-.4\height}{\includegraphics[width=1.75cm]{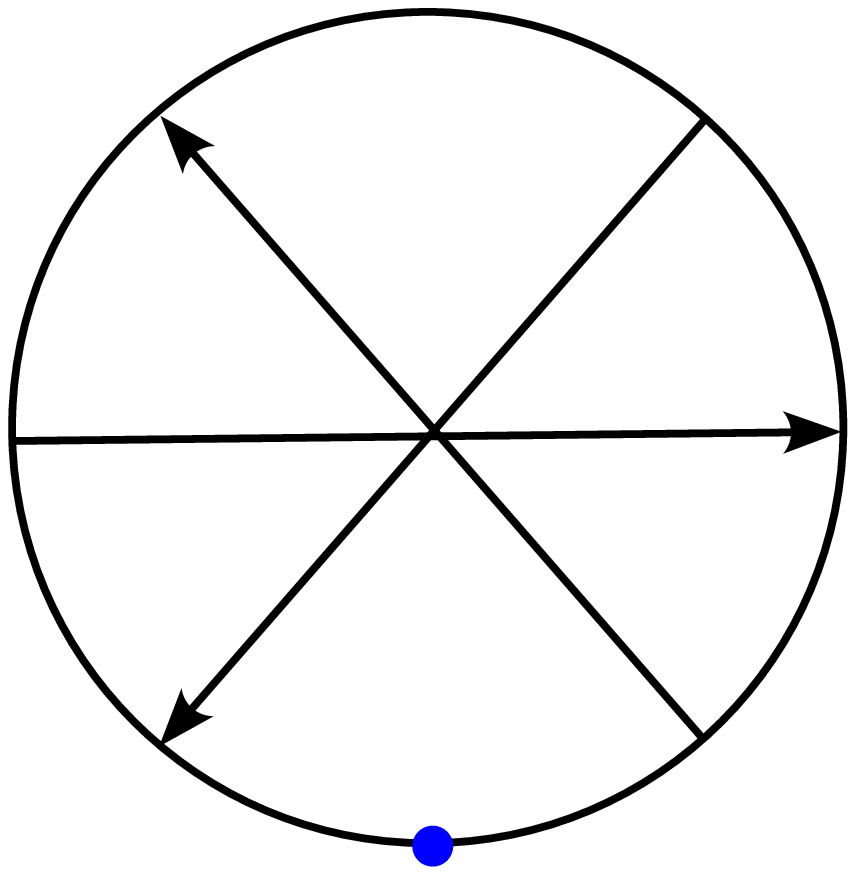}} + \raisebox{-.4\height}{\includegraphics[width=1.75cm]{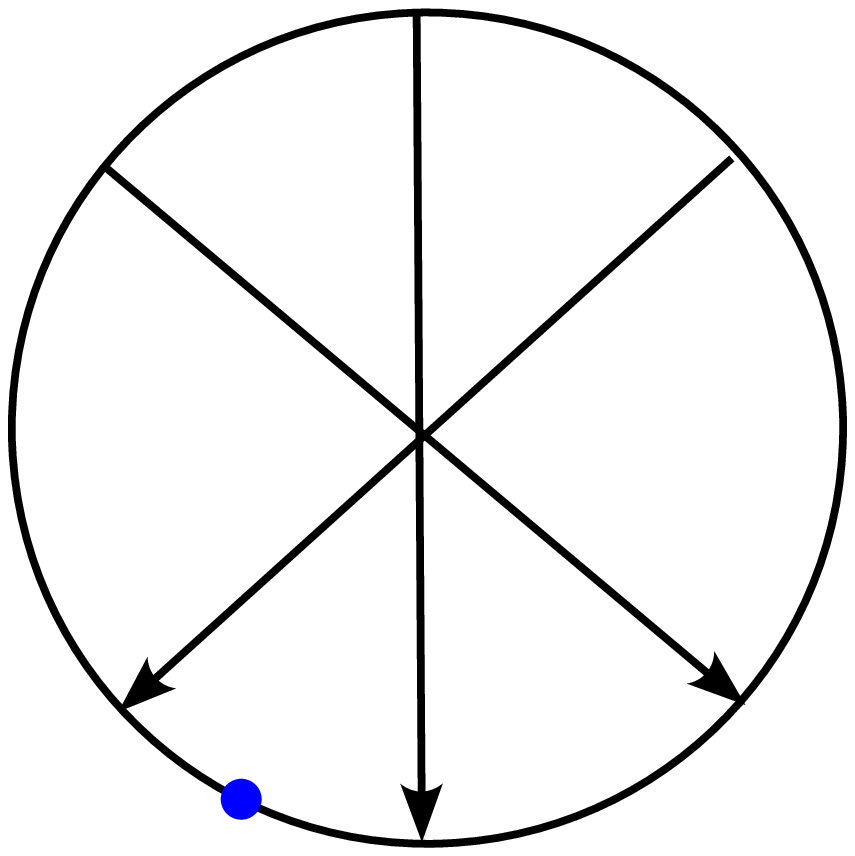}} \\
    &+ \raisebox{-.4\height}{\includegraphics[width=1.75cm]{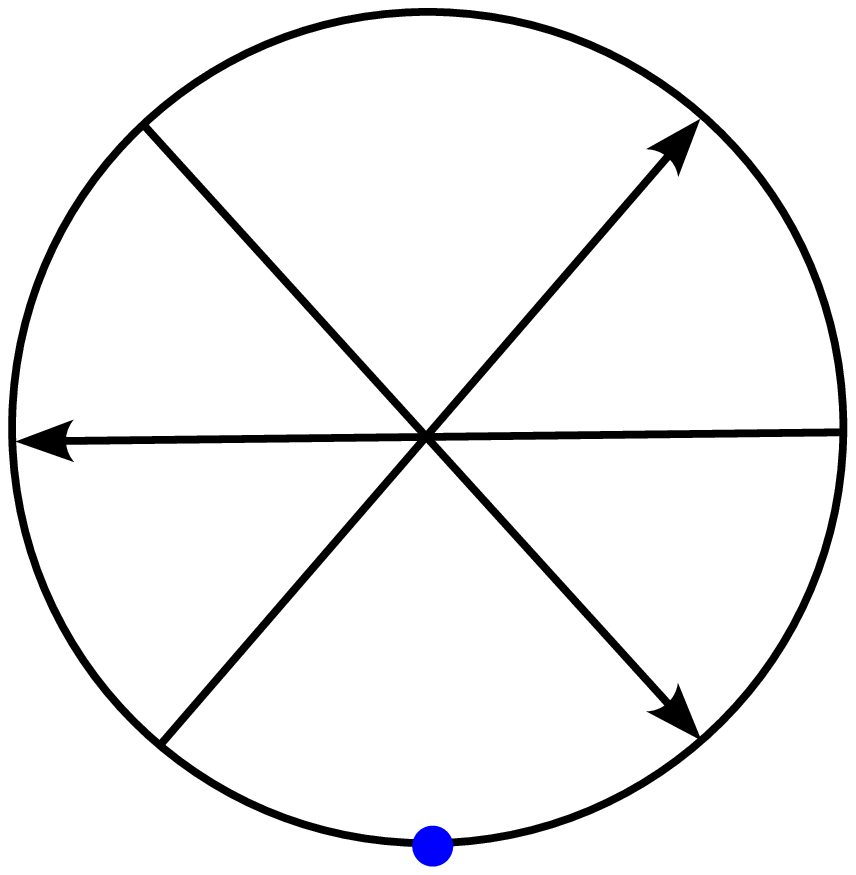}} + \raisebox{-.4\height}{\includegraphics[width=1.75cm]{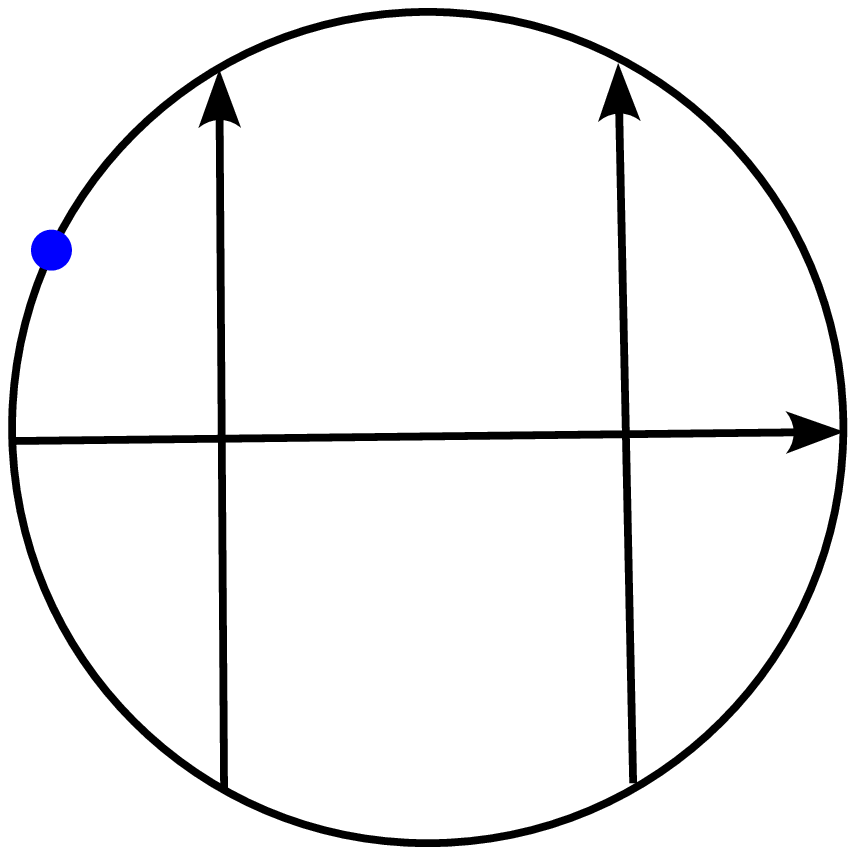}} + \raisebox{-.4\height}{\includegraphics[width=1.75cm]{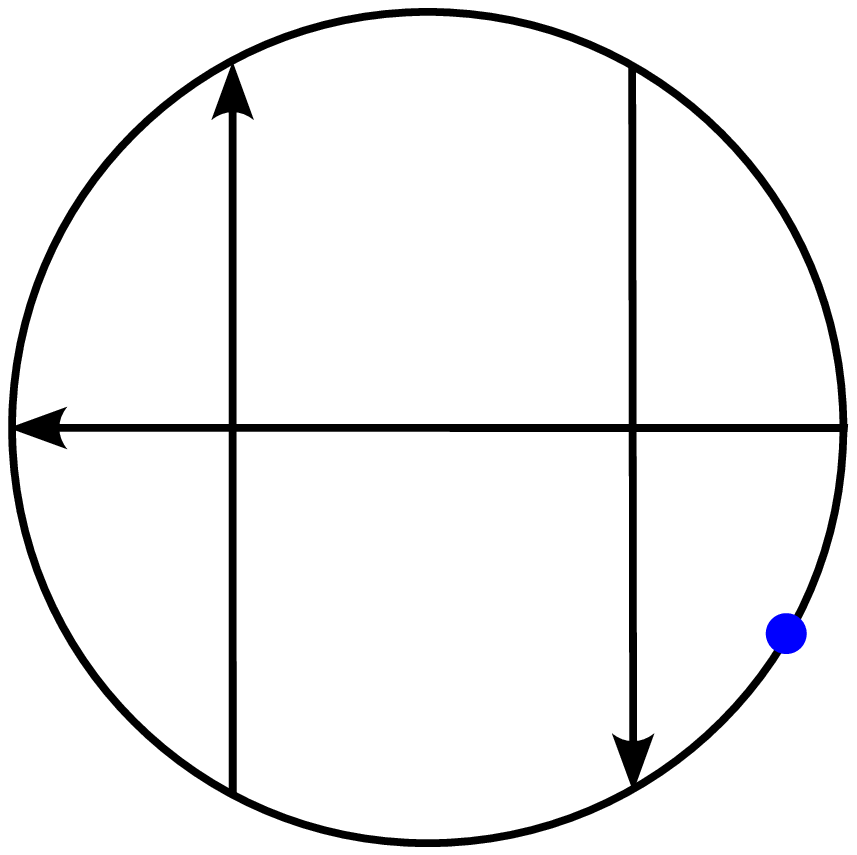}} - \raisebox{-.4\height}{\includegraphics[width=1.75cm]{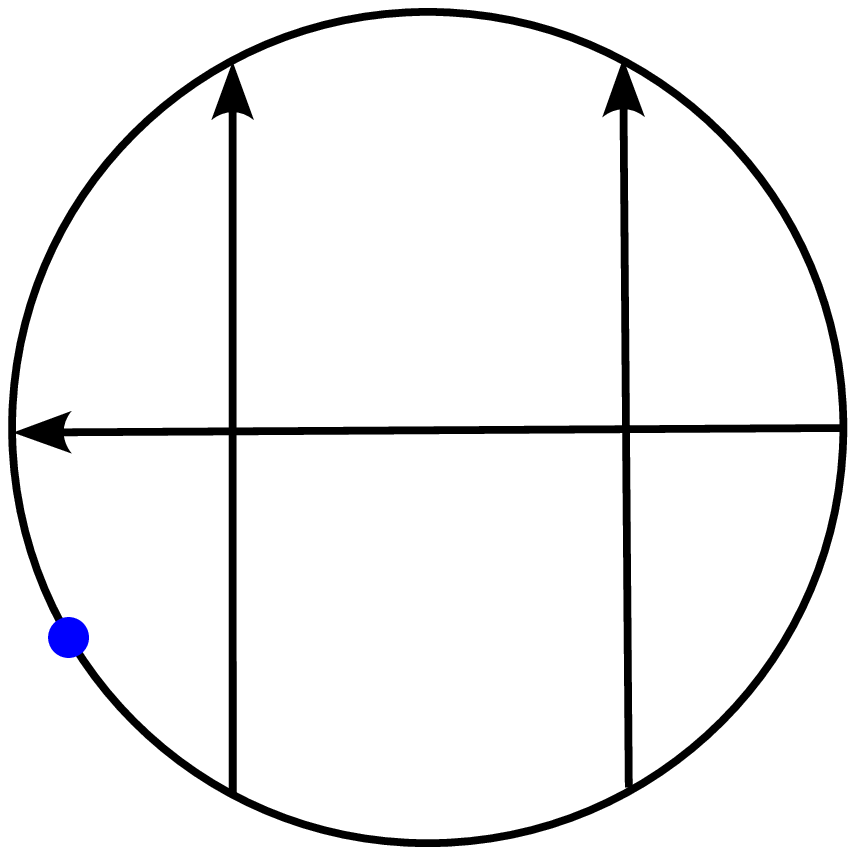}} + \raisebox{-.4\height}{\includegraphics[width=1.75cm]{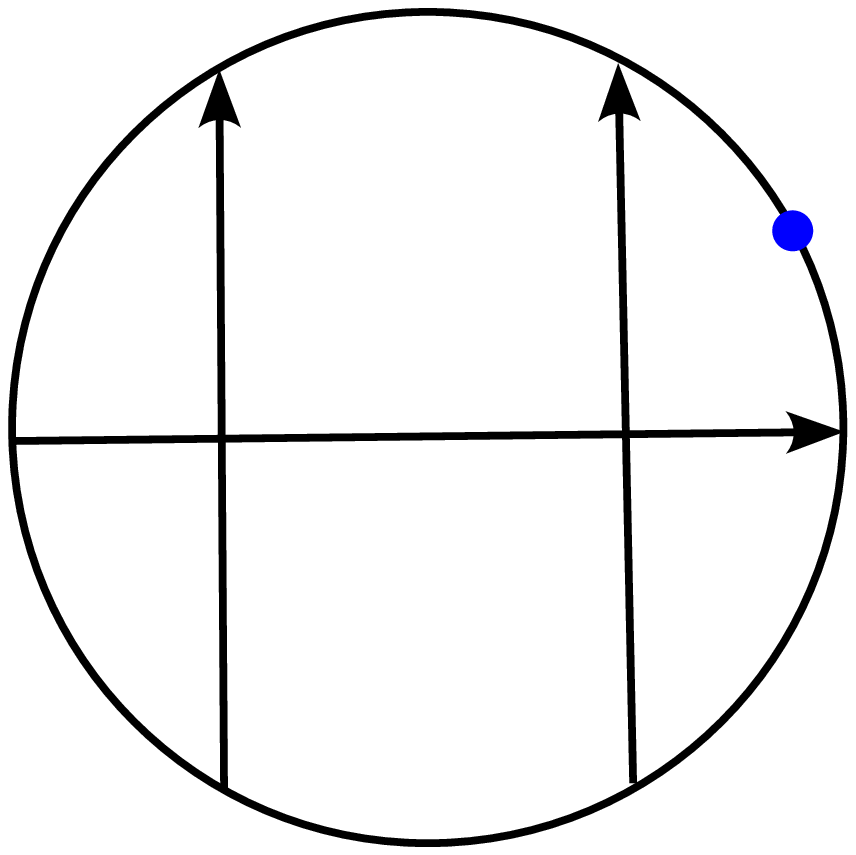}}\\ 
    &+ \raisebox{-.4\height}{\includegraphics[width=1.75cm]{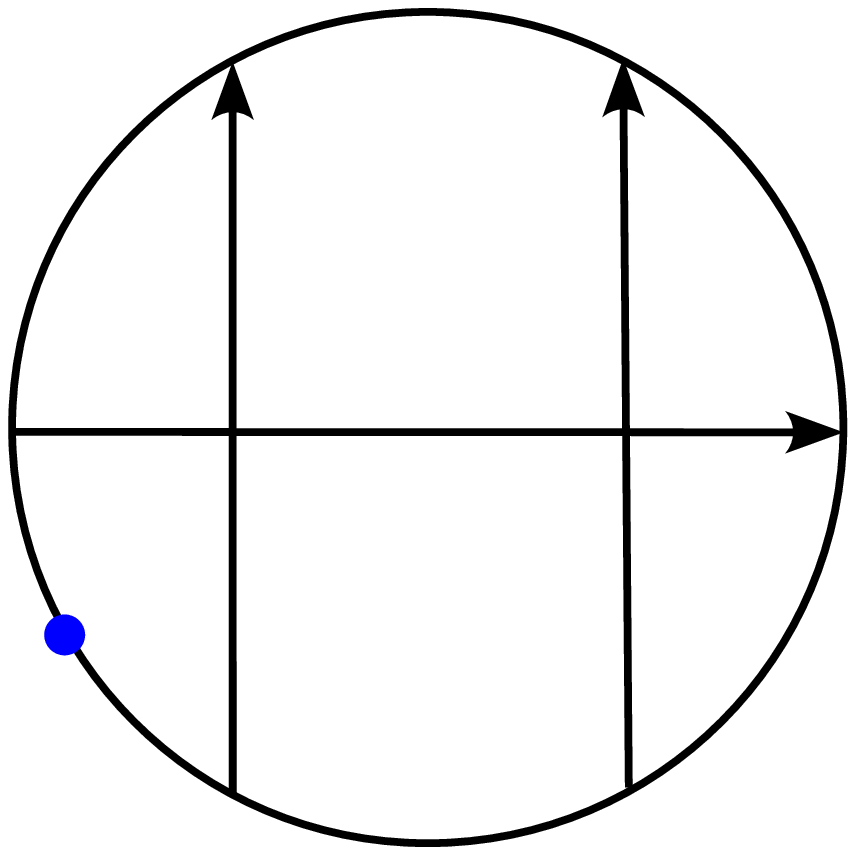}} + \raisebox{-.4\height}{\includegraphics[width=1.75cm]{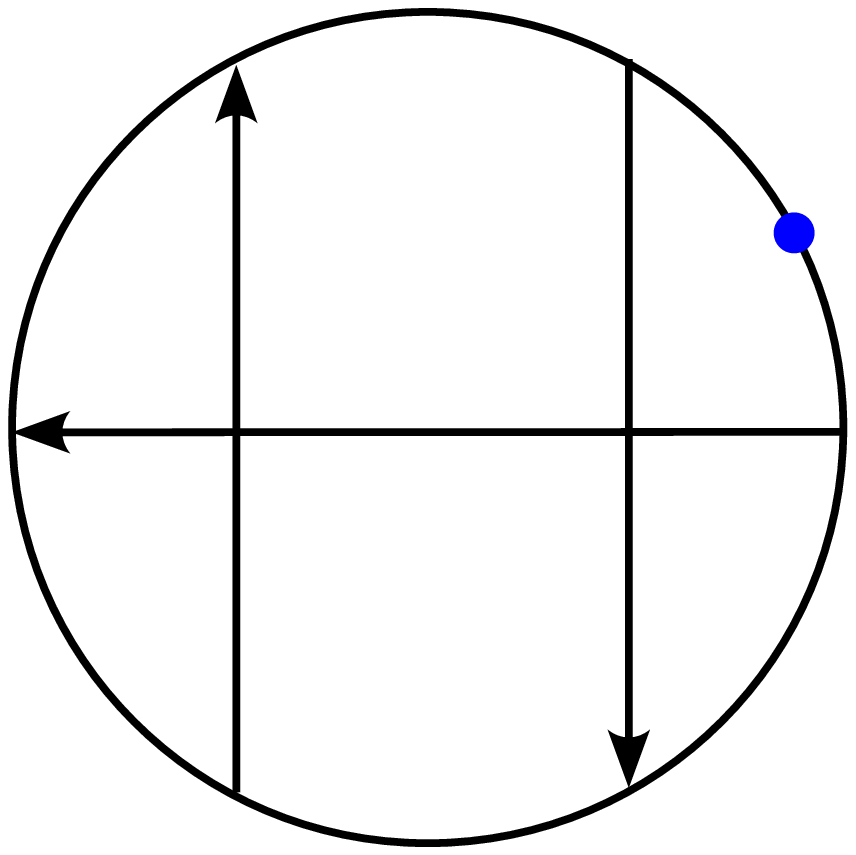}} + \raisebox{-.4\height}{\includegraphics[width=1.75cm]{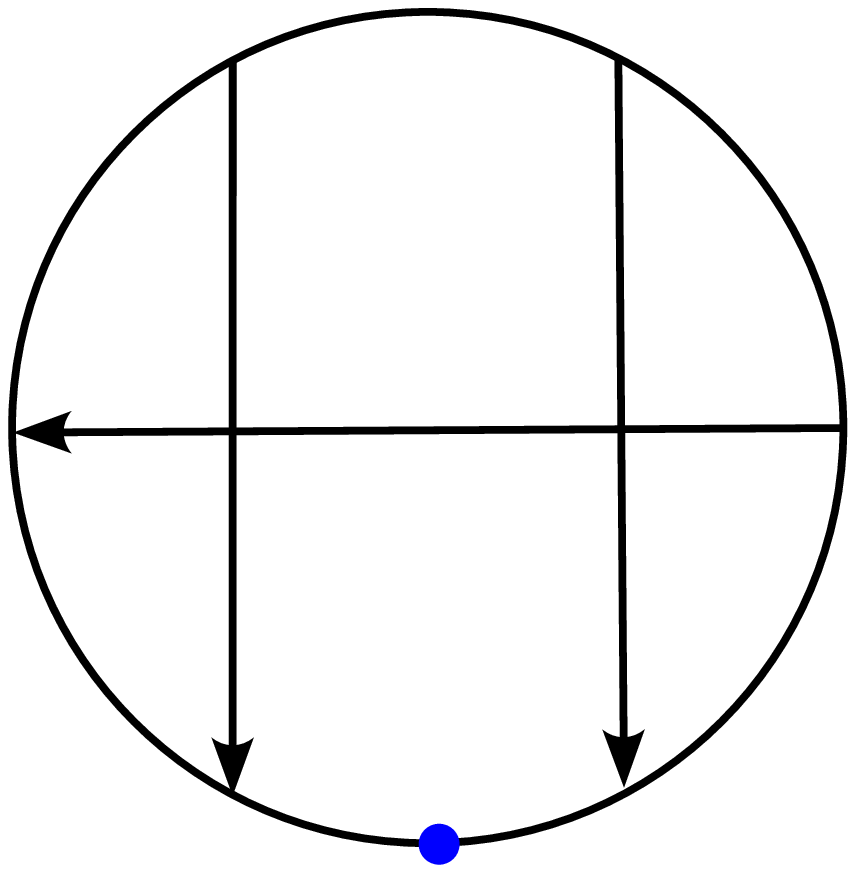}} + \raisebox{-.4\height}{\includegraphics[width=1.75cm]{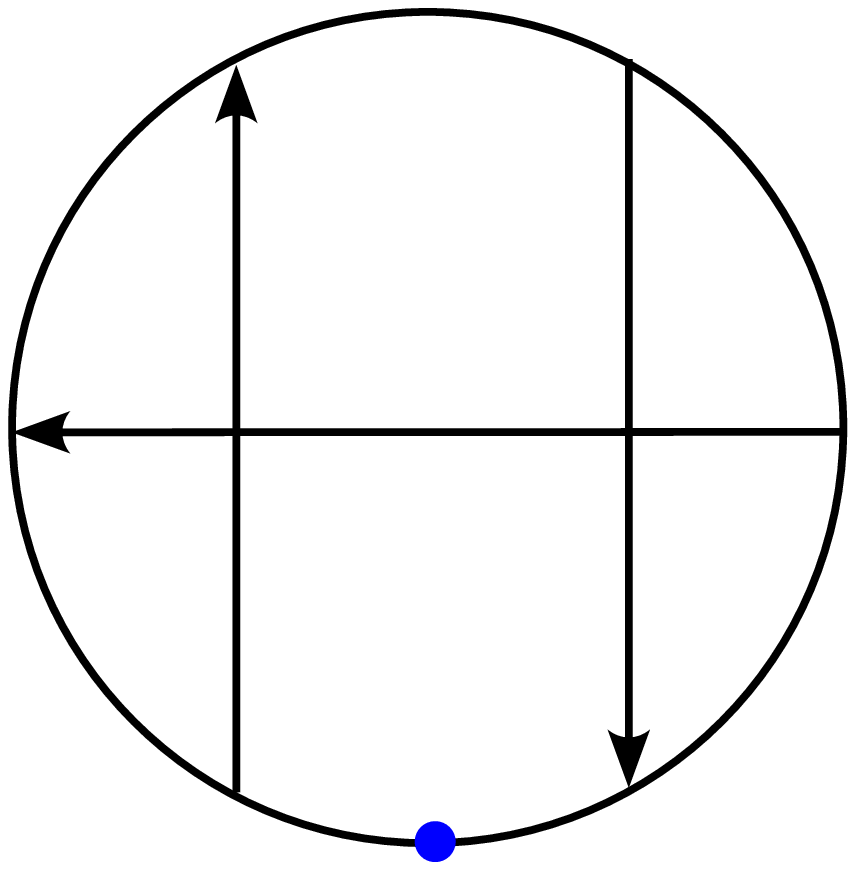}}
\end{align*}
\end{eg}

\begin{prop}
    We have $A_{0,n} = 0$ and $p_{0,n}(\cdot)=0$ for $n \ge 1$.
\end{prop}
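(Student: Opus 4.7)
The plan is to deduce both claims from (the argument behind) Proposition \ref{prop.evaluation}. The identity $p_{0,n}(K)=0$ is the easy half: since $p_{0,n}(K)$ is the coefficient of $h^0z^n=z^n$ in $DK_K(e^h,z)$, specializing $h=0$ (i.e.\ $a=1$) gives
\[
\sum_{l\ge 0}p_{0,l}(K)\,z^l \;=\; DK_K(1,z)\;=\;1
\]
by Proposition \ref{prop.evaluation}, so $p_{0,l}(K)=0$ for every $l\ge 1$. Alternatively, this is forced by the stronger claim $A_{0,n}=0$ combined with Theorem \ref{thm.representation}.

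For $A_{0,n}=0$, it suffices to show $w_{0,n}(A)=0$ for every arrow diagram $A$, and hence that $W_A(1,z)=0$ for every $A$ (taking coefficients of $z^n$ on both sides). I will adapt the pairing argument of Proposition \ref{prop.evaluation} to the arrow--diagram weights of Table \ref{table.weight_arrow}. The crucial observation is the following comparison between the two weight tables at $a=1$: the unlabelled entries of Table \ref{table.weight_arrow} are obtained from those of Table \ref{table.weight} by subtracting $1$, and at $a=1$ the relevant expressions $a^{(-1)^n-1}$ and $a^{(-1)^{n+1}\pm 1}$ are all equal to $1$; consequently every unlabelled arrow contributes the factor $0$ at $a=1$. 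The labelled entries of the two tables agree, and the component factor $d=(a-a^{-1})/z+1$ becomes $1$ at $a=1$, so $d^{c(\sigma)-1}=1$ irrespective of the number of components.

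It follows that in $W_A(1,z)$ only the \emph{fully labelled} states can contribute; on this subset I set up an involution by flipping the label (between $0$ and $\infty$) of the last arrow $\alpha$ encountered at a first passage during the process. I then need to check three things: (i) in the flipped state $\sigma'$ the order of first passages up to and including $\alpha$ is identical to that of $\sigma$, because the label of $\alpha$ only influences the evolution \emph{after} the first passage at $\alpha$, and after that moment no new first passages occur in a fully labelled state; (ii) the change numbers $n(\beta)$ of all other arrows are unaffected, since $n(\alpha)$ stops growing at its own first passage and the change numbers of arrows seen before $\alpha$ depend only on the $\infty$--labels of even earlier arrows; (iii) by inspection of rows $4$ and $6$ of Table \ref{table.weight_arrow} at $a=1$, swapping the label of $\alpha$ between $0$ and $\infty$ multiplies its weight by $-1$ while leaving $n(\alpha)$ fixed. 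Combined with $d^{c-1}=1$, this gives $w'(A,\sigma)=-w'(A,\sigma')$ at $a=1$, so the contributions cancel in pairs.

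The main technical obstacle is precisely point (i): making rigorous the statement that flipping the label of the \emph{last} first--passed arrow is a well--defined involution on fully labelled states and does not reshuffle the process up to $\alpha$. Once this is carefully justified (by walking through the process of $\sigma$ and $\sigma'$ side by side and noting that they coincide strand--for--strand until the moment of the first passage at $\alpha$), the pairing conclusion $W_A(1,z)=0$ is immediate, whence $w_{0,n}(A)=0$ for every arrow diagram $A$ and every $n\ge 1$, so $A_{0,n}=\sum_A w_{0,n}(A)\cdot A=0$ in $\mathscr{A}$; the invariant statement $p_{0,n}(\cdot)=\langle A_{0,n},\cdot\rangle=0$ then follows from Theorem \ref{thm.representation}.
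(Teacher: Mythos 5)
Your proposal is correct and matches the paper's intent exactly: the paper disposes of this proposition in one line by noting it is either a corollary of Proposition \ref{prop.evaluation} (which gives $p_{0,n}=0$ by setting $a=1$) or provable "using the same method" (the label-flipping involution, which you carry out for the weights $w'$ of Table \ref{table.weight_arrow} to get the stronger statement $A_{0,n}=0$). You supply more detail than the paper does -- in particular the observation that at $a=1$ every unlabelled arrow has $w'$-weight $0$, so only fully labelled states survive, and the care taken with point (i) -- but the argument is the same one the paper has in mind.
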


This can be seen as a corollary of Proposition \ref{prop.evaluation} or be proved using the same method.
\section{Identities and simplification}\label{sec.identities}
Östlund gave a proof of several Gauss diagram identities in \cite{Ostlund:2004} using the method of counting linking number  in two ways, namely \raisebox{-.4\height}{\includegraphics[width=1.5cm]{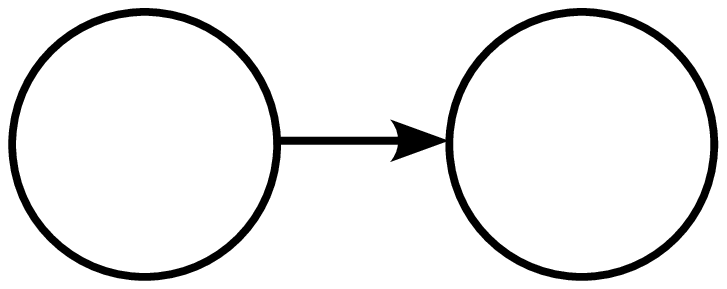}} $=$ \raisebox{-.4\height}{\includegraphics[width=1.5cm]{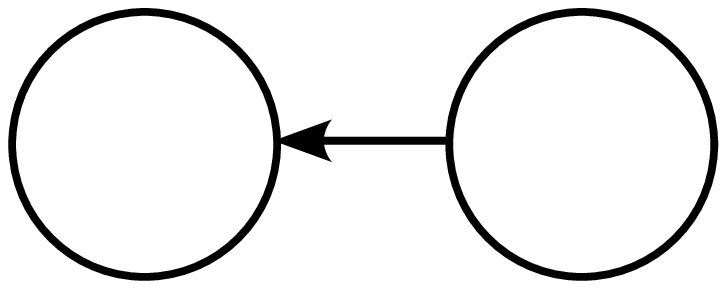}}. 
\begin{prop}\label{eq.id_deg2}
    As functions on Gauss diagrams of knots, we have 
    $$\raisebox{-.4\height}{\includegraphics[width=1.75cm]{image/A_20.eps}} = \raisebox{-.4\height}{\includegraphics[width=1.75cm]{image/A_11.eps}}$$
\end{prop}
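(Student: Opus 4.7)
The natural approach is the two-way linking-number counting of \"{O}stlund that is invoked immediately before the statement. Fix a Gauss diagram $G$ of a knot $K$, and unfold both sides of the claimed equality as double sums over ordered pairs $(\alpha,\beta)$ of distinct arrows of $G$, weighted by $\varepsilon_\alpha\varepsilon_\beta$, whose four endpoints realize one of the two cyclic interleaving patterns drawn in the proposition. Once this is done, the identity reduces to a local statement attached to each individual arrow of $G$.

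Concretely, I would single out one arrow $\alpha$, so that its two endpoints cut the Wilson circle into two arcs $\gamma^+_\alpha$ and $\gamma^-_\alpha$. For any other arrow $\beta$, its head and foot distribute on these two arcs in four possible ways; the two ``non-interleaved'' distributions (both endpoints of $\beta$ on the same arc) contribute to neither side, while the two ``interleaved'' distributions correspond exactly to the two pictured arrow diagrams, differing only in whether the head of $\beta$ sits on $\gamma^+_\alpha$ or on $\gamma^-_\alpha$. Thus evaluating either side at $G$ reduces, for each fixed $\alpha$, to the signed count of arrows $\beta$ with one specific head/foot distribution among the two interleaved options.

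The planar identity $\raisebox{-.4\height}{\includegraphics[width=1.3cm]{image/linking_number_1.eps}} = \raisebox{-.4\height}{\includegraphics[width=1.3cm]{image/linking_number_2.eps}}$ of \"{O}stlund then applies: because $G$ has a single Wilson circle, the two arcs $\gamma^+_\alpha,\gamma^-_\alpha$ form a two-component planar tangle whose linking number can be read off from either of the two interleaved signed counts, so the two counts agree. Summing this local equality over all arrows $\alpha$ of $G$ produces exactly the global equality of the two arrow-diagram counts asserted in the proposition.

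The main obstacle will be the sign bookkeeping. The two pictured arrow diagrams come with specific orientations of their arrows and a specific cyclic ordering of their four endpoints, and I must verify that expanding $\langle\,\cdot\,,G\rangle$ assembles the signed contributions into precisely the ``head/foot on which arc'' description above, with no stray factor of $2$ coming from the ordered-versus-unordered choice of pair $(\alpha,\beta)$ and no correction term arising from cutting the circle at the endpoints of $\alpha$ or from the location of the base point.
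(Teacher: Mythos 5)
Your proposal is correct and is essentially the paper's own argument: smoothing (equivalently, cutting the circle at the endpoints of) each arrow $\alpha$ to obtain two components, equating the two over-crossing counts that both compute the linking number of that pair, and then summing over all $\alpha$ weighted by its sign. The sign and double-counting bookkeeping you flag does work out, since each interleaved pair $(\alpha,\beta)$ is counted once with $\alpha$ as the smoothed arrow and $\beta$ as the linking arrow, matching the roles of the two arrows in the pictured diagrams.
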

\begin{proof}
    For each arrow $\alpha$ \raisebox{-.4\height}{\includegraphics[width=1.5cm]{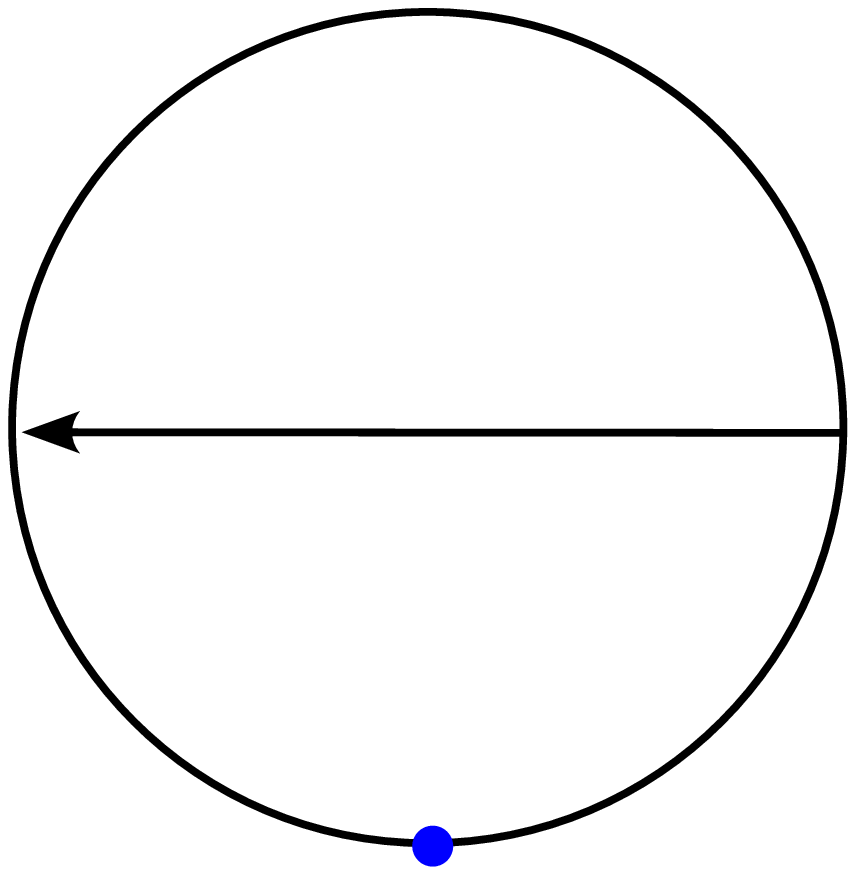}} in a Gauss diagram $G$ of a knot $K$, we can smooth it splitting the knot into two components \raisebox{-.4\height}{\includegraphics[width=1.5cm]{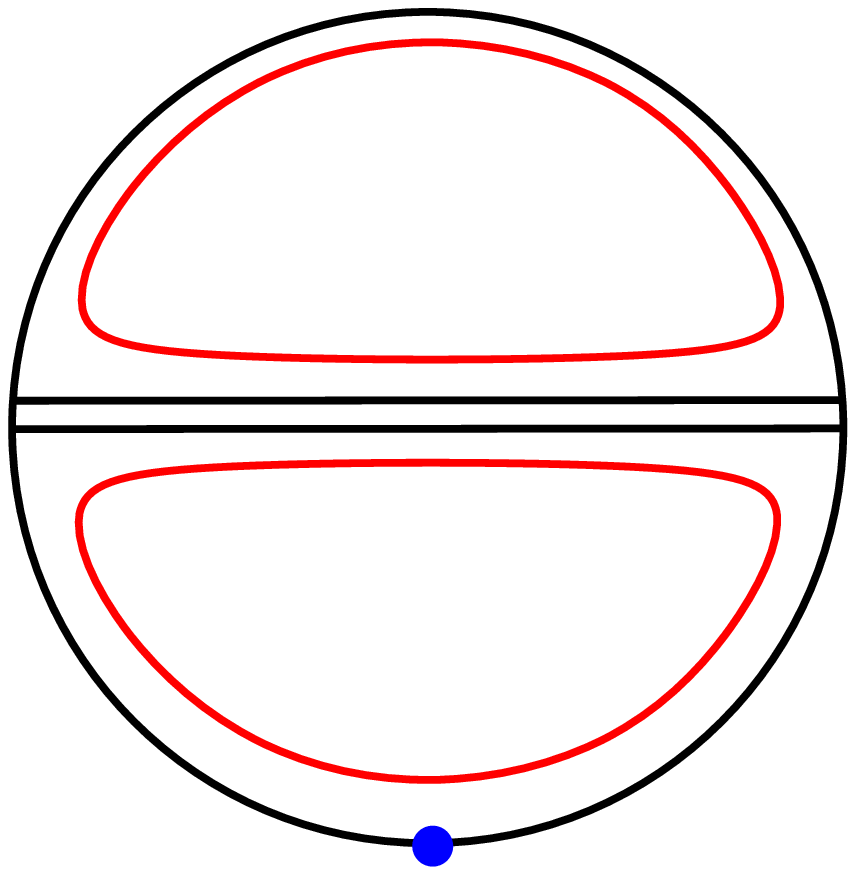}}. There are two ways to count the linking number of the two components.
    \begin{align*}
        \intertext{One is }
        \raisebox{-.4\height}{\includegraphics[width=1.75cm]{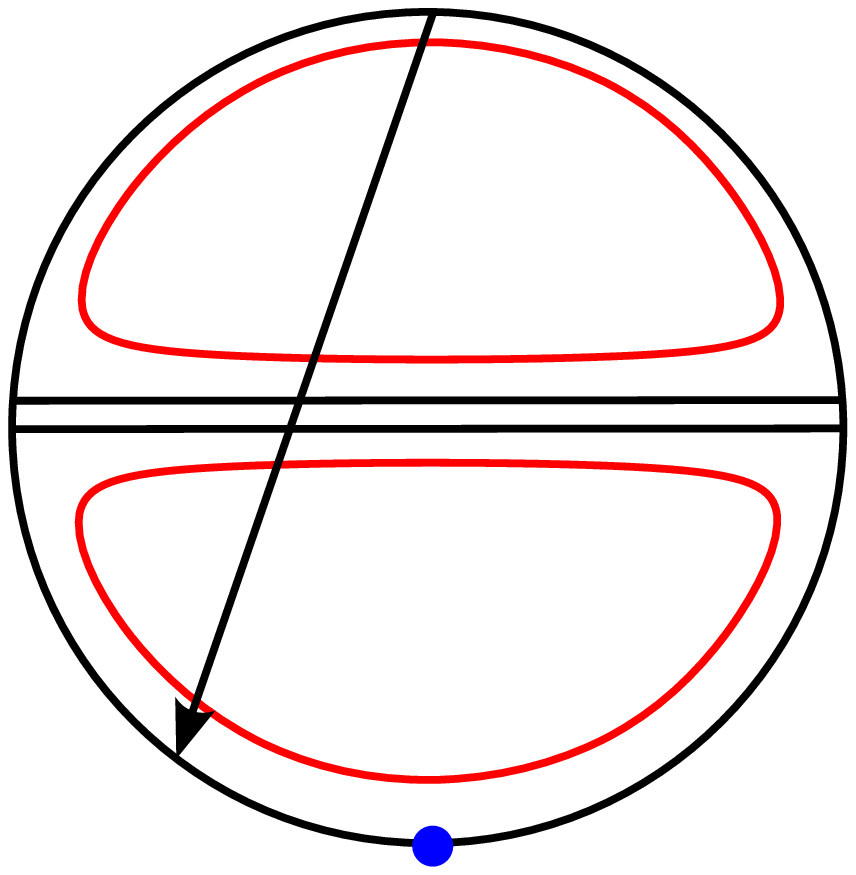}} + \raisebox{-.4\height}{\includegraphics[width=1.75cm]{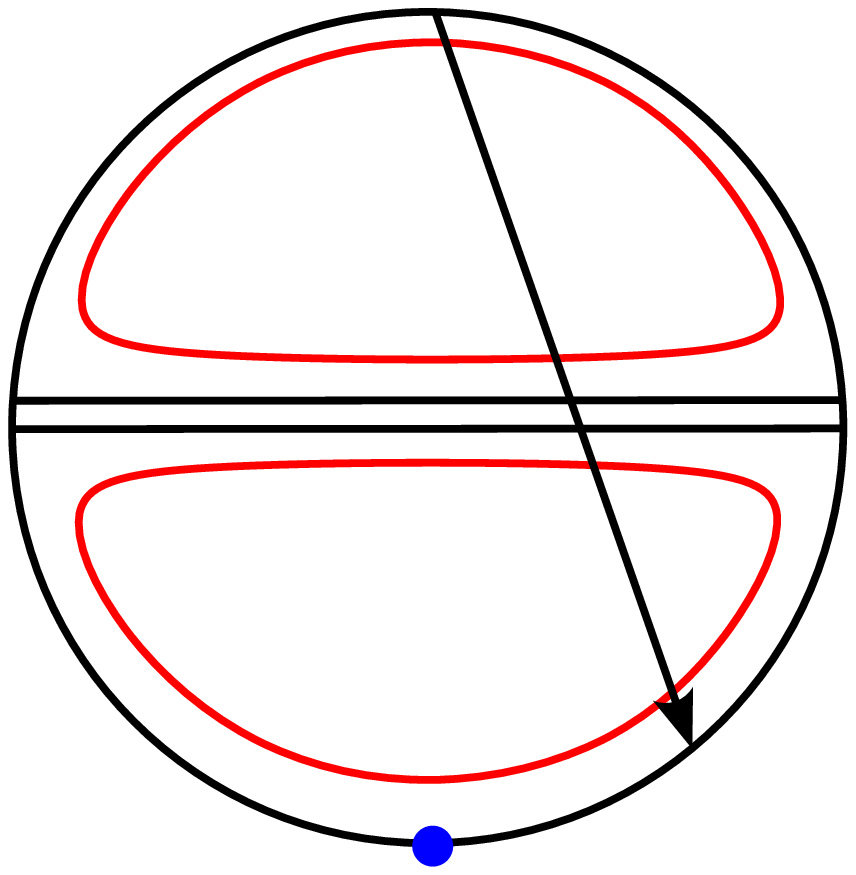}}\\
        \intertext{Another is }
        \raisebox{-.4\height}{\includegraphics[width=1.75cm]{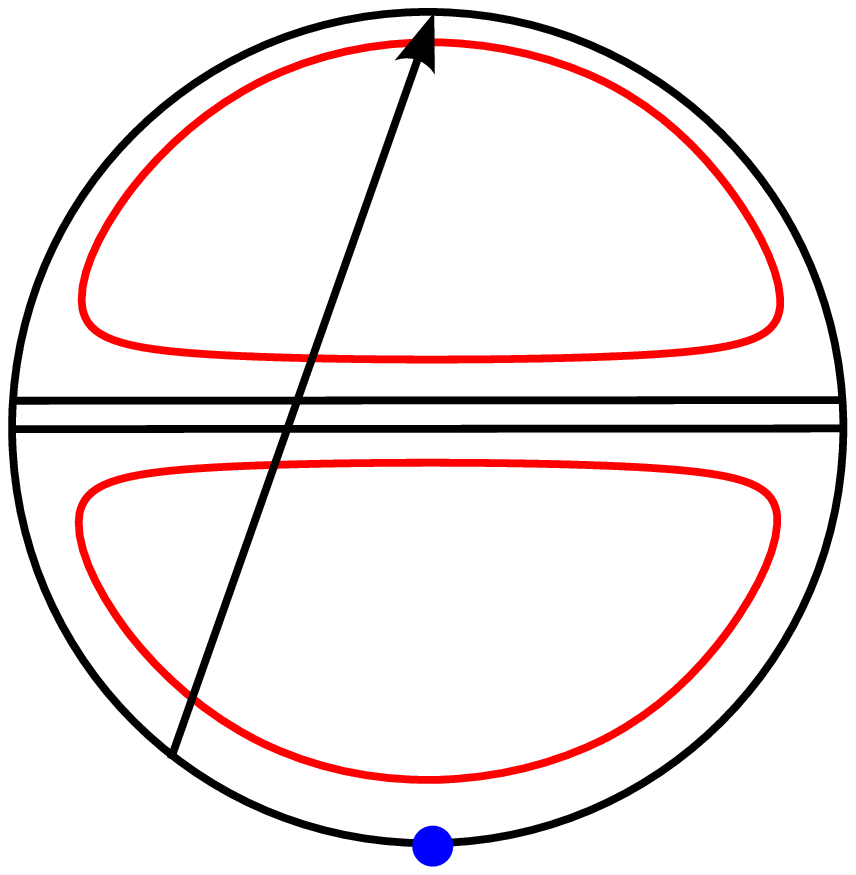}} + \raisebox{-.4\height}{\includegraphics[width=1.75cm]{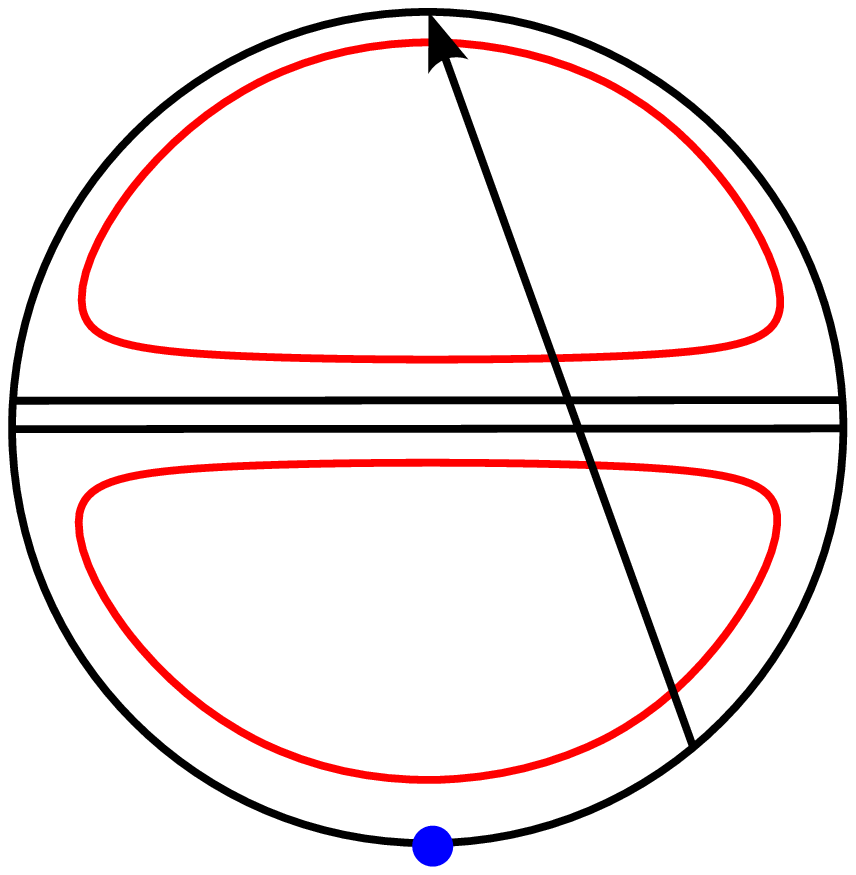}}. 
    \end{align*}
    We add all linking numbers of all such two components in $G$ regarding the sign $\alpha$. Then we have
    \begin{align*}
        \raisebox{-.4\height}{\includegraphics[width=1.75cm]{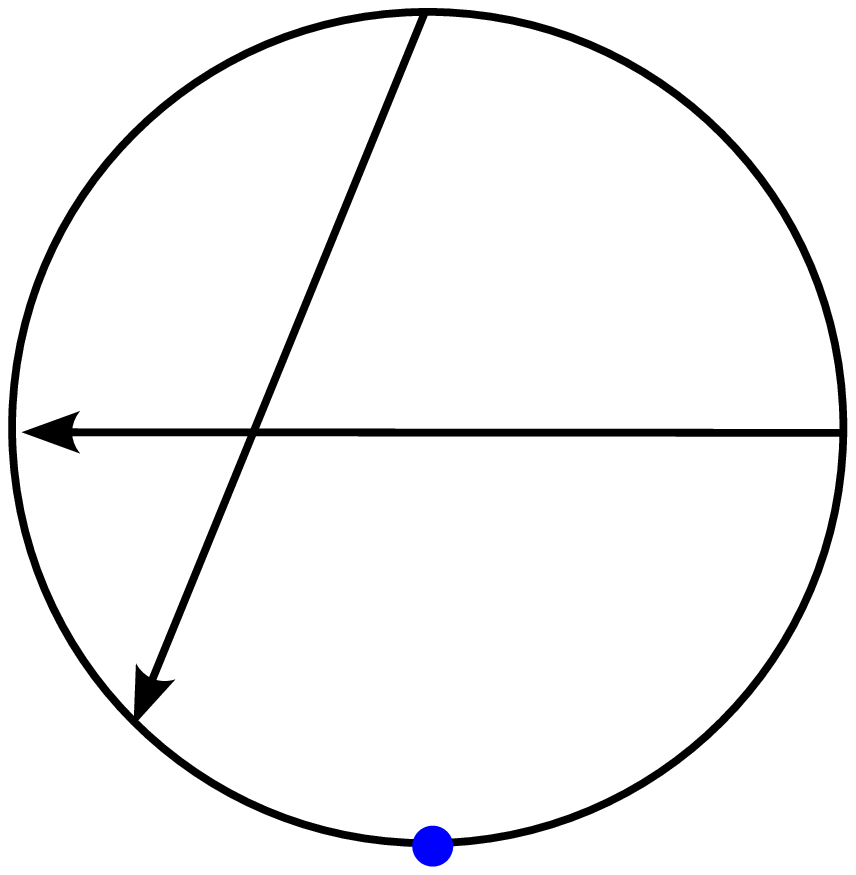}} + \raisebox{-.4\height}{\includegraphics[width=1.75cm]{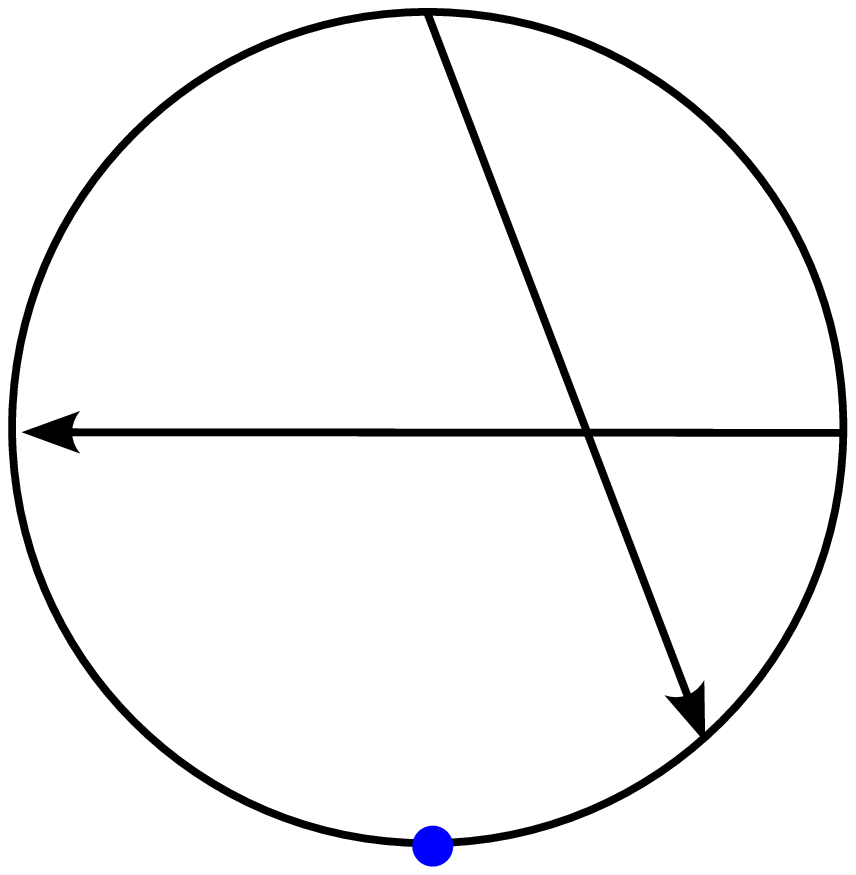}} = \raisebox{-.4\height}{\includegraphics[width=1.75cm]{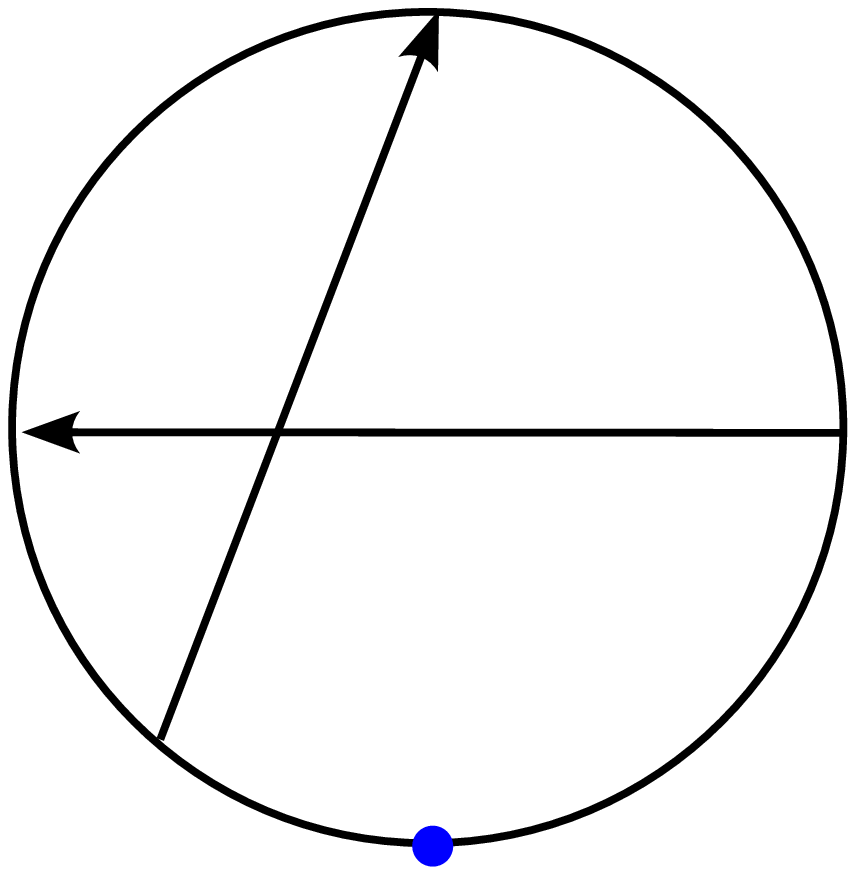}} + \raisebox{-.4\height}{\includegraphics[width=1.75cm]{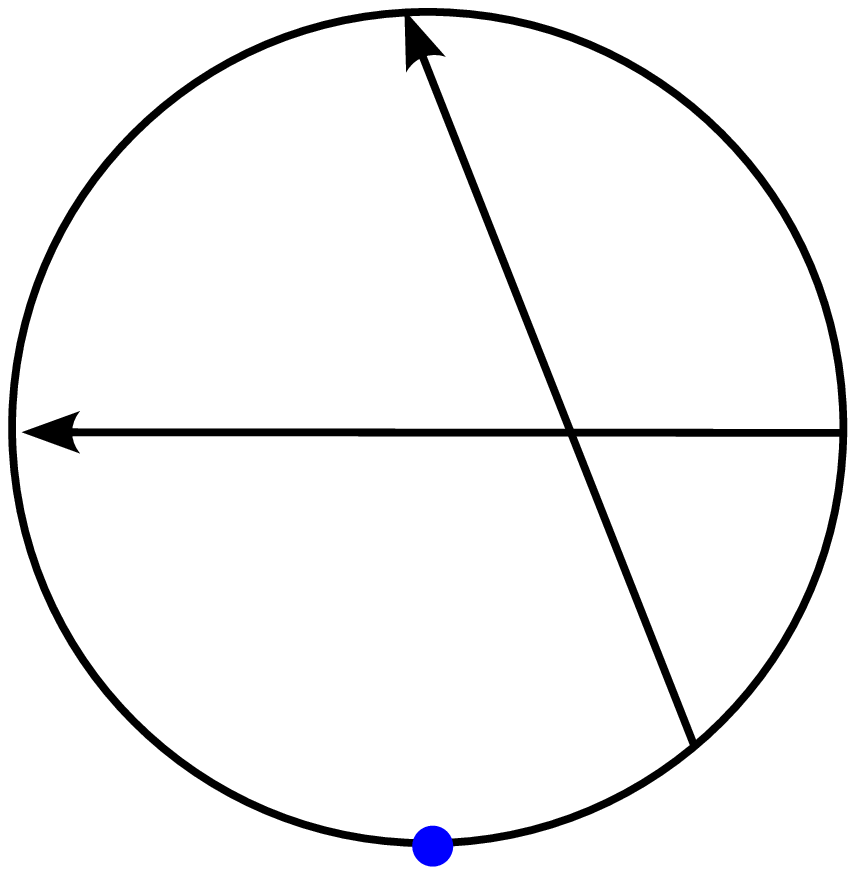}}
    \end{align*}
\end{proof}
Using the same method one can get the following identities.
\begin{prop}
    \begin{align*}
        \raisebox{-.4\height}{\includegraphics[width=1.75cm]{image/A_30_7.eps}} = \raisebox{-.4\height}{\includegraphics[width=1.75cm]{image/A_30_8.eps}}\ \ 
        \raisebox{-.4\height}{\includegraphics[width=1.75cm] {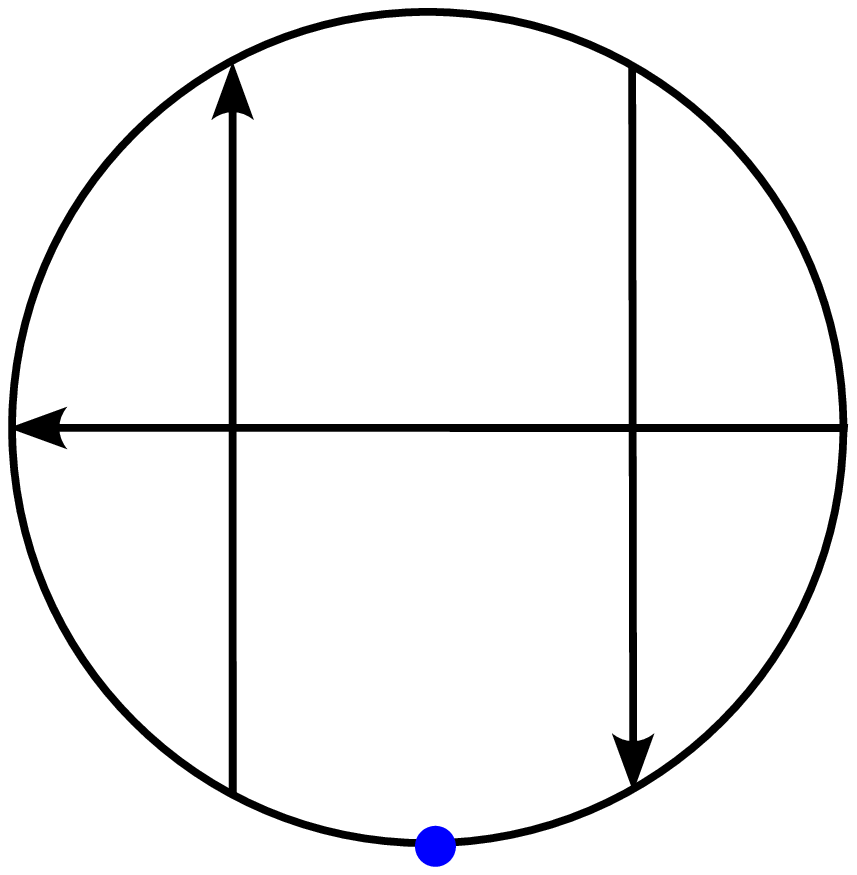}} &= \raisebox{-.4\height}{\includegraphics[width=1.75cm]{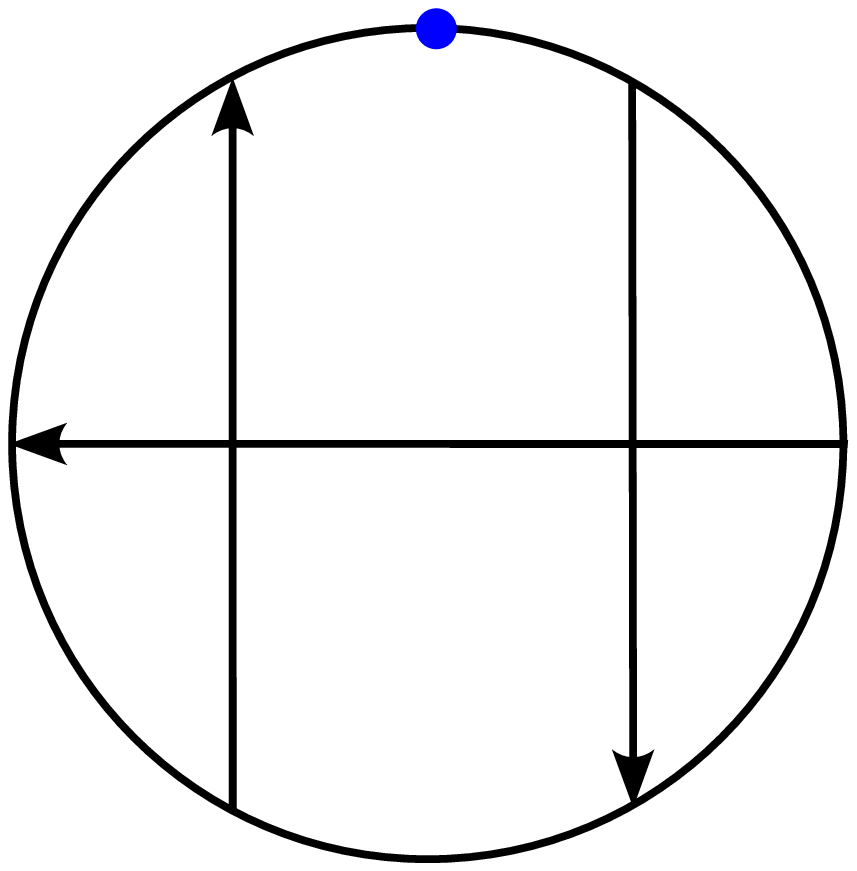}} \ \ \raisebox{-.4\height}{\includegraphics[width=1.75cm] {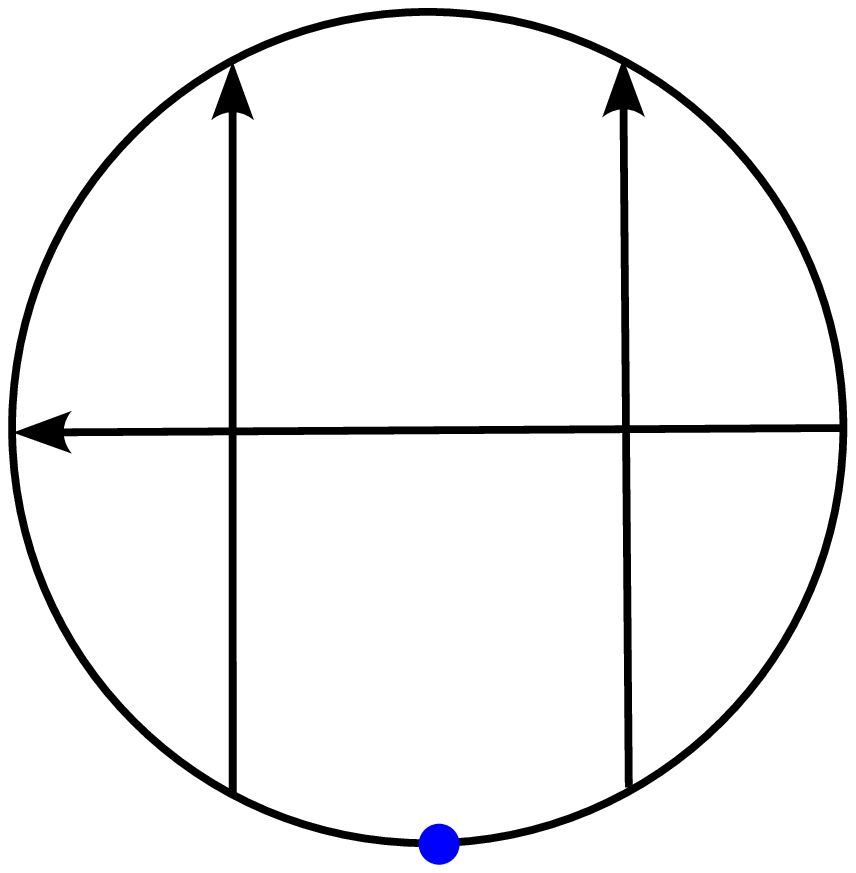}} = \raisebox{-.4\height}{\includegraphics[width=1.75cm]{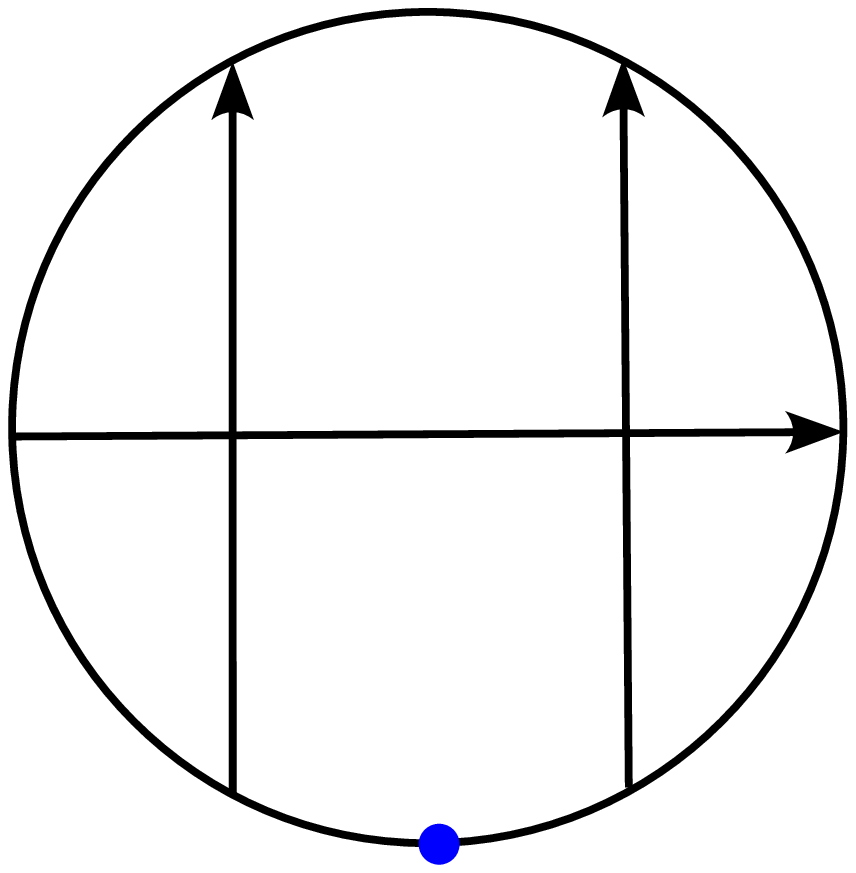}}
        \\
        \raisebox{-.4\height}{\includegraphics[width=1.75cm]{image/A_12_4.eps}} + \raisebox{-.4\height}{\includegraphics[width=1.75cm]{image/A_12_8.eps}} &= \raisebox{-.4\height}{\includegraphics[width=1.75cm]{image/A_12_3.eps}} + \raisebox{-.4\height}{\includegraphics[width=1.75cm]{image/A_12_10.eps}}
    \end{align*}
\end{prop}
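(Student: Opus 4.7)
The plan is to iterate Östlund's linking-number trick, exactly as in the proof of Proposition \ref{eq.id_deg2}. Given a Gauss diagram $G$ of an oriented knot $K$ and a distinguished arrow $\alpha \in A(G)$, smoothing $\alpha$ splits $K$ into a two-component oriented link $K_1 \cup K_2$. The linking number $lk(K_1, K_2)$ admits two Gauss-diagrammatic expressions of equal value: one sums $\epsilon(\beta)$ over arrows $\beta \neq \alpha$ whose tail lies on $K_1$ and head on $K_2$, and the other sums $\epsilon(\beta)$ over arrows with tail on $K_2$ and head on $K_1$. The equality of these two counts, aggregated over all $\alpha \in A(G)$ and all Gauss diagrams $G$, is precisely what produces Proposition \ref{eq.id_deg2}.

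To obtain the three single-term identities on the top line of the statement, I would refine the above counting by imposing an additional constraint: only arrows $\beta$ that sit in a prescribed position relative to a further auxiliary arrow $\gamma$ are allowed to contribute to the linking-number tally. For each such constraint the arrow $\beta$ that connects the two components can be recorded either as an arrow going from $K_1$ to $K_2$ or from $K_2$ to $K_1$, and these two recordings correspond to the two sides of the displayed identity. They give the same total because the linking number is independent of the chosen counting convention. In each case, $\alpha$ plays the role of the smoothed arrow, $\beta$ the role of the crossing between $K_1$ and $K_2$, and $\gamma$ the role of the constraint, so that the subdiagram swept out by the triple $(\alpha, \beta, \gamma)$ has three arrows as required.

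For the fourth identity, where each side is a sum of two three-arrow subdiagrams, the very same smoothing argument applies, but the constraining arrow $\gamma$ now sits in either of two distinct sub-configurations relative to $\alpha$ and $\beta$ that are both compatible with the prescribed constraint; enumerating these sub-configurations on each side of the equality produces the two summands on that side. The identity then follows again from the symmetry of the two ways of counting the constrained linking number.

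The main obstacle is purely combinatorial bookkeeping: for each of the four identities one must verify that the refined count yields exactly the displayed arrow subdiagrams, with matching signs, and that no further contribution is overlooked when $\gamma$ is allowed to wander over all compatible positions. Since the linking number is a topological invariant whose Gauss-diagrammatic computation is already well understood, no new geometric ingredient is required beyond Östlund's original method; the entire argument reduces to a careful case analysis of where the endpoints of $\beta$ and $\gamma$ can lie on the arcs $K_1$ and $K_2$ left by smoothing $\alpha$.
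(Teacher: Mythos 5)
The paper gives no written argument for this proposition beyond the remark that it follows ``using the same method'' as Proposition \ref{eq.id_deg2}, so your proposal must stand on its own logic, and there it has a genuine gap. The two Gauss-diagrammatic expressions for $lk(K_1,K_2)$ agree because each ranges over \emph{all} crossings between the two components: the total signed count of crossings where $K_2$ passes over $K_1$ equals the total where $K_1$ passes over $K_2$, and only the totals are equal. The moment you ``refine the counting by imposing an additional constraint'' --- keeping only those $\beta$ in a prescribed position relative to an auxiliary arrow $\gamma$ that is merely observed rather than operated on --- you are comparing two \emph{partial} sums, and your justification (``they give the same total because the linking number is independent of the chosen counting convention'') no longer applies. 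An arbitrary positional restriction is not preserved under exchanging the roles of head and foot of $\beta$, so the two restricted tallies need not coincide; this is precisely the step that fails.

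The repair is to make the auxiliary arrows part of the geometry instead of a bookkeeping filter: fix a two-arrow subpattern and apply the smoothing (or, in the subsequent proposition, singularizing) operation to \emph{both} of its arrows, so as to produce an honest link whose component decomposition is determined by the pattern; then count the full linking number of a chosen pair of components in two ways. The constraint on $\beta$ now appears automatically, as the condition of joining two specified components, and is manifestly symmetric under reversing $\beta$. The single-diagram identities on the first line arise when the relevant components are short arcs that force the position of $\beta$, while the two summands on each side of the fourth identity come from enumerating the admissible positions of an \emph{unrestricted} $\beta$ relative to the fixed pattern --- not, as you suggest, from letting the constraint itself vary. You would also need to account for the degenerate case in which $\beta$ coincides with one of the pattern arrows, and for the sign changes induced on arrows lying on reoriented arcs, neither of which your proposal addresses. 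This component-based argument is exactly what the paper spells out for the next proposition, and it is what ``the same method'' must mean here.
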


Proposition \ref{eq.id_deg2} can also be proved in another point of view by Polyak and Viro in \cite{Polyak_Viro:2001}. Notice that rotating a long knot $\pi$ around its axis is an isotopy and that the only difference is that the top strand and bottom strand exchange at every crossing. We have 
\begin{prop}
    If $F$ is a Gauss diagram formula, we have $$\langle F, \cdot \rangle = \langle F', \cdot \rangle$$ where $F'$ is the result that we change the direction of all arrows in $F$. 
\end{prop}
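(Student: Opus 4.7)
The plan is to make precise the long-knot rotation suggested in the paragraph preceding the statement. Present $K$ as a long knot with both ends running along a fixed axis $\ell$, and let $R_\pi$ denote the rotation of $\mathbb{R}^3$ by angle $\pi$ about $\ell$. The family $\{R_t\}_{t\in[0,\pi]}$ is an ambient isotopy of $\mathbb{R}^3$ fixing $\ell$ pointwise, so the rotated long knot $K^r \coloneqq R_\pi(K)$ represents the same oriented knot type as $K$.

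The key geometric step is to verify that, for a generic projection onto a plane containing $\ell$, the Gauss diagram $G(K^r)$ is obtained from $G(K)$ by reversing the direction of every arrow while preserving the sign of each crossing. Since $R_\pi$ interchanges the two half-spaces on either side of the projection plane, the over- and under-strands at every crossing of the projection are swapped, which reverses every arrow of the Gauss diagram by the convention that the foot of an arrow is the under-strand. For the signs: $R_\pi$ preserves the orientation of $\mathbb{R}^3$ and the orientation of the knot, and a short computation with tangent vectors at a crossing (using the determinant definition of the crossing sign) shows that the sign is unchanged.

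Next I would record the elementary observation that for any arrow diagram $A$ and any Gauss diagram $G$ one has $\langle A, G^{\mathrm{rev}} \rangle = \langle A^{\mathrm{rev}}, G \rangle$, where the superscript denotes the arrow-reversal operation; indeed $B \mapsto B^{\mathrm{rev}}$ is a sign-preserving involution on arrow subdiagrams, inducing a bijection between sub-copies of $A$ in $G^{\mathrm{rev}}$ and sub-copies of $A^{\mathrm{rev}}$ in $G$. Extending by linearity gives $\langle F, G^{\mathrm{rev}} \rangle = \langle F', G \rangle$ for every $F \in \mathscr{A}$. Combining the two ingredients, since $F$ is a Gauss diagram formula $\langle F,\cdot\rangle$ is a knot invariant and therefore
\[\langle F', G(K) \rangle \;=\; \langle F, G(K)^{\mathrm{rev}} \rangle \;=\; \langle F, G(K^r) \rangle \;=\; \langle F, G(K) \rangle.\]
The main obstacle is the geometric verification in the second step, namely checking that $R_\pi$ truly reverses every arrow and preserves every crossing sign regardless of the chosen projection and auxiliary orientation; once that is settled, the rest is purely formal.
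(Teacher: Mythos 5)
Your proposal is correct and is precisely the argument the paper intends: the paper offers only the one-sentence sketch (rotating a long knot by $\pi$ about its axis is an isotopy that swaps over- and under-strands at every crossing), and your write-up fills in exactly the needed details — that the rotation reverses every arrow while preserving crossing signs, and the formal adjointness $\langle F, G^{\mathrm{rev}}\rangle = \langle F', G\rangle$. No gaps; the geometric verification you flag goes through by the determinant computation you describe, since swapping the two tangent vectors and reflecting the plane each reverse orientation, so the sign is preserved.
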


Besides the operation "smooth" on a crossing, we also have the operation "singularize". This allows us to derive more identities using the linking number. 
\begin{prop}
    \begin{align*}
        \raisebox{-.4\height}{\includegraphics[width=1.75cm]{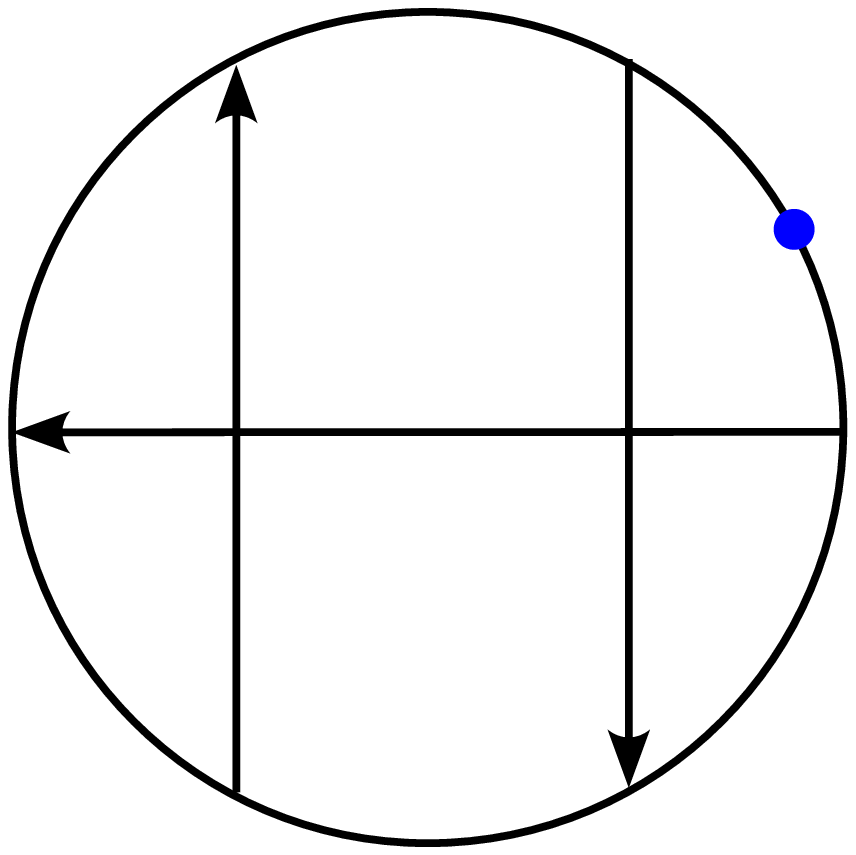}} + \raisebox{-.4\height}{\includegraphics[width=1.75cm]{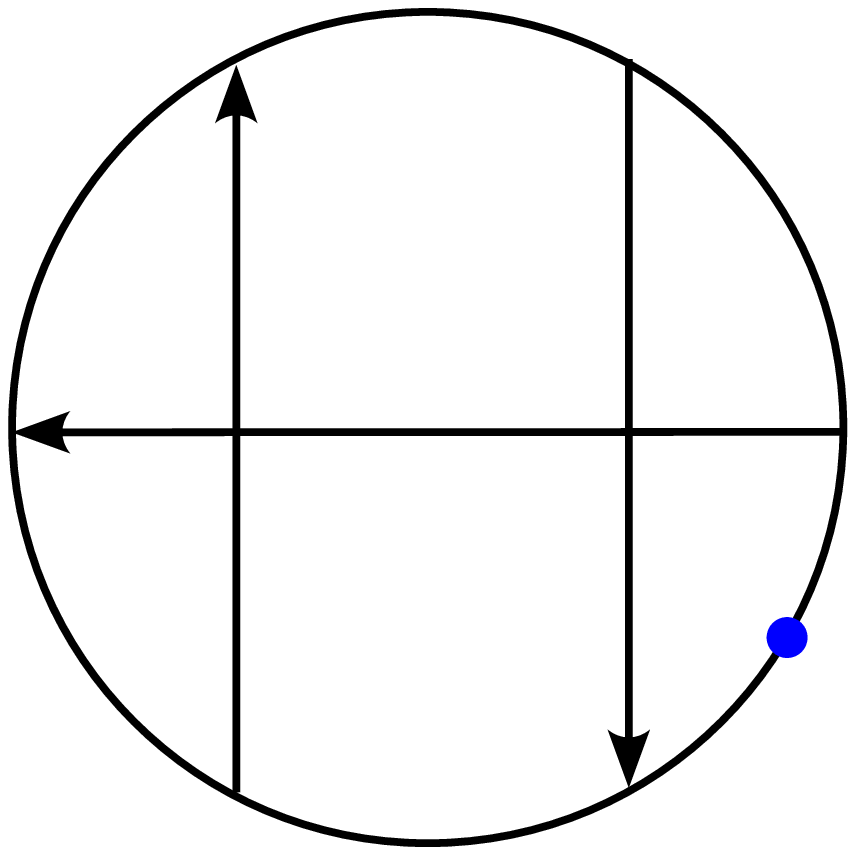}} + \raisebox{-.4\height}{\includegraphics[width=1.75cm]{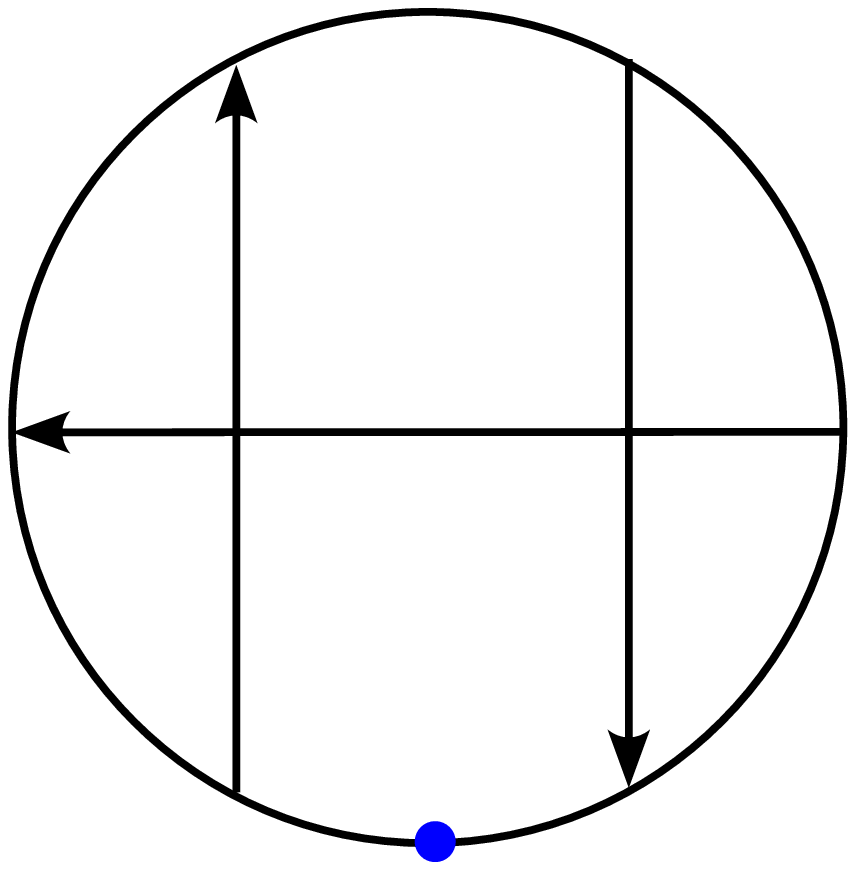}} &= \raisebox{-.4\height}{\includegraphics[width=1.75cm]{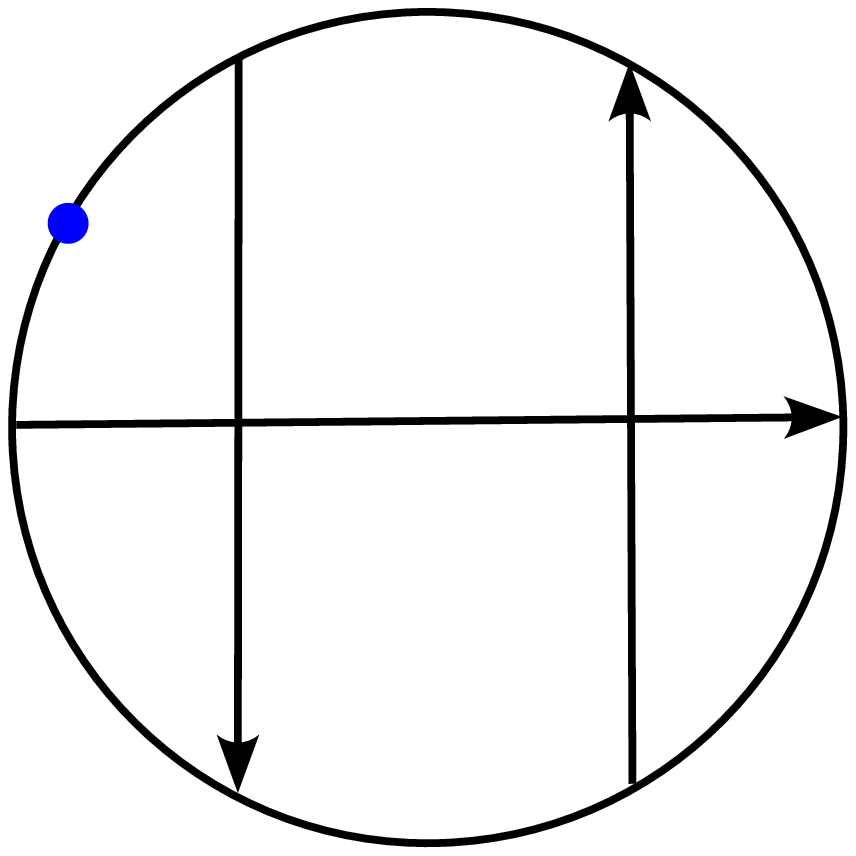}} + \raisebox{-.4\height}{\includegraphics[width=1.75cm]{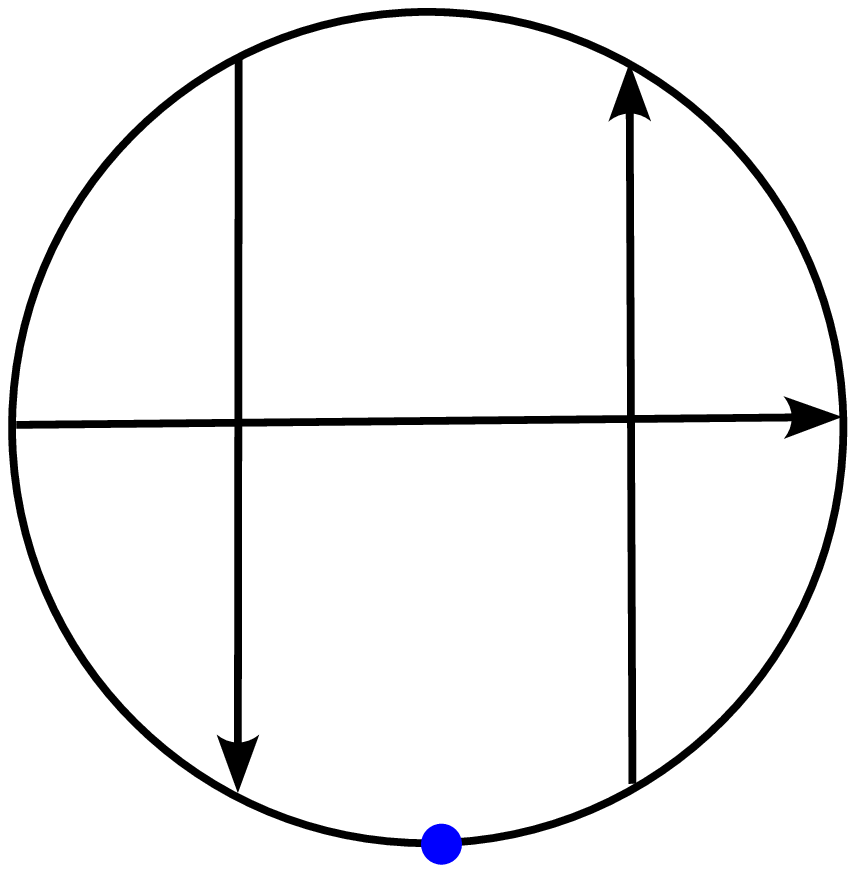}} + \raisebox{-.4\height}{\includegraphics[width=1.75cm]{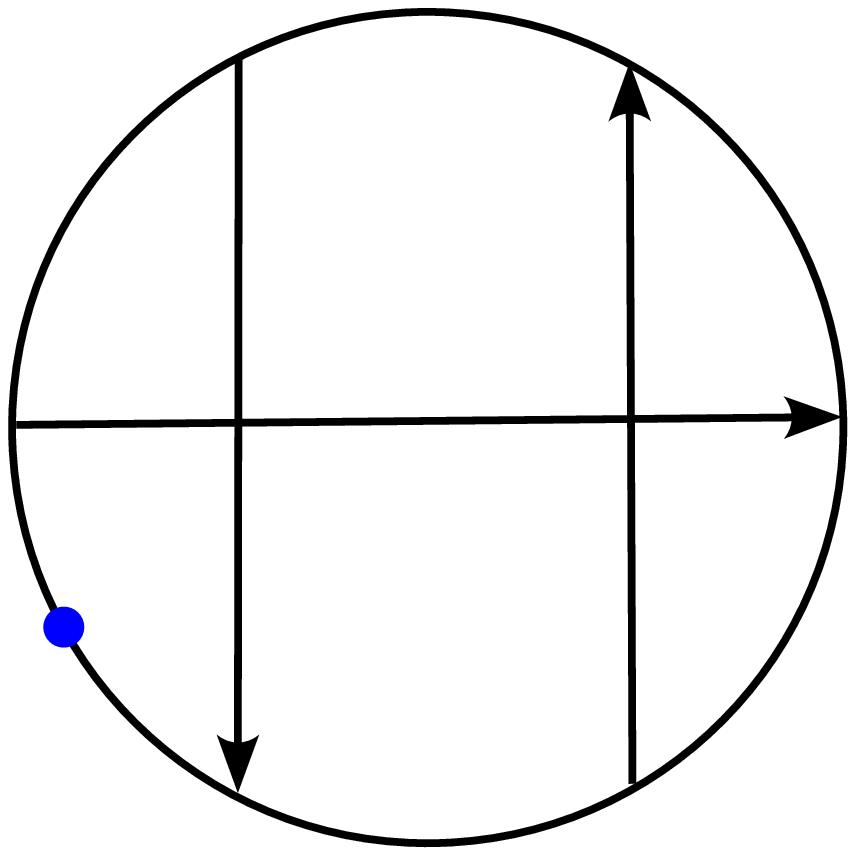}}\\
        \raisebox{-.4\height}{\includegraphics[width=1.75cm]{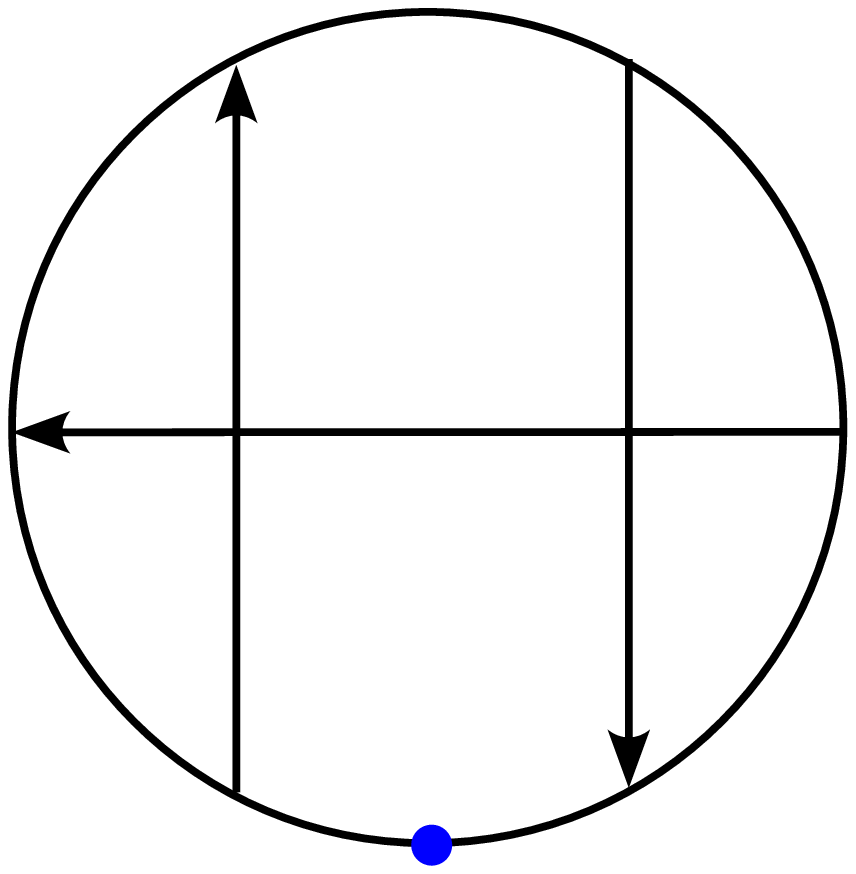}} + \raisebox{-.4\height}{\includegraphics[width=1.75cm]{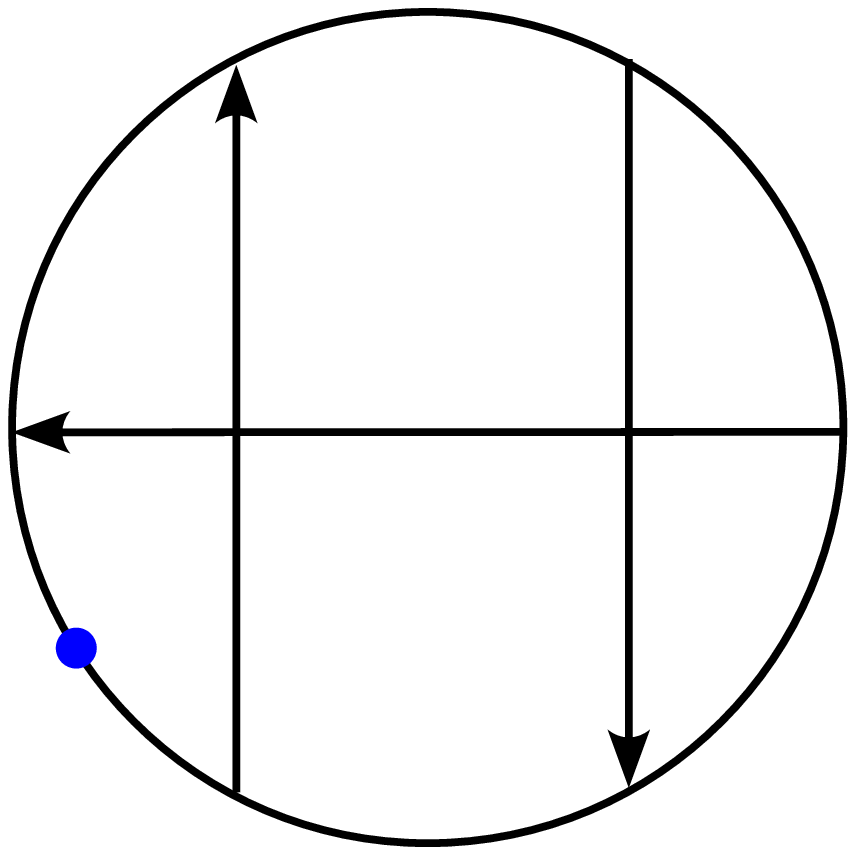}} + \raisebox{-.4\height}{\includegraphics[width=1.75cm]{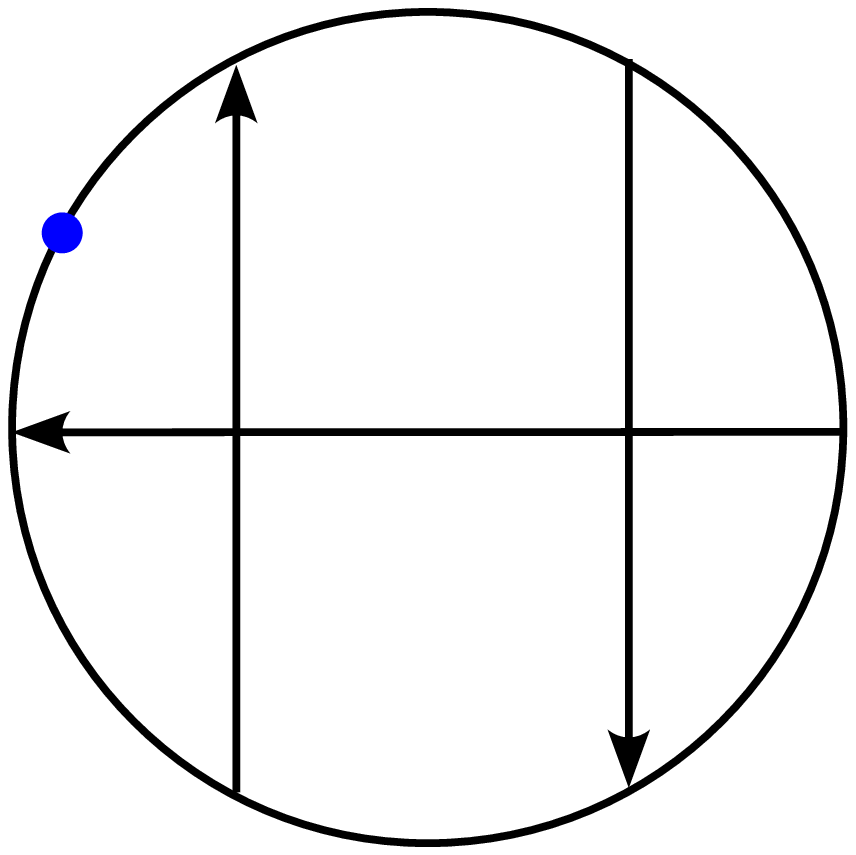}} &= \raisebox{-.4\height}{\includegraphics[width=1.75cm]{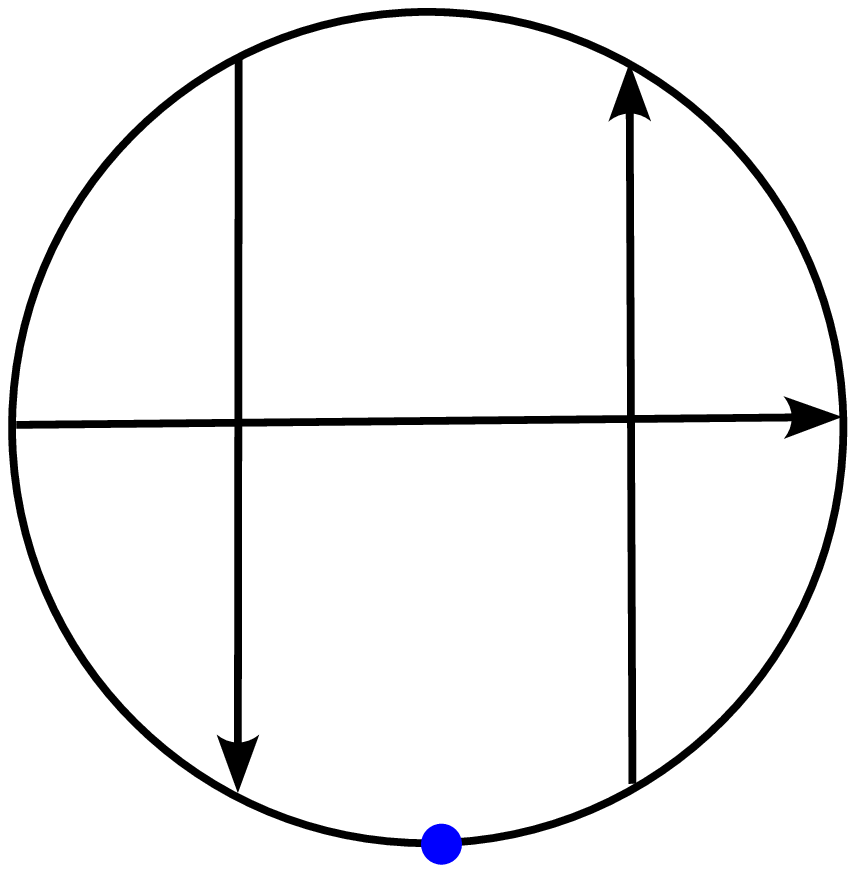}} + \raisebox{-.4\height}{\includegraphics[width=1.75cm]{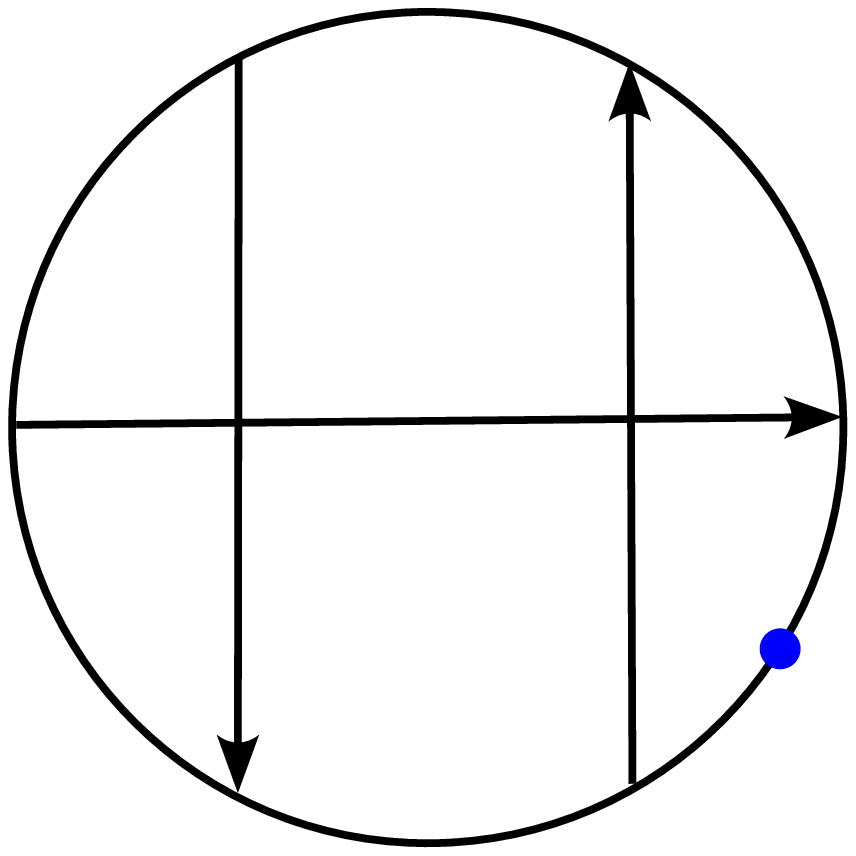}} + \raisebox{-.4\height}{\includegraphics[width=1.75cm]{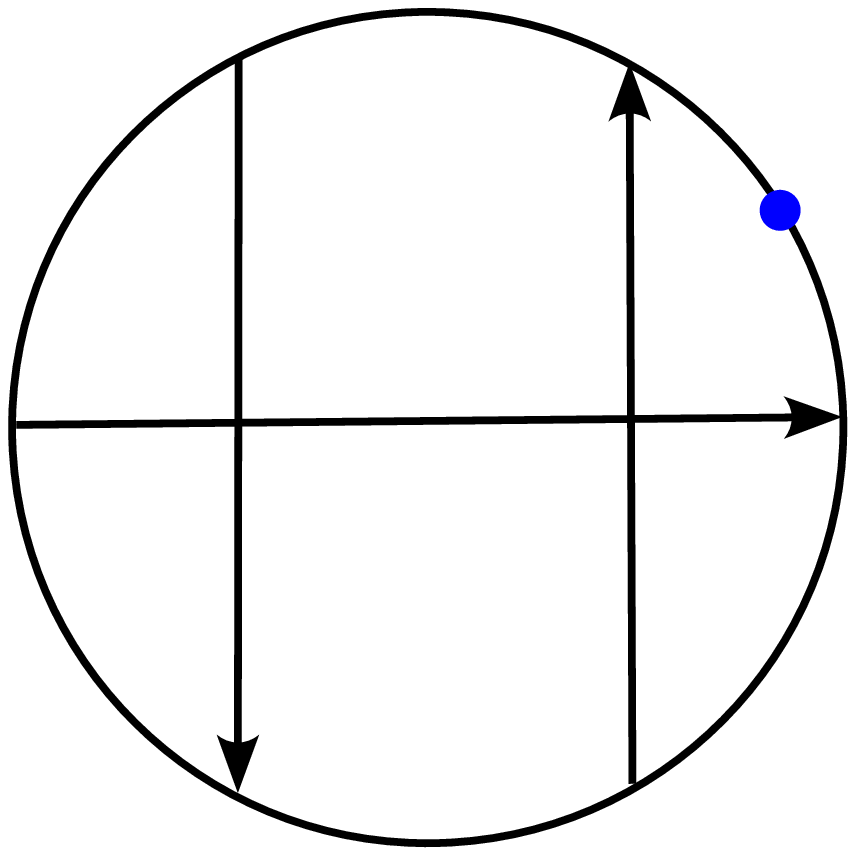}}\\
        \raisebox{-.4\height}{\includegraphics[width=1.75cm]{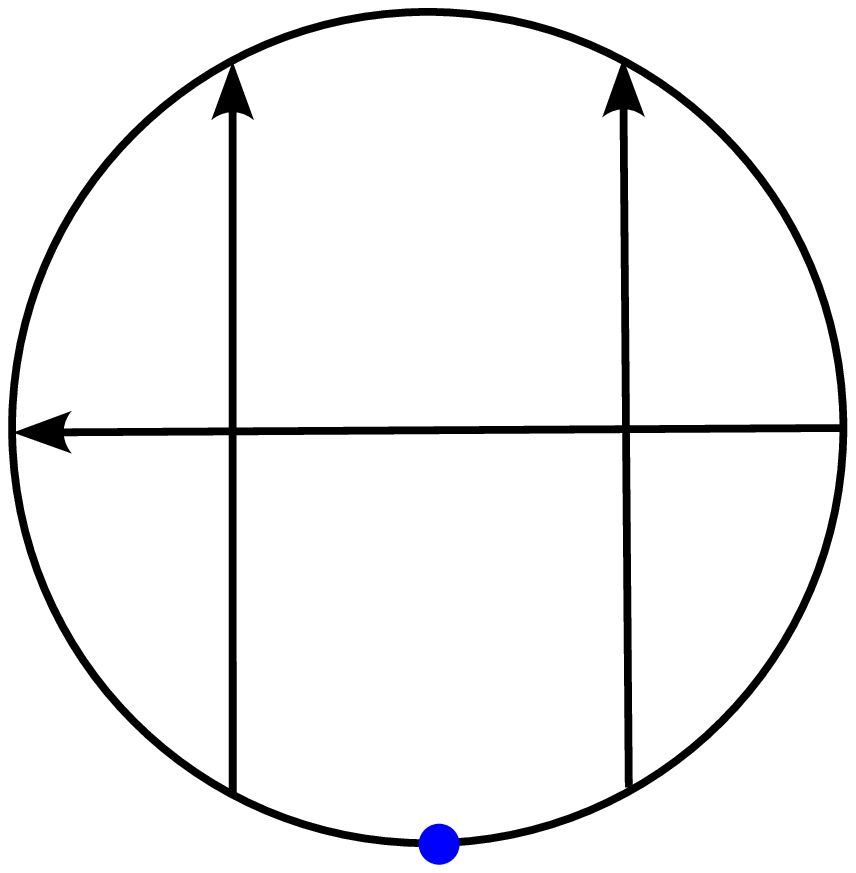}} + \raisebox{-.4\height}{\includegraphics[width=1.75cm]{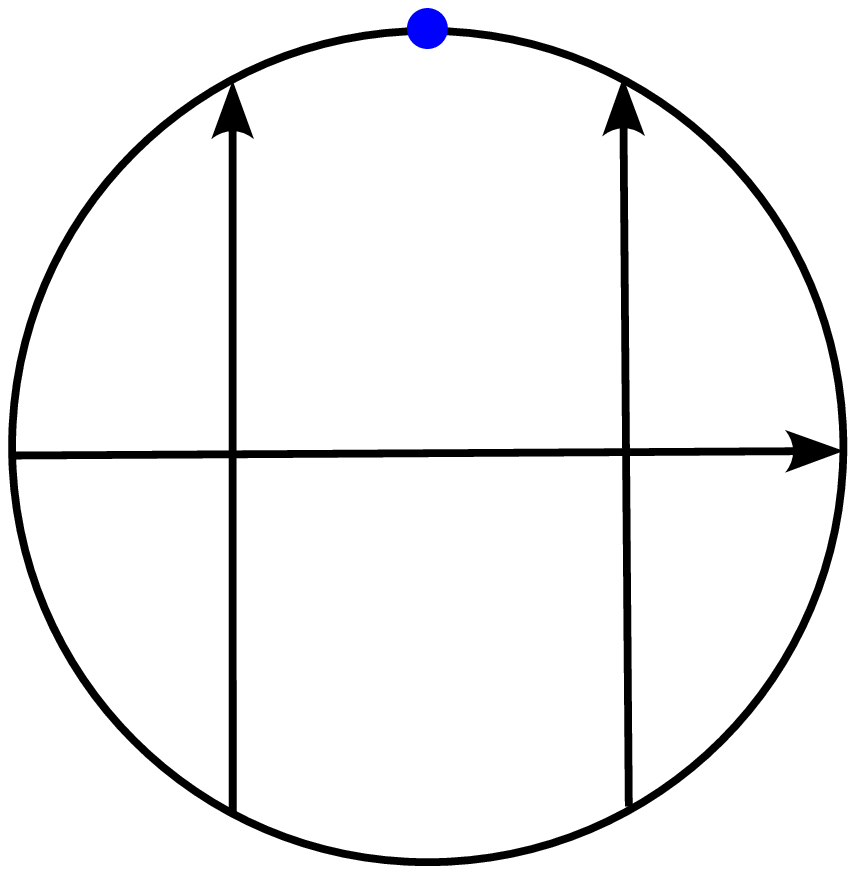}} + \raisebox{-.4\height}{\includegraphics[width=1.75cm]{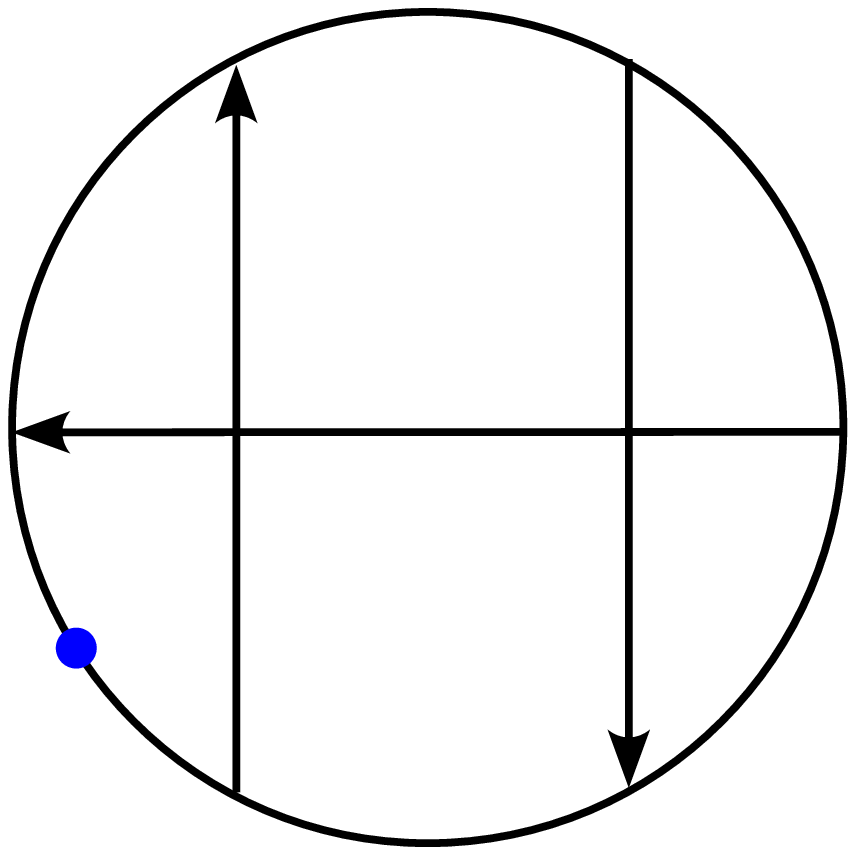}} + \raisebox{-.4\height}{\includegraphics[width=1.75cm]{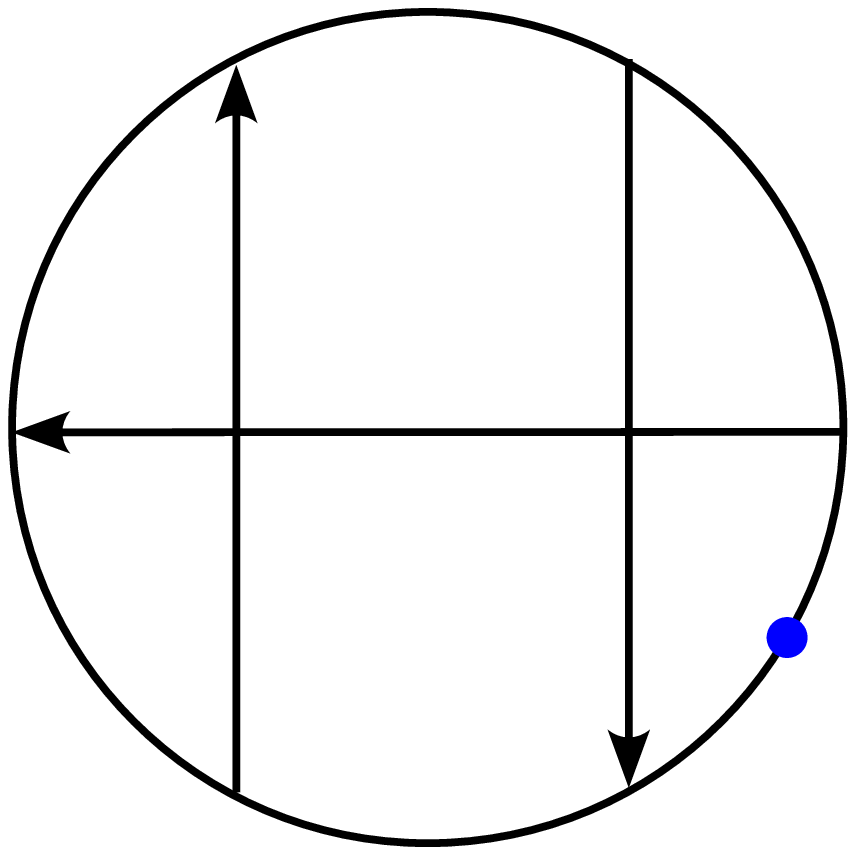}} &= 2\raisebox{-.4\height}{\includegraphics[width=1.75cm]{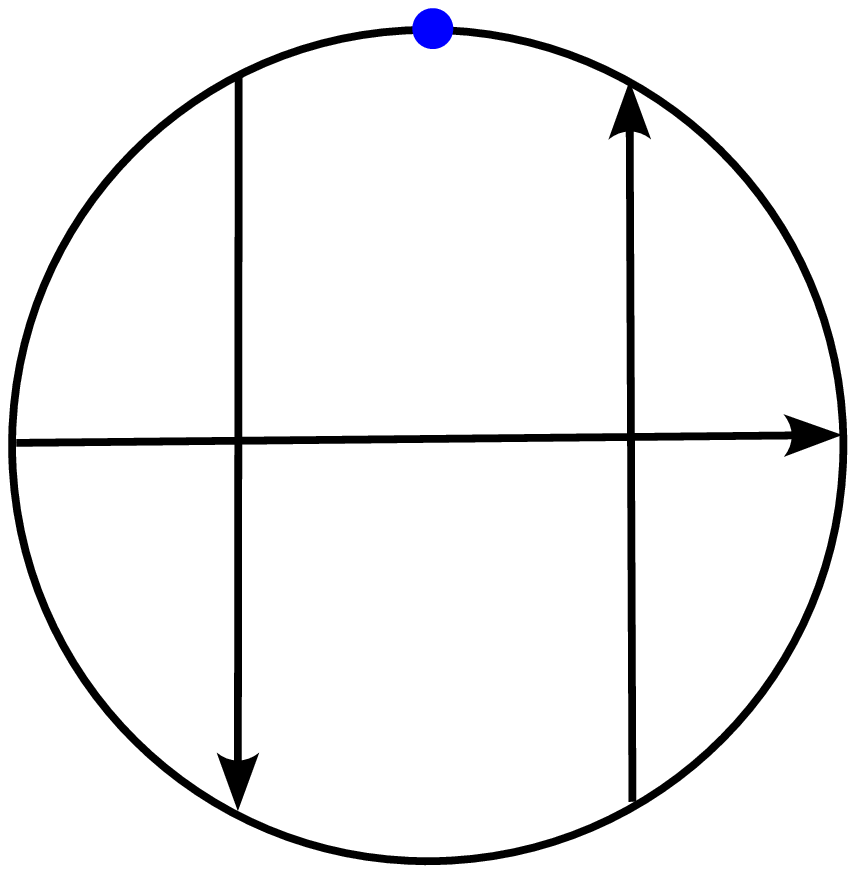}} + \raisebox{-.4\height}{\includegraphics[width=1.75cm]{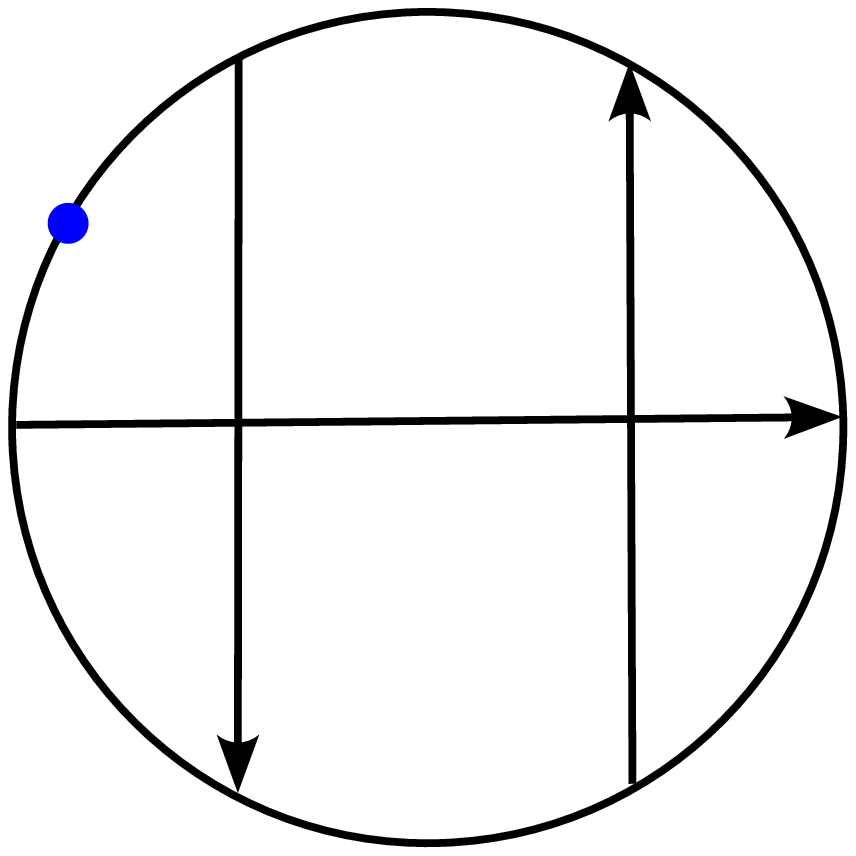}} + \raisebox{-.4\height}{\includegraphics[width=1.75cm]{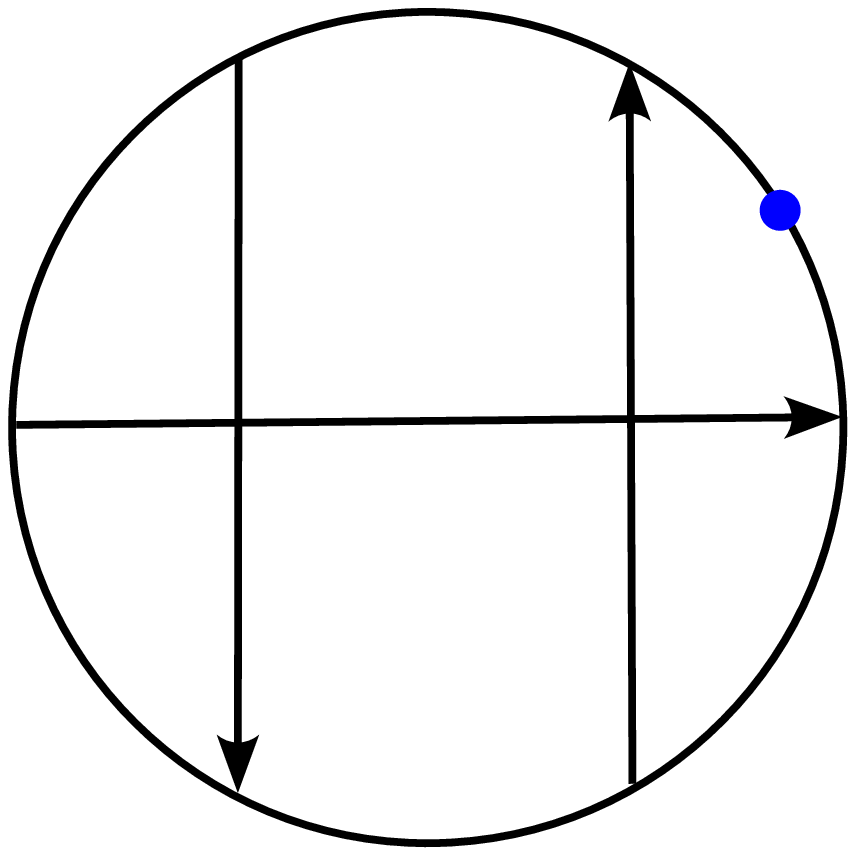}}\\
        \raisebox{-.4\height}{\includegraphics[width=1.75cm]{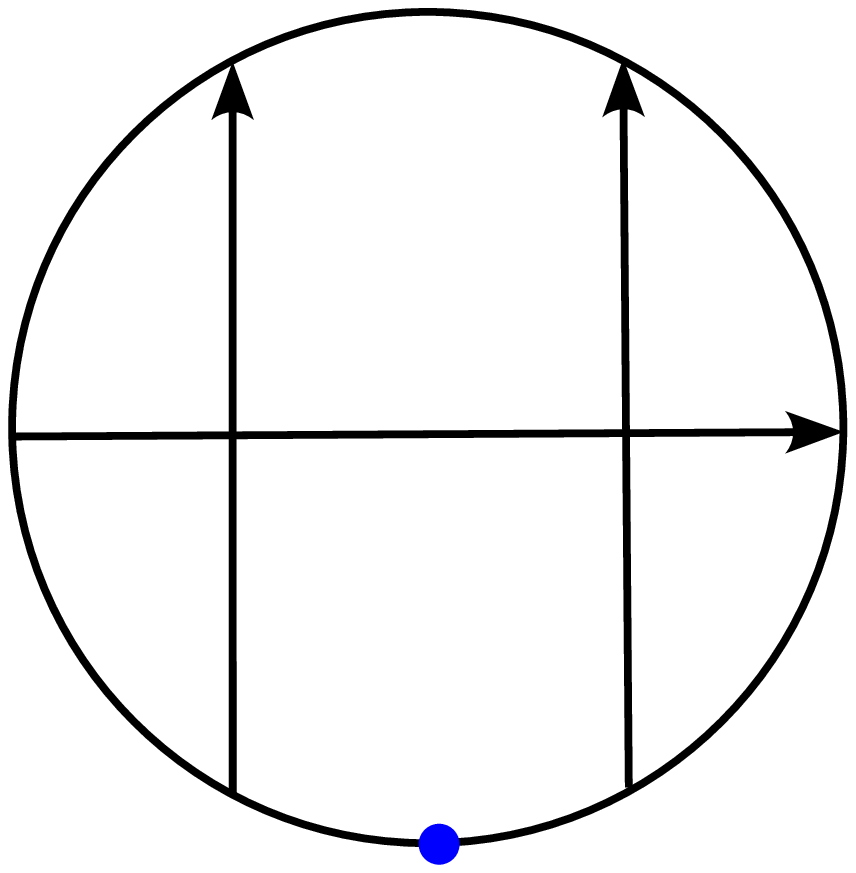}} + \raisebox{-.4\height}{\includegraphics[width=1.75cm]{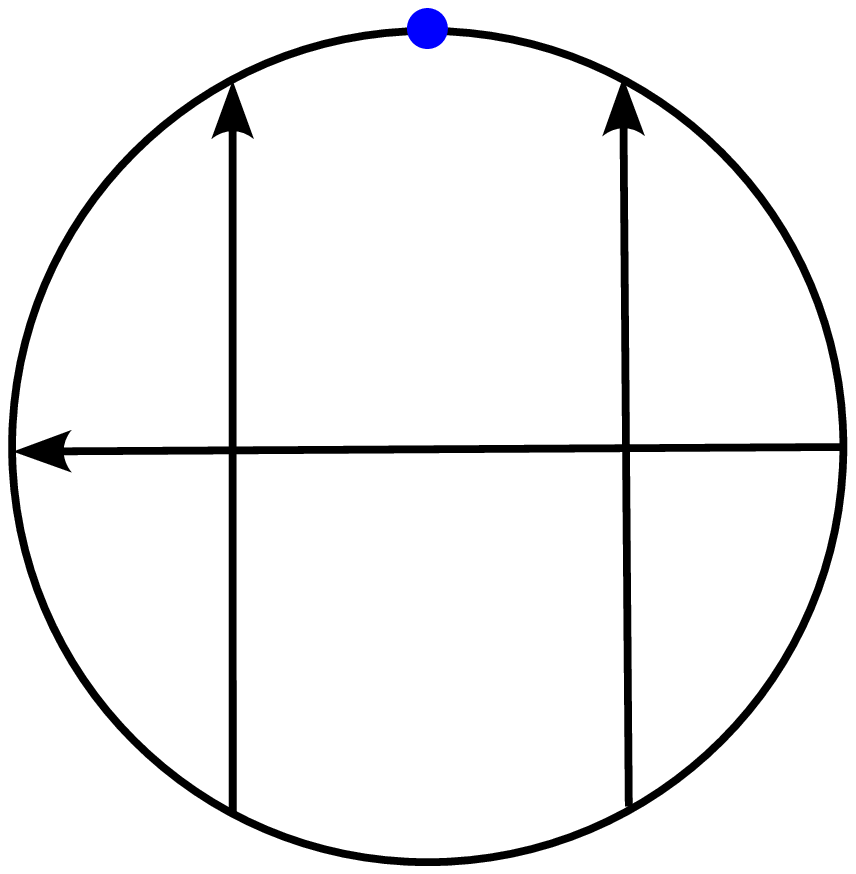}} + \raisebox{-.4\height}{\includegraphics[width=1.75cm]{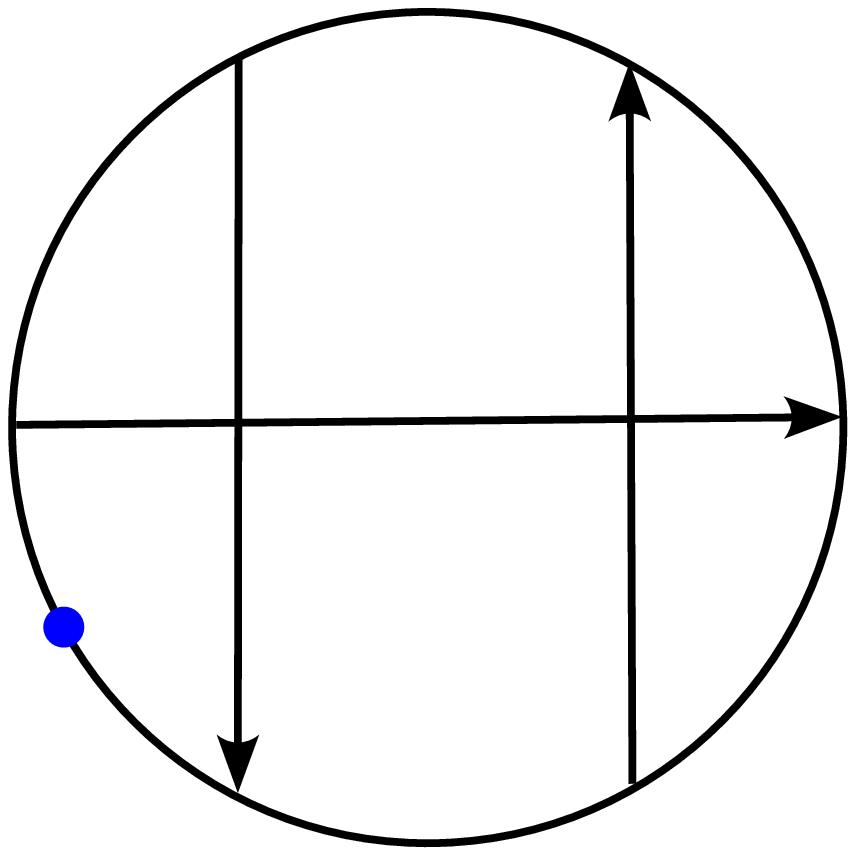}} + \raisebox{-.4\height}{\includegraphics[width=1.75cm]{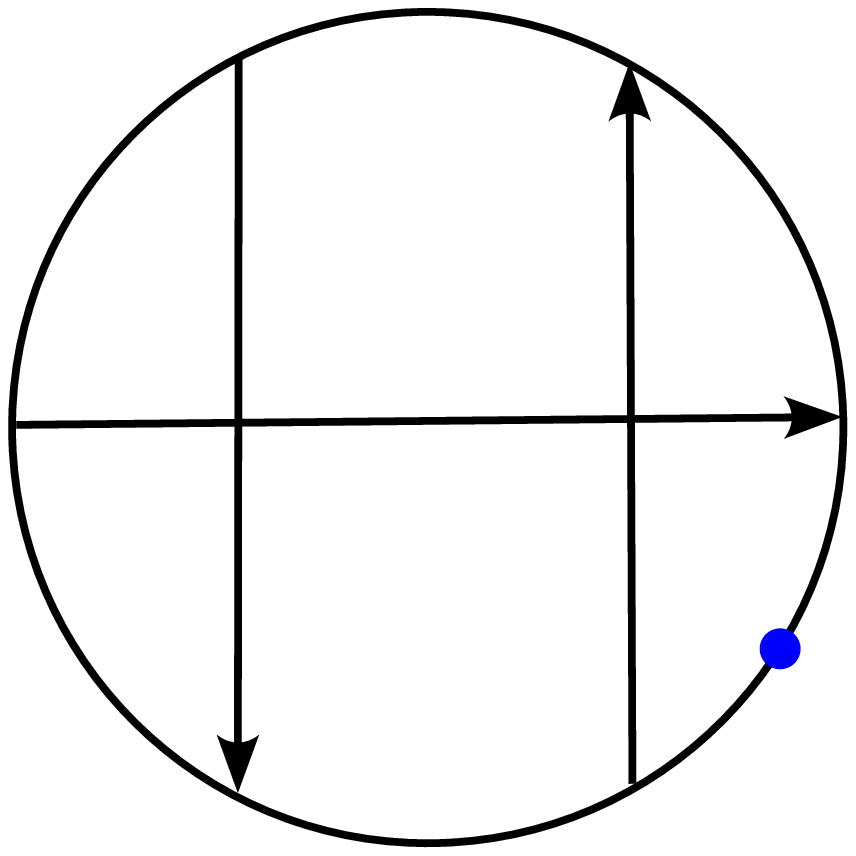}} &= 2\raisebox{-.4\height}{\includegraphics[width=1.75cm]{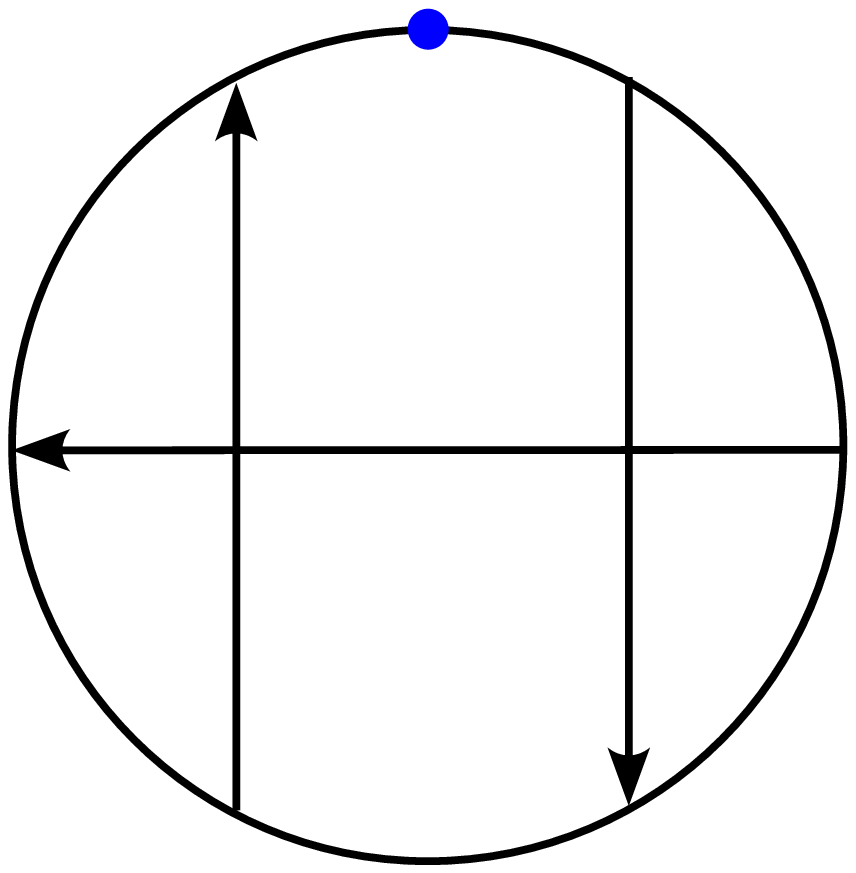}} + \raisebox{-.4\height}{\includegraphics[width=1.75cm]{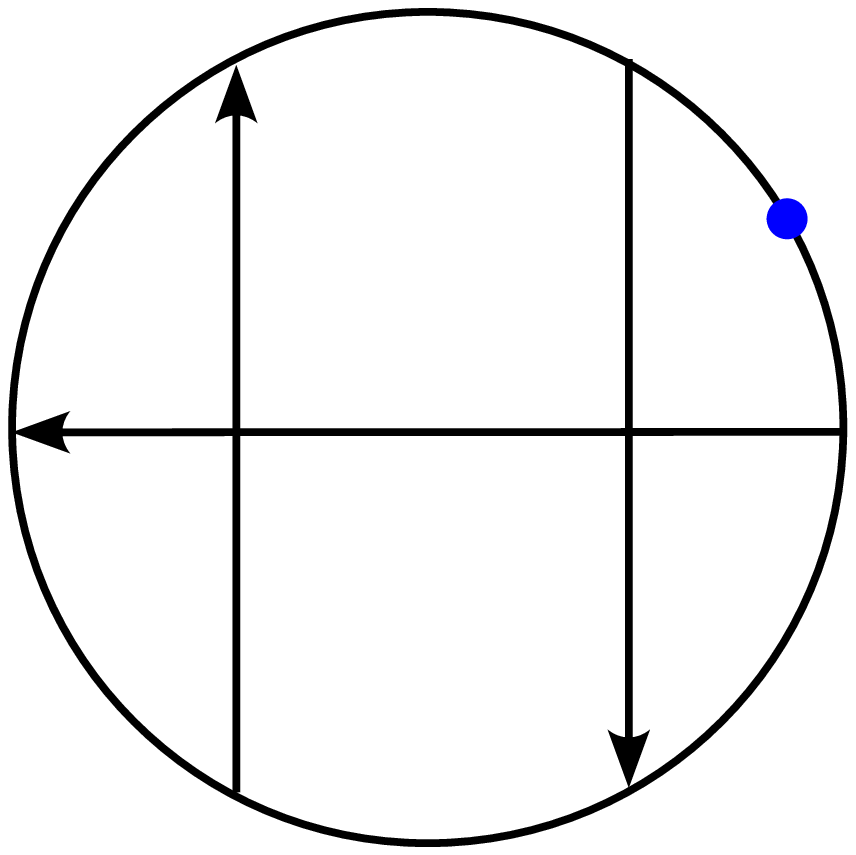}} + \raisebox{-.4\height}{\includegraphics[width=1.75cm]{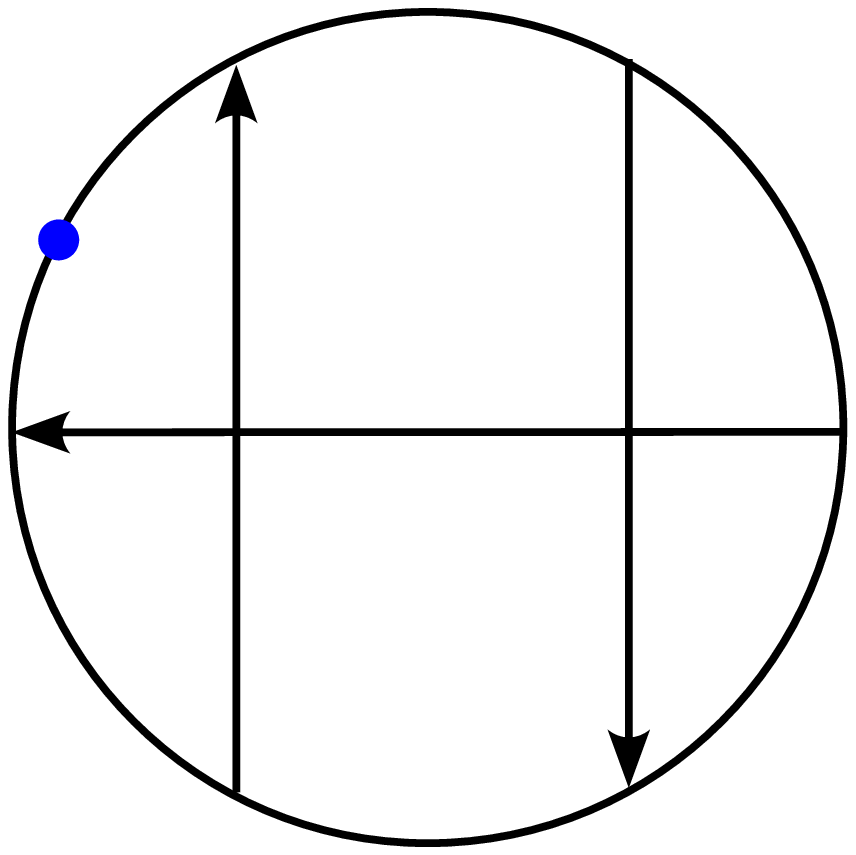}}
    \end{align*}
\end{prop}
\begin{proof}
    We prove the first identity.
    For each pattern \raisebox{-.4\height}{\includegraphics[width=1.5cm]{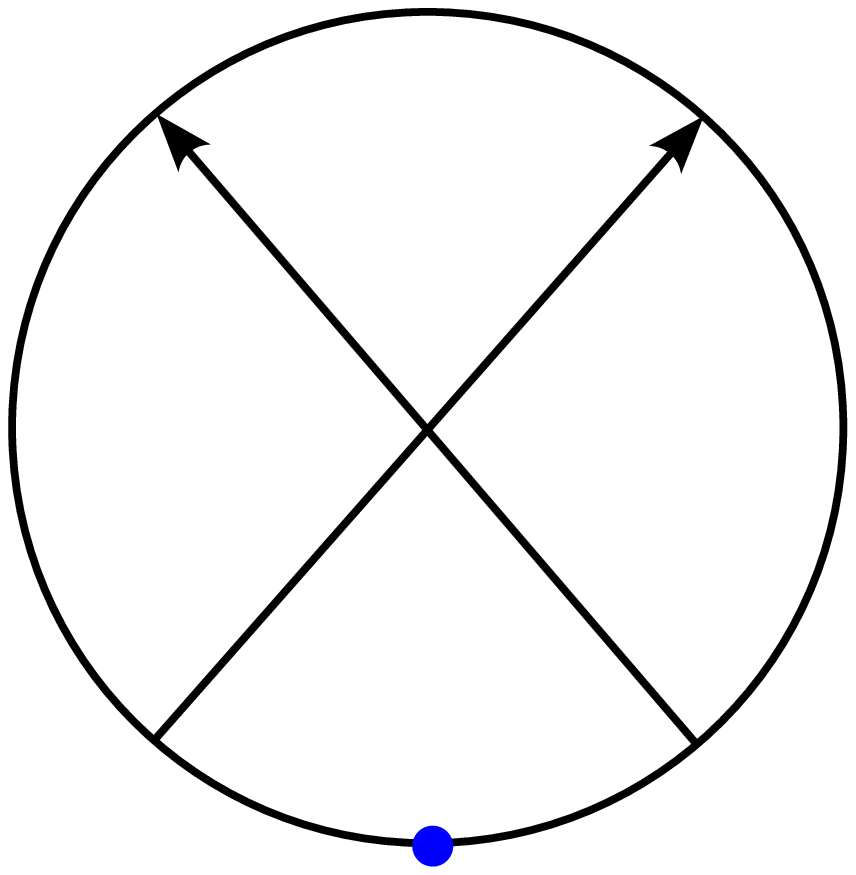}} in a Gauss diagram $G$ of a knot $K$, we singularize and smooth the two arrows as \raisebox{-.4\height}{\includegraphics[width=2cm]{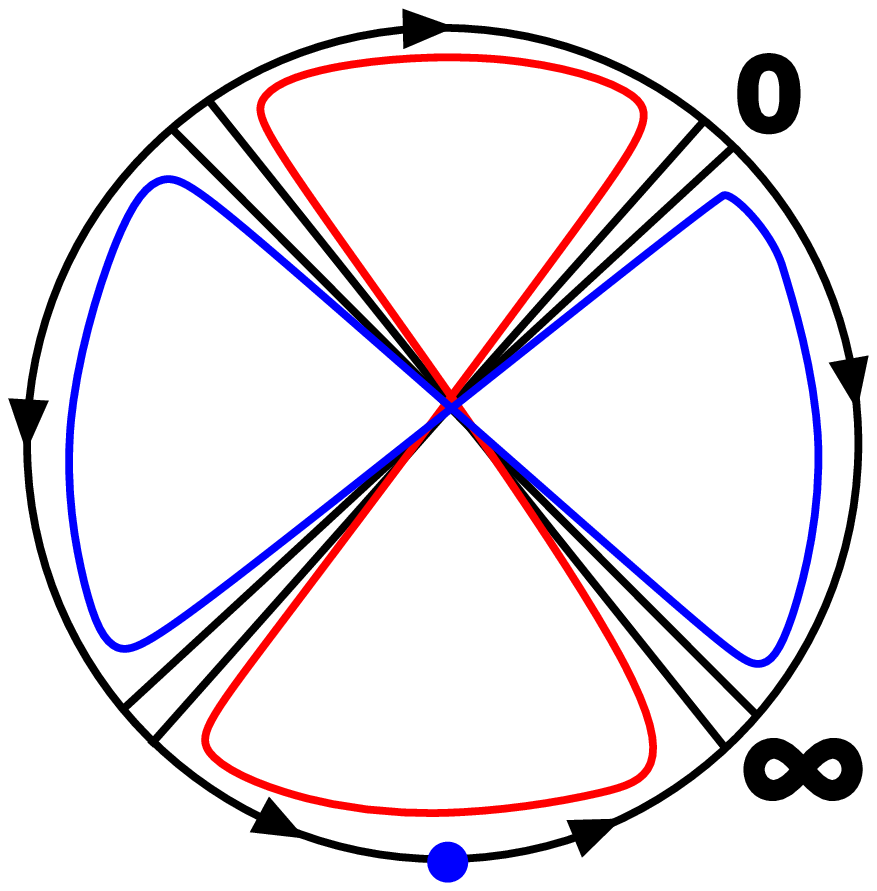}}. There are two ways to count the linking number of two components as in the proof of Proposition \ref{eq.id_deg2}. We add all the linking numbers of the two components coming from the pattern in $G$ regarding the sign of the arrows in the pattern. We have 
    \begin{align*}
        - \raisebox{-.4\height}{\includegraphics[width=1.75cm]{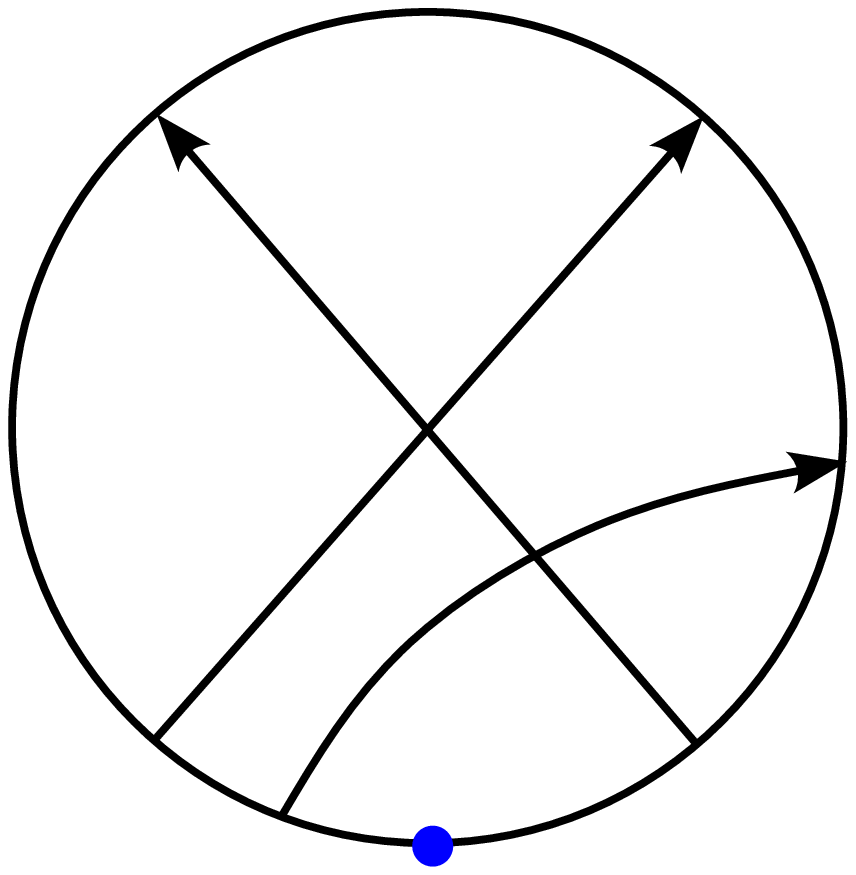}} + \raisebox{-.4\height}{\includegraphics[width=1.75cm]{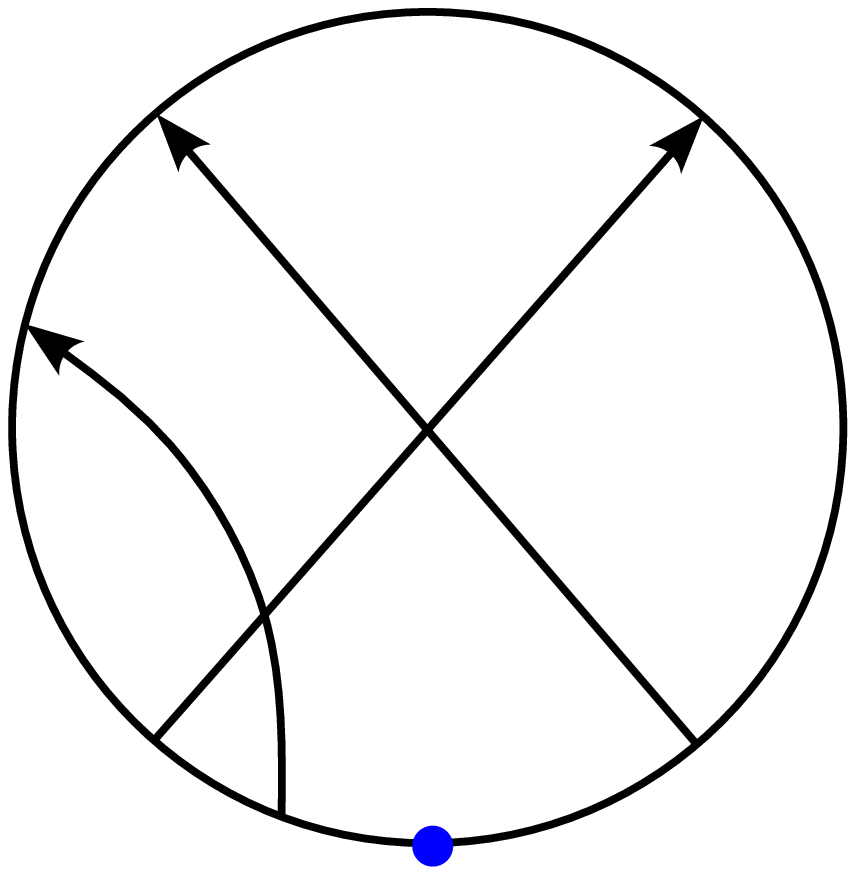}} + \raisebox{-.4\height}{\includegraphics[width=1.75cm]{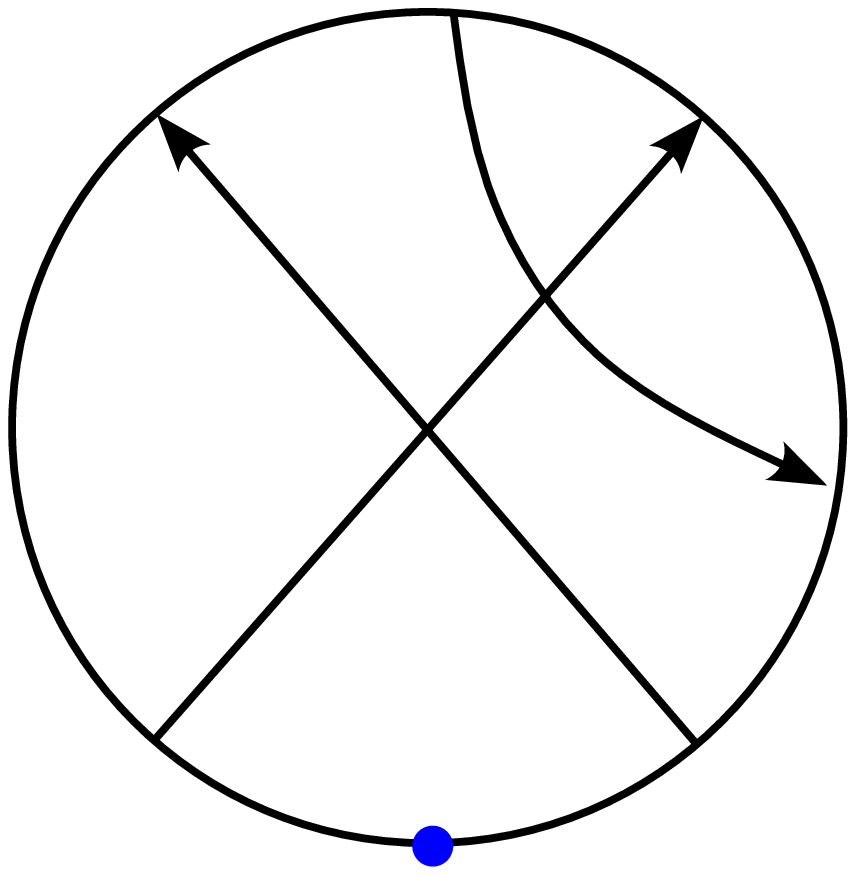}} - \raisebox{-.4\height}{\includegraphics[width=1.75cm]{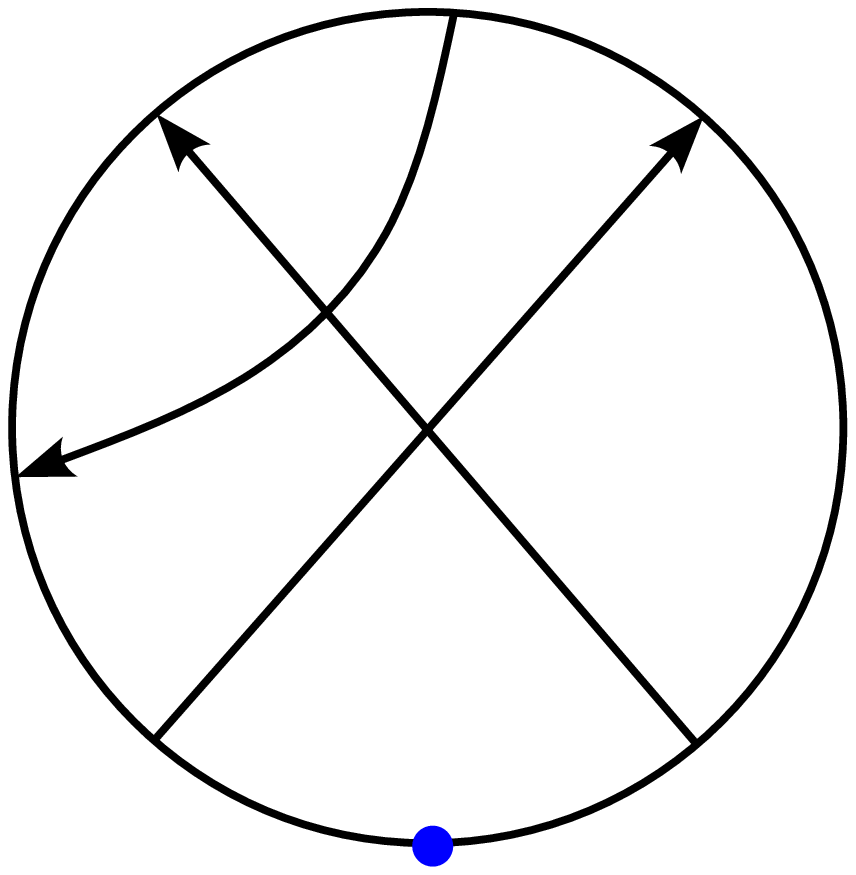}} - \raisebox{-.4\height}{\includegraphics[width=1.75cm]{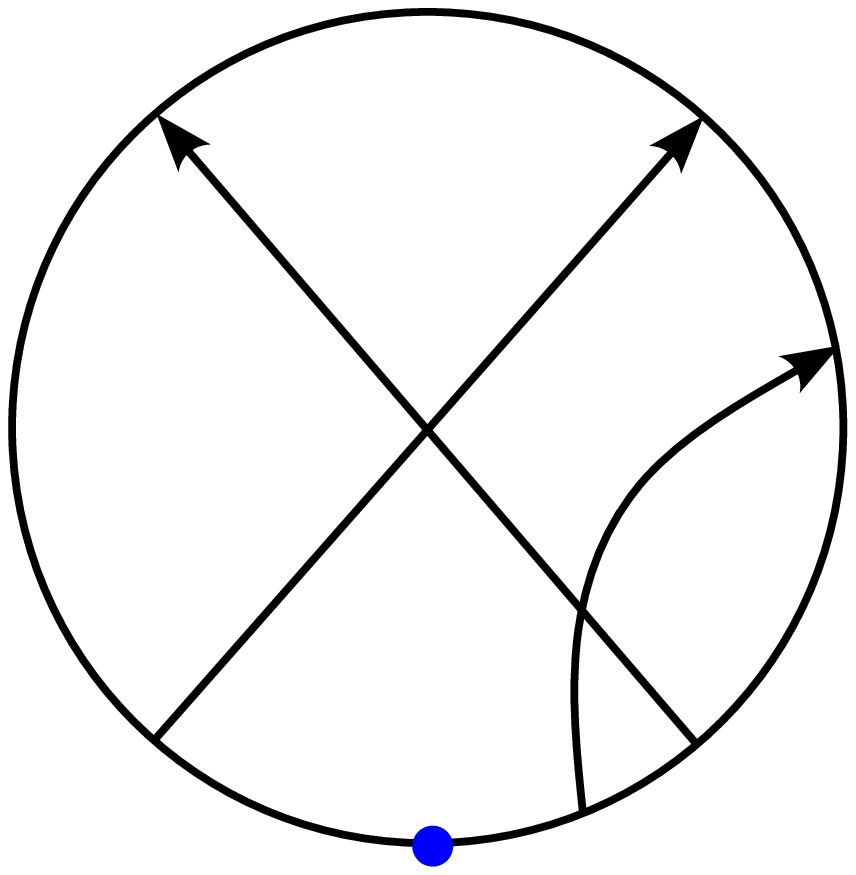}} + \raisebox{-.4\height}{\includegraphics[width=1.75cm]{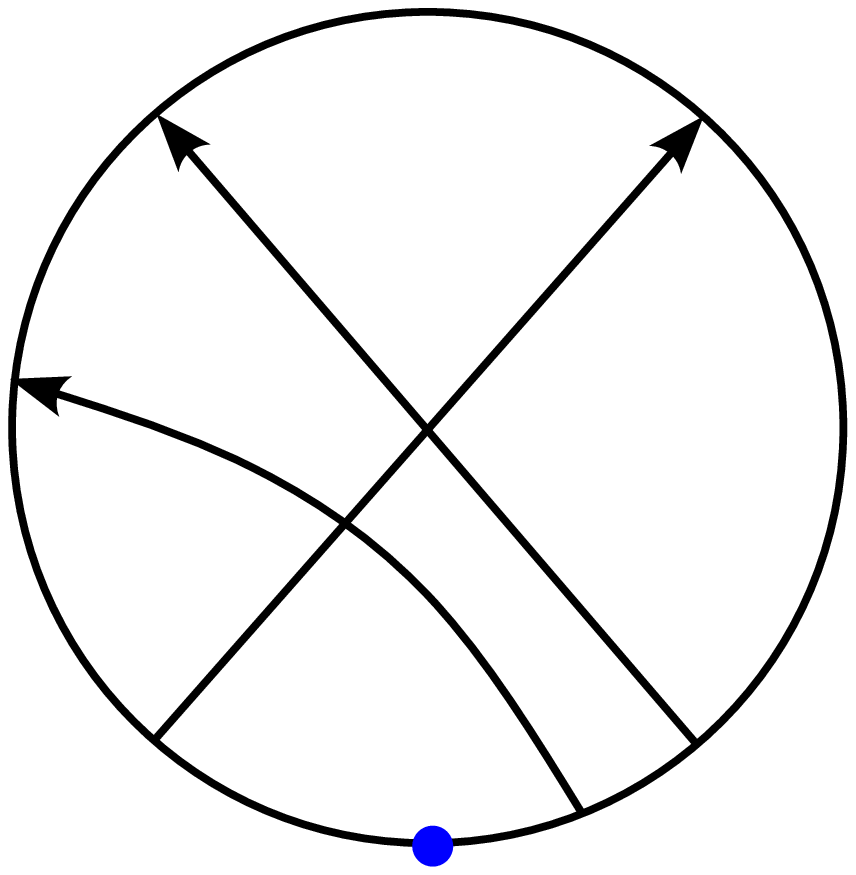}}\\
        = - \raisebox{-.4\height}{\includegraphics[width=1.75cm]{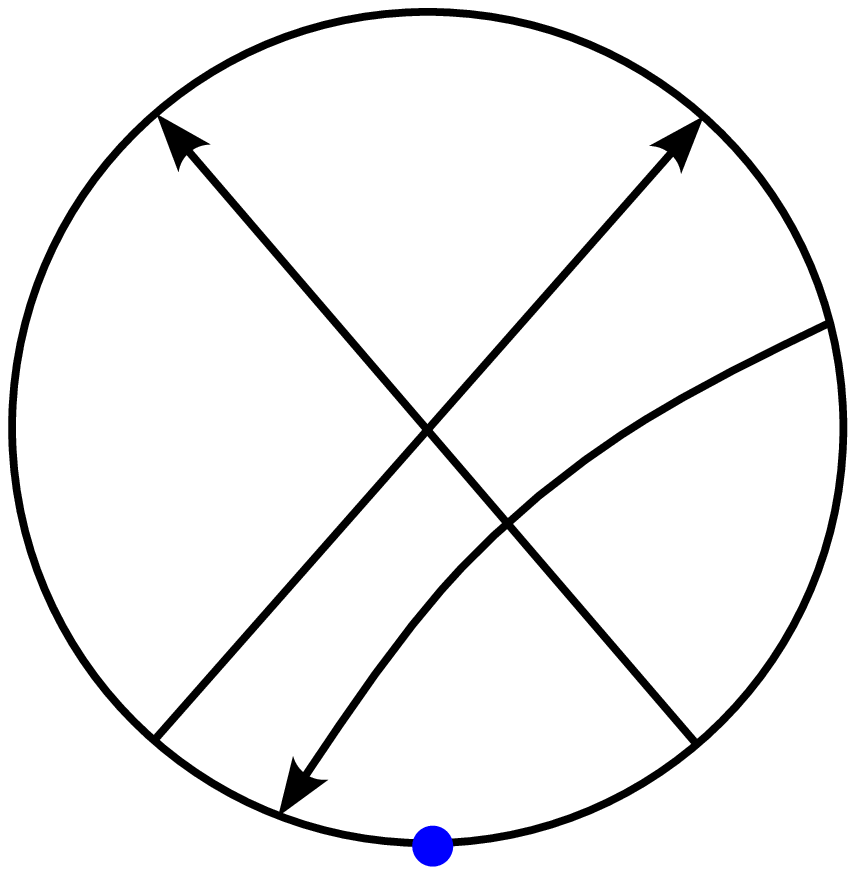}} + \raisebox{-.4\height}{\includegraphics[width=1.75cm]{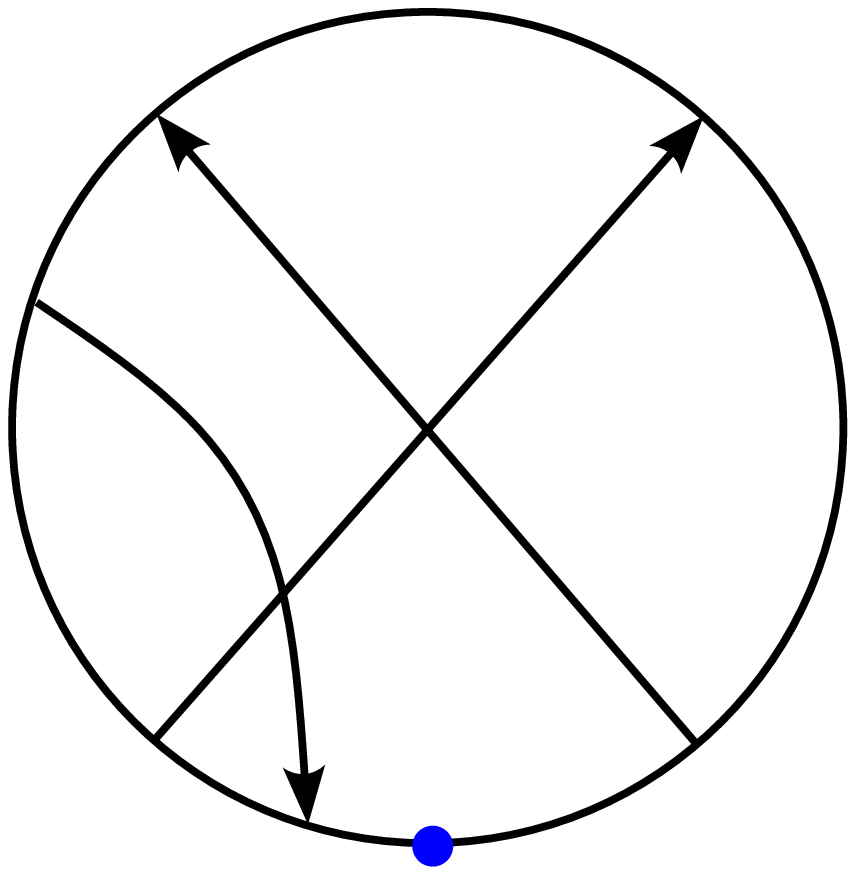}} + \raisebox{-.4\height}{\includegraphics[width=1.75cm]{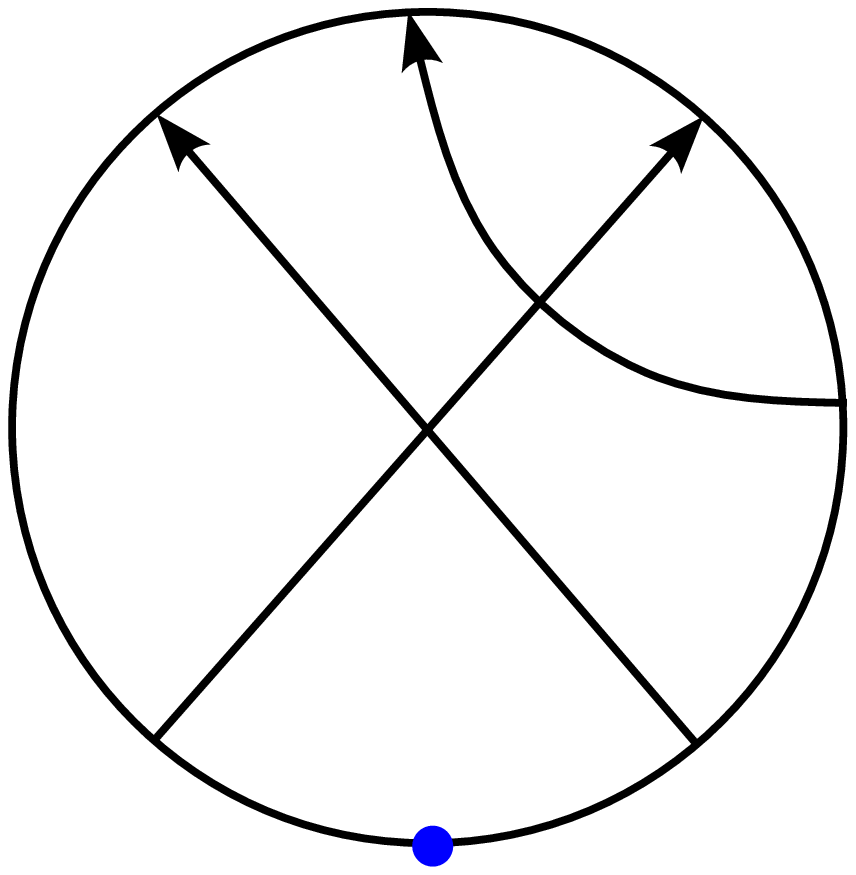}} - \raisebox{-.4\height}{\includegraphics[width=1.75cm]{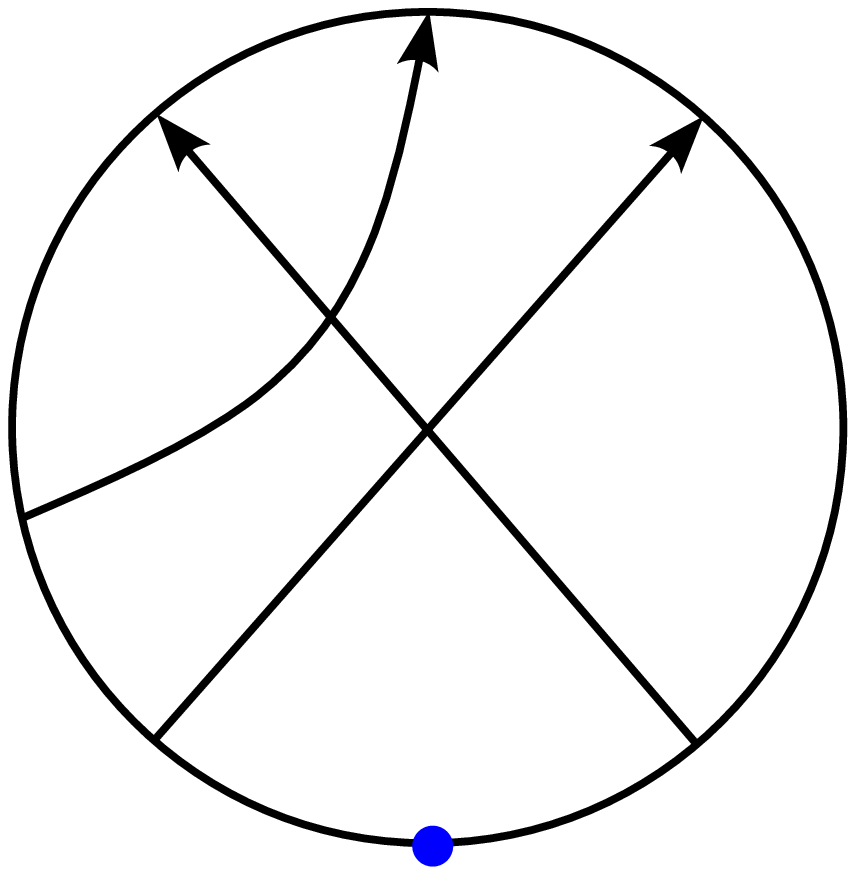}} - \raisebox{-.4\height}{\includegraphics[width=1.75cm]{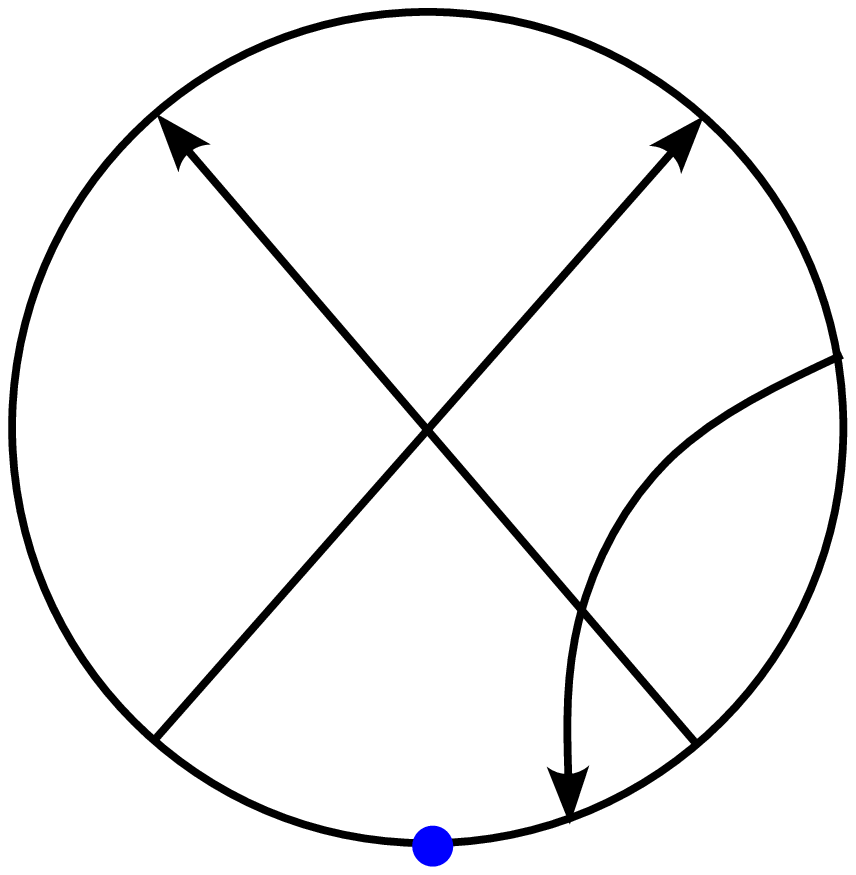}} + \raisebox{-.4\height}{\includegraphics[width=1.75cm]{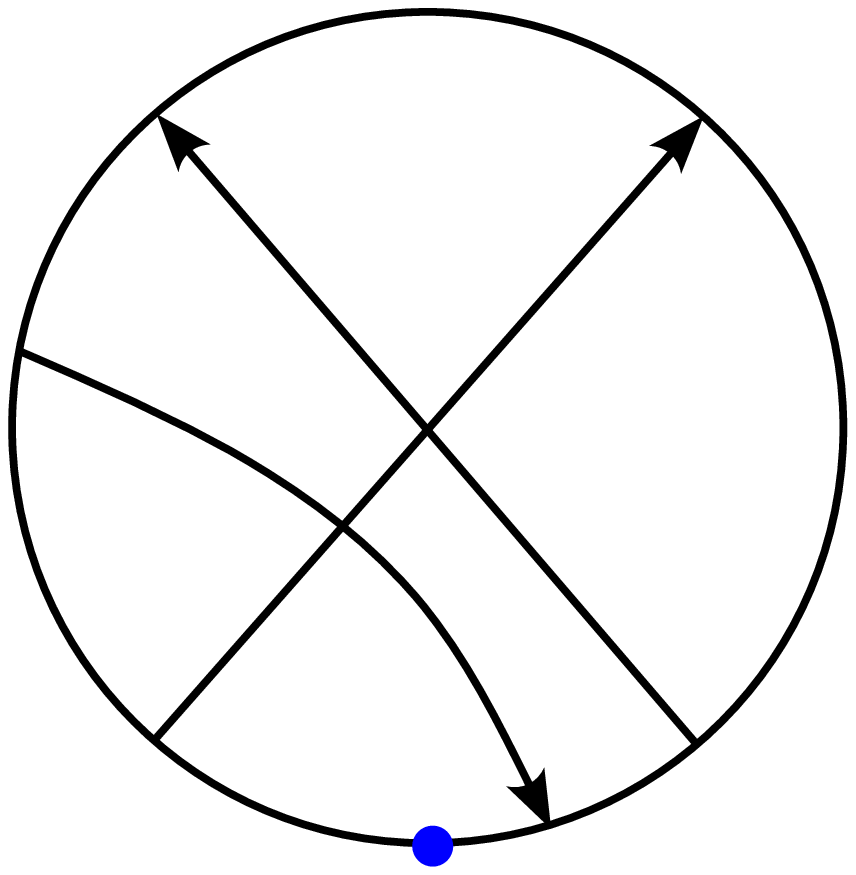}}
    \end{align*}
    Notice that the singularize operation would change the sign of the arrows on the arc that changes the orientation. After deleting the same arrow diagrams on the two sides, we get the identity. 
\end{proof}

\begin{cor}
    \begin{align*}
        \dfrac{1}{2}A_{1,2} &= \raisebox{-.4\height}{\includegraphics[width=1.75cm]{image/A_12_1.eps}} + \raisebox{-.4\height}{\includegraphics[width=1.75cm]{image/A_12_2.eps}} + \raisebox{-.4\height}{\includegraphics[width=1.75cm]{image/id4_1.eps}} + \raisebox{-.4\height}{\includegraphics[width=1.75cm]{image/id4_2.eps}} + \raisebox{-.4\height}{\includegraphics[width=1.75cm]{image/id4_3.eps}}
    \end{align*}
    $$\dfrac{1}{2}A_{1,2} + \dfrac{1}{4}A_{2,1} + \dfrac{1}{8}A_{3,0} = 0$$
    $$\dfrac{1}{2}p_{1,2} + \dfrac{1}{4}p_{2,1} + \dfrac{1}{8}p_{3,0} = 0$$
\end{cor}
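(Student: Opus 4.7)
The plan is to establish the three statements of the corollary in order, using the identities compiled earlier in this section. The third equation, $\frac{1}{2}p_{1,2}+\frac{1}{4}p_{2,1}+\frac{1}{8}p_{3,0}=0$, follows immediately from the second once it is proved: Theorem \ref{thm.representation} expresses $p_{k,l}$ as $\langle A_{k,l},\cdot\rangle$, so any equality of Gauss diagram formulae (evaluated on Gauss diagrams of knots) transfers directly to the same equality among the coefficients $p_{k,l}$. The real content is therefore the two arrow diagram identities.

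For the simplification of $\frac{1}{2}A_{1,2}$, I would take as input the 15-term expansion displayed in the example. The opening step is to apply the identity $A_{12,4}+A_{12,8}=A_{12,3}+A_{12,10}$ from the first proposition in this section, which eliminates the signed combination $-A_{12,3}+A_{12,4}+A_{12,8}-A_{12,10}$ from the expansion and leaves $A_{12,1}+A_{12,2}$ together with nine other terms. The remaining reduction to $id4_1+id4_2+id4_3$ is accomplished by combining the id3--id6 identities from the second proposition with the arrow-direction reversal principle, used to match the nine leftover terms against $id4_4, id4_5, id4_6$ (or their direction-reversals) and then to substitute them via $id4_1+id4_2+id4_3=id4_4+id4_5+id4_6$.

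For the sum identity $\frac{1}{2}A_{1,2}+\frac{1}{4}A_{2,1}+\frac{1}{8}A_{3,0}=0$, I would substitute the explicit expansions of $A_{1,2}$, $A_{2,1}$, and $A_{3,0}$ from the examples (a total of $15+14+9=38$ signed arrow diagrams) and apply every available identity to produce cancellations. My approach is to group diagrams by their underlying unsigned arrow shape, normalize each class via direction reversal so that diagrams of the same shape can be paired up, and then invoke the appropriate identity from the two propositions -- the single-diagram equalities $A_{30,7}=A_{30,8}$ and its two companions, the pair relation $A_{12,4}+A_{12,8}=A_{12,3}+A_{12,10}$, and the id3, id4, id5, id6 identities -- to collapse each class to zero. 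The three families $A_{1,2}$, $A_{2,1}$, $A_{3,0}$ provide the natural partners for the id4, id5-6, and id3 relations respectively, and one expects the coefficients $\frac{1}{2}, \frac{1}{4}, \frac{1}{8}$ to be exactly what aligns the contributions from the three families.

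The principal obstacle is the volume of bookkeeping: 38 signed arrow diagrams, a dozen identities, and the extra multiplicity introduced by arrow-direction reversal. There is no visible conceptual shortcut from the listed identities alone, so the verification must proceed class by class. Organizing the computation by underlying graph type and by orbit under direction reversal should keep the calculation within the scope of a corollary, but I expect the alignment of the coefficients $\frac{1}{2}, \frac{1}{4}, \frac{1}{8}$ to emerge only after careful case analysis -- in particular, the weights $2^{l}$ appearing implicitly from direction reversal of $l$ arrows in a degree-$(k,l)$ diagram are what force these specific ratios.
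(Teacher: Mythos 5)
Your overall strategy is the same as the paper's: the paper gives no explicit proof of this corollary, and it is clearly meant to follow by substituting the displayed order-$3$ expansions of $\frac{1}{2}A_{1,2}$, $-\frac{1}{4}A_{2,1}$, $\frac{1}{8}A_{3,0}$ and cancelling terms using the identities of the two preceding propositions (including $A_{30,7}=A_{30,8}$, the relation $\raisebox{0pt}{}A_{12,4}+A_{12,8}=A_{12,3}+A_{12,10}$, the id3--id6 identities, and arrow-direction reversal), with the third equation following from the second via Theorem \ref{thm.representation} exactly as you say. Two caveats. First, your argument is a plan rather than a proof: for a statement whose entire content is a finite diagram-by-diagram cancellation, the matching of the eleven surviving terms of $\frac{1}{2}A_{1,2}$ against $A_{12,1}+A_{12,2}+\mathrm{id4}_1+\mathrm{id4}_2+\mathrm{id4}_3$, and the full $38$-term cancellation in the second identity, must actually be exhibited; asserting that the identities ``should'' collapse each class to zero is where all the work lies, and you have not verified that the available identities suffice (e.g.\ that nine leftover terms really reduce to three via $\mathrm{id4}$ and reversal rather than requiring the $\mathrm{id5}$/$\mathrm{id6}$ relations as well). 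Second, your closing explanation of the coefficients is wrong: the factors $\frac{1}{2},\frac{1}{4},\frac{1}{8}=2^{-k}$ are not produced by direction reversal of $l$ arrows; they are the normalizations already present in the paper's displayed examples, traceable to the factor $-2$ in $c(-)=a^{-2}-1=-2h+o(h)$ contributing $(-2)^{k}$ to the leading coefficient of $A_{k,l}$ in the $h$-direction. This does not affect the logic of the proof, but as stated it would mislead a reader about why these particular ratios appear.
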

\begin{rem}
    $A_{3,0}$ is the GDF given by Chmutov and Polyak in \cite{Chmutov_Polyak:2009}. If we set
    \begin{align*}
        \raisebox{-.4\height}{\includegraphics[width=1.75cm]{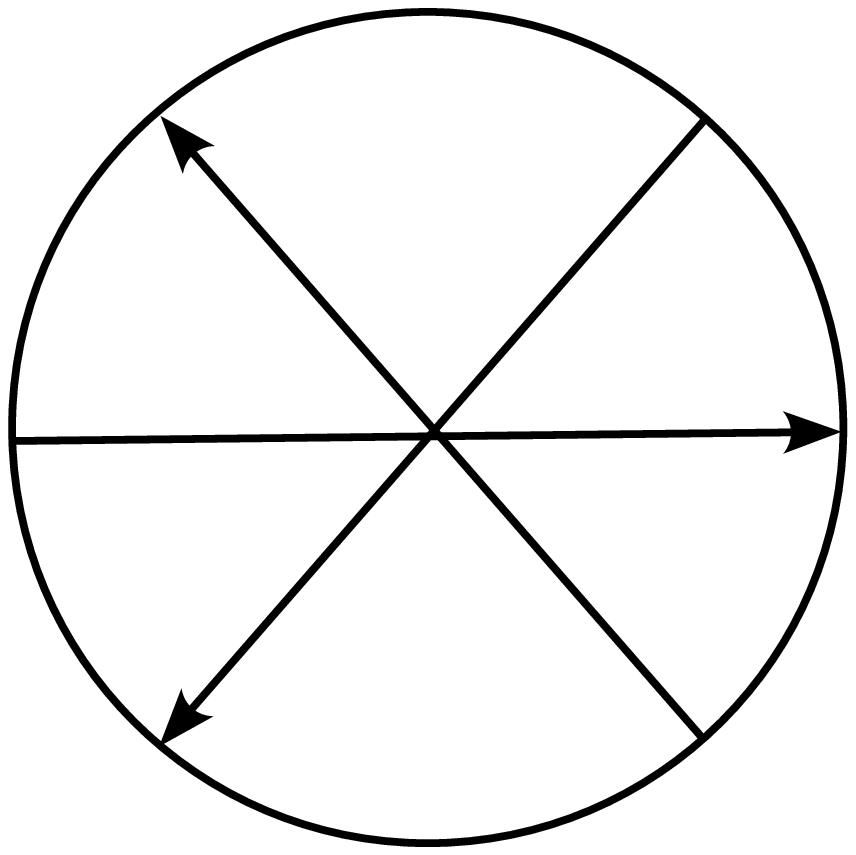}} &= 3\raisebox{-.4\height}{\includegraphics[width=1.75cm]{image/A_12_1.eps}} + 3\raisebox{-.4\height}{\includegraphics[width=1.75cm]{image/A_12_2.eps}} \\
        \raisebox{-.4\height}{\includegraphics[width=1.75cm]{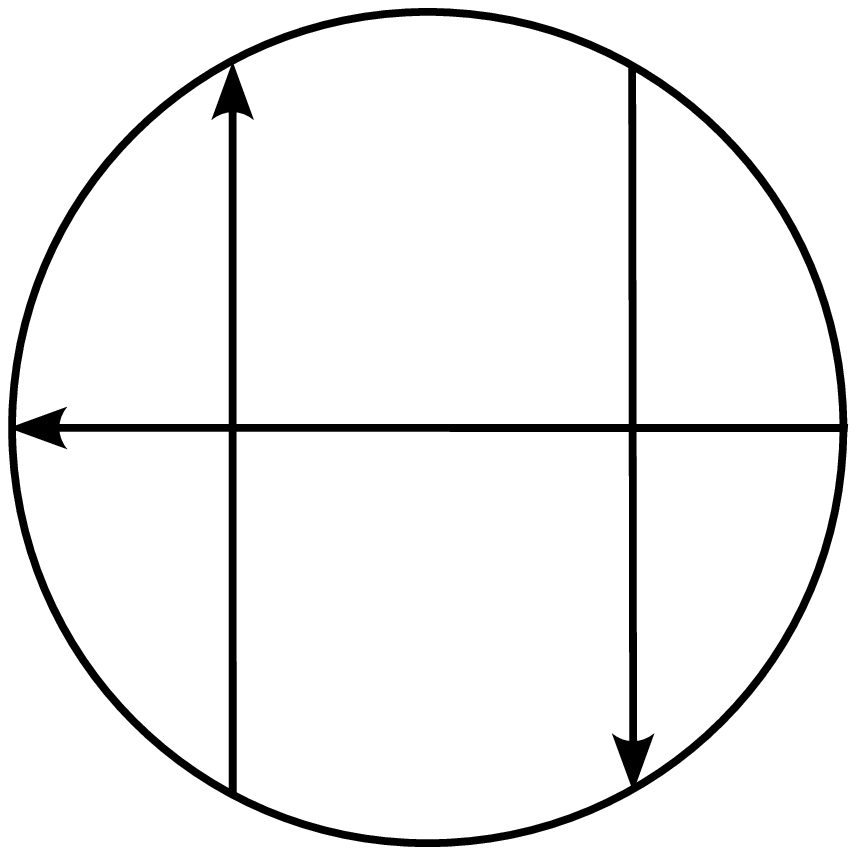}} &= \raisebox{-.4\height}{\includegraphics[width=1.75cm]{image/id3_1.eps}} + \raisebox{-.4\height}{\includegraphics[width=1.75cm]{image/id3_2.eps}} + \raisebox{-.4\height}{\includegraphics[width=1.75cm]{image/id3_3.eps}} + \raisebox{-.4\height}{\includegraphics[width=1.75cm]{image/id4_2.eps}} + \raisebox{-.4\height}{\includegraphics[width=1.75cm]{image/id4_3.eps}} + \raisebox{-.4\height}{\includegraphics[width=1.75cm]{image/id6_5.eps}}
    \end{align*}
    we will get 
    \begin{align*}
        \dfrac{1}{2}A_{1,2} = \dfrac{1}{3} \raisebox{-.4\height}{\includegraphics[width=1.75cm]{image/PV_deg3_1.eps}} + \dfrac{1}{2} \raisebox{-.4\height}{\includegraphics[width=1.75cm]{image/PV_deg3_2.eps}}\\
        \intertext{and}
        \raisebox{-.4\height}{\includegraphics[width=1.75cm]{image/PV_deg3_2.eps}} =  \raisebox{-.4\height}{\includegraphics[width=1.75cm]{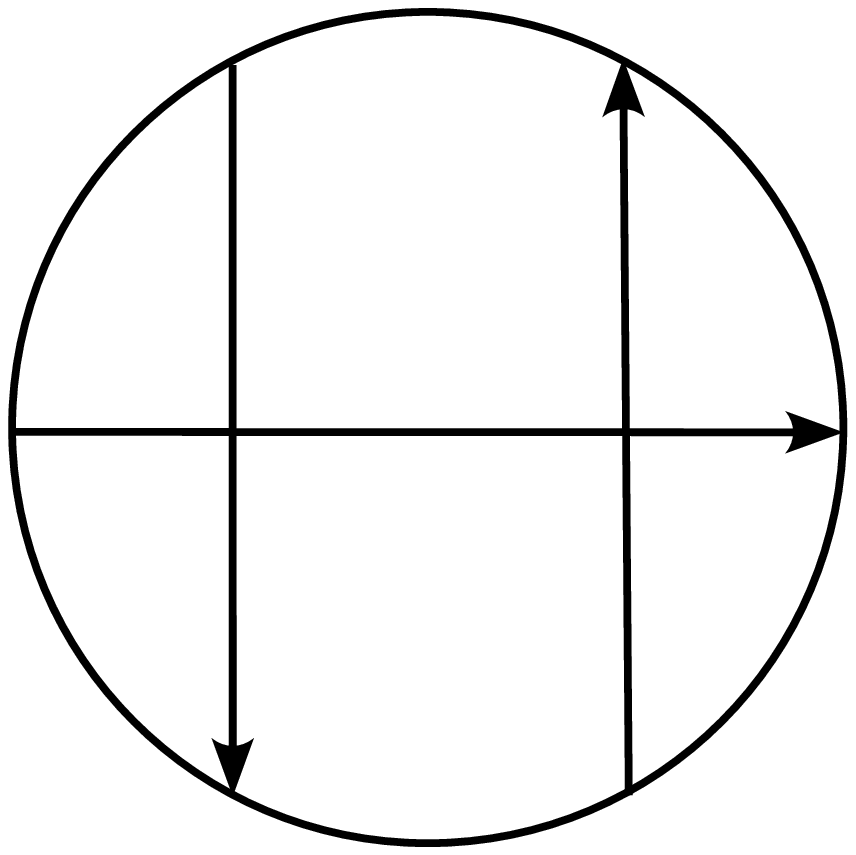}}
    \end{align*}
    which are the formulae given by Polyak and Viro in \cite{Polyak_Viro:1994}. One can find other proofs of the two formulae in \cite{Ostlund:2004} and \cite{Ito:2022}. Notice that in \cite{Ito:2022} there are a list of GDFs, some of which coincide with the formulae we give. 
\end{rem}
\section{Gauss diagram formulae from Jones Polynomial}\label{sec.GDF_Jones}
It is well-known that HOMFLY-PT polynomial and Kauffman polynomial are both generalization of Jones polynomial. We shall compare the GDFs from Jones polynomial based the 2 state models of HOMFLY-PT polynomial and Kauffman polynomial. 

Recall that Jones polynomial $V_K(t)$ satisfies the skein relation $$t^{-1}V_{+}-tV_{-} = (t^{1/2}-t^{-1/2})V_0 $$
and HOMFLY-PT polynomial $H_{K}(a,z)$ satisfies the skein relation 
$$aH_{+}-a^{-1}H_{-} = zH_{0}$$ With a change of variables we have $$V(t) = H(t^{-1},t^{1/2}-t^{-1/2})$$ and $$V(t) = F(-t^{-3/4},t^{-1/4}+t^{1/4})$$
For the technical reason we study $J(s) \coloneqq V(s^4) = H(s^{-4},s^2-s^{-2}) = F(-s^{-3},s^{-1}+s) = DK(-is^{-3},-i(s^{-1}+s))$ and find the GDFs for the coefficients of $J(ie^h) = \sum c_kh^k$. The process based on the state model of HOMFLY-PT polynomial is obtained by ignoring $\infty$ label (i.e., we do not use singularization operation on crossings.) and the corresponding weight is given by the following table (see \cite{Chmutov_Polyak:2009} and compare it to Table \ref{table.weight_arrow} for DK). 
\begin{table}[htbp]
\centering
\begin{tabular}{| c | c | c |}
\hline
& \begin{minipage}[b]{0.25\columnwidth}
		\centering
		\raisebox{-.5\height}{\includegraphics[width=\linewidth]{image/table1_7.png}}
	\end{minipage}
& \begin{minipage}[b]{0.25\columnwidth}
		\centering
		\raisebox{-.5\height}{\includegraphics[width=\linewidth]{image/table1_8.png}}
	\end{minipage} \\
\hline
\begin{minipage}[b]{0.25\columnwidth}
    \centering
	\raisebox{-.4\height}{\includegraphics[width=\linewidth]{image/table1_1.png}}
	\end{minipage}
 & $0$ & $0$ \\
\hline
\begin{minipage}[b]{0.25\columnwidth}
    \centering
	\raisebox{-.4\height}{\includegraphics[width=\linewidth]{image/table1_2.png}}
	\end{minipage}
 & $a^{-2}-1$ & $a^{2}-1$ \\
 \hline
\begin{minipage}[b]{0.25\columnwidth}
    \centering
	\raisebox{-.4\height}{\includegraphics[width=\linewidth]{image/table1_3.png}}
	\end{minipage}
 & $0$ & $0$ \\
\hline
\begin{minipage}[b]{0.25\columnwidth}
    \centering
	\raisebox{-.4\height}{\includegraphics[width=\linewidth]{image/table1_4.png}}
	\end{minipage}
 & $za^{-1}$ & $-za$ \\
 \hline
\end{tabular}
\caption{Weight $w^{H}$ for arrow diagrams}
\label{table.weight_arrow_HOMFLY}
\end{table}
We define $$W_A^{H}(a,z)=\sum \limits_{\sigma \in state(A)} w^{H}(A,\sigma)(\dfrac{a-a^{-1}}{z}+1)^{c(\sigma)-1}$$

Let $W_A^{H}((ie^h)^{-4},(ie^h)^{2}-(ie^h)^{-2}) = \sum\limits_{k}w_{k}^{H}(A)h^k$ and we define $$A_{k}^{H} \coloneqq  \sum_{A}w_{k}^{H}(A)\cdot A$$ For $DK$ let $W_A(-i(ie^h)^{-3}, -i((ie^h)^{-1}+(ie^h))) = \sum\limits_{k}w_{k}^{DK}(A)h^k$ and we define $$A_{k}^{DK} \coloneqq  \sum_{A}w_{k}^{DK}(A)\cdot A$$

$A_{k}^{H}$ and $A_{k}^{DK}$ are both well defined when $A$ runs over all the arrow diagrams and they both represent the GDF for the coefficient $c_k$ in $J(ie^h) = \sum c_kh^k$. 

\begin{eg}[GDFs for Jones Polynomial of order 2 and 3]
$$A_{2}^{H} = -48\ \raisebox{-.4\height}{\includegraphics[width=1.75cm]{image/A_20.eps}}$$
$$A_{2}^{DK} = -12\ \raisebox{-.4\height}{\includegraphics[width=1.75cm]{image/A_11.eps}} - 36\ \raisebox{-.4\height}{\includegraphics[width=1.75cm]{image/A_20.eps}} $$

\begin{align*}
    -\dfrac{1}{384}A_{3}^{H} = \dfrac{1}{8}A_{3,0} &= - \raisebox{-.4\height}{\includegraphics[width=1.75cm]{image/A_30_1.eps}} + \raisebox{-.4\height}{\includegraphics[width=1.75cm]{image/A_30_2.eps}} + \raisebox{-.4\height}{\includegraphics[width=1.75cm]{image/A_30_3.eps}} + \raisebox{-.4\height}{\includegraphics[width=1.75cm]{image/A_30_4.eps}} + \raisebox{-.4\height}{\includegraphics[width=1.75cm]{image/A_30_5.eps}} \\
    &+ \raisebox{-.4\height}{\includegraphics[width=1.75cm]{image/A_30_6.eps}} - \raisebox{-.4\height}{\includegraphics[width=1.75cm]{image/A_30_7.eps}} + \raisebox{-.4\height}{\includegraphics[width=1.75cm]{image/A_30_8.eps}} + \raisebox{-.4\height}{\includegraphics[width=1.75cm]{image/A_30_9.eps}}
\end{align*}
\begin{align*}
    A_{3}^{DK} &= 72\ \raisebox{-.4\height}{\includegraphics[width=1.75cm]{image/A_21_1.eps}} -72\ \raisebox{-.4\height}{\includegraphics[width=1.75cm]{image/A_21_2.eps}} - 72\ \raisebox{-.4\height}{\includegraphics[width=1.75cm]{image/A_21_3.eps}} - 72\ \raisebox{-.4\height}{\includegraphics[width=1.75cm]{image/A_21_5.eps}} - 72\ \raisebox{-.4\height}{\includegraphics[width=1.75cm]{image/A_21_10.eps}} \\
    &- 48\ \raisebox{-.4\height}{\includegraphics[width=1.75cm]{image/A_21_13.eps}} - 24\ \raisebox{-.4\height}{\includegraphics[width=1.75cm]{image/A_12_4.eps}} \\
    &+ 216\ \raisebox{-.4\height}{\includegraphics[width=1.75cm]{image/A_30_1.eps}} - 216\ \raisebox{-.4\height}{\includegraphics[width=1.75cm]{image/A_30_2.eps}} - 216\ \raisebox{-.4\height}{\includegraphics[width=1.75cm]{image/A_30_3.eps}} - 216\ \raisebox{-.4\height}{\includegraphics[width=1.75cm]{image/A_30_4.eps}} - 216\ \raisebox{-.4\height}{\includegraphics[width=1.75cm]{image/A_30_5.eps}} \\
    &- 264\ \raisebox{-.4\height}{\includegraphics[width=1.75cm]{image/A_30_6.eps}} - 192\ \raisebox{-.4\height}{\includegraphics[width=1.75cm]{image/A_30_9.eps}} \\
    & -96\ \raisebox{-.4\height}{\includegraphics[width=1.75cm]{image/A_12_1.eps}} -96\ \raisebox{-.4\height}{\includegraphics[width=1.75cm]{image/A_12_2.eps}} -48\ \raisebox{-.4\height}{\includegraphics[width=1.75cm]{image/A_12_3.eps}} -24\ \raisebox{-.4\height}{\includegraphics[width=1.75cm]{image/A_12_5.eps}} -48\ \raisebox{-.4\height}{\includegraphics[width=1.75cm]{image/A_12_6.eps}}\\
    &- 24\ \raisebox{-.4\height}{\includegraphics[width=1.75cm]{image/A_12_7.eps}} + 48\ \raisebox{-.4\height}{\includegraphics[width=1.75cm]{image/A_12_8.eps}} -24\ \raisebox{-.4\height}{\includegraphics[width=1.75cm]{image/A_12_9.eps}} -72\ \raisebox{-.4\height}{\includegraphics[width=1.75cm]{image/A_21_12.eps}}+ 192\ \raisebox{-.4\height}{\includegraphics[width=1.75cm]{image/A_12_11.eps}}\\
    &-240\ \raisebox{-.4\height}{\includegraphics[width=1.75cm]{image/A_12_13.eps}} -96\ \raisebox{-.4\height}{\includegraphics[width=1.75cm]{image/A_12_15.eps}}
\end{align*}
\end{eg}
By the identities, we have $A_{3}^{DK} = \dfrac{72+216}{384}A_3^{H}-\dfrac{96}{2}A_{1,2}$. Since $A_{3}^{DK} = A_{3}^{H}$, we have $\frac{1}{2}A_{1,2} = \frac{1}{8}A_{3,0}$. When using different models, we will get different GDF expressions for Jones Polynomial. 

\bibliographystyle{plain}
\bibliography{GDF_Kauffman}

\begin{thebibliography}{10}

\bibitem{BARNATAN:1995}
Dror Bar-Natan.
\newblock On the vassiliev knot invariants.
\newblock {\em Topology}, 34(2):423--472, 1995.

\bibitem{Birman:1993aa}
Joan~S. Birman and Xiao-Song Lin.
\newblock Knot polynomials and vassiliev's invariants.
\newblock {\em Inventiones mathematicae}, 111(1):225--270, 1993.

\bibitem{Chmutov_Polyak:2009}
Sergei Chmutov and Michael Polyak.
\newblock {Elementary Combinatorics of the HOMFLYPT Polynomial}.
\newblock {\em International Mathematics Research Notices}, 2010(3):480--495,
  09 2009.

\bibitem{Fiedler:2023}
Thomas Fiedler.
\newblock {\em One-Cocycles and Knot Invariants}, volume~73 of {\em Series on
  Knots and Everything}.
\newblock WORLD SCIENTIFIC, 2023.

\bibitem{Goussarov:1998}
Mikhail Goussarov, Michael Polyak, and Oleg~Ya. Viro.
\newblock Finite type invariants of classical and virtual knots.
\newblock {\em Topology}, 39:1045--1068, 1998.

\bibitem{Gros:2022}
Baptiste Gros and Butian Zhang.
\newblock A combinatorial one-cocycle in a moduli space of knots from the
  vassiliev invariant of order 3, 2022.

\bibitem{Ito:2022}
Noboru Ito, Yuka Kotorii, and Masashi Takamura.
\newblock Goussarov--polyak--viro conjecture for degree three case.
\newblock {\em Journal of Knot Theory and Its Ramifications}, 31(05):2250019,
  2022.

\bibitem{Jaege:1990}
Fran{\c c}ois Jaeger.
\newblock A combinatorial model for the homfly polynomial.
\newblock {\em European Journal of Combinatorics}, 11(6):549--557, 1990.

\bibitem{Kauffman:2001}
Louis~H Kauffman.
\newblock {\em Knots and Physics}.
\newblock WORLD SCIENTIFIC, 3rd edition, 2001.

\bibitem{Ostlund:2004}
OLOF-PETTER {\"O}STLUND.
\newblock A diagrammatic approach to link invariants of finite degree.
\newblock {\em Mathematica Scandinavica}, 94(2):295--319, 2004.

\bibitem{Polyak_Viro:1994}
Michael Polyak and Oleg Viro.
\newblock {Gauss diagram formulas for Vassiliev invariants}.
\newblock {\em International Mathematics Research Notices}, 1994(11):445--453,
  06 1994.

\bibitem{Polyak_Viro:2001}
MICHAEL POLYAK and OLEG VIRO.
\newblock On the casson knot invariant.
\newblock {\em Journal of Knot Theory and Its Ramifications}, 10(05):711--738,
  2001.

\bibitem{Vassiliev:1990}
V.~Vassiliev.
\newblock {\em Cohomology of knot spaces}, pages 23--70.
\newblock 12 1990.

\end{thebibliography}
\end{document}